\newcommand{\depth}{\operatorname{depth}}
\newcommand{\Rloc}{R^{\operatorname{loc}}}
\newcommand{\ob}{\operatorname{ob}}
\newcommand{\Sets}{\underline{Sets}}
\def\Kbar{\overline{K}}
\def\Ebar{\overline{E}}
\def\Lbar{\overline{L}}
 \DeclareMathOperator{\nInd}{n-Ind}
\newlength\xvec@height%
\newlength\xvec@depth%
\newlength\xvec@width%
\newcommand{\xvec}[2][]{%
  \ifmmode%
    \settoheight{\xvec@height}{$#2$}%
    \settodepth{\xvec@depth}{$#2$}%
    \settowidth{\xvec@width}{$#2$}%
  \else%
    \settoheight{\xvec@height}{#2}%
    \settodepth{\xvec@depth}{#2}%
    \settowidth{\xvec@width}{#2}%
  \fi%
  \def\xvec@arg{#1}%
  \def\xvec@dd{:}%
  \def\xvec@d{.}%
  \raisebox{.2ex}{\raisebox{\xvec@height}{\rlap{%
    \kern.05em
    \begin{tikzpicture}[scale=1]
    \pgfsetroundcap
    \draw (.05em,0)--(\xvec@width-.05em,0);
    \draw (\xvec@width-.05em,0)--(\xvec@width-.15em, .075em);
    \draw (\xvec@width-.05em,0)--(\xvec@width-.15em,-.075em);
    \ifx\xvec@arg\xvec@d%
      \fill(\xvec@width*.45,.5ex) circle (.5pt);%
    \else\ifx\xvec@arg\xvec@dd%
      \fill(\xvec@width*.30,.5ex) circle (.5pt);%
      \fill(\xvec@width*.65,.5ex) circle (.5pt);%
    \fi\fi%
    \end{tikzpicture}%
  }}}%
  #2%
}
\let\emptyset\varnothing
\def\A{\mathbb A}
\def\C{\mathbb C}
\def\F{\mathbb F}
\def\Q{\mathbb{Q}}
\def\R{\mathbb{R}}
\def\T{\mathbb{T}}
\def\Z{\mathbb{Z}}
\def\Qbar{\overline{\Q}}
\def\Zhat{\widehat{\Z}}
\def\m{\mathfrak m}
\def\n{\mathfrak n}
\def\chibar{\overline{\chi}}
\def\mult{\mathrm{m}}
\def\id{\mathrm{id}}
\def\et{\mathrm{\acute{e}t}}
\def\red{\mathrm{red}}
\def\ab{\mathrm{ab}}
\newcommand{\Kab}{K^\ab}
\def\nr{\mathrm{ur}}
\def\SL{\mathrm{SL}}
\def\GL{\operatorname{GL}}
\def\Gal{\mathrm{Gal}}
\def\Aut{\mathrm{Aut}}
\def\Sym{\mathrm{Sym}}
\def\End{\mathrm{End}}
\def\Art{\mathop{\mathrm{Art}}\nolimits}
\def\Hom{\mathop{\mathrm{Hom}}\nolimits}
\def\Spec{\mathop{\mathrm{Spec}}\nolimits}
\def\Frob{\mathop{\mathrm{Frob}}\nolimits}
\def\Supp{\mathop{\mathrm{Supp}}\nolimits}
\def\rhobar{\overline{\rho}}
\def\rhomod{\rho^{\mathrm{mod}}}
\def\cotimes{\widehat{\otimes}}
\def\WD{\mathrm{WD}}
\def\m{\mathfrak{m}}
\newcommand{\onto}{\twoheadrightarrow}
\newcommand{\lra}{\longrightarrow}
\newcommand{\into}{\hookrightarrow}
\newcommand{\To}{\longrightarrow}
\newcommand{\isoto}{\stackrel{\sim}{\To}}
\newlength{\ownl}
\newcommand{\ad}{{\operatorname{ad}\,}}
\newcommand{\BC}{{\operatorname{BC}\,}}
\newcommand{\chara}{{\operatorname{char}\,}}
\renewcommand{\Im}{{\operatorname{Im}\,}}
\newcommand{\im}{{\operatorname{Im}\,}}
\newcommand{\JL}{{\operatorname{JL}\,}}
\newcommand{\rec}{{\operatorname{rec}}}
\newcommand{\Res}{{\operatorname{Res}}}
\newcommand{\tr}{{\operatorname{tr}\,}}
\newcommand{\val}{{\operatorname{val}\,}}
\newcommand{\PGL}{\operatorname{PGL}}
\newcommand{\PSL}{\operatorname{PSL}}
\newcommand{\Sp}{\operatorname{Sp}}
\newcommand{\Fsemis}{{\operatorname{F-ss}}}
\newcommand{\cris}{{\operatorname{cr}}}
\newcommand{\loc}{{\operatorname{loc}}}
\newcommand{\semis}{{\operatorname{ss}}}
\newcommand{\univ}{{\operatorname{univ}}}
\newcommand{\cC}{\mathcal{C}}
\renewcommand{\cD}{\mathcal{D}}
\renewcommand{\cH}{\mathcal{H}}
\newcommand{\cJ}{\mathcal{J}}
\newcommand{\cO}{\mathcal{O}}
\newcommand{\cP}{\mathcal{P}}
\newcommand{\cp}{\mathcal{P}}
\newcommand{\cS}{\mathcal{S}}
\newcommand{\gp}{{\mathfrak{p}}}
\newcommand{\barL}{\overline{{L}}}
\newcommand{\barM}{\overline{{M}}}
\newcommand{\tE}{\widetilde{{E}}}
\newcommand{\tL}{\widetilde{{L}}}
\newcommand{\alphabar   }{\overline{\alpha  }}   
\newcommand{\betabar         }{\overline{\beta}}
 \newcommand{\sigmat   }{\widetilde{\sigma}}
\def\RCS$#1: #2 ${\expandafter\def\csname RCS#1\endcsname{#2}}
\newcommand{\HT}{\operatorname{HT}}
 \newcommand{\Qp}{{\Q_p}}
\newcommand{\Zp}{{\Z_p}}
\newcommand{\Zptimes}{{\Z_p^\times}}
\newcommand{\Ql}{{\Q_l}} 
\newcommand{\Qpbar}{{\overline{\Q}_p}}
\newcommand{\Qpbartimes}{{\overline{\Q}_p^\times}}
\newcommand{\Fpbar}{{\overline{\F}_p}}
\newcommand{\Fp}{{\F_p}}
\newtheorem{thm}[subsection]{Theorem}
\newtheorem{lem}[subsection]{Lemma}
\newtheorem{cor}[subsection]{Corollary}
\newtheorem{conj}[subsection]{Conjecture}
\newtheorem{prop}[subsection]{Proposition}
\theoremstyle{definition}
\newtheorem{defn}[subsection]{Definition}
\theoremstyle{remark}
\newtheorem{remark}[subsection]{Remark}
\newtheorem{rem}[subsection]{Remark}
 \newtheorem{exercise}[subsection]{Exercise}
 \newtheorem{fact}[subsection]{Fact}
\def\numequation{\addtocounter{subsection}{1}\begin{equation}}
\def\nummultline{\addtocounter{subsubsection}{1}\begin{multline}}
\def\anumequation{\addtocounter{subsection}{1}\begin{equation}}
\begin{document}








 \newcommand{\Wedge}{\wedge}
 \newcommand{\Syl}{{\operatorname{Syl}}}

 \newcommand{\GQl}{{G_\Ql}}
 \newcommand{\gql}{{\GQl}}
 \newcommand{\gqq}{{G_{\Q_q}}}
 \newcommand{\iqq}{{I_{\Q_q}}}
 \newcommand{\iql}{{I_\Ql}}

 \newcommand{\ca}{\mathfrak{a}}
 \newcommand{\cb}{\mathfrak{b}}
 \newcommand{\cc}{\mathfrak{c}}
 \newcommand{\cd}{\mathfrak{d}}
 \newcommand{\Zl}{{\Z_l}}
 \newcommand{\tame}{^{\mathrm{tame}}}

 \newcommand{\SO}{\operatorname{SO}}
 \newcommand{\Qq}{{\Q_q}}

\setcounter{tocdepth}{1} 







\title[Modularity lifting theorems]{Modularity lifting theorems}

\author{Toby Gee} \email{toby.gee@imperial.ac.uk} \address{Department of
  Mathematics, Imperial College London}

\date{\today}
\maketitle

\tableofcontents
\section{Introduction}The main aim of these notes is to explain
modularity/automorphy lifting theorems for two-dimensional $p$-adic
representations, using wherever possible arguments that go over to the
(essentially conjugate self-dual) $n$-dimensional case. In particular,
we use improvements on the original Taylor--Wiles method due to
Diamond, Fujiwara and Kisin, and we explain (in the case $n=2$)
Taylor's arguments in~\cite{tay} that avoid the use of Ihara's
lemma. For the most part I ignore the issues which are local at $p$,
focusing on representations which satisfy the Fontaine--Laffaille
condition. These notes are based in part on a lecture course given by
Richard Taylor at Harvard University in 2009, and I am extremely
grateful to him both for the many things I learned from his
course, and for giving me permission to present them here.


These notes were originally written for the 2013 Arizona Winter
School, and I would like to thank the organisers (Bryden Cais, Mirela
Çiperiani, Matthew Papanikolas, Rachel Pries, David Savitt, and Romyar
Sharifi) for the invitation to speak, and the participants for their
feedback. I would like to thank Ana Caraiani for encouraging me
to belatedly publish these notes. In revising for publication I removed some
material relating to the Arizona Winter School project, and for the
most part removed the outdated section on more recent developments
(the reader disappointed by this change should read Calegari's excellent
survey~\cite{calegari2021reciprocity}). I have otherwise maintained
the feel of the original notes. In particular, there are exercises for
the reader (of varying difficulty) throughout the text.

I would also like to thank Kevin Buzzard, Frank Calegari, Ana Caraiani, Kęstutis
Česnavičius, Matthew Emerton, Jessica Fintzen, Jeff Hatley, Florian Herzig, Christian
Johansson, Keenan Kidwell, Alexander Kutzim, Jack Lamplugh, Heejong Lee, Tom Lovering,
Judith Ludwig, James Newton, Sug Woo Shin and Jack Shotton for their
helpful comments on earlier versions of these notes.

Finally, I am extremely grateful to the anonymous referee for their
many helpful corrections and suggestions.
\subsection{Notation}Much of this notation will also be introduced in
the text, but I have tried to collect together various definitions
here, for ease of reading. Throughout these notes, $p>2$ is a prime
greater than two. In the earlier stages of the notes, we discuss
$n$-dimensional $p$-adic and mod $p$ representations, before
specialising to the case $n=2$. When we do so, we assume that $p\nmid
n$. (Of course, in the case $n=2$, this follows from our assumption
that $p>2$.)

If $M$ is a field, we let $G_M$ denote its absolute Galois group
$\Gal(\overline{M}/M)$, where $\overline{M}$ is some choice of
separable closure of~$M$. We
write $\varepsilon_p$ (or just~$\varepsilon$) for the $p$-adic cyclotomic character. We fix an
algebraic closure $\Qbar$ of $\Q$, and regard all algebraic extensions
of $\Q$ as subfields of $\Qbar$. For each prime $p$ we fix an
algebraic closure $\Qpbar$ of $\Qp$, and we fix an embedding
$\Qbar\into\Qpbar$. In this way, if $v$ is a finite place of a number
field $F$, we have a homomorphism $G_{F_v}\into G_F$. We also fix an
embedding $\Qbar\into\C$.  If $L/\Qp$ is algebraic, then we write
$\cO_L$ for the ring of integers of~$L$, and~ $k(L)$ for its residue field.

We normalise the definition of Hodge--Tate weights so that all the
Hodge--Tate weights of the $p$-adic cyclotomic character $\varepsilon_p$
are $-1$.

If~$R$ is a local ring, we write $\m_R$ for the maximal ideal of~$R$.

We let $\zeta_p$ be a primitive $p$th root of unity.

We use the terms ``modularity lifting theorem'' and ``automorphy
lifting theorem'' more or less interchangeably. 
\section{Galois representations}Modularity lifting theorems prove that
certain Galois representations are modular, in the sense that they
come from modular forms. We begin in this first chapter by introducing Galois
representations, and explaining some of their basic properties.
\subsection{Basics of Galois representations (and structure of Galois groups)}
Let $K'/K$ be a (not necessarily finite)
normal and separable extension of fields. Then the Galois group
$\Gal(K'/K)$ is the
group \[\{\sigma\in\Aut(K'):\sigma|_K=\id_K\}.\]This has a natural
topology, making it a compact Hausdorff totally disconnected
topological group; equivalently, it is a profinite group. This can be
expressed by the topological
isomorphism \[\Gal(K'/K)\cong\varprojlim_{\stackrel{K''/K\textrm{ finite
    normal}}{K''\subseteq K'}}\Gal(K''/K),\] where the finite groups $\Gal(K''/K)$ have
the discrete topology.
Then Galois theory gives a bijective correspondence between
intermediate fields $K'\supset K''\supset K$ and closed subgroups
$H\subset\Gal(K'/K)$, with $K''$ corresponding to $\Gal(K'/K'')$ and
$H$ corresponding to $K^H$. (See e.g.\ Section 1.6 of \cite{MR0225922}.)

Fix a separable closure $\Kbar$ of $K$, and write
$G_K:=\Gal(\Kbar/K)$. Let $L$ be a topological field; then a
\emph{Galois representation} is a continuous homomorphism
$\rho:G_K\to\GL_n(L)$ for some $n$. The nature of these representations
depends on the topology on $L$. For example, if $L$ has the discrete
topology, then the image of $\rho$ is finite, and $\rho$ factors
through a finite Galois group $\Gal(K''/K)$.
\begin{exercise}
  If $L=\C$ with the usual topology, then $\rho(G_K)$ is finite, and
  $\rho$ is conjugate to a representation valued in $\GL_n(\Qbar)$.
\end{exercise}
On the other hand, if $L/\Q_p$ is a finite extension with the $p$-adic
topology, then there can be examples with infinite image. The rest of
these notes will be concerned with these $p$-adic representations. For example,
if $p\ne\chara K$, we have the $p$-adic cyclotomic character
$\varepsilon_p:G_K\to\Zptimes$, which is uniquely determined by the
requirement that if $\sigma\in G_K$ and $\zeta\in\Kbar$ with
$\zeta^{p^m}=1$ for some $n$, then
$\sigma(\zeta)=\zeta^{\varepsilon_p(\sigma)\pmod{p^m}}$. More
interesting examples arise from geometry, as we explain in
Section~\ref{subsec: p adic HT} below. 
\begin{fact}
  If $L/\Qp$ is an algebraic extension, and $\rho:G_K\to\GL_n(L)$ is a
  continuous representation, then $\rho(G_K)\subseteq\GL_n(M)$ for
  some $L\supset M\supset\Qp$ with $M/\Qp$ finite.
\end{fact}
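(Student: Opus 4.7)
The plan is to exploit the compactness of $G_K$ together with the classical fact that $\Q_p$ has only finitely many extensions of each fixed degree. Since $G_K$ is profinite and $\rho$ is continuous, $\rho(G_K)$ is a compact subgroup of $\GL_n(L)$, and the task is to promote the pointwise statement ``each entry of each $\rho(g)$ lies in \emph{some} finite subextension'' to the uniform statement ``all entries collectively lie in a single finite subextension''.

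The key input, a standard consequence of Krasner's lemma, is that $\Q_p$ admits only finitely many extensions inside $\Qpbar$ of any given degree. Consequently, for each integer $d \geq 1$, the set of elements of $\Qpbar$ of degree at most $d$ over $\Q_p$ is contained in a single finite extension $E_d/\Q_p$. Setting $F_d := L \cap E_d$ gives a countable ascending chain of finite subextensions $F_1 \subseteq F_2 \subseteq \cdots$ of $\Q_p$ inside $L$ with $L = \bigcup_{d \geq 1} F_d$. Each $F_d$ is complete, hence closed in the topological field $L$, so $\GL_n(F_d)$ is a closed subgroup of $\GL_n(L)$ and $H_d := \rho^{-1}(\GL_n(F_d))$ is a closed subgroup of $G_K$. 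Since any single matrix $\rho(g)$ has only finitely many entries in $L$, each of which lies in some $F_d$, we have $G_K = \bigcup_{d \geq 1} H_d$.

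Applying the Baire category theorem to the compact Hausdorff (hence Baire) space $G_K$, some $H_d$ must have nonempty interior. Because $H_d$ is a subgroup of the topological group $G_K$, nonempty interior forces $H_d$ to be open, and thus of finite index. Choosing coset representatives $g_1, \ldots, g_k$ for $H_d$ in $G_K$ and letting $M$ be the finite extension of $F_d$ generated by the (finitely many) matrix entries of $\rho(g_1), \ldots, \rho(g_k)$, we conclude $\rho(G_K) \subseteq \GL_n(M)$.

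The step carrying the real content is the filtration $L = \bigcup_d F_d$ by \emph{finite} subextensions; without the Krasner-type finiteness input for $\Q_p$, the Baire argument would produce no useful information, as the $F_d$ could have unbounded degree. Everything else is a routine combination of continuity, compactness, and the standard fact that a subgroup of a topological group with nonempty interior is automatically open.
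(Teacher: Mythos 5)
Your proof is correct and uses the same approach the paper points to: the paper's proof consists of the single sentence that the statement follows from the Baire category theorem (with a reference for details), and your argument is a complete, correct instantiation of exactly that strategy — covering $G_K$ by the closed subgroups $H_d = \rho^{-1}(\GL_n(F_d))$ using the Krasner-type finiteness of extensions of $\Q_p$ of bounded degree, applying Baire to get an open $H_d$, and then clearing the finitely many cosets. Nothing to add.
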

\begin{proof}
  This follows from the Baire category theorem; see e.g.\ the proof of
  Corollary 5 of~\cite{MR1845182} for the details.
\end{proof}
\begin{exercise}\label{ex: existence of galois stable lattice}
  If $L/\Qp$ is an algebraic extension, and $\rho:G_K\to\GL_n(L)$ is a
  continuous representation, then $\rho$ is conjugate to a
  representation in $\GL_n(\cO_L)$.
\end{exercise}
Any finite-dimensional Galois representation has a Jordan--H\"older
sequence, and thus a well-defined semisimplification.
\begin{fact}
  Two Galois representations $\rho,\rho':G_K\to\GL_n(L)$ have
  isomorphic semisimplifications if and only if $\rho(g),\rho'(g)$
  have the same characteristic polynomials for each $g\in G_K$. If
  $\chara L=0$ (or indeed if $\chara L>n$), then this is equivalent to $\tr\rho(g)=\tr\rho'(g)$
  for all $g\in G_K$.
\end{fact}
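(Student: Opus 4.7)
My plan is to reduce the equivalence to the Brauer--Nesbitt theorem. The forward direction is straightforward: the characteristic polynomial is multiplicative in short exact sequences, so any composition series of $\rho$ gives $\det(X - \rho(g)) = \prod_i \det(X - \rho_i(g))$ for the Jordan--H\"older constituents $\rho_i$, whence $\rho$ and its semisimplification share characteristic polynomials on every $g \in G_K$; the same holds for $\rho'$, so $\rho^{\mathrm{ss}} \cong (\rho')^{\mathrm{ss}}$ gives the equality of characteristic polynomials.

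For the converse, since characteristic polynomials depend only on semisimplifications, I may assume $\rho, \rho'$ are both semisimple. The claim is then the Brauer--Nesbitt theorem, which I would deduce as follows. After extending scalars to an algebraic closure $\bar L$ (resemisimplifying if necessary, and appealing to standard descent to return the final isomorphism from $\bar L$ to $L$), write $\rho_{\bar L} = \bigoplus_i m_i \sigma_i$ and $\rho'_{\bar L} = \bigoplus_i m'_i \sigma_i$ over a common enumeration of pairwise non-isomorphic irreducible constituents. I would then prove $m_i = m'_i$ by induction on $\dim \rho$: using Burnside's theorem on the simultaneous $G_K$-action on $\bigoplus_i \sigma_i$, one arranges $g \in G_K$ whose action has an eigenvalue $\lambda$ arising only from $\sigma_{i_0}$ and with simple multiplicity on $\sigma_{i_0}$; this $\lambda$ then occurs in $\det(X - \rho_{\bar L}(g))$ with multiplicity exactly $m_{i_0}$, and the hypothesis forces the same multiplicity on $\rho'_{\bar L}$, so $m_{i_0} = m'_{i_0}$; peeling off a copy of $\sigma_{i_0}$ from each side completes the induction. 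The main technical obstacle here is precisely this eigenvalue-isolation step: because the hypothesis is given only on group elements, not on arbitrary elements of the group algebra, one cannot directly employ algebra-theoretic idempotents cutting out isotypic components, so the required separation must be arranged using a genuine $g \in G_K$ --- exactly what a careful application of Burnside density provides.

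The final assertion comparing traces and characteristic polynomials when $\chara L = 0$ or $\chara L > n$ is routine. The coefficient of $X^{n-1}$ in $\det(X - \rho(g))$ is $-\tr \rho(g)$, so characteristic polynomials always determine traces; conversely, the trace hypothesis yields all power sums $p_k = \tr(\rho(g)^k) = \tr \rho(g^k)$, from which Newton's identities $k e_k = \sum_{i=1}^{k}(-1)^{i-1} e_{k-i} p_i$ reconstruct the elementary symmetric functions $e_1, \dots, e_n$ (equivalently, the coefficients of the characteristic polynomial), provided $1, \dots, n$ are invertible in $L$, which is the stated hypothesis.
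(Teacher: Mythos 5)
The paper's proof is a one-line citation to the Brauer--Nesbitt theorem in Curtis--Reiner, so you are doing substantially more work by attempting a self-contained argument. Your forward direction (multiplicativity of characteristic polynomials over composition series) is correct, and your final paragraph deducing the trace version via Newton's identities when $1,\dots,n$ are invertible is also correct.

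However, the converse has a genuine gap at exactly the step you flagged as "the main technical obstacle." You claim that Burnside density supplies a group element $g$ and an eigenvalue $\lambda$ of $\sigma_{i_0}(g)$ that is simple on $\sigma_{i_0}$ and \emph{does not occur} as an eigenvalue of any other $\sigma_i(g)$. Burnside/Jacobson density only asserts that the image of the group \emph{algebra} fills out $\prod_i \End(\sigma_i)$; it says nothing about the eigenvalue structure of individual group elements, and in fact the desired isolating $g$ need not exist. A concrete counterexample: take $G = A_5$ over $\C$, $\sigma_{i_0}$ the $4$-dimensional irreducible and another constituent the $5$-dimensional irreducible. Computing eigenvalues on each conjugacy class (on a $5$-cycle the $4$-dimensional representation has the four primitive $5$th roots of unity while the $5$-dimensional has all five $5$th roots of unity; on a $3$-cycle they are $\{1,1,\omega,\omega^2\}$ versus $\{1,\omega,\omega,\omega^2,\omega^2\}$; on a $(2,2)$-cycle they are $\{1,1,-1,-1\}$ versus $\{1,1,1,-1,-1\}$), one sees that for \emph{every} $g \in A_5$ the set of eigenvalues of the $4$-dimensional representation at $g$ is contained in the set of eigenvalues of the $5$-dimensional representation at $g$. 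So no element of $G$ isolates any eigenvalue of $\sigma_{i_0}$, and the inductive step collapses.

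The standard proof circumvents this entirely. One route: pass to the multiplicative group $1 + T\bar{L}\llbracket T\rrbracket$ via $g \mapsto \det(1 - T\rho(g))$, observe that this group is torsion-free (so one may assume $p \nmid \gcd_i(m_i - m'_i)$ after factoring out $p$-powers), and then take a \emph{logarithmic derivative} $\frac{d}{dT}\log$ --- well defined in any characteristic, unlike $\log$ itself --- to convert the multiplicative relation $\prod_i \det(1 - T\sigma_i(g))^{m_i - m'_i} = 1$ into the additive relation $\sum_i (m_i - m'_i)\tr\sigma_i(g^k) = 0$ for all $k \ge 1$; linear independence of ordinary characters over an algebraically closed field then forces $m_i \equiv m'_i \pmod{p}$ for all $i$, contradicting the reduction. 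Alternatively one may deduce multiplicities from matching multiplicities of eigenvalues over $\Z$ (not merely their sets), which sidesteps the isolation issue but requires a different bookkeeping than you describe. As written, though, your eigenvalue-isolation step does not go through.
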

\begin{proof}
  This is a consequence of the Brauer--Nesbitt theorem, \cite[30.16]{MR0144979}
\end{proof}
As a corollary of the previous exercise and fact, we see that $p$-adic
representations have well-defined semi-simplified reductions modulo
$p$. Indeed, given $\rho:G_K\to\GL_n(L)$ with $L/\Qp$ algebraic, we
may conjugate $\rho$ to be valued in $\GL_n(\cO_L)$, reduce modulo the
maximal ideal and semisimplify to get a semisimple representation
$\rhobar:G_K\to\GL_n(k(L))$, whose characteristic
polynomials are determined by those of $\rho$.

\begin{rem}
  We really do have to semisimplify here; to see why, think about the
  reductions modulo $p$ of the matrices $
  \begin{pmatrix}
    1&1\\0&1
  \end{pmatrix}$ and $
  \begin{pmatrix}
    1&p\\0&1
  \end{pmatrix}$.
\end{rem}

\subsection{Local representations with $p\ne l$: the monodromy
  theorem}\label{subsec: monodromy p not l}In this section we will let $K/\Ql$ be a finite extension,
for some prime $l\ne p$. In order to study the representations of
$G_K$, we firstly recall something of the structure of $G_K$ itself;
see e.g.~\cite{MR554237} for further details. Let $\varpi_K$ be a uniformiser
of $\cO_K$,  let $k=k(K)$ denote the residue field of $K$,
and let
$\val_K:K^\times\onto\Z$ be the $\varpi_K$-adic valuation. Let
$|\cdot|_K:=(\# k)^{-\val_K(\cdot)}$ be the corresponding norm. The
action of $G_K$ on $K$ preserves $\val_K$, and thus induces an action
on $k$, so that we have a homomorphism $G_K\to G_k$, and in fact a
short exact sequence \[0\to I_K\to G_K\to G_k\to 0\] defining the
inertia subgroup $I_K$. We let $\Frob_K=\Frob_k\in G_k$ be the
geometric Frobenius element, a topological generator of $G_k\cong\Zhat$. 

Then we define the Weil group $W_K$ via the commutative diagram
\[\xymatrix{0\ar[r] &I_K\ar[r]&G_K\ar[r]&G_k\ar[r]&0\\0\ar[r]
  &I_K\ar[r]\ar@{=}[u]&W_K\ar[r]\ar@{^{(}->}[u]&\Frob_k^\Z\ar[r]\ar@{^{(}->}[u]&0} \]
so that $W_K$ is the subgroup of $G_K$ consisting of elements which
map to an integral power of the Frobenius in $G_k$. The group $W_K$ is
a topological group, but its topology is not the subspace topology of
$G_K$; rather, the topology is determined by decreeing that $I_K$ is
open, and has its usual topology. 

Let $K^\nr=\overline{K}^{I_K}$ be the maximal unramified extension of
$K$, and let $K\tame=\cup_{(m,l)=1}K^\nr(\varpi_K^{1/m})$ be the
maximal tamely ramified extension. Then the wild inertia subgroup
$P_K:=\Gal(\overline{K}/K\tame)$ is the unique Sylow pro-$l$ subgroup
of $I_K$. Let $\zeta=(\zeta_m)_{(m,l)=1}$ be a compatible system of primitive
roots of unity (i.e.\ $\zeta_{ab}^a=\zeta_b$).  Then we have a
character \[t_\zeta:I_K/P_K\isoto\prod_{p\ne l}\Z_p,\] defined
by \[\frac{\sigma(\varpi_K^{1/m})}{\varpi_K^{1/m}}=\zeta_m^{(t_\zeta(\sigma)\pmod{m})}.\]
\begin{exercise}
  Any other compatible system of roots of unity is of the form
  $\zeta^u$ for some $u\in\prod_{p\ne l}\Z_p^\times$, and we have $t_{\zeta^u}=u^{-1}t_\zeta$.
\end{exercise}
If $\sigma\in W_K$, then
$t_\zeta(\sigma\tau\sigma^{-1})=\varepsilon(\sigma)t_\zeta(\tau)$, where
$\varepsilon$ is the cyclotomic character. We let $t_{\zeta,p}$ be the
composite of $t_\zeta$ and the projection to $\Z_p$.

Local class field theory is summarised in the following
statement. (See for example~\cite{MR546607} for this and the other
facts about class field theory recalled below.)
\begin{thm}
  Let $W_K^{\ab}$ denote the group $W_K/\overline{[W_K,W_K]}$. Then
  there are unique isomorphisms $\Art_K:K^\times\isoto W_K^{\ab}$ such that
  \begin{enumerate}
  \item if $K'/K$ is a finite extension, then $\Art_{K'}=\Art_K\circ
    N_{K'/K}$, and
  \item we have a commutative
    square \[\xymatrix{K^\times\ar[r]^{\Art_K}\ar@{->>}[d]^{\val_K}&W_K^{\ab}\ar@{->>}[d]\\\Z\ar[r]&\Frob_K^\Z}\]where
    the bottom arrow is the isomorphism sending $a\mapsto\Frob_K^a$.
  \end{enumerate}

\end{thm}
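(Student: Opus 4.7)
The plan is to outline the Lubin--Tate construction, which has the advantage of making condition (2) manifest. First I would dispose of uniqueness: given two maps $\Art_K, \Art_K'$ satisfying both (1) and (2), their quotient is a continuous homomorphism $K^\times \to W_K^{\ab}$ that kills every uniformizer; combined with norm compatibility applied to the tower of finite unramified extensions (together with the fact that every unit of $\cO_K$ is a norm from some unramified extension), one deduces the two maps agree on $\cO_K^\times$ as well.

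For existence, the plan is to fix a uniformizer $\varpi$ of $K$ and build the map on $\varpi^\Z$ and on $\cO_K^\times$ separately. On $\varpi^\Z$, the extension $K^\nr/K$ has Galois group topologically generated by $\Frob_K$, and one simply decrees $\varpi \mapsto \Frob_K$, which gives (2). For the unit factor, I would invoke Lubin--Tate theory to construct a formal $\cO_K$-module $F_\varpi$ over $\cO_K$ whose multiplication-by-$\varpi$ endomorphism is congruent to $x^{\#k} \pmod{\varpi}$; the $\varpi^n$-torsion generates a totally ramified abelian extension $K_{\varpi,n}/K$, and the action of Galois on the $\varpi$-adic Tate module gives a canonical isomorphism $\Gal(K_{\varpi,\infty}/K) \isoto \cO_K^\times$. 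One then defines $\Art_K$ on $\cO_K^\times$ to be the inverse of this isomorphism, with the sign chosen so that the geometric-Frobenius normalization in (2) is respected.

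The main obstacles are threefold. The first, and most serious, is showing that $K^{\ab} = K^\nr \cdot K_{\varpi,\infty}$, so that the constructed map actually hits all of $W_K^{\ab}$; this typically requires a genuinely global input, either a cohomological computation identifying the Brauer group of $K$ with $\Q/\Z$ (via Tate's approach with a fundamental class) or a ramification-theoretic argument using the Hasse--Arf theorem. The second is independence of $\varpi$, which follows from the fact that $F_\varpi$ and $F_{\varpi'}$ become isomorphic over the completion of $K^\nr$ in a Galois-equivariant way controlled by the ratio $\varpi/\varpi'$. Once these are in hand, the norm compatibility (1) can be verified by reducing to the cases of unramified and totally ramified extensions separately: the unramified case is essentially tautological from the choice $\varpi \mapsto \Frob_K$ together with the compatibility of Frobenius under restriction, and the totally ramified case follows from the functoriality of the Lubin--Tate construction under base change.
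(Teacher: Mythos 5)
The paper does not actually supply a proof of this theorem; it records it as a summary of local class field theory and refers the reader to~\cite{MR546607} (Tate's \emph{Number theoretic background}). So there is no argument in the paper to compare against, and you have produced more than the paper does.

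Your Lubin--Tate outline is one of the standard routes, and the ingredients you isolate for existence are exactly the right ones: the totally ramified abelian tower $K_{\varpi,\infty}$ with $\Gal(K_{\varpi,\infty}/K)\cong\cO_K^\times$ via the Tate module, the identification $K^{\ab}=K^{\nr}\cdot K_{\varpi,\infty}$ as the genuinely hard input (correctly flagged as needing either the computation $\Br(K)\cong\Q/\Z$ or a Hasse--Arf-style argument), independence of $\varpi$ via the Galois-equivariant isomorphism of Lubin--Tate groups over the completion of $K^{\nr}$, and reduction of the norm compatibility~(1) to the unramified and totally ramified cases. Taking $\varpi\mapsto\Frob_K$ makes~(2) manifest, as you say, and you are right that a sign must be inserted to match the geometric-Frobenius normalization.

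The one genuine gap is in the uniqueness argument. You claim that the ``quotient'' $\psi=(\Art_K')^{-1}\Art_K$ \emph{kills every uniformizer}, but condition~(2) only forces $\psi(\varpi)$ to land in the image of $I_K$ in $W_K^{\ab}$, not to be trivial. Similarly, the observation that every $u\in\cO_K^\times$ is a norm $N_{K'/K}(u')$ from an unramified extension only shows that $\Art_K|_{\cO_K^\times}$ is determined once $\Art_{K'}|_{\cO_{K'}^\times}$ is --- but the latter is equally unknown, so the argument as written is circular. The standard way to close this (essentially Dwork's uniqueness theorem; see Serre, \emph{Corps Locaux}, ch.\ XIV) is to fix a uniformizer $\varpi$ and a finite abelian $L/K$, and to exhibit an auxiliary finite extension $K'/K$ in which $\varpi$ is a norm and to which one can apply~(1) and~(2) simultaneously to pin down $\Art_K(\varpi)|_L$; uniqueness then follows because $K^\times$ is generated by its uniformizers. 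This is more than a bookkeeping fix, so it is worth spelling out if the uniqueness assertion is to carry weight.
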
The continuous irreducible representations of the group $W_K^{\ab}$ are
just the continuous characters of $W_K$, and local class field theory gives a
simple description of them, as representations of
$K^\times=\GL_1(K)$. The local Langlands correspondence for $\GL_n$
(see Section~\ref{subsec:loclanglands}) is a
kind of $n$-dimensional generalisation of this, giving a description of
certain representations of $\GL_n(K)$ in terms of the $n$-dimensional representations of $W_K$.

\begin{defn}Let $L$ be a field of characteristic $0$.  A
  \emph{representation} of $W_K$ over $L$ is a representation (on a
  finite-dimensional $L$-vector space) which is continuous if $L$ has
  the discrete topology (i.e.\ a representation with open kernel).

 A \emph{Weil--Deligne} representation of $W_K$ on a finite-dimensional
 $L$-vector space $V$ is a pair $(r,N)$ consisting of a representation
 $r:W_K\to\GL(V)$, and an endomorphism $N\in\End(V)$ such that for all
 $\sigma\in
 W_K$,\[r(\sigma)Nr(\sigma)^{-1}=(\#k)^{-v_K(\sigma)}N,\]where
 $v_K:W_K\to\Z$ is determined by $\sigma|_{K^{\nr}}=\Frob_K^{v_K(\sigma)}$.
\end{defn}
\begin{remark}
  \leavevmode\begin{enumerate}
  \item 

  Since $I_K$ is compact and open in $W_K$, if $r$ is a representation
  of $W_K$ then $r(I_K)$ is finite.
\item $N$ is necessarily nilpotent. 
  \end{enumerate}
\end{remark}
\begin{exercise}\label{ex: WD reps}
  \leavevmode\begin{enumerate}
  \item Show that if $(r,V)$ is a representation of $W_K$ and $m\ge 1$
    then the following defines a Weil--Deligne representation
    $\Sp_m(r)$ with underlying vector space $V^m$: we let $W_K$ act
    via \[r|\Art_K^{-1}|^{m-1}_K\oplus
    r|\Art_K^{-1}|_K^{m-2}\oplus\dots\oplus r,\] and let $N$ induce an
    isomorphism from $r|\Art_K^{-1}|_K^{i-1}$ to
    $r|\Art_K^{-1}|_K^i$ for each $i<m-1$, and be $0$ on  $r|\Art_K^{-1}|_K^{m-1}$.
  \item Show that every Weil--Deligne representation $(r,V)$ for which
    $r$ is semisimple is isomorphic to a
    direct sum of representations $\Sp_{m_i}(r_i)$.
  \item Show that if $(r,V,N)$ is a Weil--Deligne representation of
    $W_K$, and $K'/K$ is a finite extension, then $(r|_{W_{K'}},V,N)$
    is a Weil--Deligne representation of $W_{K'}$.
  \item Suppose that $r$ is a representation of $W_K$. Show that if
    $\sigma\in W_K$ then for some positive integer $n$, $r(\sigma^n)$
    is in the centre of $r(W_K)$.
  \item Assume further that $\sigma\notin I_K$. Show that for any
    $\tau\in W_K$ there exists $n\in\Z$ and $m>0$ such that $r(\sigma^n)=r(\tau^m)$.
  \item Show that for a representation $r$ of $W_K$, the following
    conditions are equivalent:
    \begin{enumerate}
    \item $r$ is semisimple.
    \item $r(\sigma)$ is semisimple for all $\sigma\in W_K$.
    \item $r(\sigma)$ is semisimple for some $\sigma\notin I_K$.
    \end{enumerate}
  \item \label{last point} Let $(r,N)$ be a Weil--Deligne representation of $W_K$. Set
    $\tilde{r}(\sigma)=r(\sigma)^\semis$, the semisimplification of
    $r(\sigma)$. Prove that $(\tilde{r},N)$ is also a Weil--Deligne
    representation of $W_K$.

  \end{enumerate}
\end{exercise}
\begin{defn}
  We say that a Weil--Deligne representation $(r,N)$ is
  \emph{Frobenius semisimple} if $r$ is semisimple. With notation as
  in Exercise~\ref{ex: WD reps}~\eqref{last point}, we say that $(\tilde{r},N)$ is the \emph{Frobenius
    semisimplification} of $(r,N)$.
\end{defn}
\begin{defn}
  If $L$ is an algebraic extension of $\Qp$, then we say
  that an element $A\in\GL_n(L)$ is \emph{bounded} if it has
  determinant in $\cO_L^\times$, and characteristic polynomial in $\cO_L[X]$.
\end{defn}
\begin{exercise}
  $A$ is bounded if and only if it stabilises an $\cO_L$-lattice in
  $L^n$.
\end{exercise}
\begin{defn}
  Let $L$ be an algebraic extension of $\Qp$. Then we say
  that $r$ is \emph{bounded} if $r(\sigma)$ is bounded for all
  $\sigma\in W_K$.
\end{defn}
\begin{exercise}
  Show $r$ is bounded if and only if $r(\sigma)$ is bounded for some
  $\sigma\notin I_K$.
\end{exercise}
The reason for all of these definitions is the following theorem,
which in practice gives us a rather concrete classification of the
$p$-adic representations of $G_K$.
\begin{prop}\label{prop: monodromy thm p not l}
  (Grothendieck's monodromy theorem) Suppose that $l\ne p$, that
  $K/\Ql$ is finite, and that $V$ is a finite-dimensional $L$-vector
  space, with $L$ an algebraic extension of $\Qp$. Fix $\varphi\in W_K$ a lift of $\Frob_K$ and a compatible system
 $(\zeta_m)_{(m,l)=1}$ of primitive roots of unity. If $\rho:G_K\to\GL(V)$ is a continuous
 representation then there is a finite extension $K'/K$ and a uniquely
 determined nilpotent $N\in\End(V)$ such that for all $\sigma\in
 I_{K'}$, \[\rho(\sigma)=\exp(Nt_{\zeta,p}(\sigma)).\] For all
 $\sigma\in W_K$, we have
 $\rho(\sigma)N\rho(\sigma)^{-1}=\#k^{-v_K(\sigma)}N$. In fact, we have
 an equivalence of categories $\WD=\WD_{\zeta,\varphi}$ from the category
 of continuous representations of $G_K$ on finite-dimensional
 $L$-vector spaces to the category of bounded Weil--Deligne
 representations on finite-dimensional $L$-vector spaces,
 taking \[\rho\mapsto (V,r,N),\
 r(\tau):=\rho(\tau)\exp(-t_{\zeta,p}(\varphi^{-v_K(\tau)}\tau)N).\] The
 functors $\WD_{\zeta',\varphi'}$ and $\WD_{\zeta,\varphi}$ are naturally isomorphic.
\end{prop}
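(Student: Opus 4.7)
The plan is threefold: first, restrict $\rho$ to a small enough open subgroup of inertia to produce $N$ via the $p$-adic logarithm; second, leverage the conjugation identity for $t_\zeta$ to derive the Frobenius covariance formula and force nilpotency; third, check that the stated functor is a categorical equivalence with the obvious quasi-inverse, and verify naturality in $(\zeta,\varphi)$.

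\textbf{Step 1 (producing $N$).} By Exercise~\ref{ex: existence of galois stable lattice} I may assume $\rho(G_K) \subseteq \GL_n(\cO_L)$. Choose $a$ so that $\exp$ and $\log$ are mutually inverse isomorphisms between $1 + \varpi_L^a M_n(\cO_L)$ and $\varpi_L^a M_n(\cO_L)$. The subgroup $1 + \varpi_L^a M_n(\cO_L)$ is open and pro-$p$, so its $\rho$-preimage contains $I_{K'}$ for some finite \emph{Galois} extension $K'/K$. Since $P_{K'}$ is pro-$l$ and $\rho(I_{K'})$ is pro-$p$ with $l \ne p$, the wild inertia dies, and so $\rho|_{I_{K'}}$ factors through the tame quotient $I_{K'}/P_{K'}$; pro-$p$-ness of the image then forces it to factor through the projection $t_{\zeta,p}$ onto the $\Z_p$-component. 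The resulting continuous homomorphism $\Z_p \to 1 + \varpi_L^a M_n(\cO_L)$ has the form $t \mapsto \exp(tN)$ for a unique $N$.

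\textbf{Step 2 (covariance and nilpotency).} For $\sigma \in W_K$ and $\tau \in I_{K'}$, using $\sigma\tau\sigma^{-1} \in I_{K'}$ (as $K'/K$ is Galois) and the identity $t_\zeta(\sigma\tau\sigma^{-1}) = \varepsilon(\sigma)t_\zeta(\tau)$, one obtains
\[\rho(\sigma)\exp\bigl(t_{\zeta,p}(\tau)N\bigr)\rho(\sigma)^{-1} = \exp\bigl(\varepsilon(\sigma)t_{\zeta,p}(\tau)N\bigr).\]
Applying the uniqueness of $N$ from Step~1 to the conjugated representation gives $\rho(\sigma)N\rho(\sigma)^{-1} = \varepsilon(\sigma)N = (\#k)^{-v_K(\sigma)}N$, where I use that $\varepsilon$ is unramified at $K$ and sends the geometric Frobenius lift $\varphi$ to $(\#k)^{-1}$. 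Specializing to $\sigma = \varphi$, the conjugate matrices $N$ and $(\#k)^{-1}N$ have the same finite set of eigenvalues; this set must be stable under multiplication by $\#k > 1$, so it equals $\{0\}$ and $N$ is nilpotent.

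\textbf{Step 3 (equivalence and naturality).} Set $r(\sigma) := \rho(\sigma)\exp\bigl(-t_{\zeta,p}(\varphi^{-v_K(\sigma)}\sigma)N\bigr)$. Using the covariance formula and that $N$ commutes with $\exp(tN)$, a direct computation shows $r$ is a homomorphism, $r$ has open kernel (since $r|_{I_{K'}} = 1$), $r$ is bounded (since $r(\varphi) = \rho(\varphi) \in \GL_n(\cO_L)$, and by the earlier exercise it suffices to check some $\sigma \notin I_K$), and $(r,N)$ satisfies the Weil--Deligne compatibility. The quasi-inverse is given by the same formula with the sign of the exponent flipped; the only nontrivial check is that the resulting continuous homomorphism $W_K \to \GL(V)$ extends continuously to $G_K$, which uses density of $W_K$ in $G_K$ together with the fact that the bounded element $r(\varphi)$ generates a relatively compact cyclic subgroup of $\GL(V)$ (its reduction modulo $\varpi_L$ has finite order in $\GL_n(k(L))$ and the principal congruence subgroup is pro-$p$), allowing continuous extension of $n \mapsto r(\varphi)^n$ from $\Z$ to $\widehat{\Z}$. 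Naturality in $(\zeta,\varphi)$ is a direct unravelling: if $\zeta' = \zeta^u$ and $\varphi' = \varphi\tau_0$ with $\tau_0 \in I_K$, then $t_{\zeta',p} = u_p^{-1}t_{\zeta,p}$ and the resulting $N'$ equals $u_p N$, so the identity on $V$ furnishes the natural isomorphism between the two functors.

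The main obstacle is the continuous extension from $W_K$ to $G_K$ in Step~3: the Weil topology on $W_K$ is strictly finer than the $G_K$-subspace topology, so continuity in the former is automatic but continuity in the latter requires the boundedness hypothesis, via the observation above that bounded elements of $\GL_n(L)$ lie in compact subgroups.
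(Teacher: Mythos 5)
Your construction of $N$ takes a genuinely different route from the paper's. The paper's exercise first uses the Frobenius-conjugation relation to show that the eigenvalues of $\rho$ on a topological generator of tame inertia are $p$-power roots of unity, passes to a further finite extension $K''/K'$ making $\rho(I_{K''})$ unipotent, and then takes the (polynomial) logarithm of a unipotent operator — nilpotency of $N$ is then automatic. You instead shrink to a congruence subgroup $1+\varpi_L^a M_n(\cO_L)$ on which the $p$-adic $\exp$ and $\log$ are inverse bijections, read off $N$ directly (with no a priori nilpotency), and only afterwards deduce nilpotency from the covariance identity $\rho(\varphi)N\rho(\varphi)^{-1} = (\#k)^{-1}N$ by the finite-eigenvalue-set argument. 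Both approaches use an eigenvalue argument against Frobenius conjugation, but at different stages and applied to different objects ($\rho(\sigma)$ versus $N$). Yours is slightly more economical in that it avoids the intermediate extension $K''$, at the cost of invoking the convergent $p$-adic $\exp$/$\log$ rather than a purely polynomial one; the paper's version reduces everything to algebra over $\cO_L$. Your explanation of where boundedness enters — the extension of the quasi-inverse from $W_K$ to $G_K$ requires the Frobenius image to generate a relatively compact subgroup — is exactly the point the paper's exercise part (4) is asking for, though you should note that you need $L$ (or at least the field generated by the matrix entries) finite over $\Q_p$ before speaking of $\varpi_L$ and of $\GL_n(\cO_L)$ being profinite; the reduction to finite $L$ via the Baire category fact is implicit but worth flagging.

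There is, however, a real error in your final sentence. You claim that when $\zeta' = \zeta^u$ the identity map on $V$ furnishes the natural isomorphism $\WD_{\zeta,\varphi}\Rightarrow\WD_{\zeta',\varphi'}$. But with $\varphi$ fixed and $\zeta$ changed, your own calculation gives $r'=r$ and $N'=u_pN$; for the identity to be a morphism of Weil--Deligne representations it would have to intertwine $N$ with $N'=u_pN$, i.e.\ satisfy $N = u_pN$, which fails whenever $N\neq 0$ and $u_p\neq 1$. Similarly a change of Frobenius lift $\varphi'=\varphi\tau_0$ with $\tau_0\in I_K$ changes $r$ by a unipotent factor depending on $v_K(\tau)$, so again the identity does not intertwine. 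The functors are indeed naturally isomorphic, but the natural isomorphism is a more subtle unipotent (and scaling) operator built from the filtration on $V$ that $N$ determines, not the identity. (The paper leaves this point to the reader as well; I mention it only because you stated something false, not because your overall line of argument is missing an essential idea.)
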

\begin{remark}
  Note that since $N$ is nilpotent, the exponential here is just a
  polynomial - there are no convergence issues!
\end{remark}
The proof is contained in the following exercise.
\begin{exercise}
  \leavevmode\begin{enumerate}
  \item By Exercise~\ref{ex: existence of galois stable lattice} there
    is a $G_K$-stable  $\cO_L$-lattice
    $\Lambda\subset V$. Show that  if $G_{K'}$ is the kernel of the
    induced map $G_K\to\Aut(\Lambda/p\Lambda)$, then $K'/K$ is a
    finite extension, and $\rho(G_{K'})$ is pro-$p$. Show that
    $\rho|_{I_{K'}}$ factors through $t_{\zeta,p}:I_{K'}\to\Zp$.
  \item Choose $\sigma\in I_{K'}$ such that $t_{\zeta,p}(\sigma)$ topologically
    generates $t_{\zeta,p}(I_{K'})$. By considering the action of
    conjugation by $\varphi$, show that the eigenvalues of $\rho(\sigma)$
    are all $p$-power roots of unity. Hence show that one may make a
    further finite extension $K''/K'$ such that the elements of
    $\rho(I_{K''})$ are all unipotent.
  \item Deduce the existence of a unique nilpotent $N\in\End(V)$ such that for all $\sigma\in
 I_{K''}$, $\rho(\sigma)=\exp(Nt_{\zeta,p}(\sigma)).$ [Hint: use the
 logarithm map (why are there no convergence issues?).]
\item Complete the proof of the proposition, by showing that $(r,N)$
  is a Weil--Deligne representation. Where does the condition that $r$
  is bounded come in?
  \end{enumerate}
\end{exercise}
One significant advantage of Weil--Deligne representations over Galois
representations is that there are no subtle topological issues: the
topology on the Weil--Deligne representation is the discrete
topology. This allows one to describe representations in a way that is
``independent of $L$'', and is necessary to make sense of the notion
of a compatible system of Galois representations (or at least to make
sense of it at places at which the Galois representation is ramified);
see Definition~\ref{defn:strictly compatible system} below.
\subsection{Local representations with $p=l$: $p$-adic Hodge
  theory}\label{subsec: p adic HT}The case $l=p$ is far more complicated than the case $l\ne
p$, largely because wild inertia can act in a highly nontrivial
fashion, so there is no simple analogue of Grothendieck's monodromy
theorem. (There is still an analogue, though, it's just much harder to
state and prove, and doesn't apply to all $p$-adic Galois representations.) The study of representations $G_K\to\GL_n(\Qpbar)$
with $K/\Qp$ finite is part of what is called \emph{$p$-adic Hodge theory}, a subject
initially developed by Fontaine in the 1980s. 
For an introduction to the part of $p$-adic Hodge theory concerned
with Galois representations, the reader could consult
\cite{MR2023292}. There is a lot more to $p$-adic Hodge theory than
the study of Galois representations, and an excellent overview of some
recent developments in the more geometric part of the theory can be
found in~\cite{bhatt2021algebraic}.
We will content ourselves with some
terminology, some definitions, and some remarks intended to give
intuition and motivation.

Fix $K/\Qp$ finite. In some sense, ``most'' $p$-adic Galois representations
$G_K\to\GL_n(\Qpbar)$ will not be relevant for us, because they do not
arise in geometry, or in the Galois representations associated to
automorphic representations. Instead, there is a hierarchy of classes
of
representations \[\{\text{crystalline}\}\subsetneq\{\text{semistable}\}\subsetneq\{\text{de
  Rham}\}\subsetneq\{\text{Hodge--Tate}\}.\] For any of these classes $X$,
we say that $\rho$ is \emph{potentially} $X$ if there is a finite
extension $K'/K$ such that $\rho|_{G_{K'}}$ is $X$. A representation
is potentially de Rham if and only if it is de Rham, and potentially
Hodge--Tate if and only if it is Hodge--Tate; the corresponding
statements for crystalline and semistable representations are false,
as we will see concretely in the case $n=1$ later on. The $p$-adic
analogue of Grothendieck's monodromy theorem is the following deep
theorem of Berger.
\begin{thm}
  (The $p$-adic monodromy theorem) A representation is de Rham if and
  only if it is potentially semistable.
\end{thm}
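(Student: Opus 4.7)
The easy direction, that potentially semistable representations are de Rham, is essentially formal. The plan is to note that $\Bst \subset \BdR$ (once one chooses a compatible embedding, e.g.\ by picking a branch of the $p$-adic logarithm), so semistable representations are automatically de Rham. It then suffices to check that the de Rham property descends along finite extensions: if $K'/K$ is finite Galois and $V|_{G_{K'}}$ is de Rham, the $\Gal(K'/K)$-invariants of $\DdR^{K'}(V) = (\BdR \otimes_{\Qp} V)^{G_{K'}}$ recover $\DdR^{K}(V)$, and faithful flatness of $K' \otimes_K -$ lets one compare dimensions to conclude $\dim_K \DdR^K(V) = n$. (For non-Galois $K'/K$ one first enlarges to a Galois closure.) Combined, this gives potentially semistable $\Rightarrow$ potentially de Rham $\Rightarrow$ de Rham.

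The hard direction, de Rham $\Rightarrow$ potentially semistable, is the content of Berger's theorem, and my plan would follow his strategy via $(\varphi,\Gamma)$-modules. The first step is to translate $V$ into a $(\varphi,\Gamma)$-module $D(V)$ over Fontaine's ring $\mathbf{B}_K$; this is a faithful exact tensor functor, thanks to Fontaine. Next, invoke the Cherbonnier--Colmez theorem that every such $(\varphi,\Gamma)$-module is overconvergent, so that $V$ actually comes from a $(\varphi,\Gamma)$-module $D^\dagger(V)$ over the ring $\mathbf{B}^\dagger_K$ of overconvergent power series. Scalar-extending to the Robba ring yields a $(\varphi,\Gamma)$-module $D^\dagger_{\mathrm{rig}}(V)$, which by Berger's fundamental construction can be viewed as a $p$-adic differential equation with Frobenius structure (the $\Gamma$-action produces a connection $\nabla$, and $\varphi$ commutes with it in the appropriate sense).

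The key translation is then: $V$ is de Rham if and only if $D^\dagger_{\mathrm{rig}}(V)$ is what Berger calls \emph{de Rham} (equivalently, its localised module $N_{\mathrm{rig}}(V) = D^\dagger_{\mathrm{rig}}(V)[1/t]$, equipped with its canonical filtration coming from the embedding into $\BdR$-cohomology, satisfies a nondegeneracy condition), and $V$ is semistable (resp.\ crystalline) if and only if $D^\dagger_{\mathrm{rig}}(V)$ is unipotent (resp.\ trivial) after inverting $t$. At this point the problem becomes one purely about $p$-adic differential equations with Frobenius structure over the Robba ring.

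The main obstacle, and the heart of the proof, is the $p$-adic local monodromy theorem of Andr\'e--Kedlaya--Mebkhout (independently proved), which asserts that any such differential equation with Frobenius structure becomes quasi-unipotent after pullback along a finite extension of $K$. I would cite this as a black box rather than attempt to reprove it; its proof uses slope filtrations and the full theory of $\varphi$-modules over the Robba ring and is by far the deepest ingredient. Granted this, one finds a finite extension $K'/K$ after which $D^\dagger_{\mathrm{rig}}(V|_{G_{K'}})$ is unipotent, hence $V|_{G_{K'}}$ is semistable by the translation above, which is exactly what ``potentially semistable'' means.
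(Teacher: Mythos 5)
The paper states this theorem without proof, attributing it to Berger and leaving it as a black box in the exposition, so there is no in-text argument to compare against. Your sketch is a faithful high-level outline of Berger's actual proof. The easy direction via the inclusion $\Bst \hookrightarrow \BdR$ (depending on a choice of $\log p$) and Galois descent of the de Rham property is correct and standard. The hard direction correctly identifies the chain of ideas: Fontaine's $(\varphi,\Gamma)$-modules, the Cherbonnier--Colmez overconvergence theorem, base change to the Robba ring, Berger's dictionary between de Rham/semistable/crystalline on the Galois side and properties of the resulting $p$-adic differential equation with Frobenius structure, and finally the Andr\'e--Kedlaya--Mebkhout $p$-adic local monodromy theorem as the decisive input.

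One caveat worth flagging: the ``translation'' you invoke --- that $V$ is de Rham iff $D^\dagger_{\mathrm{rig}}(V)$ carries a canonical differential module $N_{\dR}(V)$, and $V$ is semistable iff that module is unipotent --- is itself a substantial portion of Berger's paper, established by explicit comparisons with $\DdR$ and $\Dst$ via auxiliary period rings (the rings $\widetilde{\mathbf{B}}^\dagger_{\mathrm{rig}}$ and $\mathbf{B}^\dagger_{\log}$), and does not fall out of the $(\varphi,\Gamma)$-module formalism for free. You treat the $p$-adic local monodromy theorem as a black box; these comparison theorems should be treated with the same explicitness. Since you are upfront that this is a plan rather than a full proof, I read this as a signposting issue rather than a genuine gap.
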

The notion of a de Rham representation is designed to capture the
representations arising in geometry; it does so by the following
result of Tsuji (building on the work of many people).
\begin{thm}\label{thm: Tsuji}
  If $X/K$ is a smooth projective variety, then each
  $H^i_\et(X\times_K \Kbar,\Qpbar)$ is a de Rham representation.
\end{thm}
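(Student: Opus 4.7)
The plan is to prove the stronger statement that after a finite extension $K'/K$, the restriction $H^i_\et(X\times_K \Kbar,\Qp)|_{G_{K'}}$ is semistable, and then invoke the $p$-adic monodromy theorem quoted above to conclude that $H^i_\et(X\times_K \Kbar,\Qp)$ is de Rham (being de Rham is insensitive to finite base change). The semistability will come from a comparison isomorphism
\[
H^i_\et(X\times_K \Kbar,\Qp)\otimes_{\Qp}\Bst \;\cong\; H^i_{\log\text{-}\crys}(\cX_0/W)\otimes_{K_0}\Bst
\]
for a suitable semistable integral model $\cX/\cO_{K'}$, compatibly with Frobenius, monodromy, filtration, and Galois action. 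Unwinding the filtered $(\varphi,N,G_{K'})$-module structure then identifies $\Dst$ of the étale cohomology with the log-crystalline cohomology, which is of the correct dimension, so the representation is semistable.

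The first step is the case of good reduction, where $\cX/\cO_K$ is smooth and proper. Here one uses the crystalline comparison theorem of Fontaine--Messing, Faltings, and Niziol (or Tsuji, or Beilinson, or Bhatt--Morrow--Scholze) to give
\[
H^i_\et(X\times_K \Kbar,\Qp)\otimes_{\Qp}\Bcris \;\cong\; H^i_{\crys}(\cX_0/W)\otimes_{W[1/p]}\Bcris,
\]
respecting Frobenius, filtration (after identifying $H^i_{\crys}(\cX_0/W)\otimes_W K$ with $H^i_\dR(X/K)$ via the Berthelot--Ogus isomorphism), and $G_K$-action. This shows $H^i_\et(X_\Kbar,\Qp)$ is crystalline, in particular de Rham.

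The second step extends this to the semistable reduction case, where $\cX/\cO_K$ is a proper semistable model (strictly semistable, say). Here the relevant result is Tsuji's $C_{\st}$-conjecture (now theorem), which upgrades the crystalline comparison to a filtered $(\varphi,N)$-equivariant isomorphism with $\Bst$ replacing $\Bcris$, and log-crystalline cohomology replacing crystalline cohomology. This yields that $H^i_\et(X_\Kbar,\Qp)$ is semistable.

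For the general case, no smooth proper $X/K$ need admit a semistable or even any good model over $\cO_K$; this is where the main technical obstacle lies, and it is overcome by de Jong's alteration theorem: after replacing $K$ by a finite extension $K'/K$, there exists a proper surjective generically finite morphism $X'\to X_{K'}$ with $X'$ smooth proper over $K'$ admitting a strictly semistable model $\cX'/\cO_{K'}$. The pullback map realises $H^i_\et(X_\Kbar,\Qp)$ as a $G_{K'}$-stable direct summand of $H^i_\et(X'_{\Kbar},\Qp)$ (using a trace/transfer argument together with the fact that $p$-adic étale cohomology satisfies the appropriate functorialities modulo torsion). Since the larger space is semistable by the previous step, so is the summand, hence $H^i_\et(X_\Kbar,\Qp)|_{G_{K'}}$ is semistable and $H^i_\et(X_\Kbar,\Qp)$ itself is potentially semistable, hence de Rham by the $p$-adic monodromy theorem. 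The deepest input is the semistable comparison isomorphism of Tsuji; the main structural obstacle, and the reason the argument is not just a formal consequence of the good reduction case, is the existence of semistable alterations, which requires de Jong's theorem.
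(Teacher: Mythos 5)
The paper does not supply a proof of this statement: it is cited as a theorem of Tsuji, with a pointer to Section 2.5 of \cite{MR2023292}, so there is no internal argument to compare against. Your sketch is a correct outline of the standard proof. In particular, the reduction of the general case to the strictly semistable one is exactly the right move: de Jong's semistable alteration theorem produces, after a finite extension $K'/K$, a proper surjective generically finite $f:X'\to X_{K'}$ with $X'$ smooth proper over $K'$ admitting a strictly semistable model over $\cO_{K'}$; since $X$ and $X'$ are smooth and proper of the same dimension, Poincar\'e duality and the projection formula give a $G_{K'}$-equivariant transfer with $f_{*}f^{*}=\deg f$, invertible in $\Qp$, so $H^i_\et(X_{\Kbar},\Qp)|_{G_{K'}}$ is a direct summand of the semistable $G_{K'}$-representation $H^i_\et(X'_{\Kbar},\Qp)$, and semistability passes to direct summands. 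One small point: the very last step, from potentially semistable to de Rham, uses only the elementary direction of the $p$-adic monodromy theorem, namely the inclusion $\Bst\subset\BdR$ together with Galois descent for $\DdR$ along a finite extension (this is what your parenthetical about insensitivity to finite base change is really invoking); Berger's theorem is the much deeper converse implication and is not needed here.
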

Similarly, the definitions of crystalline and semistable are designed
to capture the notions of good and semistable reduction, and one has
(again as a consequence of Tsuji's work; see Section 2.5
of~\cite{MR2023292} and the references therein)
\begin{thm}If $X/K$ is a smooth projective variety with good
  (respectively, semistable) reduction, then each
  $H^i_\et(X\times_K \Kbar,\Qpbar)$ is a crystalline (respectively,
  semistable) representation.
  \end{thm}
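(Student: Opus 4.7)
The plan is to reduce the statement to a comparison isomorphism between the $p$-adic étale cohomology of the generic fiber and a suitable $p$-adic cohomology of the special fiber of an integral model of $X$, tensored with an appropriate period ring. By definition, a $\Qp$-representation $V$ of $G_K$ is crystalline (resp.\ semistable) precisely when $\Dcris(V) := (V \otimes_{\Qp} \Bcris)^{G_K}$ (resp.\ $\Dst(V) := (V \otimes_{\Qp} \Bst)^{G_K}$) attains its maximum possible $K_0$-dimension, namely $\dim_{\Qp} V$. So to prove that $V := H^i_\et(X \times_K \Kbar, \Qp)$ is crystalline (resp.\ semistable), it will suffice to produce a natural $\Bcris$-linear (resp.\ $\Bst$-linear), $G_K$-equivariant isomorphism between $V \otimes_{\Qp} \Bcris$ (resp.\ $V \otimes_{\Qp} \Bst$) and a ``constant'' period-ring module arising from integral data on $X$, compatible with Frobenius, filtration, and (in the semistable case) monodromy.

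Concretely, in the good-reduction case I would fix a smooth proper model $\cX/\cO_K$ with special fiber $X_s/k$ and form the crystalline cohomology $H^i_{\crys}(X_s/W(k))[1/p]$, a finite-dimensional $K_0$-vector space equipped with a Frobenius $\varphi$ and a filtration after base change to $K$ (via the de Rham comparison for the generic fiber). The goal is to construct a natural isomorphism
\[
H^i_\et(X \times_K \Kbar, \Qp) \otimes_{\Qp} \Bcris \;\cong\; H^i_{\crys}(X_s/W(k))[1/p] \otimes_{K_0} \Bcris
\]
respecting all of these structures. Taking $G_K$-invariants and using that $\Bcris^{G_K} = K_0$ then yields $\Dcris(V) \cong H^i_{\crys}(X_s/W(k))[1/p]$ of the expected $K_0$-dimension, so that $V$ is crystalline. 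The semistable case is parallel, but with $\cX$ replaced by a proper regular semistable model, $H_\crys$ replaced by log-crystalline (Hyodo--Kato) cohomology (which acquires a monodromy operator $N$), and $\Bcris$ replaced by $\Bst$.

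The main obstacle is the construction of this comparison isomorphism --- this is essentially Fontaine's crystalline conjecture $C_{\crys}$ (resp.\ the semistable conjecture $C_{\st}$), and its proof is very far from formal. Tsuji's approach proceeds via syntomic cohomology: one builds the comparison map by relating sheaves of syntomic cohomology on a suitable site to $p$-adic nearby cycles, and then invokes Kato's comparison between syntomic and log-crystalline cohomology to identify the target; along the way one must treat delicate issues of boundary terms and ramification. Independent proofs have been given by Faltings via almost mathematics and almost étale extensions, and more recently by Bhatt--Morrow--Scholze via prismatic cohomology. In every known approach the real work lies in constructing the comparison map and verifying its compatibility with Galois action, Frobenius, Hodge filtration, and monodromy; once that is in place, the theorem as stated is an immediate consequence.
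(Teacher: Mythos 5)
Your proposal is correct, and it is essentially the same approach as the paper, which simply records the statement as a consequence of the $C_{\crys}$ and $C_{\st}$ comparison theorems (citing Tsuji, via Brinon--Conrad). You have correctly identified that the whole content lies in constructing the comparison isomorphism between $H^i_{\et} \otimes_{\Qp} \Bcris$ (resp.\ $\otimes_{\Qp}\Bst$) and crystalline (resp.\ Hyodo--Kato log-crystalline) cohomology of the special fiber tensored with the period ring, compatibly with Frobenius, filtration, Galois action and (in the semistable case) monodromy; once such an isomorphism is in place the dimension count on $\Dcris$ or $\Dst$ finishes the argument. One minor point: the paper states the theorem with $\Qpbar$ coefficients, so strictly you should add a line observing that $H^i_\et(X\times_K\Kbar,\Qpbar) \cong H^i_\et(X\times_K\Kbar,\Qp)\otimes_{\Qp}\Qpbar$ and that a $\Qpbar$-representation is crystalline (resp.\ semistable) iff the underlying $\Qp$-representation is, so the reduction to $\Qp$-coefficients is harmless.
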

Thus the $p$-adic monodromy theorem can be thought of as a
Galois-theoretic incarnation of Grothendieck's semistable reduction
theorem. 

The case that $n=1$ is particularly simple, as we now explain. In this
case, every semistable character is crystalline, and the de Rham
characters are exactly the Hodge--Tate characters. In the case
$K=\Qp$, these are precisely the characters whose restrictions to
inertia are of the form
$\psi\varepsilon_p^m$ where $\psi$ has finite order and $m\in\Z$,
while the crystalline characters are those for which $\psi$ is
trivial. A similar description exists for general $K$, with
$\varepsilon_p^m$ replaced by a product of so-called \emph{Lubin--Tate
  characters}. 
\begin{fact}\label{fact: de Rham crystalline char loc alg}
 A character
  $\chi:G_K\to\Qpbartimes$ is de Rham if and only if there is an open
  subgroup $U$ of $K^\times$ and an integer $n_\tau$ for each
  $\tau:K\into\Qpbar$ such that
  $(\chi\circ\Art_K)(\alpha)=\prod_\tau\tau(\alpha)^{-n_\tau}$ for
  each $\alpha\in U$, and it is crystalline if and only if we can take
  $U=\cO_K^\times$.   (See Exercise 6.4.3 of~\cite{brinonconrad}.)
\end{fact}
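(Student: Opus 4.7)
The plan is to prove both directions by reducing, via twists by appropriate ``Lubin--Tate'' characters, to the case of an unramified character. The key input is the existence, for each embedding $\tau\colon K\into\Qpbar$, of a crystalline character $\chi_\tau\colon G_K\to\Qpbartimes$ whose Hodge--Tate weight is $-1$ at $\tau$ and $0$ at the other embeddings, and which satisfies $(\chi_\tau\circ\Art_K)(\alpha)=\tau(\alpha)$ for all $\alpha\in\cO_K^\times$; for $K=\Qp$ this is the cyclotomic character $\varepsilon_p$, and in general it is provided by Lubin--Tate formal groups. The second essential input, which I expect to be the main obstacle of the proof, is the fact that a crystalline character of $G_K$ with all Hodge--Tate weights equal to zero must be unramified: using Fontaine's functor $D_{\cris}$, such a character corresponds to a one-dimensional filtered $\varphi$-module with trivial filtration, and the classification of these matches it with the unramified character sending geometric Frobenius to the Frobenius eigenvalue.

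For the ``if'' direction, suppose $(\chi\circ\Art_K)(\alpha)=\prod_\tau\tau(\alpha)^{-n_\tau}$ for $\alpha$ in an open subgroup $U\subset K^\times$. Form the twist $\psi:=\chi\cdot\prod_\tau\chi_\tau^{n_\tau}$, so that $(\psi\circ\Art_K)|_{U\cap\cO_K^\times}=1$. If $U=\cO_K^\times$, this means $\psi$ is trivial on $\Art_K(\cO_K^\times)$; since the image of $\Art_K$ restricted to $\cO_K^\times$ is the inertia subgroup of $W_K^{\mathrm{ab}}$, the character $\psi$ is unramified, hence crystalline. Because products and inverses of crystalline characters are crystalline, $\chi=\psi\cdot\prod_\tau\chi_\tau^{-n_\tau}$ is crystalline. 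For a general open $U$, choose a finite extension $K'/K$ with $N_{K'/K}(\cO_{K'}^\times)\subset U$; then the compatibility $\Art_{K'}=\Art_K\circ N_{K'/K}$ forces $\psi|_{G_{K'}}$ to be unramified, so $\chi|_{G_{K'}}$ is crystalline and $\chi$ is potentially crystalline, hence de Rham.

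For the ``only if'' direction, suppose $\chi$ is de Rham with Hodge--Tate weights $h_\tau$. By the $p$-adic monodromy theorem, $\chi|_{G_{K'}}$ is semistable for some finite $K'/K$; but a one-dimensional space admits no nonzero nilpotent endomorphism, so the monodromy operator vanishes and $\chi|_{G_{K'}}$ is in fact crystalline (and we may take $K'=K$ if $\chi$ itself is crystalline). Setting $n_\tau:=h_\tau$, the twist $\psi:=\chi\cdot\prod_\tau\chi_\tau^{n_\tau}$ is crystalline on $G_{K'}$ with all Hodge--Tate weights equal to zero, since $\chi_\tau^{n_\tau}$ contributes Hodge--Tate weight $-n_\tau$ at $\tau$. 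Invoking the technical claim above, $\psi|_{G_{K'}}$ is unramified; translating through local class field theory, $\psi\circ\Art_K$ is trivial on $N_{K'/K}(\cO_{K'}^\times)$, which is an open subgroup of $\cO_K^\times$. Rearranging the identity $(\psi\circ\Art_K)(\alpha)=(\chi\circ\Art_K)(\alpha)\cdot\prod_\tau\tau(\alpha)^{n_\tau}$ then yields $(\chi\circ\Art_K)(\alpha)=\prod_\tau\tau(\alpha)^{-n_\tau}$ for $\alpha\in N_{K'/K}(\cO_{K'}^\times)$. In the crystalline case with $K'=K$ this open subgroup is all of $\cO_K^\times$, completing the proof.
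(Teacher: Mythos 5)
The paper does not give its own proof of this statement---it is recorded as a Fact with the proof deferred to Exercise 6.4.3 of \cite{brinonconrad}---so there is no in-paper argument to compare against. Your proof is correct and is essentially the standard one: the two inputs you isolate (the existence of the Lubin--Tate characters $\chi_\tau$ with $\HT_\tau(\chi_\tau)=-1$, trivial weights at the other embeddings, and $(\chi_\tau\circ\Art_K)|_{\cO_K^\times}=\tau$; and the fact that a crystalline character with all Hodge--Tate weights zero must be unramified) are exactly the right ones, and the twist by $\prod_\tau\chi_\tau^{n_\tau}$ cleanly reduces both directions to the unramified case. Two small places deserve an extra sentence if this were written in full. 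First, in the ``if'' direction for general $U$, one should observe that $U\cap\cO_K^\times$ is open and of finite index in $\cO_K^\times$, so local class field theory supplies a finite abelian $K'/K$ with $N_{K'/K}(\cO_{K'}^\times)\subseteq U\cap\cO_K^\times$; your use of $\Art_{K'}=\Art_K\circ N_{K'/K}$ is then exactly right. Second, the key lemma (crystalline with all Hodge--Tate weights zero $\Rightarrow$ unramified) is correctly attacked via $D_{\cris}$, but one should keep track of coefficients: the character lands in some finite $L/\Qp$, so $D_{\cris}$ is a rank-one module over $K_0\otimes_{\Qp}L$ with trivial filtration, and one then invokes the Colmez--Fontaine equivalence to identify it with $D_{\cris}$ of the unramified character having the same Frobenius eigenvalues. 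Neither point is a gap in the argument; they are places where the write-up asks for a line of care.
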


As soon as $n>1$, there are non-crystalline semistable
representations, and non-de Rham Hodge--Tate representations. A useful
heuristic when comparing to the $l\ne p$ case is that crystalline
representations correspond to unramified representations, semistable
representations correspond to representations for which inertia acts
unipotently, and de Rham representations correspond to all
representations.

Suppose that $\rho:G_K\to\GL_n(\Qpbar)$ is a Hodge--Tate
representation. Then for each $\tau:K\into\Qpbar$ there is a multiset
of \emph{$\tau$-labeled Hodge--Tate weights} (defined for example in
the notation section of~\cite{blggt}, where they are called
``Hodge--Tate numbers'') $\HT_\tau(\rho)$
associated to $\rho$; this is a multiset of integers, and in the case
of a de Rham character $\chi$ as above,
$\HT_\tau(\chi)=n_\tau$. In particular,
the $p$-adic cyclotomic character $\varepsilon_p$ has all Hodge--Tate
weights equal to $-1$. If $K'/K$ is a finite extension, and
$\tau':K'\into\Qpbar$ extends $\tau:K\into\Qpbar$, then
$\HT_{\tau'}(\rho|_{G_{K'}})=\HT_\tau(\rho)$.

If furthermore $\rho$ is potentially semistable (equivalently, de
Rham) then a construction of Fontaine associates a Weil--Deligne
representation $\WD(\rho)=(r,N)$ of $W_K$ to $\rho$. If $K'/K$ is a
finite extension, then $\WD(\rho|_{G_{K'}})=(r|_{W_{K'}},N)$. It is
  known that $\rho$ is semistable if and only if $r$ is unramified,
  and that $\rho$ is crystalline if and only if $r$ is unramified and
  $N=0$. Thus $\rho$ is potentially crystalline if and only $N=0$.

\subsection{Number fields}We now consider the case that $K$ is a
number field (that is, a finite extension of $\Q$). If $v$ is a finite
place of $K$, we let $K_v$ denote the completion of $K$ at $v$. If
$K'/K$ is a finite Galois extension, then $\Gal(K'/K)$ transitively
permutes the places of $K'$ above $v$; if we choose one such place
$w$, then we define the \emph{decomposition
  group} \[\Gal(K'/K)_w:=\{\sigma\in\Gal(K'/K)|w\sigma=w\}.\] Then we
have a natural isomorphism $\Gal(K'/K)_w\isoto\Gal(K'_w/K_v)$, and
since $\Gal(K'/K)_{w\sigma}=\sigma^{-1}\Gal(K'/K)_w\sigma$, we see
that the definition extends to general algebraic extensions, and in
particular we have an embedding $G_{K_v}\into G_K$ which is well-defined up
to conjugacy (alternatively, up to a choice of embedding
$\Kbar\into\Kbar_v$). (Note that you need to be slightly careful with taking completions in the case
that $K'/K$ is infinite, as then the extension $K'_w/K_v$ need not be algebraic;
we can for example define $\Gal(K'_w/K_v)$ to be the group of continuous
automorphisms of $K'_w$ which fix $K_v$ pointwise.)

If $K'/K$ is Galois and unramified at $v$, and $w$ is a place
of $K'$ lying over $v$, then we
define \[\Frob_w:=\Frob_{K_v}\in\Gal(K'_w/K_v)\isoto\Gal(K'/K)_w\into\Gal(K'/K).\]
We have $\Frob_{w\sigma}=\sigma^{-1}\Frob_w\sigma$, and thus a
well-defined conjugacy class $[\Frob_v]=\{\Frob_w\}_{w|v}$ in $\Gal(K'/K)$.
\begin{fact}
  (Chebotarev density theorem) If $K'/K$ is a Galois extension which
  is unramified outside of a finite set $S$ of places of $K$, then the
  union of the conjugacy classes $[\Frob_v]$, $v\notin S$ is dense in $\Gal(K'/K)$.
\end{fact}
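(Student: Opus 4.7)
The plan is to reduce to the case of a finite Galois extension, and then apply the analytic theory of Artin $L$-functions. I would exploit the profinite structure of $\Gal(K'/K)$: its open subgroups are the $\Gal(K'/L)$ for $L \subseteq K'$ a finite subextension of $K$, and a conjugation-invariant subset of a profinite group is dense if and only if, for every open normal subgroup $N$, its image in the (finite) quotient equals the whole quotient. Taking $N = \Gal(K'/L)$ for $L/K$ finite Galois (so $L/K$ is unramified outside $S$ by hypothesis), the projection sends $[\Frob_v]$ to the Frobenius conjugacy class of $v$ in $\Gal(L/K)$. Thus it suffices to prove: for every finite Galois $L/K$ unramified outside a finite set $S$, and every conjugacy class $C \subseteq G := \Gal(L/K)$, there exists $v \notin S$ with $\Frob_v \in C$.

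For the finite case, I would attach to each finite-dimensional complex character $\chi$ of $G$ the Artin $L$-function
\[
L(s,\chi) \;=\; \prod_{v \notin S} \det\!\bigl(1 - \chi(\Frob_v)\, Nv^{-s}\bigr)^{-1},
\]
convergent for $\RE(s) > 1$ (omitting Euler factors at the ramified places for brevity). Two analytic inputs are required: $L(s,\chi)$ continues meromorphically to a neighborhood of $s = 1$ with a pole there of order equal to the multiplicity of the trivial character in $\chi$; and $L(1,\chi) \neq 0$ whenever $\chi$ is nontrivial irreducible. Both follow from Brauer's induction theorem, which writes $\chi = \sum_i n_i \Ind_{H_i}^G \psi_i$ with $H_i$ cyclic and $\psi_i$ a character of $H_i$: class field theory (Artin reciprocity, as recalled in the previous section) identifies each $L(s, \Ind_{H_i}^G \psi_i)$ with a Hecke $L$-function, for which the required analytic properties are classical. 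The non-vanishing at $s=1$ for general irreducible $\chi$ is then obtained via the factorization $\zeta_L(s) = \prod_\chi L(s,\chi)^{\dim \chi}$ together with the fact that $\zeta_L(s)$ has at worst a simple pole at $s=1$.

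Given $\sigma \in G$ with conjugacy class $C$, character orthogonality expresses the indicator as $1_C(g) = (|C|/|G|) \sum_\chi \overline{\chi(\sigma)}\, \chi(g)$. Taking logarithms of the Euler products and summing yields
\[
\sum_{\substack{v \notin S \\ \Frob_v \in C}} Nv^{-s} \;=\; \frac{|C|}{|G|} \log \zeta_K(s) \;+\; O(1)
\]
as $s \to 1^+$, where the $O(1)$ absorbs the contributions of nontrivial $\chi$ (bounded thanks to the previous paragraph) together with the higher prime-power terms (convergent for $\RE(s) > 1/2$). Since $\zeta_K(s)$ has a simple pole at $s=1$, the left-hand side diverges, so infinitely many $v \notin S$ have $\Frob_v \in C$; in particular $C$ meets the set of Frobenius conjugacy classes. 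The hardest step is the non-vanishing $L(1,\chi) \neq 0$ for nontrivial irreducible $\chi$: Brauer induction only expresses $\chi$ as a \emph{virtual} sum of monomial characters, so $L(s,\chi)$ is a priori a quotient of products of Hecke $L$-functions and could in principle acquire a zero at $s=1$; the comparison with $\zeta_L$ is what rules this out.
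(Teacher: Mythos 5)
The paper states the Chebotarev density theorem as a \emph{Fact} and offers neither a proof nor a citation, so there is no proof in the paper to compare against. Your argument is the standard analytic proof via Artin $L$-functions and Brauer induction, and it is correct in outline. The reduction to finite Galois subextensions is right: open normal subgroups of $\Gal(K'/K)$ correspond to finite Galois $L/K$ inside $K'$, density is detected on finite quotients, and the hypothesis "unramified outside $S$" passes to $L/K$. The orthogonality and $\log\zeta_K$ divergence argument at the end is the usual one, and omitting the ramified Euler factors only changes things by $O(1)$ near $s=1$.

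One refinement to your final paragraph: the worry that the \emph{virtual} Brauer decomposition could leave $L(s,\chi)$ with a zero at $s=1$ is actually resolved inside the Brauer step, before any comparison with $\zeta_L$. Writing $\chi = \sum_i n_i \Ind_{H_i}^G \psi_i$ with $H_i$ cyclic, inductivity and additivity of Artin $L$-functions give $L(s,\chi)=\prod_i L(s,\psi_i)^{n_i}$, a product of Hecke $L$-functions of the fields $L^{H_i}$. For nontrivial $\psi_i$ these are holomorphic and nonvanishing at $s=1$ (Hecke); for trivial $\psi_i$ they are Dedekind zeta functions with simple poles. Frobenius reciprocity forces $\sum_{\psi_i=1} n_i = \langle \chi, 1\rangle = 0$ when $\chi$ is nontrivial irreducible, so the $\zeta_{L^{H_i}}(s)^{n_i}$ factors contribute a nonzero finite limit at $s=1$ rather than a pole or a zero. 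Hence $L(1,\chi)$ is automatically nonzero and finite. The factorization $\zeta_L(s)=\prod_\chi L(s,\chi)^{\dim\chi}$ gives an equally classical alternative route to nonvanishing once holomorphy is known, but it is not what rescues the argument; either way your conclusion stands.
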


We briefly recall a statement of global class field theory. Let
$\A_K$ denote the adeles of $K$, and write
$K_\infty=\prod_{v|\infty}K_v$. Let $K^\ab=\Kbar^{[G_K,G_K]}$ be the
maximal abelian extension of $K$. Then there is a homomorphism
$\Art_K:\A^\times_K/(K_\infty^\times)^\circ\to\Gal(\Kab/K)$, defined
in the following way: for each finite place $v$ of $K$,  the
restriction of $\Art_K$ to $K_v^\times$ agrees with the local Artin
maps $\Art_{K_v}$, and similarly at the infinite places, it agrees
with the obvious isomorphisms
$\Art_{K_v}:K_v^\times/(K_v^\times)^\circ\isoto\Gal(\Kbar_v/K_v)$. (In
both cases, the symbol ${}^\circ$ refers to the connected component of
the identity.) Then
global class field theory states that $\Art_K$ induces an
isomorphism\[\Art_K:\A_K^\times/\overline{K^\times(K^\times_\infty)^\circ}\isoto\Gal(\Kab/K).\]

The global Galois representations that we will care about are those
that Fontaine and Mazur call \emph{geometric}. Let $L/\Qp$ be an
algebraic extension.
\begin{defn}
  If~$K$ is a number field, then a continuous representation $\rho:G_K\to\GL_n(L)$ is
  \emph{geometric} if it is unramified outside of a finite set of
  places of $K$, and if for each place $v|p$, $\rho|_{G_{K_v}}$ is de Rham.
\end{defn}
\begin{rem}
  It is known that both conditions are necessary; that is, there are
  examples of representations which are unramified outside of a finite
  set of places of $K$ but not de Rham at places lying over $p$, and
  examples of representations which are de Rham at all places lying
  over $p$, but are ramified at infinitely many places. (As we will
  see in Theorem~\ref{thm: Gr\"ossencharacters and algebraic
    representations}, these examples require $n>1$.)
\end{rem}
In practice (and conjecturally always), geometric Galois
representations arise as part of a \emph{compatible system} of Galois
representations. There are a number of different definitions of a
compatible system in the literature, all of which are conjecturally
equivalent (although proving the equivalence of the definitions is
probably very hard). The following definition, taken
from~\cite{blggt}, is simultaneously a strong enough set of assumptions under
which one can hope to employ automorphy lifting theorems to study a
compatible system, and is weak enough that the conditions can be
verified in interesting examples.
\begin{defn}
  \label{defn:weakly compatible system}Suppose that $K$ and $M$ are
number fields, that $S$ is a finite set of places of $K$ and that $n$
is a positive integer. By a {\em weakly compatible system} of
$n$-dimensional $p$-adic representations (for varying $p$) of $G_K$ defined over $M$ and unramified outside $S$ we mean a family
of continuous semisimple representations
\[ r_\lambda: G_K \lra \GL_n(\barM_\lambda), \]
where $\lambda$ runs over the finite places of $M$, with the following properties.
\begin{itemize}
\item If $v\notin S$ is a finite place of $K$, then for all $\lambda$ not dividing the residue characteristic of $v$, the representation $r_\lambda$ is unramified at $v$ and the characteristic
polynomial of $r_\lambda(\Frob_v)$ lies in $M[X]$ and is independent of $\lambda$. 
\item Each
representation $r_\lambda$ is de Rham at all places above the residue characteristic of $\lambda$, and in fact crystalline at any place $v \not\in S$ which divides
the residue characteristic of $\lambda$. 
\item For each embedding $\tau:K \into \barM$ the $\tau$-labeled Hodge--Tate weights of $r_\lambda$ are independent of~ $\lambda$.
\end{itemize}
\end{defn}
\begin{rem}
  By the Chebotarev density theorem and the Brauer--Nesbitt theorem, each $r_\lambda$ is determined by
  the characteristic polynomials of the $r_\lambda(\Frob_v)$ for
  $v\notin S$, and in particular the compatible system is determined
  by a single $r_\lambda$. Note that for a general element $\sigma\in G_K$,
  there will be no relationship between the characteristic polynomials
  of the $r_\lambda(\sigma)$ as $\lambda$ varies (and they won't even
  lie in $M[X]$, so there will be no way of comparing them).
\end{rem}

There are various other properties one could demand; for example, we
have the following definition (again following~\cite{blggt}, although
we have slightly strengthened the definition made there by allowing
$\lambda$ to divide the residue characteristic of $v$).
\begin{defn}
  \label{defn:strictly compatible system}We say that a weakly
  compatible system is {\em strictly
    compatible} if for each finite place $v$ of $K$ there is a
  Weil--Deligne representation $\WD_v$ of $W_{K_v}$ over $\barM$
  such that for each finite place $\lambda$ of $M$ and every $M$-linear embedding
  $\varsigma:\barM \into \barM_\lambda$ we have
  $\varsigma\WD_v\cong \WD(r_\lambda|_{G_{K_v}})^\Fsemis$.
\end{defn}
Conjecturally, every weakly compatible system is strictly compatible,
and even satisfies further properties, such as purity (see e.g.\ Section 5
of~\cite{blggt}). We also have the following consequence of the
Fontaine--Mazur conjecture (Conjecture~\ref{conj:FM conj} below) and standard conjectures on the \'etale
cohomology of algebraic varieties over number fields.
\begin{conj}\label{conj: geometric implies strictly compatible}
  Any semisimple geometric representation $G_K\to\GL_n(L)$ is part of a strictly
  compatible system of Galois representations.
\end{conj}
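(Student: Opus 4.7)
The plan is to deduce the conjecture from the Fontaine--Mazur conjecture together with standard conjectures on algebraic cycles and on $\ell$-independence in \'etale cohomology. Let $\rho: G_K \to \GL_n(L)$ be semisimple and geometric; passing to irreducible constituents, I may assume $\rho$ is irreducible. The Fontaine--Mazur conjecture then asserts that, up to a Tate twist, $\rho$ occurs as a subquotient of $H^i_\et(X \times_K \Kbar, \Qpbar)$ for some smooth projective variety $X/K$ and some $i \geq 0$.

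Granting this geometric origin, the first step is to produce a uniform way of cutting $\rho$ out of the cohomology of $X$ that makes sense $\ell$-adically for every prime $\ell$. Concretely, I would seek a $K$-rational algebraic correspondence on $X \times_K X$, with coefficients in some number field $M$, whose induced idempotent on $H^i_\et$ carves out $\rho$ $p$-adically; the existence of such a correspondence follows from the Tate conjecture, which is one of the ``standard conjectures'' alluded to. Applying the same projector to $H^i_\et(X \times_K \Kbar, \overline{M}_\lambda)$ for each finite place $\lambda$ of $M$ (enlarging $M$ if necessary) then produces the candidate family $\{r_\lambda\}$.

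It remains to verify the axioms of Definitions~\ref{defn:weakly compatible system} and~\ref{defn:strictly compatible system}. At unramified $v \notin S$ with $\lambda$ not dividing the residue characteristic of $v$, the characteristic polynomial of $\Frob_v$ on the cut-out piece of $\ell$-adic cohomology lies in $M[X]$ and is independent of $\lambda$, by the Weil conjectures combined with proper/smooth base change. Crystallinity of $r_\lambda$ at places $v \notin S$ dividing the residue characteristic of $\lambda$ is given by the crystalline analogue of Theorem~\ref{thm: Tsuji} at places of good reduction. The $\tau$-labeled Hodge--Tate weights are computed from the Hodge numbers of $X$ via the Hodge--de Rham comparison, and are therefore independent of $\lambda$. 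Strict compatibility at places $v \in S$ of bad reduction is the assertion that $\WD(r_\lambda|_{G_{K_v}})^{\Fsemis}$ is independent of $\lambda$, which is a second standard conjecture, known for example at places of semistable reduction.

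The principal obstacle is the Fontaine--Mazur conjecture itself, which remains open in all but rather special cases. A secondary but still serious obstacle is the $\ell$-independence of the Weil--Deligne representations at bad primes, which requires semisimplicity of Frobenius on \'etale cohomology in addition to Grothendieck's monodromy theorem. In practice, for specific $\rho$ of interest one often circumvents this general strategy by first establishing automorphy of $\rho$ via a modularity lifting theorem, and then invoking the strictly compatible system known to be attached to the corresponding automorphic representation.
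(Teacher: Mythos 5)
This statement is labeled a \emph{Conjecture} in the paper, and the paper offers no proof: the only supporting text is the one sentence immediately preceding it, asserting that the conjecture is ``a consequence of the Fontaine--Mazur conjecture (Conjecture~\ref{conj:FM conj} below) and standard conjectures on the \'etale cohomology of algebraic varieties over number fields.'' Your proposal is therefore not in competition with an actual proof, but rather an attempt to unpack that one-line remark, and as such it does a good job. You correctly identify the two layers of conditionality: first, Fontaine--Mazur to place $\rho$ (up to Tate twist) inside $H^i_\et$ of a smooth projective $X/K$; second, the ``standard conjectures'' needed to propagate this across all $\lambda$, namely the Tate conjecture (to realize the $p$-adic subquotient as the $\lambda$-adic realizations of a motive, via an algebraic idempotent), semisimplicity of Frobenius on $H^i_\et$ (so that ``subquotient'' can be upgraded to ``direct summand cut out by a projector''), and $\lambda$-independence of the associated Frobenius-semisimple Weil--Deligne representations at places of bad reduction. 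These are exactly the conjectures the paper is gesturing at via Theorem~\ref{thm:etale cohomology gives a compatible system} and Remark~\ref{rem: conj that etale cohomology is strictly compatible}.

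Two small points worth flagging. First, the reduction to irreducible constituents is harmless but should be accompanied by the observation that a direct sum of (weakly or strictly) compatible systems is again such a system, after enlarging the coefficient field $M$ to a common number field; this is implicit in your argument but not stated. Second, Fontaine--Mazur only gives $\rho$ as a \emph{subquotient} of cohomology, and passing to a summand cut out by a correspondence already invokes semisimplicity of the Galois action on $H^i_\et$ (the Grothendieck--Serre semisimplicity conjecture), which you list only later under $\ell$-independence; it is needed at the very first step. You are entirely right to close by emphasizing that this is not a proof but a conditional sketch, and that in practice one circumvents the whole strategy by proving automorphy directly. That last observation is exactly the point of view the paper adopts, via Conjecture~\ref{conj: strictly compatible implies automorphic} and the automorphy lifting machinery developed in the rest of the notes.
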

In practice, most progress on understanding these conjectures has been
made by using automorphy lifting theorems to prove special cases of
the following conjecture.
\begin{conj}\label{conj: strictly compatible implies automorphic}
  Any weakly compatible system of Galois representations is strictly
  compatible, and is in addition automorphic, in the sense that there
  is an algebraic automorphic representation (in the sense of
  \cite{MR1044819}) $\pi$ of $\GL_n(\A_K)$ with the property that
  $\WD_v\cong\rec_{K_v}(\pi_v|\det|^{(1-n)/2})$ for each finite place
  $v$ of $K$, where $\rec_{K_v}$ is the local Langlands correspondence as in
  Section~\ref{subsec:loclanglands} below.
\end{conj}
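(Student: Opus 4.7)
The plan is to reduce the conjecture to the special case of proving automorphy of a single Galois representation in the compatible system, and then to propagate automorphy across the system. Concretely, I would proceed in three stages:

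\textbf{Stage 1 (pick a good prime and prove residual automorphy).} Choose a finite place $\lambda_0$ of $M$ whose residue characteristic $\ell$ is very large compared to $n$, the ramification data, and the Hodge--Tate weights. Standard genericity results guarantee that for all but finitely many $\lambda_0$ the residual representation $\overline{r}_{\lambda_0}$ has large image, is absolutely irreducible, and its restriction to $G_{K(\zeta_\ell)}$ remains irreducible; moreover $r_{\lambda_0}|_{G_{K_v}}$ is crystalline with regular Hodge--Tate weights at every $v|\ell$, so it fits in the Fontaine--Laffaille range. To establish residual automorphy of $\overline{r}_{\lambda_0}$, I would pass to a solvable CM extension $L/K$ (assuming $K$ is CM or totally real; otherwise this step is currently inaccessible) and apply a potential automorphy theorem of Harris--Shepherd-Barron--Taylor / Barnet-Lamb--Geraghty--Geraghty--Taylor type: find an auxiliary compatible system (for example in the cohomology of a suitable Dwork family) that shares a residual realization with $\overline{r}_{\lambda_0}|_{G_L}$ at some other prime $\lambda_0'$ where residual automorphy is directly accessible.

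\textbf{Stage 2 (modularity lifting at $\lambda_0$).} Apply a Taylor--Wiles--Kisin modularity lifting theorem — of the kind developed in the body of the paper — to lift the residual automorphy of $\overline{r}_{\lambda_0}|_{G_L}$ to automorphy of $r_{\lambda_0}|_{G_L}$ itself. The hypotheses of the lifting theorem (large residual image, adequate residual representation, $p\nmid n$, potentially diagonalizable local conditions at places above $\ell$, essential conjugate self-duality) must be arranged through the choice of $\lambda_0$ and the auxiliary extension. Then descend from $L$ to $K$ using cyclic base change for $\GL_n$; this step restricts the setting to essentially conjugate self-dual situations where solvable descent is available. The output is an algebraic automorphic representation $\pi$ of $\GL_n(\mathbb{A}_K)$.

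\textbf{Stage 3 (propagate and obtain strict compatibility).} Attached to $\pi$ is a compatible system of Galois representations $\{r_\lambda(\pi)\}$ (by work of Chenevier--Harris, Shin, Caraiani, and others in the self-dual case), equipped with local-global compatibility: at each finite $v$, the Frobenius-semisimple Weil--Deligne representation of $r_\lambda(\pi)|_{G_{K_v}}$ equals $\rec_{K_v}(\pi_v|\det|^{(1-n)/2})$. Define $\WD_v$ to be this common Weil--Deligne representation. By construction $r_{\lambda_0}\cong r_{\lambda_0}(\pi)$. For any other $\lambda$, the characteristic polynomials of $r_\lambda(\Frob_v)$ and $r_\lambda(\pi)(\Frob_v)$ agree outside $S\cup\{v\mid\lambda\}$ (by weak compatibility of $\{r_\lambda\}$ applied through $\lambda_0$, together with the compatibility of $\{r_\lambda(\pi)\}$); Chebotarev plus Brauer--Nesbitt then force $r_\lambda\cong r_\lambda(\pi)$ as semisimple representations. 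Strict compatibility, and the matching with $\rec_{K_v}$, follow immediately.

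\textbf{Main obstacles.} The residual automorphy step in Stage 1 is by far the hardest: outside $n=1,2$ it is only known in restricted situations and requires serious potential automorphy input; proving it unconditionally would essentially resolve the Fontaine--Mazur conjecture. Second, Stage 2 requires $K$ to be totally real or CM and requires essentially conjugate self-dual Galois representations — for general $K$ or non-self-dual systems the necessary modularity lifting technology (in the guise of $R=\mathbb{T}$ over a genuine Hecke algebra) is not yet available, and this is where Calegari--Geraghty and derived/coherent cohomology methods would have to enter. Finally, the descent at the end of Stage 2 is only justified in cyclic (or more generally solvable) CM extensions, so even granting the other steps the conclusion is currently out of reach in full generality.
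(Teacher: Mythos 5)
Conjecture~\ref{conj: strictly compatible implies automorphic} is stated in the paper as an open conjecture and is not proved there; it is a form of the Langlands reciprocity conjecture for compatible systems, and no proof of it exists. What you have written is therefore not, and could not be, a proof of the statement you were given. That said, you explicitly concede as much in your final paragraph, and as an outline of the potential automorphy plus modularity lifting strategy by which partial results towards this conjecture have been obtained (in the conjugate self-dual, regular, CM or totally real setting, roughly as in \cite{blggt}), your sketch is essentially accurate. So the primary point to flag is simply that the labelled statement is a conjecture, and your text is a summary of the known strategy and its limitations rather than an argument establishing the conclusion.

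It is still worth being precise about where the strategy fails, because one of your steps is stated more strongly than the technology permits. In Stage~1 you propose to establish residual automorphy of $\overline{r}_{\lambda_0}$ by passing to a solvable CM extension $L/K$ and invoking a potential automorphy theorem, and then in Stage~2 to descend by cyclic base change. This is circular: potential automorphy theorems do not allow you to prescribe $L$, and in particular do not produce a solvable extension; what they yield is automorphy of $r_{\lambda_0}|_{G_L}$ over some (typically highly non-solvable) Galois CM or totally real $L$, for which no automorphic descent to $K$ is available. Closing that gap is exactly the content of the word ``potential,'' and Brauer induction only salvages the $L$-function, not the automorphic representation $\pi$ your conclusion requires. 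Two further points: the ``standard genericity results'' you invoke in choosing $\lambda_0$ (large residual image, irreducibility of $\overline{r}_{\lambda_0}|_{G_{K(\zeta_\ell)}}$ for almost all $\lambda_0$) are themselves open for a general weakly compatible system in the sense of Definition~\ref{defn:weakly compatible system}, and the conjecture permits arbitrary number fields $K$ and non-self-dual systems, for which the Taylor--Wiles--Kisin and solvable base change ingredients you use in Stage~2 simply do not exist in the required form. Your concluding paragraph of ``main obstacles'' correctly identifies all of this, so there is no error of judgement on your part — only the mismatch between being asked to prove a statement and that statement being a wide-open conjecture.
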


\subsection{Sources of Galois representations}\label{subsec:sources of
  Galois repns}
The main source (and conjecturally the only source) of compatible
systems of Galois representations is the \'etale cohomology of
algebraic varieties. We have the following result, a consequence of
Theorem~\ref{thm: Tsuji} and ``independence of $l$'' results in \'etale cohomology~\cite{MR332791}.
\begin{thm}\label{thm:etale cohomology gives a compatible system}
  Let $K$ be a number field, and let $X/K$ be a smooth projective
  variety. Then for any $i,j$, the
  $H^i_{\et}(X\times_K\Kbar,\Qp)^\semis(j)$ (the $(j)$ denoting a Tate
  twist) form a weakly compatible system  (defined over~$\Q$) as~$p$ varies.
\end{thm}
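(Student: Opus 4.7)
The plan is to verify the three bullet points of Definition~\ref{defn:weakly compatible system} with $M=\Q$ and the places $\lambda$ of $M$ being the rational primes $p$. First I would choose $S$ to be the (finite) set of places of $K$ consisting of the archimedean places together with the finite places at which $X$ has bad reduction, so that $X$ spreads out to a smooth projective scheme $\mathcal{X}\to\Spec\cO_{K,S}$. Throughout, the Tate twist $(j)$ and the semisimplification are harmless: the twist shifts Hodge--Tate weights by $j$ and multiplies Frobenius eigenvalues by a power of the residue cardinality, while semisimplifying does not change characteristic polynomials of Frobenius, Hodge--Tate weights, or the property of being de Rham or crystalline (these being invariants of the underlying filtered $(\varphi,N)$- or graded module, respectively).

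For the first bullet, pick $v\notin S$ and $p$ coprime to the residue characteristic of $v$. Smooth and proper base change in \'etale cohomology identifies
\[
H^i_\et(X\times_K\Kbar,\Qp)|_{G_{K_v}}\cong H^i_\et(\mathcal{X}_{\overline{k(v)}},\Qp)
\]
as $G_{K_v}/I_{K_v}$-modules, so the representation is unramified at $v$. By Deligne's work on the Weil conjectures (and the Lefschetz trace formula), the characteristic polynomial of geometric Frobenius on $H^i_\et(\mathcal{X}_{\overline{k(v)}},\Qp)$ has coefficients in $\Z$ which are independent of the auxiliary prime~$p$. This is precisely the ``independence of $\ell$'' input referenced in the statement.

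For the second bullet, at any place $v\mid p$ the restriction $H^i_\et(X\times_K\Kbar,\Qp)|_{G_{K_v}}$ is de Rham by Theorem~\ref{thm: Tsuji}, applied to $X\times_K K_v$. If in addition $v\notin S$ then $\mathcal{X}$ has good reduction at $v$, and the crystalline comparison theorem (Faltings--Tsuji) upgrades this to crystallinity. For the third bullet, fix an embedding $\tau\colon K\hookrightarrow\barQ$ and let $v$ be the place of $K$ below the induced $p$-adic embedding; by the Hodge--Tate decomposition theorem, the $\tau$-labeled Hodge--Tate weights of $H^i_\et(X\times_K\Kbar,\Qp)(j)|_{G_{K_v}}$ form the multiset in which $j-q$ occurs with multiplicity $h^{i-q,q}(X):=\dim_K H^q(X,\Omega^{i-q}_{X/K})$. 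These Hodge numbers are algebro-geometric invariants of $X/K$ and do not depend on $p$, giving the required $\lambda$-independence.

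The main obstacle, as is typical with such a statement, is not in the organization of the proof but in the depth of the inputs: the independence of $\ell$ for Frobenius characteristic polynomials at places of good reduction, and the crystalline/de Rham comparison theorems with their computation of Hodge--Tate weights. All of these are invoked as black boxes from \cite{MR332791}, \cite{MR2023292}, and the references therein; once they are granted, verification of the three bullets is essentially formal.
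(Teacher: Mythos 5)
Your argument is the same one the paper intends: the paper asserts the theorem as a consequence of Tsuji's comparison theorems together with Deligne's independence-of-$\ell$ results, and you correctly fill in exactly those inputs via smooth and proper base change, the Weil conjectures, and the Hodge--Tate decomposition, also noting that semisimplification and Tate twist are harmless. There is one small sign slip: with the paper's normalization $\HT(\varepsilon_p)=-1$, the Hodge--Tate weights of $H^i_{\et}(X\times_K\Kbar,\Qp)(j)$ are $q-j$ rather than $j-q$ (check on $H^2_{\et}(\mathbb{P}^1_{\Kbar},\Qp)\cong\Qp(-1)$, which has weight $+1$), but this does not affect the conclusion, since the multiplicities are still given by the Hodge numbers $h^{q,i-q}$ and hence are independent of $p$.
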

\begin{rem}\label{rem: conj that etale cohomology is strictly
    compatible} Conjecturally, it is a strictly compatible system, and
  there is no need to semisimplify the representations. Both of these
  properties are known if $X$ is an abelian variety (see Section 2.4 of~\cite{MR1293977}).
\end{rem}
\begin{conj}\label{conj:FM conj}
  (The Fontaine--Mazur conjecture,~\cite{MR1363495}) Any irreducible
  geometric representation $\rho:G_K\to\GL_n(\Qpbar)$ is (the
  extension of scalars to $\Qpbar$ of) a subquotient
  of a representation arising from \'etale cohomology as in
  Theorem~\ref{thm:etale cohomology gives a compatible system}. 
\end{conj}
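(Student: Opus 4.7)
The plan is to deduce the conjecture from the automorphy conjecture (Conjecture~\ref{conj: strictly compatible implies automorphic}) via the following standard strategy, which is known to succeed in various special cases (for example, the two-dimensional case over~$\Q$ when the residual representation satisfies appropriate irreducibility hypotheses, by work of Kisin, Khare--Wintenberger, and Emerton). The idea is to show first that $\rho$ is automorphic, and then invoke the (conjectural, but partially known) construction of Galois representations in the cohomology of Shimura varieties to place $\rho$ as a subquotient of $H^i_\et(X \times_K \Kbar, \Qpbar)(j)$ for a suitable Shimura variety~$X$.

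First, by Exercise~\ref{ex: existence of galois stable lattice}, choose a $G_K$-stable $\cO_L$-lattice $\Lambda \subset L^n$ (after enlarging~$L$ if necessary so that $\rho$ takes values in $\GL_n(L)$), and let $\rhobar : G_K \to \GL_n(k(L))$ denote the semisimple residual representation. The next step is to prove \emph{residual automorphy}: produce an algebraic automorphic representation $\pibar$ of $\GL_n(\A_K)$ whose associated mod~$p$ Galois representation is isomorphic to~$\rhobar$. In the two-dimensional odd case over~$\Q$ this is Serre's conjecture (Khare--Wintenberger); in the essentially conjugate self-dual $n$-dimensional setting that is the focus of these notes, one would apply potential automorphy techniques (Moret-Bailly plus automorphy lifting over a suitable CM extension) to at least achieve automorphy of $\rhobar|_{G_{K'}}$ for some well-chosen finite $K'/K$.

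Second, apply a modularity (automorphy) lifting theorem of the type developed in these notes to promote the automorphy of $\rhobar$ (or $\rhobar|_{G_{K'}}$) to automorphy of $\rho$ (or $\rho|_{G_{K'}}$). Here one uses crucially the hypothesis that $\rho$ is geometric: unramified outside a finite set, and de Rham at all $v\mid p$, so that the local conditions needed to invoke the Taylor--Wiles--Kisin patching machine are satisfied. Third, descend from $K'$ back to $K$ using Brauer's theorem and solvable base change (this is the step where the essentially conjugate self-dual hypothesis is typically used). Finally, having realized $\rho$ as attached to an algebraic automorphic representation $\pi$, match $\pi$ to a cohomological piece of a Shimura variety (or unitary Shimura variety, in the essentially self-dual case) to exhibit $\rho$ as a subquotient of étale cohomology, completing the comparison with Theorem~\ref{thm:etale cohomology gives a compatible system}.

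The main obstacle is of course that this is an open conjecture: even granting the modularity lifting machine (which is the subject of these notes), two serious bottlenecks remain. The first is residual automorphy in dimensions $n > 2$ --- outside of the two-dimensional odd case over~$\Q$, a generalised Serre's conjecture is not known, and potential automorphy is at present only a substitute, so genuine modularity of $\rho$ itself typically cannot be obtained. The second is the construction of Galois representations attached to automorphic forms in cohomology: this is well-understood for regular algebraic essentially self-dual cuspidal representations over CM fields (Shin, Chenevier--Harris, \emph{et al.}), but outside this self-dual world one does not in general have a geometric construction realizing $\rho$ inside étale cohomology of an algebraic variety, so the last step of the argument --- precisely the content of the Fontaine--Mazur conjecture beyond automorphy --- is itself open.
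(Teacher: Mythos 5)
This statement is a \emph{conjecture} in the paper (the Fontaine--Mazur conjecture), and the paper offers no proof of it --- indeed, as the paper makes clear, it remains open in essentially all cases beyond $n\le 2$ over $\Q$. You have correctly recognised this: your writeup is not a proof but a sketch of the standard strategy one would follow (residual automorphy, then an automorphy lifting theorem, then descent, then realisation in the cohomology of a Shimura variety), and you honestly and accurately identify the two genuine bottlenecks --- the absence of a generalised Serre's conjecture for $n>2$, and the lack of a geometric realisation of non--self-dual automorphic Galois representations in \'etale cohomology. Your account is consistent with the discussion in the paper itself (see the remark that the Fontaine--Mazur--Langlands conjecture is essentially the union of Conjectures~\ref{conj: geometric implies strictly compatible} and~\ref{conj: strictly compatible implies automorphic}, and the discussion in Section~\ref{subsec:relaxing hypotheses} of what is and is not currently known). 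There is nothing to mark wrong here, but there is also no proof to compare against: the correct answer to ``prove this statement'' is that it is an open conjecture, which is precisely what you have said.
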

\begin{rem}
  The \emph{Fontaine--Mazur--Langlands conjecture} is a somewhat
  ill-defined conjecture, which is essentially the union of
  Conjectures~\ref{conj: geometric implies strictly compatible}
  and~\ref{conj: strictly compatible implies automorphic}, expressing
  the expectation that an irreducible geometric Galois representation
  is automorphic.
\end{rem}
When $n=1$, all of these conjectures are essentially known, as we will
now explain. For $n>1$, we know very little (although the situation
when $K=\Q$ and $n=2$ is pretty good), and the main results that are
known are as a consequence of automorphy lifting theorems (as
discussed in these notes) and of potential automorphy theorems (which
are not discussed in these notes, but should be accessible given the
material we develop here; for a nice introduction,
see~\cite{MR2905534}).

\begin{defn}
  \label{defn: grossenchar}A \emph{Gr\"ossencharacter} is a continuous
  character $\chi:\A_K^\times/K^\times\to\C^\times$. We say that
  $\chi$ is \emph{algebraic} (or ``type $A_0$'') if for each
  $\tau:K\into\C$ there is an integer $n_\tau$, such that for each
  $\alpha\in(K_\infty^\times)^\circ$, we have
  $\chi(\alpha)=\prod_\tau(\tau(\alpha))^{-n_\tau}$.
\end{defn}

\begin{defn}
  \label{defn: algebraic char}Let $L$ be a field of characteristic
  zero such that for each embedding $\tau:K\into\Lbar$, we have
  $\tau(K)\subseteq L$. Then an \emph{algebraic character}
  $\chi_0:\A_K^\times\to \Lbar^\times$ is a character with open kernel
  such that for each $\tau:K\into L$ there is an integer $n_\tau$ with
  the property that for all $\alpha\in K^\times$, we have
  $\chi_0(\alpha)=\prod_\tau(\tau(\alpha))^{n_\tau}$. 
\end{defn}
\begin{exercise}
  Show that if $\chi_0$ is an algebraic character, then $\chi_0$ takes
  values in some number field. [Hint: show that
  $\A_K^\times/(K^\times\ker\chi_0)$ is finite, and that
  $\chi_0(K^\times\ker\chi_0)$ is contained in a number field.]
\end{exercise}
\begin{thm}\label{thm: Gr\"ossencharacters and algebraic
    representations}Let $E$ be a number field containing the normal
  closure of $K$. Fix embeddings $\imath_\infty:\Ebar\into\C$,
  $\imath_p:\Ebar\into\Qpbar$. Then the following are in natural
  bijection.
  \begin{enumerate}
  \item Algebraic characters $\chi_0:\A_K^\times\to\Ebar^\times$.
  \item Algebraic Gr\"ossencharacters $\chi:\A_K^\times/K^\times\to\C^\times$.
  \item Continuous representations $\rho:G_K\to\Qpbartimes$ which
    are de Rham at all $v|p$.
  \item Geometric representations $\rho:G_K\to\Qpbartimes$.
  \end{enumerate}

  \end{thm}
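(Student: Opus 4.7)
The plan is to exhibit the natural bijections $(1) \leftrightarrow (2)$ and $(1) \leftrightarrow (4)$, and to deduce $(3) \Leftrightarrow (4)$ by observing that both classes correspond to the same set of algebraic characters in~(1). The implication $(4) \Rightarrow (3)$ is tautological, so the content is in $(1) \Leftrightarrow (3)$ combined with $(1) \Leftrightarrow (4)$.

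For $(1) \leftrightarrow (2)$, given $\chi_0$ with weights $n_\tau$, I define
\[ \chi(\alpha) := \imath_\infty\!\bigl(\chi_0(\alpha)\bigr) \cdot \prod_{\tau : K \into \C} \tau(\alpha_\infty)^{-n_\tau}. \]
Triviality on $K^\times$ is the defining formula for $\chi_0$ on $K^\times$, and continuity comes from the open-kernel assumption. The inverse reverses the formula; the only subtlety is that the values land in $\Ebar$, which follows from the exercise after Definition~\ref{defn: algebraic char} (finiteness of $\A_K^\times/K^\times\ker\chi_0$).

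For $(1) \to (4)$, given $\chi_0$ I define $\psi: \A_K^\times \to \Qpbartimes$ by
\[ \psi(\alpha) := \imath_p\!\bigl(\chi_0(\alpha)\bigr)\cdot \prod_{v \mid p}\ \prod_{\tau: K_v \into \Qpbar} \tau(\alpha_v)^{-n_\tau}, \]
identifying the embeddings $\tau: K_v \into \Qpbar$ lying above $v|p$ with the embeddings $K \into \Ebar \xrightarrow{\imath_p} \Qpbar$ that induce $v$. The algebraicity of $\chi_0|_{K^\times}$ kills $\psi$ on $K^\times$; the correction terms involve only places above $p$, so $\psi$ vanishes on $(K_\infty^\times)^\circ$ by the open-kernel property of $\chi_0$. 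Global class field theory then produces $\rho: G_K \to \Qpbartimes$ with $\rho \circ \Art_K = \psi$. Since $\chi_0$ has open kernel in $\A_K^\times$, it is trivial on $\cO_{K_v}^\times$ for almost all finite $v$, so $\rho$ is unramified outside a finite set. At $v \mid p$, $\psi|_{K_v^\times}$ is a product of $\imath_p\circ\chi_0|_{K_v^\times}$ (open kernel) and $\prod_\tau \tau^{-n_\tau}$, so Fact~\ref{fact: de Rham crystalline char loc alg} forces $\rho|_{G_{K_v}}$ to be de Rham. Thus $\rho$ is geometric.

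For the inverse, given $\rho$ de Rham at each $v \mid p$, set $\chi := \rho \circ \Art_K$ (a continuous character of $\A_K^\times/K^\times$). Fact~\ref{fact: de Rham crystalline char loc alg} at each $v|p$ gives integers $n_{\tau,v}$ and an open subgroup of $K_v^\times$ on which $\chi$ agrees with $\prod_\tau \tau^{-n_{\tau,v}}$, and I define
\[ \chi_0(\alpha) := \chi(\alpha)\cdot \prod_{v\mid p,\ \tau} \tau(\alpha_v)^{n_{\tau,v}}. \]
By construction $\chi_0$ has open kernel at all $v|p$; continuity of $\chi$ into the totally disconnected $\Qpbartimes$ forces $\chi|_{\cO_{K_v}^\times}$ to have finite image for $v \nmid p$ (no small subgroups argument), so $\chi_0$ has open kernel there too; and $\chi$ vanishes on $(K_\infty^\times)^\circ$ by connectedness. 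On $K^\times$ the value $\chi_0(\beta) = \prod_{v|p,\tau}\tau(\beta)^{n_{\tau,v}}$ reassembles into $\prod_{\tau:K\into\Qpbar}\tau(\beta)^{n_\tau}$, which is algebraic; the finiteness of $\A_K^\times/K^\times\ker\chi_0$ forces the image into a number field contained in $\Ebar$. This recovers~(1), and simultaneously shows that any $\rho$ as in~(3) is in fact ramified at only finitely many places, yielding $(3) \Rightarrow (4)$.

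The main obstacle is the $v \mid p$ analysis: everything hinges on using Fact~\ref{fact: de Rham crystalline char loc alg} to translate ``de Rham'' into ``locally algebraic on $K_v^\times$'', so that one can separate off the algebraic part to produce a character with open kernel. The rest is careful bookkeeping—matching up the weights $n_\tau$ across the archimedean embeddings (used in~(2)), the $p$-adic embeddings (used in~(3) and~(4)), and the abstract embeddings into $\Ebar$ (used in~(1)) via the fixed embeddings $\imath_\infty$ and $\imath_p$—and checking that the global class field theory passage through the adeles is compatible with the purely local statement of Fact~\ref{fact: de Rham crystalline char loc alg}.
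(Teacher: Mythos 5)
Your overall strategy---pair (1) with (2) and (1) with (3) via the $\imath_\infty$- and $\imath_p$-twisting formulas, and use Fact~\ref{fact: de Rham crystalline char loc alg} plus global class field theory to translate the de Rham condition into local algebraicity of $\rho\circ\Art_K$ above $p$---matches the paper's intended argument exactly, and the formulas you write are the correct ones (modulo the implicit reindexing $n_\tau \leftrightarrow n_{\imath_\infty^{-1}\tau}$, $n_{\imath_p^{-1}\tau}$ via the fixed embeddings). The $v\mid p$ analysis is sound.

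There is, however, a genuine gap in the direction $(3)\Rightarrow(1)$, which is precisely where the substance of the theorem lies (a continuous $p$-adic character that is de Rham above $p$ is automatically ramified at only finitely many places). You define $\chi_0(\alpha) = \chi(\alpha)\prod_{v\mid p,\tau}\tau(\alpha_v)^{n_{\tau,v}}$ and must check it has \emph{open kernel in $\A_K^\times$}, i.e.\ is trivial on $\cO_{K_v}^\times$ for \emph{almost all} finite $v$. You only verify that, for each fixed $v\nmid p$, the image $\chi(\cO_{K_v}^\times)$ is finite; this is necessary but not sufficient, since a character could a priori be nontrivially ramified at infinitely many such $v$. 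The parenthetical ``no small subgroups argument'' is also the wrong tool here: $\Qpbartimes$ (or $L^\times$ for a finite $L/\Qp$ through which $\chi$ factors) \emph{does} have arbitrarily small open subgroups, namely the $1+\m_L^n$. The correct argument is: pick $n$ large enough that $1+\m_L^n$ is torsion-free pro-$p$; by continuity $\chi_0^{-1}(1+\m_L^n)$ is open in $\A_K^\times$ and so contains $\prod_{v\notin S}\cO_{K_v}^\times$ for some finite $S$; and for $v\nmid p$, $\cO_{K_v}^\times$ is an extension of a finite group by a pro-$l_v$ group with $l_v\ne p$, so it admits no nontrivial continuous homomorphism to the torsion-free pro-$p$ group $1+\m_L^n$. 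Hence $\chi_0$ is unramified at all $v\notin S\cup\{v\mid p\}$, which is what you need before you may legitimately invoke the exercise after Definition~\ref{defn: algebraic char} and conclude $\chi_0$ is an algebraic character landing in $\Ebar^\times$.
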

  \begin{exercise}
    Prove Theorem~\ref{thm: Gr\"ossencharacters and algebraic
      representations} as follows (see e.g.\ Section 1
    of~\cite{farguesnotes} for more details). Firstly, use 
Fact~\ref{fact: de Rham crystalline char loc alg}, together with global class field
    theory, to show that (3) and (4) are equivalent. For the correspondence between (1) and (2), show that we can pair up $\chi_0$ and
$\chi$
by \[\chi(\alpha)=\imath_\infty\left(\chi_0(\alpha)\prod_{\tau:K\into\C}\tau(\alpha_\infty)^{-n_{\imath_\infty^{-1}\tau}}\right).\]
For the correspondence between (1) and (3), show that we can pair up
$\chi_0$ and $\rho$ by \[(\rho\circ\Art_K)(\alpha)=\imath_p\left(\chi_0(\alpha)\prod_{\tau:K\into\Qpbar}\tau(\alpha_p)^{-n_{\imath_p^{-1}\tau}}\right).\]
  \end{exercise}

  \section{Galois deformations}The ``lifting'' in ``modularity lifting
  theorems'' refers to deducing the modularity of a $p$-adic Galois
  representation from the modularity of its reduction modulo $p$; so
  we ``lift'' the modularity property from characteristic~$p$ to
  characteristic zero. In this section we consider the
  Galois-theoretic aspects of this lifting, which are usually known as
  ``Galois deformation theory''.

  There are a number of good introductions
to the material in this section, and for the most part we will simply
give basic definitions and motivation, and refer elsewhere for
proofs. In particular, \cite{MR1638481} is a very nice introduction to
Galois deformations (although slightly out of date, as it does not
treat liftings/framed deformations), and \cite{boecklenotes} is a
thorough modern treatment.
\subsection{Generalities}Take $L/\Qp$ finite with ring of integers
$\cO=\cO_L$ and maximal ideal $\lambda$, and write
$\F=\cO/\lambda$. Let $G$ be a profinite group which satisfies the
following condition (Mazur's condition $\Phi_p$): for each
open subgroup~$\Delta$ of $G$, then
$\Delta/\langle[\Delta,\Delta],\Delta^p\rangle$ is
finite. Equivalently (see e.g.\ Exercise 1.8.1 of~\cite{boecklenotes}), for
each $\Delta$ the maximal pro-$p$ quotient of $\Delta$ is
topologically finitely generated. If $G$ is topologically finitely
generated, then $\Phi_p$ holds, but we will need to use the condition
for some $G$ (the global Galois groups $G_{K,S}$ defined below) which
are not known to be topologically finitely generated.

In particular, using class field theory or Kummer theory, it can be
checked that $\Phi_p$ holds if $G=G_K=\Gal(\overline{K}/K)$ for some
prime $l$ (possibly equal to $p$) and some finite extension $K/\Ql$,
or if $G=G_{K,S}=\Gal(K_S/K)$ where $K$ is a number field, $S$ is a
finite set of finite places of $K$, and $K_S/K$ is the maximal extension
unramified outside of $S$ and the infinite places (see e.g.\ the proof of Theorem 2.41
of~\cite{MR1605752}).

Fix a continuous representation $\rhobar:G\to\GL_n(\F)$. Let $\cC_\cO$ be the
category of complete local Noetherian $\cO$-algebras with residue
field $\F$, and consider the functor $\cC_\cO\to\Sets$ which sends $A$ to
the set of continuous representations $\rho:G\to\GL_n(A)$ such that
$\rho\text{ mod }\m_A=\rhobar$ (that is, to the set of \emph{lifts} of
$\rhobar$ to $A$).
\begin{lem}
  This functor is represented by a representation
  $\rho^\square:G\to\GL_n(R_{\rhobar}^\square)$. 
\end{lem}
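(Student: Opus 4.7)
The plan is to verify representability by checking Schlessinger's criterion on the full subcategory $\cC_\cO^0 \subseteq \cC_\cO$ of Artinian local $\cO$-algebras with residue field~$\F$, and then to extend the resulting universal object to all of $\cC_\cO$ by passing to inverse limits. Write $F:\cC_\cO \to \Sets$ for the lifting functor and $F^0$ for its restriction to $\cC_\cO^0$. For $A \in \cC_\cO^0$, the ring $A$ is finite, so continuity of a homomorphism $G \to \GL_n(A)$ amounts to having open kernel.

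The crucial formal feature of the lifting problem (as opposed to deformations up to conjugacy) is that for any pullback diagram $A' \to A \leftarrow A''$ in $\cC_\cO^0$, the natural map
\[
\GL_n(A' \times_A A'') \lra \GL_n(A') \times_{\GL_n(A)} \GL_n(A'')
\]
is a bijection. Hence $F^0$ commutes with arbitrary fibre products, which immediately yields Schlessinger's conditions H1, H2, and H4 in their strongest forms. Only H3, finite-dimensionality of the tangent space $t_{F^0} := F^0(\F[\epsilon])$, requires real work.

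The identification $t_{F^0} = Z^1_{\cts}(G, \ad \bar\rho)$ is standard: write a lift as $g \mapsto (1 + \epsilon c(g)) \bar\rho(g)$ and translate multiplicativity of $\rho$ into the cocycle condition on $c : G \to \ad \bar\rho = M_n(\F)$. Since $B^1$ is a quotient of the finite-dimensional space $\ad \bar\rho$, the short exact sequence $0 \to B^1 \to Z^1 \to H^1_{\cts}(G, \ad\bar\rho) \to 0$ reduces matters to showing that $H^1_{\cts}(G, \ad \bar\rho)$ is finite. This is precisely where Mazur's condition $\Phi_p$ is used, and I expect it to be the only nontrivial step. Pick an open normal subgroup $\Delta \trianglelefteq G$ acting trivially on the finite module $\ad \bar\rho$; inflation-restriction reduces the question to finiteness of the $G/\Delta$-invariants in $\Hom_\cts(\Delta, \ad \bar\rho)$. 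Since $\ad\bar\rho$ has $p$-power order, any continuous homomorphism from $\Delta$ factors through the maximal abelian pro-$p$ quotient of $\Delta$, which is topologically finitely generated by $\Phi_p$; hence the Hom-set is finite.

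With H1-H4 in hand, Schlessinger's theorem produces $R_{\bar\rho}^\square \in \cC_\cO$ pro-representing $F^0$, together with a compatible family $\rho_n : G \to \GL_n(R_{\bar\rho}^\square / \m^n)$; Noetherianness of $R_{\bar\rho}^\square$ follows because its cotangent space is finite-dimensional, so it is a quotient of a power series ring over~$\cO$ in finitely many variables. Taking the inverse limit yields a continuous $\rho^\square : G \to \GL_n(R_{\bar\rho}^\square)$. Finally, to upgrade the universal property from $\cC_\cO^0$ to all of $\cC_\cO$, write $A = \varprojlim_n A/\m_A^n$ and observe that a continuous homomorphism $G \to \GL_n(A)$ is the same data as a compatible system of (automatically continuous) homomorphisms to the finite quotients $A/\m_A^n$; combining this with the Artinian universal property gives $F(A) = \varprojlim_n F^0(A/\m_A^n) = \Hom_\cO(R_{\bar\rho}^\square, A)$, as required.
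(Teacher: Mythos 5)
Your proof is correct and takes essentially the same route as the reference the paper itself points to (Böckle's Proposition~1.3.1(a)): establish pro-representability on the Artinian subcategory via Schlessinger's criteria, exploiting that the \emph{lifting} functor (unlike the deformation functor) commutes with all fibre products so that only H3 needs $\Phi_p$, and then extend to all of $\cC_\cO$ by passing to limits over $A/\m_A^n$.
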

\begin{proof}
  This is straightforward; see Proposition 1.3.1(a)
  of~\cite{boecklenotes} for a closely related result (showing the
  prorepresentability of the functor restricted to Artinian algebras),
  or Dickinson's appendix to~\cite{MR1860043} for a complete proof of
  a more general result.
\end{proof}

\begin{defn}
  We say that
  $R_{\rhobar}^\square$ is the \emph{universal lifting ring} (or in
  Kisin's terminology, the \emph{universal framed deformation
    ring}). We say that $\rho^\square$ is the \emph{universal lifting}
  of $\rhobar$.
\end{defn}
If $\End_{\F[G]}\rhobar=\F$ we will say that $\rhobar$ is
\emph{Schur}. By Schur's lemma, if $\rhobar$ is absolutely
irreducible, then $\rhobar$ is Schur. In this case, there is a very
useful (and historically earlier) variant on the above construction.
\begin{defn}
  Suppose that $\rhobar$ is Schur. Then a \emph{deformation} of
  $\rhobar$ to $A\in\ob\cC_\cO$ is an equivalence class of liftings,
  where $\rho\sim\rho'$ if and only if $\rho'=a\rho a^{-1}$ for some
  $a\in\ker(\GL_n(A)\to\GL_n(\F))$ (or equivalently, for some $a\in\GL_n(A)$).
\end{defn}
\begin{lem}
  If $\rhobar$ is Schur, then the functor $\cC_\cO\to\Sets$ sending
  $A$ to the set of deformations of $\rhobar$ to $A$ is representable
  by some $\rho^\univ:G\to\GL_n(R_{\rhobar}^\univ)$. 
\end{lem}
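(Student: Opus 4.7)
The plan is to apply Schlessinger's criteria for pro-representability to the deformation functor $D_{\rhobar}\colon \cC_\cO \to \Sets$ sending $A$ to the set of deformations of $\rhobar$ to $A$. Since any $A\in\ob\cC_\cO$ is the inverse limit of its Artinian quotients $A/\m_A^n$, and the deformation functor is visibly continuous in this sense, it suffices to check representability on the subcategory $\cC_\cO^0$ of Artinian objects and then take an inverse limit to construct $R_{\rhobar}^\univ$ and the universal deformation $\rho^\univ\colon G\to\GL_n(R_{\rhobar}^\univ)$.

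First I would verify Schlessinger's condition (H1): for small surjections $A_1\to A_0$, $A_2\to A_0$ in $\cC_\cO^0$, the natural map
\[
D_{\rhobar}(A_1\times_{A_0}A_2)\longrightarrow D_{\rhobar}(A_1)\times_{D_{\rhobar}(A_0)}D_{\rhobar}(A_2)
\]
is surjective. Given compatible classes on each factor, one chooses representative liftings, conjugates one of them by an element of $\ker(\GL_n(A_1)\to\GL_n(A_0))$ so that the reductions to $A_0$ agree on the nose, and then takes the componentwise pair, which is a genuine lifting to the fibre product. Next, I would check (H3): the tangent space $D_{\rhobar}(\F[\epsilon]/\epsilon^2)$ is naturally identified with $H^1(G,\ad\rhobar)$, and Mazur's condition $\Phi_p$ on~$G$ ensures this $\F$-vector space is finite-dimensional.

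The main obstacle is (H2): with $A_0=\F$ and $A_2=\F[\epsilon]/\epsilon^2$, the map above must be bijective, and this is precisely where the Schur hypothesis is used. The issue is injectivity: if two deformations $\rho,\rho'$ to $A_1\times_\F A_2$ become equivalent on each factor, we must exhibit a single conjugation that realises the equivalence over the fibre product. Writing down the candidate conjugating element from the data on each factor, the obstruction to assembling them lives in $\ker(\GL_n(\F)\to \GL_n(\F))^G$-style centraliser data; more precisely, the failure is controlled by the centraliser of $\rhobar$ in $\GL_n(\F)$, which by the Schur hypothesis $\End_{\F[G]}\rhobar=\F$ consists exactly of the scalars $\F^\times$. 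Since scalars act trivially by conjugation on every lifting, the centraliser contributes nothing, and one obtains the required bijectivity.

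Once (H1)--(H3) hold, Schlessinger's theorem (as in~\cite{boecklenotes}, Chapter~2) yields a complete local Noetherian $\cO$-algebra $R_{\rhobar}^\univ$ pro-representing $D_{\rhobar}$ on $\cC_\cO^0$. The universal deformation $\rho^\univ$ is obtained as the inverse limit of the universal Artinian deformations; equivalently, one can realise $R_{\rhobar}^\univ$ as the quotient of the universal lifting ring $R_{\rhobar}^\square$ cut out by the equivalence relation of strict $\widehat{\GL}_n$-conjugation, where the Schur property guarantees that this quotient is well behaved.
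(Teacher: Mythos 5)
The paper just cites Böckle and Darmon--Diamond--Taylor for this, so any complete argument is acceptable, and your Schlessinger route is the standard one. However, there is a genuine gap: verifying only (H1)--(H3) yields a \emph{hull} for $D_{\rhobar}$ (a formally smooth map from a representable functor onto $D_{\rhobar}$), not pro-representability. Schlessinger's criterion for pro-representability also requires (H4): bijectivity of
\[
D_{\rhobar}(A_1\times_{A_0}A_1)\longrightarrow D_{\rhobar}(A_1)\times_{D_{\rhobar}(A_0)}D_{\rhobar}(A_1)
\]
for every small surjection $A_1\to A_0$ in $\cC_\cO^0$, with $A_0$ an arbitrary Artinian quotient and not just $\F$. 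You never state or verify (H4), and it is precisely there that the Schur hypothesis enters.

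Relatedly, your attribution of the Schur condition to (H2) is misplaced: in (H2) one has $A_0=\F$ and $A_2=\F[\epsilon]/\epsilon^2$, and injectivity is automatic, since any conjugating elements $g_1\in\ker(\GL_n(A_1)\to\GL_n(\F))$ and $g_2\in\ker(\GL_n(\F[\epsilon]/\epsilon^2)\to\GL_n(\F))$ both have trivial image in $\GL_n(\F)$, so $(g_1,g_2)$ automatically lies in $\GL_n(A_1\times_\F\F[\epsilon]/\epsilon^2)$ with no hypothesis on $\rhobar$. (Your phrase ``$\ker(\GL_n(\F)\to\GL_n(\F))^G$-style centraliser data'' is a symptom of this: that kernel is trivial.) The genuine obstruction lives in (H4), where the images $\bar g_1,\bar g_2\in\GL_n(A_0)$ of the two conjugating elements need not agree; they differ by an element of the centraliser of $\rho\bmod\m_{A_0}$ in $\GL_n(A_0)$. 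The Schur hypothesis, via a standard Nakayama induction, shows this centraliser is exactly $A_0^\times$; since such a scalar lifts to $A_1^\times$ and commutes with everything, one can modify $g_2$ so that $(g_1,g_2)\in\GL_n(A_1\times_{A_0}A_1)$. So the mechanism you describe (``centraliser is just scalars'') is the correct ingredient, but it must be deployed to prove (H4), which you need to state and check explicitly before invoking pro-representability.
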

\begin{proof}
  See  Proposition 1.3.1(b) of~\cite{boecklenotes}, or Theorem 2.36
  of~\cite{MR1605752} for a more hands-on approach.
\end{proof}
\begin{defn}
  We say that $\rho^\univ$ (or more properly, its equivalence class)
  is the \emph{universal deformation} of
  $\rhobar$, and $R_{\rhobar}^\univ$ is the \emph{universal
    deformation ring}.
\end{defn}

Deformations are representations considered up to conjugation, so it
is reasonable to hope that deformations can be studied by considering
their traces. In the case that $\rhobar$ is absolutely irreducible, universal
deformations are determined by traces in the following rather strong
sense. This result is essentially due to Carayol \cite{MR1279611}.
\begin{lem}\label{lem: carayol traces lemma}
  Suppose that $\rhobar$ is absolutely irreducible. Let $R$ be an
  object of $\cC_\cO$, and $\rho:G\to\GL_n(R)$ a lifting of
  $\rhobar$.
  \begin{enumerate}
  \item If $a\in\GL_n(R)$ and $a\rho a^{-1}=\rho$ then $a\in R^\times$.
  \item If $\rho':G\to\GL_n(R)$ is another continuous lifting of
    $\rhobar$ and $\tr\rho=\tr\rho'$, then there is some
    $a\in\ker(\GL_n(R)\to\GL_n(\F))$ such that $\rho'=a\rho a^{-1}$.
  \item If $S\subseteq R$ is a closed subring with $S\in\ob\cC_\cO$ and
    $\m_S=\m_R\cap S$, and if $\tr\rho(G)\subseteq S$, then  there is
    some $a\in\ker(\GL_n(R)\to\GL_n(\F))$ such that $a\rho a^{-1}:G\to\GL_n(S)$.
  \end{enumerate}
\end{lem}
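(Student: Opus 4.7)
The engine for all three parts is the Burnside--Jacobson density theorem combined with Nakayama's lemma: since $\rhobar$ is absolutely irreducible, $\rhobar(G)$ spans $M_n(\F)$ over $\F$, so one may choose $g_1,\ldots,g_{n^2}\in G$ whose images $\rhobar(g_i)$ form an $\F$-basis of $M_n(\F)$; Nakayama then promotes $\rho(g_1),\ldots,\rho(g_{n^2})$ to an $R$-basis of $M_n(R)$, so the induced $R$-algebra map $R[G]\to M_n(R)$ is surjective, and the same holds for $\rho'$ in (2) (which is also a lift of $\rhobar$). Part (1) is then immediate: any $a$ commuting with every $\rho(g)$ commutes with all of $M_n(R)$, hence is a central scalar $cI$ with $c\in R^\times$. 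The other standing tool is the non-degeneracy of the trace pairing $(x,y)\mapsto\tr(xy)$ on $M_n(R)$, immediate from matrix units.

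For (2), one first checks that if $\sum_j t_j\rho(h_j)=0$ then $\sum_j t_j\rho'(h_j)=0$: multiply by $\rho(h)$ and take the trace, use $\tr\rho=\tr\rho'$, and conclude via non-degeneracy of the trace pairing together with the spanning property of $\rho'(G)$. This lets us define an $R$-linear map $\phi\colon M_n(R)\to M_n(R)$ with $\phi(\rho(g))=\rho'(g)$; multiplicativity follows since $\phi(\rho(g)\rho(h))=\phi(\rho(gh))=\rho'(gh)=\rho'(g)\rho'(h)$, and the symmetric argument with $\rho,\rho'$ interchanged produces a two-sided inverse, so $\phi$ is an $R$-algebra automorphism of $M_n(R)$. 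Since $R$ is local, $\Pic R=0$ and Skolem--Noether gives $\phi=\mathrm{inn}(a)$ for some $a\in\GL_n(R)$; thus $\rho'=a\rho a^{-1}$. Modulo $\m_R$, $\phi$ is the identity, so $\bar a$ centralises $M_n(\F)$ and is therefore scalar; rescaling $a$ by a lift in $\cO^\times$ places it in $\ker(\GL_n(R)\to\GL_n(\F))$.

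For (3), using the same $g_i$, form the Gram matrix $M=(\tr\rho(g_ig_j))_{i,j}\in M_{n^2}(S)$, which is invertible because its reduction modulo $\m_S$ is the Gram matrix of an $\F$-basis of $M_n(\F)$ under the non-degenerate trace form. For any $g\in G$, writing $\rho(g)=\sum_i r_i\rho(g_i)$ with $r_i\in R$ and pairing with $\rho(g_j)$ yields the linear system $M\cdot(r_i)=(\tr\rho(gg_j))_j\in S^{n^2}$, forcing $r_i\in S$. Thus the $S$-subalgebra $B:=\sum_i S\cdot\rho(g_i)\subseteq M_n(R)$ contains $\rho(G)$, is $S$-free of rank $n^2$, and satisfies $B/\m_S B\isoto M_n(\F)$ (this is where $\m_S=\m_R\cap S$ is used). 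A Hensel-style lifting of matrix units then produces an $S$-algebra isomorphism $\psi\colon B\isoto M_n(S)$, and after composing with an inner automorphism of $M_n(S)$ we may assume $\bar\psi$ is the identity; extending scalars to $R$ (using $B\otimes_S R=M_n(R)$) gives an $R$-algebra automorphism $\psi_R$ of $M_n(R)$ reducing to the identity, and Skolem--Noether once more yields $a\in\GL_n(R)$ with $\bar a$ scalar and $a\rho(g)a^{-1}=\psi(\rho(g))\in M_n(S)$ for all $g\in G$; a final scalar adjustment places $a$ in the kernel. The hardest step is this Azumaya-type trivialisation of $B$ over $S$ and its arrangement compatibly with the inclusion $B\hookrightarrow M_n(R)$; everything else is bookkeeping around the same Burnside/Nakayama/trace-pairing package.
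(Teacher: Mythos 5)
Your argument is correct, and it is essentially the Carayol-type proof that the paper delegates to \cite{cht} and \cite{boecklenotes}: absolute irreducibility combined with Burnside density and Nakayama gives that $\rho(G)$ $R$-linearly spans $M_n(R)$, after which the non-degenerate trace pairing on $M_n(R)$ controls everything. The only real difference is one of packaging: those references construct the conjugating element and the matrix units explicitly (lifting idempotents by hand), whereas you invoke Skolem--Noether over the local ring $R$ and an Azumaya/Brauer-group trivialisation of $B$ over $S$; both are legitimate, the former because $\Pic$ of a local ring is trivial, the latter because the Brauer group of a complete Noetherian local ring with finite residue field vanishes. Two small details worth spelling out in a self-contained writeup: in (3), once you know $\rho(G)\subseteq B$ you should note that $B$ is closed under multiplication (as $\rho(g_i)\rho(g_j)=\rho(g_ig_j)\in B$), so it really is an $S$-subalgebra; and the continuity of $a\rho a^{-1}\colon G\to\GL_n(S)$ for the $\m_S$-adic topology requires that the subspace topology on $S\subseteq R$ coincide with the $\m_S$-adic one, which is precisely where the hypotheses that $S$ is a \emph{closed} subring with $\m_S=\m_R\cap S$ genuinely intervene (via Chevalley's theorem for the complete Noetherian local ring $S$).
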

\begin{proof}See Lemmas 2.1.8 and 2.1.10 of~\cite{cht}, or Theorem
  2.2.1 of~\cite{boecklenotes}.
  \end{proof}
\begin{exercise}\label{exercise: deformation ring generated by traces}
  Deduce from Lemma~\ref{lem: carayol traces lemma} that if $\rhobar$
  is absolutely irreducible, then $R_{\rhobar}^\univ$ is topologically
  generated over $\cO$ by the values $\tr\rho^\univ(g)$ as $g$ runs
  over any dense subset of $G$.
\end{exercise}
\begin{exercise}
  Show that if $\rhobar$ is absolutely irreducible, then
  $R_{\rhobar}^\square$ is isomorphic to a power series ring in
  $(n^2-1)$ variables over $R_{\rhobar}^\univ$. Hint: let $\rho^\univ$
  be a choice of universal deformation, and consider the
  homomorphism \[\rho^\square:G\to
  \GL_n(R_{\rhobar}^\univ\llbracket X_{i,j}\rrbracket _{i,j=1,\dots,n}/(X_{1,1}))\] given by
  $\rho^\square=(1_n+(X_{i,j}))\rho^\univ(1_n+(X_{i,j}))^{-1}$. Show that
  this is the universal lifting.
\end{exercise}
\subsection{Tangent spaces}The tangent spaces of universal lifting and
deformation rings have a natural interpretation in terms of liftings and
deformations to the ring of dual numbers,
$\F[\varepsilon]/(\varepsilon^2)$.
\begin{exercise}
  Show that we have natural bijections between
  \begin{enumerate}
  \item $\Hom_{\F}(\m_{R^\square_{\rhobar}}/(\m_{R^\square_{\rhobar}}^2,\lambda),{\F})$.
  \item $\Hom_{\cO}(R^\square_{\rhobar},{\F}[\varepsilon]/(\varepsilon^2))$.
  \item The set of liftings of $\rhobar$ to ${\F}[\varepsilon]/(\varepsilon^2)$.
  \item The set of cocycles $Z^1(G,\ad\rhobar)$.
  \end{enumerate}
Show that if $\rhobar$ is absolutely irreducible, then we also have a
bijection between
$\Hom_{\F}(\m_{R^\univ_{\rhobar}}/(\m_{R^\univ_{\rhobar}}^2,\lambda),{\F})$
and $H^1(G,\ad\rhobar)$.

Hint: given
$f\in\Hom_{\F}(\m_{R^\square_{\rhobar}}/(\m_{R^\square_{\rhobar}}^2,\lambda),{\F})$,
define an element of
$\Hom_{\cO}(R^\square_{\rhobar},{\F}[\varepsilon]/(\varepsilon^2))$ by sending
$a+x$ to $a+f(x)\varepsilon$ whenever $a\in\cO$ and
$x\in\m_{R^\square_{\rhobar}}$. Given a cocycle
$\phi\in Z^1(G,\ad\rhobar)$, define a lifting
$\rho:G\to\GL_n({\F}[\varepsilon]/(\varepsilon^2))$ by
$\rho(g):=(1+\phi(g)\varepsilon)\rhobar(g)$. 
\end{exercise}
\begin{cor}
  We have $\dim_{\F}
  \m_{R^\square_{\rhobar}}/(\m_{R^\square_{\rhobar}}^2,\lambda)=\dim_{\F}H^1(G,\ad\rhobar)+n^2-\dim_{\F} H^0(G,\ad\rhobar)$.
\end{cor}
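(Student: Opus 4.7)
The plan is to read off the dimension of $\m_{R^\square_{\rhobar}}/(\m_{R^\square_{\rhobar}}^2,\lambda)$ from the bijections established in the preceding exercise, and then compute the dimension of the space $Z^1(G,\ad\rhobar)$ of $1$-cocycles using the standard short exact sequence relating cocycles, coboundaries, and cohomology.

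First I would note that since $R^\square_{\rhobar}$ is a complete local $\cO$-algebra with residue field $\F$, the cotangent space $\m_{R^\square_{\rhobar}}/(\m_{R^\square_{\rhobar}}^2,\lambda)$ is an $\F$-vector space, and its $\F$-dimension equals the $\F$-dimension of its $\F$-linear dual $\Hom_{\F}(\m_{R^\square_{\rhobar}}/(\m_{R^\square_{\rhobar}}^2,\lambda),\F)$. By the bijections in the preceding exercise, this $\F$-vector space is identified with $Z^1(G,\ad\rhobar)$; one checks that the identifications are $\F$-linear (the correspondence sends a cocycle $\phi$ to the lifting $(1+\phi(g)\varepsilon)\rhobar(g)$, and sums and scalar multiples of cocycles correspond to the natural $\F$-vector space structure on liftings coming from the dual number ring). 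So
\[
\dim_{\F}\m_{R^\square_{\rhobar}}/(\m_{R^\square_{\rhobar}}^2,\lambda)=\dim_{\F}Z^1(G,\ad\rhobar).
\]

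Next I would invoke the tautological exact sequence
\[
0\lra B^1(G,\ad\rhobar)\lra Z^1(G,\ad\rhobar)\lra H^1(G,\ad\rhobar)\lra 0,
\]
and identify $B^1(G,\ad\rhobar)$ with $\ad\rhobar/(\ad\rhobar)^G$ via the map $x\mapsto (g\mapsto gx-x)$, whose kernel is exactly $(\ad\rhobar)^G=H^0(G,\ad\rhobar)$. Since $\dim_{\F}\ad\rhobar=n^2$, this gives $\dim_{\F}B^1(G,\ad\rhobar)=n^2-\dim_{\F}H^0(G,\ad\rhobar)$.

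Combining the two displays yields
\[
\dim_{\F}\m_{R^\square_{\rhobar}}/(\m_{R^\square_{\rhobar}}^2,\lambda)=\dim_{\F}H^1(G,\ad\rhobar)+n^2-\dim_{\F}H^0(G,\ad\rhobar),
\]
as desired. There is no real obstacle here; the only point that might deserve a brief comment is the $\F$-linearity of the bijection in the exercise, so that counting cocycles actually computes the $\F$-dimension of the cotangent space rather than merely its cardinality.
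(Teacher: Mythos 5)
Your argument is correct and is essentially the same as the paper's: the paper cites the four-term exact sequence $0\to(\ad\rhobar)^G\to\ad\rhobar\to Z^1(G,\ad\rhobar)\to H^1(G,\ad\rhobar)\to 0$, which you have simply unpacked into the identification $B^1\cong\ad\rhobar/(\ad\rhobar)^G$ plus the short exact sequence $0\to B^1\to Z^1\to H^1\to 0$. Your remark on $\F$-linearity of the bijection from the exercise is a sensible point to flag, though the paper leaves it implicit.
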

\begin{proof}
  This follows from the exact sequence
  \[0\to(\ad\rhobar)^G\to\ad\rhobar\to Z^1(G,\ad\rhobar)\to
  H^1(G,\ad\rhobar)\to 0.\qedhere\]
\end{proof}
In particular, if $d:=\dim_{\F}Z^1(G,\ad\rhobar)$, then we can choose a
surjection $\phi:\cO\llbracket x_1,\dots,x_d\rrbracket \onto
R_{\rhobar}^\square$. Similarly, if $\rhobar$ is absolutely
irreducible, we can choose a surjection $\phi':\cO\llbracket x_1,\dots,x_{d'}\rrbracket \onto
R_{\rhobar}^\univ$, where $d':=\dim_{\F} H^1(G,\ad\rhobar)$.
\begin{lem}
  If $J=\ker\phi$ or $J=\ker\phi'$, then there is an injection
  $\Hom_\F(J/\m J,\F)\into H^2(G,\ad\rhobar)$, where $\m$ denotes the
  maximal ideal of $\cO\llbracket x_1,\dots,x_d\rrbracket $ or
  $\cO\llbracket x_1,\dots,x_{d'}\rrbracket $ respectively.
\end{lem}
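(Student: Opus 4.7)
The plan is to use obstruction theory for the small extension $0 \to J/\m J \to R/\m J \to R_{\rhobar}^\square \to 0$ to construct the map, and to prove injectivity via the minimality of the presentation $\phi$. I will treat the $\square$-case; the deformation case is essentially identical, replacing lifts by deformations throughout. Since $J/\m J$ is square-zero in $R/\m J$, one can choose a continuous set-theoretic lift $\widetilde\rho : G \to \GL_n(R/\m J)$ of the universal lifting $\rho^\square$, and its defect
$$c(g,h) := \widetilde\rho(gh)\widetilde\rho(h)^{-1}\widetilde\rho(g)^{-1} - 1 \in M_n(J/\m J) = \ad\rhobar \otimes_\F J/\m J$$
is a continuous $2$-cocycle whose cohomology class is independent of $\widetilde\rho$. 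Using the identification $H^2(G, \ad\rhobar \otimes_\F J/\m J) = H^2(G,\ad\rhobar) \otimes_\F J/\m J$, the class $[c]$ corresponds to an $\F$-linear map $\alpha : \Hom_\F(J/\m J, \F) \to H^2(G,\ad\rhobar)$; concretely, $\alpha(f)$ is the obstruction to lifting $\rho^\square$ through the small extension $0 \to \F \to R/K_f \to R_{\rhobar}^\square \to 0$, where $K_f \subseteq J$ denotes the preimage of $\ker f$ under $J \twoheadrightarrow J/\m J$. This $\alpha$ is the desired injection.

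To prove injectivity, I would take $f \neq 0$ with $\alpha(f) = 0$ and derive a contradiction. By the definition of $\alpha(f)$ there is a lift $\widetilde\rho : G \to \GL_n(R/K_f)$ of $\rho^\square$; viewing this as a lift of $\rhobar$, the universal property of $R_{\rhobar}^\square$ yields a unique $\cO$-algebra map $\xi : R_{\rhobar}^\square \to R/K_f$ with $\xi_*\rho^\square = \widetilde\rho$. Composing $\xi$ with $R/K_f \twoheadrightarrow R_{\rhobar}^\square$ gives an endomorphism of $R_{\rhobar}^\square$ under which $\rho^\square$ pulls back to itself, which by uniqueness is the identity, so $\xi$ is a section. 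Since $R$ is a power series ring over $\cO$, I can lift $\xi \circ \phi : R \to R/K_f$ to a continuous $\cO$-algebra endomorphism $\theta : R \to R$ by choosing any $\theta(x_i) \in R$ with image $\xi(\phi(x_i)) \in R/K_f$. By construction $\phi \circ \theta = \phi$, so $\theta(J) \subseteq J$; and the reduction of $\theta$ modulo $K_f$ equals $\xi \circ \phi$, which annihilates $J$, so in fact $\theta(J) \subseteq K_f$.

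The hard part is to show that $\theta$ induces the identity on $J/\m J$: once this is established, $J = \theta(J) + \m J \subseteq K_f$, forcing $K_f = J$ and $f = 0$, the desired contradiction. This step is where the minimality of $\phi$ is essential. Because $d = \dim_\F Z^1(G, \ad\rhobar)$ equals the dimension of the reduced cotangent space of $R_{\rhobar}^\square$ (by the corollary proved just before the lemma), $\phi$ induces an isomorphism on reduced cotangent spaces, equivalently $J \subseteq \m^2 + \lambda R$. Writing any $j \in J$ as $j_1 + \lambda j_2$ with $j_1 \in \m^2$ and $j_2 \in R$, and using $\theta(x_i) - x_i \in J$ together with $\theta(a) \equiv a \pmod{J}$ for every $a \in R$, one expands $\theta(j_1)$ term by term and writes $\theta(\lambda j_2) = \lambda \theta(j_2)$, checking that every discrepancy lies in $\m \cdot J = \m J$. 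This gives $\theta(j) \equiv j \pmod{\m J}$ for all $j \in J$ and completes the proof.
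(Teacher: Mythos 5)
Your proposal is correct and follows essentially the same obstruction-theoretic argument as Mazur's Proposition 2 (the reference cited in the paper): construct the map via the obstruction cocycle for the square-zero extension $R/\m J \to R_{\rhobar}^\square$, then prove injectivity by lifting the resulting section to an endomorphism $\theta$ of $R$ and using minimality of the presentation (i.e.\ $J \subseteq \m^2 + \lambda R$) to show $\theta$ is the identity on $J/\m J$. The computation in your "hard part" — expanding $\theta(j_1)-j_1$ for $j_1 \in \m^2$ and $\lambda(\theta(j_2)-j_2)$ term by term and observing each difference lands in $\m J$ — is exactly the right way to close the argument.
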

\begin{proof}
  See the proof of Proposition 2 of~\cite{MR1012172}.
\end{proof}
\begin{cor}\label{cor:dimension of unrestricted deformation rings}
If $H^2(G,\ad\rhobar)=0$, then $R_{\rhobar}^\square$ is formally
smooth of relative dimension $\dim_{\F} Z^1(G,\ad\rhobar)$ over $\cO$.

  In any case, the Krull dimension of $R_{\rhobar}^\square$ is at
  least \[1+n^2-\dim_{\F} H^0(G,\ad\rhobar)+\dim_{\F} H^1(G,\ad\rhobar)
  -\dim_{\F} H^2(G,\ad\rhobar).\]If $\rhobar$ is absolutely irreducible,
  then the Krull dimension of $R_{\rhobar}^\univ$ is at
  least \[1+\dim_{\F} H^1(G,\ad\rhobar)-\dim_{\F} H^2(G,\ad\rhobar).\]
\end{cor}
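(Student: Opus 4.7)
The plan is to exploit the surjections $\phi\colon\cO\llbracket x_1,\dots,x_d\rrbracket \onto R_{\rhobar}^\square$ and $\phi'\colon\cO\llbracket x_1,\dots,x_{d'}\rrbracket \onto R_{\rhobar}^\univ$ together with the injection $\Hom_\F(J/\m J,\F)\hookrightarrow H^2(G,\ad\rhobar)$ from the previous lemma, and deduce everything from standard commutative algebra (Nakayama and Krull's height theorem).

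For the first assertion, suppose $H^2(G,\ad\rhobar)=0$. Then the lemma gives $\Hom_\F(J/\m J,\F)=0$, so $J/\m J=0$. Since $\cO\llbracket x_1,\dots,x_d\rrbracket$ is Noetherian, $J$ is finitely generated, and Nakayama's lemma forces $J=0$. Hence $\phi$ is an isomorphism, and $R_{\rhobar}^\square\cong\cO\llbracket x_1,\dots,x_d\rrbracket$ is formally smooth of relative dimension $d=\dim_\F Z^1(G,\ad\rhobar)$ over~$\cO$.

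For the general dimension bound, observe that since $\Hom_\F(J/\m J,\F)\hookrightarrow H^2(G,\ad\rhobar)$, the ideal $J$ is generated (as an ideal in the Noetherian local ring $\cO\llbracket x_1,\dots,x_d\rrbracket$) by at most $h^2:=\dim_\F H^2(G,\ad\rhobar)$ elements, by a standard application of Nakayama. Since $\cO\llbracket x_1,\dots,x_d\rrbracket$ has Krull dimension $d+1$, Krull's height theorem gives
\[
\dim R_{\rhobar}^\square \;=\; \dim \bigl(\cO\llbracket x_1,\dots,x_d\rrbracket /J\bigr) \;\geq\; (d+1)-h^2.
\]
Finally, from the exact sequence
\[
0\to(\ad\rhobar)^G\to\ad\rhobar\to Z^1(G,\ad\rhobar)\to H^1(G,\ad\rhobar)\to 0
\]
used in the proof of the previous corollary, we obtain
\[
d \;=\; \dim_\F Z^1(G,\ad\rhobar) \;=\; n^2 - \dim_\F H^0(G,\ad\rhobar) + \dim_\F H^1(G,\ad\rhobar).
\]
Substituting yields the claimed lower bound for $\dim R_{\rhobar}^\square$. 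The absolutely irreducible case is identical, replacing $\phi$ by $\phi'$ and $d$ by $d'=\dim_\F H^1(G,\ad\rhobar)$; here the $n^2-\dim_\F H^0$ term disappears because one is working with deformations rather than liftings.

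There is no real obstacle; the only subtlety worth flagging is the passage from ``$\Hom_\F(J/\m J,\F)$ has dimension at most $h^2$'' to ``$J$ is generated by at most $h^2$ elements,'' which is just the usual Nakayama argument for the minimal number of generators of a finitely generated module over a local ring.
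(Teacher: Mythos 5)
Your proof is correct and is exactly the argument the paper leaves implicit: the corollary is stated immediately after the lemma giving the injection $\Hom_\F(J/\m J,\F)\into H^2(G,\ad\rhobar)$, with no written proof, and the intended deduction is precisely yours — Nakayama gives $J=0$ when $H^2$ vanishes, and in general the minimal number of generators of $J$ is $\dim_\F J/\m J\le\dim_\F H^2$, so Krull's height theorem (applied to the power series ring of dimension $d+1$, resp.\ $d'+1$) gives the lower bound, and the exact sequence $0\to(\ad\rhobar)^G\to\ad\rhobar\to Z^1\to H^1\to 0$ translates $d$ into the displayed expression. No gaps.
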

\subsection{Deformation conditions}In practice, we frequently want to
impose additional conditions on the liftings and deformations we
consider. For example, if we are trying to prove the Fontaine--Mazur
conjecture, we would like to be able to restrict to global deformations
which are geometric. There are various ways in
which to impose extra conditions; we will use the formalism of
\emph{deformation problems} introduced in \cite{cht}.
\begin{defn}
  By a \emph{deformation problem} $\cD$ we mean a collection of
  liftings $(R,\rho)$ of $(\F,\rhobar)$ (with $R$ an object of
  $\cC_\cO$), satisfying the following properties.
  \begin{itemize}
  \item $(\F,\rhobar)\in\cD$.
  \item If $f:R\to S$ is a morphism in $\cC_\cO$ and $(R,\rho)\in\cD$,
    then $(S,f\circ\rho)\in\cD$.
  \item If $f:R\into S$ is an injective morphism in $\cC_\cO$ then $(R,\rho)\in\cD$
    if and only if $(S,f\circ\rho)\in\cD$.
  \item Suppose that $R_1,R_2\in\ob\cC_\cO$ and $I_1,I_2$ are closed ideals
    of 
    $R_1,R_2$ respectively such that there is an isomorphism
    $f:R_1/I_1\isoto R_2/I_2$. Suppose also that
    $(R_1,\rho_1),(R_2,\rho_2)\in\cD$, and that $f(\rho_1\text{ mod
    }I_1)=\rho_2\text{ mod }I_2$.

Then $(\{(a,b)\in R_1\oplus R_2: f(a\text{ mod }I_1)=b\text{ mod }I_2\},\rho_1\oplus\rho_2)\in\cD$.
\item If $(R,\rho)$ is a lifting of $(\F,\rhobar)$ and $I_1\supset
  I_2\supset\cdots$ is a sequence of ideals of $R$ with
  $\cap_jI_j=0$, and $(R/I_j,\rho\text{ mod }I_j)\in\cD$ for all
  $j$, then $(R,\rho)\in\cD$.
\item If $(R,\rho)\in\cD$ and $a\in\ker(\GL_n(R)\to\GL_n(\F))$, then
  $(R,a\rho a^{-1})\in\cD$.
  \end{itemize}

\end{defn}
In practice, when we want to impose a condition on our deformations,
it will be easy to see that it satisfies these requirements. (An
exception is that these properties are hard to check for certain
conditions arising in $p$-adic Hodge theory, but we won't need those
conditions in these notes.)

The relationship of this definition to the universal lifting ring is
as follows. Note that each element
$a\in\ker(\GL_n(R_{\rhobar}^\square)\to\GL_n(\F))$ acts on
$R_{\rhobar}^\square$, via the universal property and by
sending $\rho^\square$ to $a^{-1}\rho^\square a$. [Warning: this
\emph{isn't} a group action, though!]
\begin{lem}
  (1) If $\cD$ is a deformation problem then there is a
  $\ker(\GL_n(R_{\rhobar}^\square)\to\GL_n(\F))$-invariant ideal
  $I(\cD)$ of $R_{\rhobar}^\square$ such that $(R,\rho)\in\cD$ if and
  only if the map $R_{\rhobar}^\square\to R$ induced by $\rho$ factors
  through the quotient $R_{\rhobar}^\square/I(\cD)$.

(2) Let $\tL(\cD)\subseteq
Z^1(G,\ad\rhobar)\cong\Hom(\m_{R_{\rhobar}^\square}/(\lambda,\m_{R_{\rhobar}^\square}^2),\F)$
denote the annihilator of the image of $I(\cD)$ in
$\m_{R_{\rhobar}^\square}/(\lambda,\m_{R_{\rhobar}^\square}^2)$.

Then $\tL(\cD)$ is the preimage of some subspace $L(\cD)\subseteq
H^1(G,\ad\rhobar)$.

(3) If $I$ is a
$\ker(\GL_n(R_{\rhobar}^\square)\to\GL_n(\F))$-invariant ideal of
$R_{\rhobar}^\square$ with $\sqrt{I}=I$ and
$I\ne\m_{R_{\rhobar}^\square}$, then
\[\cD(I):=\{(R,\rho):R_{\rhobar}^\square\to R\text{ factors through
}R_{\rhobar}^\square/I\}\] is a deformation problem. Furthermore, we
have $I(\cD(I))=I$ and $\cD(I(\cD))=\cD$.
\end{lem}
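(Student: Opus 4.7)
The plan is to address the three parts in order, each building on its predecessors.

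For (1), I define $\mathcal{I}$ to be the set of ideals $J \subseteq R_{\rhobar}^\square$ such that $(R_{\rhobar}^\square/J, \rho^\square \bmod J) \in \cD$, and set $I(\cD) := \bigcap_{J \in \mathcal{I}} J$. The main task is to show $I(\cD) \in \mathcal{I}$. First, finite intersections stay in $\mathcal{I}$: $R_{\rhobar}^\square/(J_1 \cap J_2)$ injects into the fibre product $R_{\rhobar}^\square/J_1 \times_{R_{\rhobar}^\square/(J_1+J_2)} R_{\rhobar}^\square/J_2$, which lies in $\cD$ by the fibre-product axiom (the two quotients of $\rho^\square$ agree modulo $J_1 + J_2$), and the injectivity axiom then descends this to the subring. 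To reduce an arbitrary intersection to a decreasing sequence I exploit the complete local Noetherian structure: for each $n$, the Artinian quotient $R_{\rhobar}^\square/\m^n$ has DCC, so there is a finite subfamily $S_n \subseteq \mathcal{I}$ with $\bigcap_{J \in S_n}(J + \m^n) = \bigcap_{J \in \mathcal{I}}(J + \m^n)$. Setting $K_n := \bigcap_{J \in S_1 \cup \cdots \cup S_n} J$ yields a decreasing sequence in $\mathcal{I}$, and using Krull's intersection theorem inside each $R_{\rhobar}^\square/J$ one checks $\bigcap_n K_n = I(\cD)$, so the sequence axiom gives $I(\cD) \in \mathcal{I}$. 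The claimed equivalence is now immediate: if $(R, \rho) \in \cD$ with associated map $\pi\colon R_{\rhobar}^\square \to R$, factor $\pi$ through $R_{\rhobar}^\square/\ker\pi \hookrightarrow R$ and apply the injectivity axiom to get $\ker\pi \in \mathcal{I}$, hence $\ker\pi \supseteq I(\cD)$; conversely, the base-change axiom propagates $R_{\rhobar}^\square/I(\cD) \in \cD$ forward to $R$. Finally, $\ker$-invariance of $I(\cD)$ follows from the conjugation axiom, since the action of $\ker(\GL_n(R_{\rhobar}^\square)\to\GL_n(\F))$ on $R_{\rhobar}^\square$ (via the universal property) permutes $\mathcal{I}$.

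For (2), the annihilator $\tL(\cD)$ is automatically an $\F$-subspace of the cotangent space, so the real content is that $\tL(\cD)$ contains the coboundaries $B^1(G, \ad\rhobar)$. Under the identification of cocycles with liftings to $\F[\varepsilon]/(\varepsilon^2)$, the subspace $\tL(\cD)$ corresponds to those liftings that factor through $R_{\rhobar}^\square/I(\cD)$, which by (1) is exactly the set of liftings lying in $\cD$. The coboundary attached to $X \in \ad\rhobar$ corresponds to the lifting $(1+X\varepsilon)\rhobar(g)(1+X\varepsilon)^{-1}$; the trivial lifting $\rhobar$ extended to $\F[\varepsilon]$ lies in $\cD$ by base-change from $(\F,\rhobar) \in \cD$, and $1+X\varepsilon \in \ker(\GL_n(\F[\varepsilon])\to\GL_n(\F))$, so the conjugation axiom places the coboundary lifting in $\cD$. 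Thus $B^1 \subseteq \tL(\cD)$, and $L(\cD) := \tL(\cD)/B^1 \subseteq H^1(G, \ad\rhobar)$ is the subspace whose preimage is $\tL(\cD)$.

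For (3), I would verify the six axioms for $\cD(I)$; each follows directly by tracing whether the associated ring homomorphism $R_{\rhobar}^\square \to R$ factors through $R_{\rhobar}^\square/I$, a property stable under base change, subrings (since the kernel can only grow), fibre products, and inverse limits along $\bigcap I_j = 0$. The delicate axiom is conjugation, where the $\ker$-invariance of $I$ enters: given $(R,\rho) \in \cD(I)$ with map $\pi\colon R_{\rhobar}^\square \to R$ and $a \in \ker(\GL_n(R)\to\GL_n(\F))$, lift $a$ entry-wise to $\tilde a \in \ker(\GL_n(R_{\rhobar}^\square)\to\GL_n(\F))$; then by universality the map associated to $a\rho a^{-1}$ is $\pi \circ \phi_{\tilde a}$, where $\phi_{\tilde a}$ is the ring automorphism of $R_{\rhobar}^\square$ induced by $\tilde a$, and $\phi_{\tilde a}(I) = I$ shows that this composite still kills $I$. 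Once $\cD(I)$ is known to be a deformation problem, $I(\cD(I)) = I$ follows because the associated collection $\mathcal{I}$ for $\cD(I)$ is exactly $\{J : J \supseteq I\}$, with minimum $I$, while $\cD(I(\cD)) = \cD$ is precisely the characterisation proved in (1). I expect the main obstacle to be the intersection argument in (1): a naive ``take the infimum'' approach fails because in a Noetherian ring (already in $\Z$) arbitrary intersections need not be finite sub-intersections, so the Artinian truncation combined with Krull's intersection theorem is essential.
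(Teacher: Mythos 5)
The paper does not prove this lemma directly, instead citing Lemma~2.2.3 of~\cite{cht} and Lemma~3.2 of~\cite{blght}; you are supplying a complete argument where the paper defers, and parts (1) and (2) are essentially correct. In (1), your reduction of the infinite intersection $\bigcap_{J\in\mathcal{I}}J$ to a decreasing chain --- passing through the Artinian quotients $R^\square_{\rhobar}/\m^n$ and invoking Krull's intersection theorem --- is exactly the right way to handle the obstacle you flag (arbitrary intersections in a Noetherian ring need not reduce to finite ones), and the finite-intersection step via the fibre-product and injectivity axioms, together with the characterisation via base-change, is correct. In (2), realising a coboundary $\partial X$ as the cocycle of $(1\pm X\varepsilon)\rhobar(1\mp X\varepsilon)$ and applying the base-change and conjugation axioms for $\cD$ directly is a clean route to $B^1\subseteq\tL(\cD)$, a bit different in flavour from the hint the paper gives (which goes via $\ker$-invariance of $I(\cD)$) but equally valid.

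The conjugation axiom in (3) has a genuine gap. You propose to lift $a\in\ker(\GL_n(R)\to\GL_n(\F))$ entry-wise to $\tilde a\in\ker(\GL_n(R^\square_{\rhobar})\to\GL_n(\F))$, but the structure map $\pi\colon R^\square_{\rhobar}\to R$ induced by $\rho$ is in general \emph{not} surjective --- its image is only the closed $\cO$-subalgebra generated by the matrix entries of $\rho(g)$ --- so the entries $a_{ij}$ need not lie in $\operatorname{im}(\pi)$ and no such lift $\tilde a$ need exist. The conjugation axiom must hold for \emph{every} $a$ in the kernel, not merely those that happen to lift. (A smaller point: $\phi_{\tilde a}$ is only a ring endomorphism, not an automorphism --- the paper explicitly warns that this action is not a group action --- and elementwise $\ker$-invariance gives $\phi_{\tilde a}(I)\subseteq I$, not $\phi_{\tilde a}(I)=I$; the inclusion suffices for your purpose, but the equality is unjustified.) To repair the argument one should work universally: set $B:=R^\square_{\rhobar}\llbracket Y_{ij}\rrbracket$, let $\alpha\colon R^\square_{\rhobar}\to B$ be the map classifying $(1+Y)^{-1}\rho^\square(1+Y)$, and observe that the map classifying $a\rho a^{-1}$ factors as $R^\square_{\rhobar}\xrightarrow{\alpha}B\to R$, the second arrow extending $\pi$ by $Y_{ij}\mapsto (a^{-1})_{ij}-\delta_{ij}$; this composite kills $I$ once one knows $\alpha(I)\subseteq I\cdot B$. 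This ``universal'' form of $\ker$-invariance is what the axiom really requires, and it is not a formal consequence of the elementwise version your proof relies on; making it explicit (or adopting it as the definition of $\ker$-invariance) is the missing ingredient in part (3).
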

\begin{proof}
  See Lemma 2.2.3 of~\cite{cht} and Lemma 3.2 of~\cite{blght} (and for
  (2), use that $I(\cD)$ is $\ker(\GL_n(R_{\rhobar}^\square)\to\GL_n(\F))$-invariant).
\end{proof}
\subsection{Fixing determinants}For technical reasons, we will want to fix the
determinants of our Galois representations (see Remark 5.12 of~\cite{CalGer}). 
To this end, let
$\chi:G\to\cO^\times$ be a continuous homomorphism such that
$\chi\text{ mod }\lambda=\det\rhobar$. Then it makes sense to ask that
a lifting has determinant $\chi$, and we can define a universal
lifting ring $R_{\rhobar,\chi}^\square$ for lifts with determinant
$\chi$, and when $\rhobar$ is Schur, a universal fixed determinant
deformation ring $R_{\rhobar,\chi}^\univ$.
\begin{exercise}
  Check that the material developed in the previous section goes over
  unchanged, except that $\ad\rhobar$ needs to be replaced with
  $\ad^0\rhobar:=\{x\in\ad\rhobar:\tr x =0\}$. 
\end{exercise}
Note that since we are assuming throughout that $p\nmid n$, $\ad^0\rhobar$ is a direct summand of $\ad\rhobar$ (as a $G$-representation).
\subsection{Global deformations with local conditions}Now fix a finite
set $S$, and for each $v\in S$, a profinite group $G_v$ satisfying
$\Phi_p$, together with a continuous homomorphism $G_v\to G$, and a
deformation problem $\cD_v$ for $\rhobar|_{G_v}$. [In applications,
$G$ will be a global Galois group, and the $G_v$ will be decomposition
groups at finite places.]

Also fix $\chi:G\to\cO^\times$, a continuous homomorphism such that
$\chi\text{ mod }\lambda=\det\rhobar$. Assume that $\rhobar$ is
absolutely irreducible, and fix some subset $T\subseteq S$.
\begin{defn}
  Fix $A\in\ob\cC_\cO$. A {\em $T$-framed deformation} of $\rhobar$ of {\em type}
  $\cS:=(S,\{\cD_v\}_{v\in S},\chi)$ to $A$ is an equivalence class of
  tuples $(\rho,\{\alpha_v\}_{v\in T})$, where $\rho:G\to\GL_n(A)$ is a
  lift of $\rhobar$ such that $\det\rho=\chi$ and
  $\rho|_{G_v}\in\cD_v$ for all $v\in S$, and $\alpha_v$ is an element
  of $\ker(\GL_n(A)\to\GL_n(\F))$. 

  The equivalence relation is defined by decreeing that for each
  $\beta\in\ker(\GL_n(A)\to\GL_n(\F))$, we have
  $(\rho,\{\alpha_v\}_{v\in
    T})\sim(\beta\rho\beta^{-1},\{\beta\alpha_v\}_{v\in T})$.
\end{defn}
The point of considering $T$-framed deformations is that it allows us
to study absolutely irreducible representations $\rhobar$ for which
some of the $\rhobar|_{G_v}$ are reducible, because if $(\rho,\{\alpha_v\}_{v\in
    T})$ is a $T$-framed deformation of type $\cS$, then
  $\alpha_v^{-1}\rho|_{G_v}\alpha_v$ is a well-defined element of
  $\cD_v$ (independent of the choice of representative of the
  equivalence class). The following lemma should be unsurprising.
  \begin{lem}
    The functor $\cC_\cO\to\Sets$ sending $A$ to the set of $T$-framed
    deformations of $\rhobar$ of type $\cS$ is represented by a
    universal object $\rho^{\square_T}:G\to\GL_n(R_{\cS}^{\square_T})$.
  \end{lem}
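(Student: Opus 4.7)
The plan is to construct $R^{\square_T}_{\cS}$ explicitly, in two stages: first build the universal lifting ring of type $\cS$ (with no framings), then add the framings as free parameters modulo the (scalar) residual symmetry.

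First, I would build the universal lifting ring $R^{\square}_{\cS}$ which represents the functor $A\mapsto\{\rho:G\to\GL_n(A) \text{ lifting } \rhobar,\ \det\rho=\chi,\ \rho|_{G_v}\in\cD_v\ \forall v\in S\}$. Start from the universal fixed-determinant lifting ring $R^\square_{\rhobar,\chi}$, whose existence was recorded earlier. For each $v\in S$, restriction of the universal lift to $G_v$ classifies a lift of $\rhobar|_{G_v}$, giving a map $\varphi_v:R^\square_{\rhobar|_{G_v},\chi|_{G_v}}\to R^\square_{\rhobar,\chi}$. Define $J_v:=\varphi_v(I(\cD_v))\cdot R^\square_{\rhobar,\chi}$, and set $R^\square_{\cS}:=R^\square_{\rhobar,\chi}/\sum_{v\in S}J_v$. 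The universal property of $I(\cD_v)$ (from the deformation problem formalism) immediately shows that this quotient represents the required functor of lifts of type $\cS$.

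Next, I would handle the framings. If $T=\emptyset$, the equivalence relation collapses to ordinary conjugation, so the functor is simply the functor of deformations of type $\cS$, representable (using the absolute irreducibility of $\rhobar$) by the standard universal deformation ring of type $\cS$, analogous to Lemma 2.2.3 of~\cite{cht}. If $T\ne\emptyset$, pick any $v_0\in T$. I claim every equivalence class contains a unique representative with $\alpha_{v_0}=1$: given $(\rho,\{\alpha_v\})$, the element $\beta:=\alpha_{v_0}^{-1}\in\ker(\GL_n(A)\to\GL_n(\F))$ produces a representative normalised at $v_0$; and if $(\rho,1,\{\alpha_v\}_{v\ne v_0})\sim(\rho',1,\{\alpha'_v\}_{v\ne v_0})$ via $\beta$, then the condition $\beta\cdot 1=1$ forces $\beta=1$. (Note that the use of $\beta=\alpha_{v_0}^{-1}$ is purely a change of $A$-basis, so the conditions of type $\cS$ are preserved.) Consequently the $T$-framed functor is naturally isomorphic to the product functor
\[ A\mapsto\{\text{lifts of }\rhobar\text{ of type }\cS\text{ to }A\}\times\prod_{v\in T\setminus\{v_0\}}\ker(\GL_n(A)\to\GL_n(\F)). \]

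Finally, since the functor $A\mapsto\ker(\GL_n(A)\to\GL_n(\F))$ is representable by $\cO\llbracket W_{i,j}\rrbracket_{i,j=1,\dots,n}$ with universal element $1+(W_{i,j})$, the displayed product functor is represented by
\[ R^{\square_T}_{\cS}:=R^\square_{\cS}\llbracket W_{v,i,j}\rrbracket_{v\in T\setminus\{v_0\},\ 1\le i,j\le n}, \]
with universal tuple given by the pushforward of the universal lift of type $\cS$ together with $\alpha_{v_0}=1$ and $\alpha_v=1_n+(W_{v,i,j})$ for $v\in T\setminus\{v_0\}$. There is no serious obstacle in this argument; the only point requiring care is checking that the normalisation $\alpha_{v_0}=1$ really does produce a unique representative, which is a direct consequence of how the equivalence relation is defined (and is the precise place where the absolute irreducibility of $\rhobar$, used via Lemma~\ref{lem: carayol traces lemma} to rule out non-trivial scalar centralisers elsewhere in the theory, is not actually needed for the framed statement because the framing at $v_0$ itself pins down $\beta$).
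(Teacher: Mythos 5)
Your proof is correct and considerably more explicit than the paper's, which simply cites Prop.\ 2.2.9 of~\cite{cht}. The key step — normalising $\alpha_{v_0}=1$ so that each equivalence class has a \emph{unique} representative, turning the $T$-framed functor into the product of the lifting functor of type $\cS$ with $\#T-1$ copies of $A\mapsto\ker(\GL_n(A)\to\GL_n(\F))$ — is exactly right, and your observation that this makes absolute irreducibility of $\rhobar$ unnecessary once $T\ne\emptyset$ is a genuine feature of the framed formalism (the lemma carries that hypothesis because it also covers $T=\emptyset$, where $R_{\cS}^{\square_\emptyset}=R_{\cS}^\univ$). One small point worth spelling out: conjugation by $\beta\in\ker(\GL_n(A)\to\GL_n(\F))$ preserves type $\cS$ because $\det$ is conjugation-invariant and because closure under such conjugation is the last axiom in the definition of a deformation problem. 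For comparison with the paper's later description in the patching section, where $R_{\cS}^{\square_T}\cong R_{\cS}^\univ\cotimes_\cO\cJ$ with $\cJ=\cO\llbracket X_{v,i,j}\rrbracket_{v\in T}/(X_{v_0,1,1})$: that presentation normalises only the $(1,1)$-entry of $\alpha_{v_0}$, killing just the scalar ambiguity, and pins down a representative lift via $R_{\cS}^\square\cong R_{\cS}^\univ\llbracket X_{i,j}\rrbracket/(X_{1,1})$, which does invoke absolute irreducibility. Your stronger normalisation $\alpha_{v_0}=1$ kills the entire conjugation, obviating that step; the two answers $R_{\cS}^\square\llbracket W_{v,i,j}\rrbracket_{v\in T\setminus\{v_0\}}$ and $R_{\cS}^\univ\cotimes_\cO\cJ$ both have $n^2\#T-1$ free variables over $R_{\cS}^\univ$ and are the same ring in different coordinates.
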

  \begin{proof}
    See Proposition 2.2.9 of~\cite{cht}.
  \end{proof}
If $T=\emptyset$ then we will write $R_{\cS}^\univ$ for $R_{\cS}^{\square_T}$.
  \subsection{Presenting global deformation rings over local lifting
    rings}Continue to use the notation of the previous
  subsection. Since $\alpha_v^{-1}\rho^{\square_T}|_{G_v}\alpha_v$ is a
  well-defined element of $\cD_v$, we have a tautological homomorphism
  $R_{\rhobar|_{G_v},\chi}^\square/I(\cD_v)\to R_{\cS}^{\square_T}$. Define \[R_{\cS,T}^\loc:=\widehat{\otimes}_{v\in
    T}\left(R_{\rhobar|_{G_v},\chi}^\square/I(\cD_v)\right).\]Then we
have a natural map $R_{\cS,T}^\loc\to R_{\cS}^{\square_T}$. 

We now generalise Corollary~\ref{cor:dimension of unrestricted
  deformation rings} by considering presentations of
$R_{\cS}^{\square_T}$ over $R_{\cS,T}^\loc$. In order to compute how
many variables are needed to present $R_{\cS}^{\square_T}$ over
$R_{\cS,T}^\loc$, we must compute $\dim_{\F}
  \m_{R_{\cS}^{\square_T}}/(\m_{R_{\cS}^{\square_T}}^2,\m_{R_{\cS,T}^\loc},\lambda)$. Unsurprisingly,
  in order to compute this, we will compute a certain $H^1$.

We define a complex as follows. As usual, given a group $G$ and an
${\F}[G]$-module $M$, we let $C^i(G,M)$ be the space of functions $G^i\to
M$, and we let $\partial:C^i(G,M)\to C^{i+1}(G,M)$ be the usual coboundary
map. We define a complex $C^i_{\cS,T,\loc}(G,\ad^0\rhobar)$
by \[C^0_{\cS,T,\loc}(G,\ad^0\rhobar)=\oplus_{v\in
  T}C^0(G_{v},\ad\rhobar)\oplus\oplus_{v\in S\setminus T}0,\] \[C^1_{\cS,T,\loc}(G,\ad^0\rhobar)=\oplus_{v\in
  T}C^1(G_v,\ad^0\rhobar)\oplus\oplus_{v\in S\setminus
  T}C^1(G_v,\ad^0\rhobar)/\tL(\cD_v),\] and for $i\ge 2$, \[C^i_{\cS,T,\loc}(G,\ad^0\rhobar)=\oplus_{v\in
   S}C^i(G_v,\ad^0\rhobar)
.\]

Let $C^0_0(G,\ad^0\rhobar):=C^0(G,\ad\rhobar)$, and set
$C^i_0(G,\ad^0\rhobar)=C^i(G,\ad^0\rhobar)$ for $i>0$. Then we let $H^i_{\cS,T}(G,\ad^0\rhobar)$ denote the cohomology
of the
complex \[C^i_{\cS,T}(G,\ad^0\rhobar):=C_0^i(G,\ad^0\rhobar)\oplus C^{i-1}_{\cS,T,\loc}(G,\ad^0\rhobar)\]where the coboundary
  map is given by \[(\phi,(\psi_v))\mapsto(\partial
  \phi,(\phi|_{G_v}-\partial\psi_v)).\] 
  Then we have an exact sequence of complexes \[0\to C^{i-1}_{\cS,T,\loc}(G,\ad^0\rhobar) \to
C^i_{\cS,T}(G,\ad^0\rhobar)\to C^i_0(G,\ad^0\rhobar)\to 0,\]and the
corresponding long exact sequence in cohomology is

\[\begin{tikzpicture}[descr/.style={fill=white,inner sep=1.5pt}]
        \matrix (m) [
            matrix of math nodes,
            row sep=1em,
            column sep=2.5em,
            text height=1.5ex, text depth=0.25ex
        ]
        { 0 & H^0_{\cS,T}(G,\ad^0\rhobar) & H^0(G,\ad\rhobar) &
          \oplus_{v\in T}H^0(G_v,\ad\rhobar) \\
            & H^1_{\cS,T}(G,\ad^0\rhobar) & H^1(G,\ad^0\rhobar) &
          \oplus_{v\in T}H^1(G_v,\ad^0\rhobar)\oplus_{v\in S\setminus T}H^1(G_v,\ad^0\rhobar)/L(\cD_v) \\
            & H^2_{\cS,T}(G,\ad^0\rhobar) & H^2(G,\ad^0\rhobar) &
          \oplus_{v\in S}H^2(G_v,\ad^0\rhobar)  
          \\
                       &H^3_{\cS,T}(G,\ad^0\rhobar) &  \ldots\ldots\ldots\\
        };

        \path[overlay,->, font=\scriptsize,>=latex]
        (m-1-1) edge (m-1-2)
        (m-1-2) edge (m-1-3)
        (m-1-3) edge (m-1-4)
        (m-1-4) edge[out=355,in=175]  (m-2-2)
        (m-2-2) edge (m-2-3)
        (m-2-3) edge (m-2-4)
        (m-2-4) edge[out=355,in=175]  (m-3-2)
        (m-3-2) edge (m-3-3)
        (m-3-3) edge (m-3-4)
        (m-3-4) edge[out=355,in=175] (m-4-2)
        (m-4-2) edge (m-4-3);
\end{tikzpicture}\]
Taking Euler characteristics, we see
that if we define the negative Euler characteristic~ $\chi$ by $\chi(G,\ad^0\rhobar)=\sum_i(-1)^{i-1}\dim_{\F}
H^i(G,\ad^0\rhobar),$ we
have \begin{align*}\chi_{\cS,T}(G,\ad^0\rhobar)=&-1+\chi(G,\ad^0\rhobar)-\sum_{v\in
    S}\chi(G_v,\ad^0\rhobar)+\sum_{v\in T} (\dim_{\F}H^0(G_v,\ad\rhobar)-\dim_{\F}H^0(G_v,\ad^0\rhobar))\\&+\sum_{v\in S\setminus T}\left(\dim_{\F}
  L(\cD_v)-\dim_{\F} H^0(G_v,\ad^0\rhobar)\right).\end{align*}
From now on for the rest of the notes, we specialise to the case that $F$ is a number field, $S$ is a
finite set of finite places of $F$ including all the places lying over
$p$, and we set $G=G_{F,S}$, $G_v=G_{F_v}$ for $v\in S$. (Since
$G=G_{F,S}$, note in particular that all deformations we are
considering are unramified outside of $S$.) We then
employ standard results on Galois cohomology that can be found
in~\cite{MR2261462}. In particular, we have $H^i(G_{F_v},\ad\rhobar)=0$
if $i\ge 3$, and \[H^i(G_{F,S},\ad^0\rhobar)\cong\oplus_{v\text{
    real}}H^i(G_{F_v},\ad^0\rhobar)=0\] if $i\ge 3$ (the vanishing
of the local cohomology groups follows as $p>2$, so $G_{F_v}$ has
order coprime to that of $\ad^0\rhobar$). Consequently,
$H^i_{\cS,T}(G_{F,S},\ad^0\rhobar)=0$ if $i>3$.

We now employ the local and global Euler characteristic formulas. For
simplicity, assume from now on that $T$ contains all the places of $S$ lying
over $p$. The
global formula gives \[\chi(G_{F,S},\ad^0\rhobar)=
-\sum_{v|\infty}\dim_{\F}H^0(G_{F_v},\ad^0\rhobar)+[F:\Q](n^2-1),\] and
the local formula gives
\begin{align*}\sum_{v\in
  S}\chi(G_{F_v},\ad^0\rhobar)
&=\sum_{v|p}(n^2-1)[F_v:\Q_p]\\
&=(n^2-1)[F:\Q],
  \end{align*}
so that \[\chi_{\cS,T}(G_{F,S},\ad^0\rhobar)=-1+\#T-\sum_{v|\infty}\dim_{\F}H^0(G_{F_v},\ad^0\rhobar)+\sum_{v\in S\setminus T}\left(\dim_{\F}
  L(\cD_v)-\dim_{\F} H^0(G_{F_v},\ad^0\rhobar)\right).\]

Assume now that $\rhobar$ is absolutely irreducible; then
$H^0(G_{F,S},\ad\rhobar)=\F$, so
$H^0_{\cS,T}(G_{F,S},\ad^0\rhobar)=\F$. To say something sensible
about $H^1_{\cS,T}(G_{F,S},\ad^0\rhobar)$ we still need to control the
$H^2_{\cS,T}$ and $H^3_{\cS,T}$. Firstly, the above long exact
sequence gives us in particular the exact sequence \[\begin{tikzpicture}[descr/.style={fill=white,inner sep=1.5pt}]
        \matrix (m) [
            matrix of math nodes,
            row sep=1em,
            column sep=2.5em,
            text height=1.5ex, text depth=0.25ex
        ]
        { & & H^1(G_{F,S},\ad^0\rhobar) &
          \oplus_{v\in T}H^1(G_{F_v},\ad^0\rhobar)\oplus_{v\in S\setminus T}H^1(G_{F_v},\ad^0\rhobar)/L(\cD_v) \\
            & H^2_{\cS,T}(G_{F,S},\ad^0\rhobar) & H^2(G_{F,S},\ad^0\rhobar) &
          \oplus_{v\in S}H^2(G_{F_v},\ad^0\rhobar)\\
                       &H^3_{\cS,T}(G_{F,S},\ad^0\rhobar) &  0.\\
        };

        \path[overlay,->, font=\scriptsize,>=latex]
        (m-1-3) edge (m-1-4)
        (m-1-4) edge[out=355,in=175]  (m-2-2)
        (m-2-2) edge (m-2-3)
        (m-2-3) edge (m-2-4)
        (m-2-4) edge[out=355,in=175]  (m-3-2)
        (m-3-2) edge (m-3-3);
\end{tikzpicture}\]On the other hand, from the Poitou--Tate exact
sequence~\cite[Prop. 4.10, Chapter 1]{MR2261462} we have an exact sequence \[\begin{tikzpicture}[descr/.style={fill=white,inner sep=1.5pt}]
        \matrix (m) [
            matrix of math nodes,
            row sep=1em,
            column sep=2.5em,
            text height=1.5ex, text depth=0.25ex
        ]
        {  H^1(G_{F,S},\ad^0\rhobar) &
          \oplus_{v\in S}H^1(G_{F_v},\ad^0\rhobar)&H^1(G_{F,S},(\ad^0\rhobar)^\vee(1))^\vee&\\
            H^2(G_{F,S},\ad^0\rhobar) &
          \oplus_{v\in
            S}H^2(G_{F_v},\ad^0\rhobar)&H^0(G_{F,S},(\ad^0\rhobar)^\vee(1))^\vee&
          0.\\
                   };

        \path[overlay,->, font=\scriptsize,>=latex]
       (m-1-1) edge (m-1-2)
        (m-1-2) edge (m-1-3)
        (m-1-3) edge[out=355,in=175]  (m-2-1)
        (m-2-1) edge (m-2-2)
        (m-2-2) edge (m-2-3)
        (m-2-3) edge (m-2-4);
\end{tikzpicture}\]Note that $\ad^0\rhobar$ is self-dual under the
trace pairing, so we can and do identify $(\ad^0\rhobar)^\vee(1)$ and
$(\ad^0\rhobar)(1)$. If we let $L(\cD_v)^\perp\subseteq
H^1(G_{F_v},(\ad^0\rhobar)(1))$ denote the annihilator of $L(\cD_v)$
under the pairing coming from Tate local duality, and we
define \[H^1_{\cS,T}(G_{F,S},(\ad^0\rhobar)(1)):=\ker\left(H^1(G_{F,S},(\ad^0\rhobar)(1))\to\oplus_{v\in
    S\setminus
    T}\left(H^1(G_{F_v},(\ad^0\rhobar)(1))/L(\cD_v)^\perp\right)\right),\]then
we deduce that we have an exact sequence \[\begin{tikzpicture}[descr/.style={fill=white,inner sep=1.5pt}]
        \matrix (m) [
            matrix of math nodes,
            row sep=1em,
            column sep=2.5em,
            text height=1.5ex, text depth=0.25ex
        ]
        { & & H^1(G_{F,S},\ad^0\rhobar) &
          \oplus_{v\in T}H^1(G_{F_v},\ad^0\rhobar)\oplus_{v\in S\setminus T}H^1(G_{F_v},\ad^0\rhobar)/L(\cD_v) \\
            & H^1_{\cS,T}(G_{F,S},\ad^0\rhobar(1))^\vee & H^2(G_{F,S},\ad^0\rhobar) &
          \oplus_{v\in S}H^2(G_{F_v},\ad^0\rhobar)\\
                       &H^0(G_{F,S},\ad^0\rhobar(1))^\vee &  0,\\
        };

        \path[overlay,->, font=\scriptsize,>=latex]
        (m-1-3) edge (m-1-4)
        (m-1-4) edge[out=355,in=175]  (m-2-2)
        (m-2-2) edge (m-2-3)
        (m-2-3) edge (m-2-4)
        (m-2-4) edge[out=355,in=175]  (m-3-2)
        (m-3-2) edge (m-3-3);
\end{tikzpicture}\]and comparing with the diagram above shows
that \[H^3_{\cS,T}(G_{F,S},\ad^0\rhobar)\cong
H^0(G_{F,S},\ad^0\rhobar(1))^\vee,\] \[H^2_{\cS,T}(G_{F,S},\ad^0\rhobar)\cong
H^1_{\cS,T}(G_{F,S},\ad^0\rhobar(1))^\vee.\] 

Combining all of this, we see that \begin{align*}\dim_{\F} H^1_{\cS,T}(G_{F,S},\ad^0\rhobar)=&\#T-\sum_{v|\infty}\dim_{\F}H^0(G_{F_v},\ad^0\rhobar)+\sum_{v\in S\setminus T}\left(\dim_{\F}
  L(\cD_v)-\dim_{\F} H^0(G_{F_v},\ad^0\rhobar)\right)\\
&+\dim_{\F}H^1_{\cS,T}(G_{F,S},\ad^0\rhobar(1))-\dim_{\F}
H^0(G_{F,S},\ad^0\rhobar(1)).\end{align*}

Now, similar arguments to those we used above give us the following
result (see Section 2.2 of~\cite{cht}).
\begin{prop}
  \label{prop:H^1 and H^2 of global over local}(1)There is a canonical
  isomorphism
\[\Hom(\m_{R_{\cS}^{\square_T}}/(\m_{R_{\cS}^{\square_T}}^2,\m_{R_{\cS,T}^\loc},\lambda),{\F})\cong
H^1_{\cS,T}(G_{F,S},\ad^0\rhobar).\]

(2) $R_{\cS}^{\square_T}$ is the quotient of a power series ring in
$\dim_{\F}H^1_{\cS,T}(G_{F,S},\ad^0\rhobar)$ variables over
$R_{\cS,T}^\loc$.

(3) The Krull dimension of $R_{\cS}^\univ$ is at least \[1+\sum_{v\in
  S}\left(\textrm{Krull
    dim.}(R^\square_{\rhobar|_{G_{F_v}},\chi}/I(\cD_v))-n^2\right)-\sum_{v|\infty}\dim_{\F}
  H^0(G_{F_v},\ad^0\rhobar)-\dim_{\F} H^0(G_{F,S},\ad^0\rhobar(1)).\]
\end{prop}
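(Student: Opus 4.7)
The plan is to prove (1) by a direct tangent-space computation, to deduce (2) from (1) via a complete-Noetherian version of Nakayama's lemma, and to obtain (3) by combining (2) with an $H^2_{\cS,T}$-bound on defining relations (analogous to the one behind Corollary~\ref{cor:dimension of unrestricted deformation rings}), the Euler-characteristic identity just derived, and the fact that, for absolutely irreducible $\rhobar$ and nonempty $T$, the forgetful map $R^\univ_\cS \to R_{\cS}^{\square_T}$ realises the latter as a formal power series ring over the former in $n^2\#T-1$ variables.

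For (1), a morphism $R_{\cS}^{\square_T} \to \F[\varepsilon]/(\varepsilon^2)$ whose composition with $R^\loc_{\cS,T} \to R_{\cS}^{\square_T}$ factors through $\F$ corresponds, by the universal property, to a $T$-framed deformation $(\rho,\{\alpha_v\}_{v\in T})$ to $\F[\varepsilon]$ whose local framed restrictions $\alpha_v^{-1}\rho|_{G_{F_v}}\alpha_v$ agree with the trivial lift $\rhobar|_{G_{F_v}}$ for each $v\in T$. Writing $\rho=(1+\varepsilon\phi)\rhobar$ with $\phi\in Z^1(G_{F,S},\ad^0\rhobar)$ and $\alpha_v=1+\varepsilon a_v$ with $a_v\in\ad\rhobar$, the triviality condition at $v\in T$ translates into $\phi|_{G_{F_v}}=\partial a_v$, while for $v\in S\setminus T$ the condition $\rho|_{G_{F_v}}\in\cD_v$ becomes $[\phi|_{G_{F_v}}]\in L(\cD_v)$. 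Strict equivalence $(\rho,\{\alpha_v\})\sim(\beta\rho\beta^{-1},\{\beta\alpha_v\})$ with $\beta=1+\varepsilon b$ corresponds to $(\phi,(a_v))\sim(\phi+\partial b,(a_v+b))$, precisely the image of the coboundary map $C^0_{\cS,T}\to C^1_{\cS,T}$; this identifies the cotangent space with $H^1_{\cS,T}(G_{F,S},\ad^0\rhobar)$ and establishes (1). Part (2) then follows immediately by choosing lifts of a basis of the cotangent space and invoking the standard Nakayama argument for complete Noetherian local $R^\loc_{\cS,T}$-algebras.

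For (3), present $R_{\cS}^{\square_T}$ as a quotient $R^\loc_{\cS,T}\llbracket x_1,\ldots,x_h\rrbracket/J$ with $h=\dim_\F H^1_{\cS,T}$ as provided by (2). An obstruction-theoretic argument modelled on the one behind the lemma preceding Corollary~\ref{cor:dimension of unrestricted deformation rings}, but now interpreting obstructions in the complex $C^\bullet_{\cS,T}$, shows $\dim_\F J/\m J\leq\dim_\F H^2_{\cS,T}(G_{F,S},\ad^0\rhobar)$, whence
\[\dim R_{\cS}^{\square_T}\geq\dim R^\loc_{\cS,T}+\dim_\F H^1_{\cS,T}-\dim_\F H^2_{\cS,T}.\]
Combining this with the formal-power-series relation $\dim R^\univ_\cS=\dim R_{\cS}^{\square_T}-n^2\#T+1$ (valid for absolutely irreducible $\rhobar$ and nonempty $T$: once any framing is introduced, the scalar centraliser acts freely on framings, and a tangent-space count supplies the difference), the Euler-characteristic formula for $\dim_\F H^1_{\cS,T}-\dim_\F H^2_{\cS,T}$ established above, and the universal tangent-space estimate $\dim R^\square_{\rhobar|_{G_{F_v}},\chi}/I(\cD_v)\leq n^2+\dim_\F L(\cD_v)-\dim_\F H^0(G_{F_v},\ad^0\rhobar)$ at $v\in S\setminus T$ (Krull dimension bounded by embedding dimension), one arrives at the stated lower bound; the nonnegative term $\#T$ that appears in the Euler-characteristic sum may be discarded to recover exactly the displayed inequality.

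The principal obstacle is the $H^2_{\cS,T}$-bound on the number of defining relations, which requires setting up a suitable obstruction theory for the Selmer-type complex $C^\bullet_{\cS,T}$ that simultaneously tracks global obstructions in $C^\bullet(G_{F,S},\ad^0\rhobar)$ and local deviations from the subcomplexes governing each $\cD_v$, refining the standard argument used in the unrestricted setting.
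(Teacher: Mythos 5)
Parts (1) and (2) are correct: identifying framed tangent vectors over $\F[\varepsilon]$ which kill $\m_{R^\loc_{\cS,T}}$ with cocycles in $C^1_{\cS,T}$, and strict equivalence with the image of the coboundary $C^0_{\cS,T}\to C^1_{\cS,T}$, is the right computation (up to sign conventions in the coboundary), and (2) then follows by Nakayama; this matches the argument of~\cite[\S 2.2]{cht}, to which the paper defers.

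The argument for (3) breaks at exactly the step you flag as the ``principal obstacle''. The inequality $\dim R_{\cS}^{\square_T}\geq\dim R^\loc_{\cS,T}+\dim_\F H^1_{\cS,T}-\dim_\F H^2_{\cS,T}$ is false for $T\neq\emptyset$, so the bound $\dim_\F J/\m J\leq\dim_\F H^2_{\cS,T}$ cannot hold for the presentation over $R^\loc_{\cS,T}$. For a concrete contradiction, take $n=2$, $F=\Q$, $S=T=\{p\}$, with $\cD_p$ the Fontaine--Laffaille condition of Theorem~\ref{thm:FL deformation rings}. Then $R^\loc_{\cS,T}$ is formally smooth over $\cO$ of Krull dimension $5$; the Euler-characteristic identity in the text (with $T=S$, so the sum over $S\setminus T$ is empty) gives $\dim_\F H^1_{\cS,T}-\dim_\F H^2_{\cS,T}=\#T-\sum_{v|\infty}\dim_\F H^0(G_{F_v},\ad^0\rhobar)-\dim_\F H^0(G_{F,S},\ad^0\rhobar(1))=0$; and $\dim R_{\cS}^{\square_T}=\dim R_{\cS}^\univ+n^2\#T-1=\dim R_{\cS}^\univ+3$. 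Your inequality would therefore force $\dim R_{\cS}^\univ\geq 2$, whereas (3) asserts only $\geq 1$, and in exactly these Fontaine--Laffaille settings (the ones the main theorem of the paper applies to) $R_{\cS}^\univ$ is finite flat over $\cO$ and hence has Krull dimension exactly $1$. The root cause is that for $T\neq\emptyset$ the structure map $R^\loc_{\cS,T}\to R_{\cS}^{\square_T}$ need not be injective, and generators of its kernel are relations that the complex $C^\bullet_{\cS,T}$ does not detect.

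The intended proof, as in~\cite[\S 2.3]{cht}, takes $T=\emptyset$, so that $R^\loc_{\cS,\emptyset}=\cO$ and part (2) presents $R_{\cS}^\univ$ as a quotient of $\cO\llbracket x_1,\ldots,x_{h^1_{\cS,\emptyset}}\rrbracket$. The obstruction theory for $C^\bullet_{\cS,\emptyset}$ carried out over $\cO$ does give $(J/\m J)^\vee\into H^2_{\cS,\emptyset}(G_{F,S},\ad^0\rhobar)$, and Krull's theorem then yields $\dim R_{\cS}^\univ\geq 1+\dim_\F H^1_{\cS,\emptyset}-\dim_\F H^2_{\cS,\emptyset}$. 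Substituting the Euler-characteristic identity with $T=\emptyset$ (still valid although the text's simplifying assumption $T\supseteq\{v\mid p\}$ is not met) and the local estimate $\dim_\F L(\cD_v)-\dim_\F H^0(G_{F_v},\ad^0\rhobar)\geq\dim R^\square_{\rhobar|_{G_{F_v}},\chi}/I(\cD_v)-n^2$ at every $v\in S$ gives exactly the displayed bound. Your Euler-characteristic bookkeeping and local tangent-space estimate are both right; it is the base ring over which the obstruction theory is run that must be $\cO$, not $R^\loc_{\cS,T}$.
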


\subsection{Finiteness of maps between global deformation rings}
Suppose that $F'/F$ is a finite extension of number fields, and that
$S'$ is the set of places of $F'$ lying over $S$. Assume that
$\rhobar|_{G_{F',S'}}$ is absolutely irreducible. Then restricting the
universal deformation $\rho^\univ$ of $\rhobar$ to $G_{F',S'}$ gives a
ring homomorphism $R_{\rhobar|_{G_{F',S'}}}^\univ\to
  R_{\rhobar}^\univ$. The following very useful fact is due to Khare and Wintenberger.
  \begin{prop}
    \label{prop:finiteness of deformation rings over each other}The
    ring $R_{\rhobar}^\univ$ is a finitely generated $R_{\rhobar|_{G_{F',S'}}}^\univ$-module.
  \end{prop}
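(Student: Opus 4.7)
Set $R = R_{\rhobar|_{G_{F',S'}}}^\univ$ and $S = R_{\rhobar}^\univ$, and let $\phi \colon R \to S$ be the local $\cO$-algebra homomorphism induced by restricting the universal deformation of $\rhobar$ to $G_{F',S'}$. By a standard application of Nakayama's lemma (using that $S$ is a Noetherian local ring, so that any finitely generated $S$-module killed by $\m_S$ must vanish), in order to prove that $\phi$ makes $S$ into a finitely generated $R$-module it is enough to show that the fiber $S/\m_R S$ is a finite-dimensional $\F$-vector space, where $\F = R/\m_R$.

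The quotient $S/\m_R S$ carries a natural $\F$-algebra structure and pro-represents the subfunctor on Artinian local $\F$-algebras $A$ consisting of deformations $[\rho]$ of $\rhobar$ to $A$ whose restriction $[\rho|_{G_{F',S'}}]$ is equivalent to the constant deformation $\rhobar|_{G_{F',S'}} \colon G_{F',S'} \to \GL_n(\F) \subset \GL_n(A)$. The plan is to classify such $[\rho]$ and to bound them uniformly in $A$. I would first reduce to the case that $F'/F$ is Galois: since $F' \subseteq F_S$ by the unramified-outside-$S$ hypothesis and $F_S/F$ is Galois, the Galois closure $F''$ of $F'$ over $F$ already lies inside $F_S$, and finiteness of $R_{\rhobar}^\univ$ over the (universal deformation or framed) ring attached to $\rhobar|_{G_{F'',S''}}$ propagates to finiteness over $R$ via the tower of induced ring maps.

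Now assume $F'/F$ is Galois with group $\Gamma$. For each class $[\rho]$ of the above type I choose a representative with $\rho|_{G_{F',S'}} = \rhobar|_{G_{F',S'}}$. For any $g \in G_{F,S}$ and $h \in G_{F',S'}$, normality of $G_{F',S'}$ gives $ghg^{-1} \in G_{F',S'}$, and comparing
\[\rho(g)\rhobar(h)\rho(g)^{-1} = \rho(ghg^{-1}) = \rhobar(ghg^{-1}) = \rhobar(g)\rhobar(h)\rhobar(g)^{-1}\]
shows that $\rhobar(g)^{-1}\rho(g)$ centralizes $\rhobar(G_{F',S'})$ inside $\GL_n(A)$. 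Since $A$ is an $\F$-algebra, $M_n(A) = M_n(\F) \otimes_\F A$, and absolute irreducibility of $\rhobar|_{G_{F',S'}}$ together with Schur's lemma identify this centralizer with $A \cdot I_n$. Hence $\rho = c \cdot \rhobar$ for a unique continuous character $c \colon G_{F,S} \to A^\times$ with $c \equiv 1 \pmod{\m_A}$, and the constraint $\rho|_{G_{F',S'}} = \rhobar|_{G_{F',S'}}$ forces $c|_{G_{F',S'}} = 1$, so that $c$ factors through $\Gamma$.

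Because $1 + \m_A$ is a pro-$p$ group, such a $c$ factors further through the maximal abelian pro-$p$ quotient $\Gamma^{\ab, p}$, a finite abelian $p$-group (since $\Gamma$ is finite). The universal such character is classified by the finite-dimensional $\F$-algebra $\F[\Gamma^{\ab, p}]$, and the analysis above shows that $S/\m_R S$ is a quotient of this algebra, hence finite over $\F$; Nakayama's lemma then gives the conclusion. The main delicate point is the reduction to the Galois case: if absolute irreducibility is lost upon restricting $\rhobar$ to $G_{F'',S''}$, the centralizer computation and Schur step must be executed at the level of universal framed deformation rings $R^\square$ (which always exist), with the conclusion transferred to the universal deformation rings via the power-series presentation of $R^\square$ over $R^\univ$ valid in the absolutely irreducible case.
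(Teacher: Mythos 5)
Your argument takes a genuinely different route from the one the paper cites (Lemma 1.2.3 of \cite{blggt}, essentially Khare--Wintenberger). The shared first step is the Nakayama reduction: one must show that $B := S/\m_R S$, which pro-represents deformations of $\rhobar$ whose restriction to $G_{F',S'}$ is the constant deformation, is finite over $\F$. When $F'/F$ is Galois your centralizer/Schur analysis is correct and attractive: after normalizing the representative so that $\rho|_{G_{F',S'}} = \rhobar|_{G_{F',S'}}$, normality forces $\rhobar(g)^{-1}\rho(g)$ to centralize $\rhobar(G_{F',S'})$, hence to be scalar, so $\rho = c\rhobar$ for a character $c$ of $\Gal(F'/F)^{\ab,p}$ and $B$ is a quotient of $\F[\Gal(F'/F)^{\ab,p}]$.

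The gap is the reduction to the Galois case. To run the centralizer argument you must work over the Galois closure $F''$ of $F'$, but the hypothesis is only that $\rhobar|_{G_{F',S'}}$ is absolutely irreducible, and this can fail for $\rhobar|_{G_{F'',S''}}$. The normality step only gives $ghg^{-1} \in G_{F',S'}$ for $h$ in the normal core $G_{F'',S''}$, so what you actually learn is that $\rhobar(g)^{-1}\rho(g)$ centralizes $\rhobar(G_{F'',S''})$ --- and that centralizer can be strictly larger than the scalars. Passing to framed deformation rings does not repair this: framing removes the need for $\rhobar|_{G_{F'',S''}}$ to be Schur in order to \emph{define} a universal object, but it does nothing to shrink the centralizer, which is what the Schur step requires. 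As written, your proof establishes the proposition only under the stronger hypothesis that $\rhobar$ remains absolutely irreducible after restriction to $G_{F'',S''}$.

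The argument in \cite{blggt} avoids the Galois reduction entirely. Since $\rho_B|_{G_{F',S'}}$ is conjugate to $\rhobar|_{G_{F',S'}}$, the group $\rho_B(G_{F,S})$ contains a finite subgroup of index at most $[F':F]$ and is therefore finite. By Exercise~\ref{exercise: deformation ring generated by traces} (Carayol), $B$ is topologically generated over $\F$ by the finitely many elements $\tr\rho_B(g)$, $g\in G_{F,S}$. Each $\rho_B(g)$ has finite order, so modulo any prime of $B$ its eigenvalues are roots of unity in characteristic $p$ and hence lie in $\Fpbar$; a short computation then shows $\tr\rho_B(g)$ lies in $\F$ plus the nilradical of $B$. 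A Noetherian local $\F$-algebra whose maximal ideal is generated by finitely many nilpotents is Artinian, so $B$ is finite over $\F$. Your route buys a clean structural description of the fiber (as a group algebra) but only in the Galois-and-still-irreducible setting; the trace argument buys generality at the cost of being less explicit.
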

  \begin{proof}
    See e.g.\ Lemma 1.2.3 of~\cite{blggt}. 
  \end{proof}
\subsection{Local deformation rings with $l=p$}For proving modularity
lifting theorems, we typically need to consider local deformation
rings when $l=p$ which capture certain properties in $p$-adic Hodge
theory (for example being crystalline with fixed Hodge--Tate weights).
These deformation rings are one of the most difficult and interesting parts
of the subject; for example, a detailed computation of deformation
rings with $l=p=3$ was at the heart of the eventual proof of the
Taniyama--Shimura--Weil conjecture. 

For the most part, the relevant deformation rings 
when $l=p$ are still not well understood; 
we don't have a concrete description of the rings
in most cases, or even basic information such as the number of
irreducible components of the generic fibre. In these notes, we will ignore all of these difficulties, and work
only with the ``Fontaine--Laffaille'' case, where the deformation
rings are formally smooth. This is already enough to have
important applications. 

Assume that $K/\Qp$ is a finite unramified extension, and assume that
$L$ is chosen large enough to contain the images of all embeddings
$K\into\Qpbar$. For each $\sigma:K\into L$, let $H_\sigma$ be a set of
$n$ distinct integers, such that the difference between the maximal
and minimal elements of $H_\sigma$ is less than or equal to $p-2$.

\begin{thm}\label{thm:FL deformation rings}
  There is a unique reduced, $p$-torsion free quotient
  $R^\square_{\rhobar,\chi,\cris,\{H_\sigma\}}$ of
  $R^\square_{\rhobar,\chi}$ with the property that a continuous
  homomorphism $\psi:R^\square_{\rhobar,\chi}\to\Qpbar$ factors
  through $R^\square_{\rhobar,\chi,\cris,\{H_\sigma\}}$ if and only if
  $\psi\circ \rho^\square$ is crystalline, and for each $\sigma:K\into
  L$, we have $\HT_\sigma(\psi\circ \rho^\square)=H_\sigma$. 

Furthermore it has Krull dimension given by $\dim
R^\square_{\rhobar,\chi,\cris,\{H_\sigma\}}=n^2+[K:\Qp]\frac{1}{2}n(n-1)$,
and in fact $R^\square_{\rhobar,\chi,\cris,\{H_\sigma\}}$ is formally
smooth over $\cO$, i.e.\ it is isomorphic to a power series ring in
$n^2-1+[K:\Qp]\frac{1}{2}n(n-1)$ variables over $\cO$.
\end{thm}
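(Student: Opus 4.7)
The plan is to prove this by exploiting Fontaine--Laffaille theory, which, since $K/\Qp$ is unramified and the gaps between the prescribed Hodge--Tate weights are at most $p-2$, provides an exact, fully faithful functor $\textbf{T}^*_{\cris}$ from filtered Frobenius modules (Fontaine--Laffaille modules) over $\cO_K\otimes_{\Zp}A$ with jumps in the filtration prescribed by $\{H_\sigma\}$ to crystalline $\cO$-representations of $G_K$ with those Hodge--Tate weights, for any Artinian $\cO$-algebra $A$. After twisting by a suitable power of the cyclotomic character we may assume all weights lie in $[0,p-2]$.

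First I would construct the quotient. Define a subfunctor of the lifting functor by declaring that $\rho:G_K\to\GL_n(A)$ (for $A$ Artinian) lies in it if and only if the underlying $G_K$-module is in the essential image of $\textbf{T}^*_{\cris}$ applied to a Fontaine--Laffaille module whose graded pieces have the prescribed ranks in each degree. Fullness and exactness of $\textbf{T}^*_{\cris}$ ensure this is stable under subrepresentations, quotients, and fibre products, so by Schlessinger-type criteria it is pro-represented by a quotient of $R^\square_{\rhobar,\chi}$; one then spreads this out to a quotient $R^\square_{\rhobar,\chi,\cris,\{H_\sigma\}}$ of the full lifting ring, and checks that its $\Qpbar$-points are exactly the claimed crystalline lifts. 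Uniqueness as a reduced, $p$-torsion free quotient follows from density of the characteristic-$0$ points in $\Spec R^\square_{\rhobar,\chi,\cris,\{H_\sigma\}}$.

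Next, the core of the argument is formal smoothness, which I would prove by an infinitesimal lifting criterion. Given a small surjection $A'\twoheadrightarrow A$ in $\cC_\cO$ and a lift $\rho$ over $A$ in our subfunctor, let $M$ be the corresponding Fontaine--Laffaille module over $\cO_K\otimes A$. The claim is that $M$ lifts to a Fontaine--Laffaille module $M'$ over $\cO_K\otimes A'$ of the same shape, and then $\textbf{T}^*_{\cris}(M')$ provides the desired lift of $\rho$. But $M$ is free as an $\cO_K\otimes A$-module with a filtration by direct summands and $\sigma$-semilinear Frobenius maps on graded pieces that are isomorphisms (after the appropriate twists); any such structure lifts freely to $\cO_K\otimes A'$ because free modules lift, direct-summand filtrations lift, and isomorphisms of free semilinear modules lift. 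This is the step I expect to be the main (if routine) technical matter to write out carefully. It shows there are no obstructions, so $R^\square_{\rhobar,\chi,\cris,\{H_\sigma\}}$ is formally smooth over $\cO$.

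Finally, the dimension is computed by counting parameters in the universal Fontaine--Laffaille module with fixed determinant, together with the framing. The framing contributes $n^2$ and the fixed determinant subtracts $1$; for each embedding $\sigma:K\hookrightarrow L$, the Hodge filtration on an $n$-dimensional space with $n$ distinct jumps is a flag, contributing $\dim_L(\GL_n/B)=\tfrac{1}{2}n(n-1)$ parameters, and the Frobenius on each graded piece is then determined up to isomorphism by the residual data. Summing over the $[K:\Qp]$ embeddings gives a tangent space of dimension $n^2-1+[K:\Qp]\tfrac{1}{2}n(n-1)$ over $\cO$, whence the stated Krull dimension $n^2+[K:\Qp]\tfrac{1}{2}n(n-1)$.
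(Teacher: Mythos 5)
Your proposal is correct and follows the standard route. The paper itself does not give a proof of this theorem but refers to Fontaine--Laffaille theory, to Ramakrishna's thesis, and to Section 2.4 of \cite{cht}; your outline is essentially a summary of the argument in the last of these. The key steps — defining a subfunctor on Artinian points via membership in the essential image of the Fontaine--Laffaille functor, proving formal smoothness by lifting Fontaine--Laffaille modules along small surjections, and identifying the $\Qpbar$-points via the classification of lattices in crystalline representations — are the right ones, and you correctly flag the module-lifting step as the place where the real work happens.

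Two points to tighten. First, the claim that ``the Frobenius on each graded piece is then determined up to isomorphism by the residual data'' is not literally true: the semilinear Frobenius structure has $[K:\Qp]n^2$ parameters of its own, and these cancel exactly against the $[K:\Qp]n^2$ parameters of $\cO_K\otimes_{\Zp}A$-linear changes of basis of the Fontaine--Laffaille module (a group that is \emph{not} part of what is being framed, since the framing is a basis of the Galois representation and not of the module). Your count therefore produces the right total, but the heuristic as phrased is misleading; the cleaner and standard route, which \cite{cht} uses, is to prove the cohomological identity
\[
\dim_\F \bigl(\text{Fontaine--Laffaille subspace of }H^1(G_K,\ad^0\rhobar)\bigr)-\dim_\F H^0(G_K,\ad^0\rhobar)=[K:\Qp]\tfrac{1}{2}n(n-1),
\]
and then add $\dim_\F\ad^0\rhobar-\dim_\F H^0(G_K,\ad^0\rhobar)$ for the framing; this is also robust when $\rhobar|_{G_K}$ is not irreducible, which is the typical situation. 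Second, when lifting the Fontaine--Laffaille module along a small surjection, the strong divisibility condition $\sum_i\phi_i(\Fil^i M)=M$ is the part needing an argument: it does lift, being an open condition given the remaining structure, but ``isomorphisms of free semilinear modules lift'' does not by itself cover this, since the individual $\phi_i$ are not isomorphisms.
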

In fact, if we remove the assertion of formal smoothness,
Theorem~\ref{thm:FL deformation rings} still holds without the
assumption that $K/\Qp$ is unramified, and without any assumption on
the difference between the maximal and minimal elements of the
$H_\sigma$, but in this case it is a much harder theorem of Kisin
(\cite{MR2373358}). In any case, the formal smoothness will be
important for us. 

Theorem~\ref{thm:FL deformation rings} is essentially a consequence of
Fontaine--Laffaille theory \cite{fl}, which is a form of integral
$p$-adic Hodge theory; it classifies the Galois-stable
lattices in crystalline representations, under the assumptions we've
made above. The first proof of Theorem~\ref{thm:FL deformation rings}
was essentially in Ramakrishna's thesis~\cite{MR1227448}, and the
general result is the content of Section 2.4 of~\cite{cht}. 

\subsection{Local deformation rings with $p\ne l$}In contrast to the
situation when $l=p$, we will need to consider several deformation
problems when $l\ne p$. We will restrict ourselves to the
two-dimensional case. Let $K/\Ql$ be a finite extension, with $l\ne
p$, and fix $n=2$. As we saw in Section~\ref{subsec: monodromy p not
  l}, there is essentially an incompatibility between the wild inertia
subgroup of $G_K$ and the $p$-adic topology on $\GL_2(\cO)$, which
makes it possible to explicitly describe the $p$-adic representations
of $G_K$, and consequently the corresponding universal deformation
rings. This was done in varying degrees of generality over a long
period of time; in particular, in the general $n$-dimensional case we
highlight Section 2.4.4 of~\cite{cht} and~\cite{suhhyun}, and in the
$2$-dimensional setting \cite{pilloninotes}
and~\cite{shotton}. In fact~\cite{shotton} gives a complete
description of the deformation rings for a fixed inertial type.

We will content ourselves with recalling some of the basic structural
results, and with giving a sketch of how the results are proved in one
particular case (see Exercise~\ref{ex: TW deformations} below).

\subsection{Deformations of fixed type}
Recall from
Proposition~\ref{prop: monodromy thm p not l} that given a
representation $\rho:G_K\to\GL_2(\Qpbar)$ there is a Weil--Deligne
representation $\WD(\rho)$ associated to $\rho$. If $\WD=(r,N)$ is a
Weil--Deligne representation, then we write $\WD|_{I_K}$ for
$(r|_{I_K},N)$, and call it an \emph{inertial $\WD$-type}.

Fix $\rhobar:G_K\to\GL_2(\F)$. Then (assuming as usual that $L$ is
sufficiently large) we have the following general
result on $R^\square_{\rhobar,\chi}$ (see e.g.\ Theorem 3.3.1
of~\cite{boecklenotes}).
\begin{thm}
  \label{thm: types are constant on components
    etc}$R^\square_{\rhobar,\chi}$ is equidimensional of Krull dimension $4$, and
  the generic fibre $R^\square_{\rhobar,\chi}[1/p]$ has Krull
  dimension $3$. Furthermore:

(a) The function which takes a $\Qpbar$-point
$x:R^\square_{\rhobar,\chi}[1/p]\to\Qpbar$ to (the isomorphism class
of) $\WD(x\circ\rho^\square)|_{I_K}$ (forgetting $N$) is constant on the irreducible
components of $R^\square_{\rhobar,\chi}[1/p]$.


(b) The irreducible components of $R^\square_{\rhobar,\chi}[1/p]$ are
all regular, and there are only finitely many of them.
\end{thm}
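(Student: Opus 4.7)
The plan is to separate the dimension estimates (which are cohomological) from the structural claims about the components (which rest on Grothendieck's monodromy theorem and a parameterisation of bounded Weil--Deligne representations).

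First, to get $\dim R^\square_{\rhobar,\chi}\ge 4$ I would apply Corollary~\ref{cor:dimension of unrestricted deformation rings} in its fixed-determinant variant, with $\ad^0\rhobar$ in place of $\ad\rhobar$. Since $l\ne p$ the order of $\ad^0\rhobar$ is prime to the residue characteristic of $K$, so the local Euler characteristic formula gives $\chi(G_K,\ad^0\rhobar)=0$, and the bound reads $1+(n^2-1)+0=4$.

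For the upper bound and part~(a), the key tool is Proposition~\ref{prop: monodromy thm p not l}: a $\Qpbar$-point of $R^\square_{\rhobar,\chi}[1/p]$ is the same datum as a framing together with a bounded Weil--Deligne representation $(r,N)$ of $W_K$ lifting the data attached to $\rhobar$ and having determinant matching $\chi$. I would first choose once and for all a finite extension $K'/K$ such that $\rhobar|_{G_{K'}}$ is trivial on wild inertia. The pro-$l$ group $P_K$ can then only map into the pro-$p$ congruence subgroup of $\GL_2(\cO_L)$ via a finite quotient factoring through $\rhobar(P_K)$; in particular $r|_{P_K}$ is \emph{rigid}, taking values in a finite set of possibilities. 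What remains is the image $r(\tau)$ of a chosen topological generator $\tau$ of tame inertia of $K'$ (a bounded element whose semisimple part is constrained by the wild data), a lift of Frobenius (varying through the centraliser of $r|_{I_K}$ in $\GL_2$), and the nilpotent $N$ subject to the Weil--Deligne relation. An explicit parameter count of framed such tuples, modulo conjugation by the stabiliser, will give exactly $3$ dimensions. Moreover, the characteristic polynomial of $r(\tau)$ is an algebraic function on $R^\square_{\rhobar,\chi}$, and since the set of possible inertial types is discrete (and finite once the wild image is fixed), continuity forces the inertial type to be locally constant on $\Spec R^\square_{\rhobar,\chi}[1/p]$, hence constant on each irreducible component.

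For part~(b), I would, for each inertial type $\tau_0$ that actually occurs, identify the corresponding locus in $\Spec R^\square_{\rhobar,\chi}[1/p]$ with an open subscheme of an affine space of dimension $3$ over $L$ (recording the Frobenius lift in the centraliser of $r|_{I_K}$, the compatible $N$, and the framing); this gives smoothness of each component and finiteness of their number. Equidimensionality of the full $\Spec R^\square_{\rhobar,\chi}$ then follows because each component is $\cO$-flat---every bounded Weil--Deligne representation admits a Galois-stable $\cO$-lattice, providing an integral model---so each component has dimension $3+1=4$. The hard part is the case analysis in the parameter count: unramified, tamely ramified principal series, twist-of-Steinberg, and wildly ramified inertial types have centralisers of $r|_{I_K}$ of different dimensions, and one must check that the ``loss'' in Frobenius-lift freedom is compensated by the ``gain'' in framing freedom in each case. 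The restriction to $n=2$ is what keeps this case analysis manageable.
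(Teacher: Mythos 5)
The paper does not prove this theorem itself---it simply points to Theorem~3.3.1 of~\cite{boecklenotes} (and, implicitly, to~\cite{shotton} for the two-dimensional case)---so there is no in-text proof to compare against. Your outline is broadly the standard one: the lower bound $\dim R^\square_{\rhobar,\chi}\ge 4$ via the tangent-space/Euler-characteristic computation (with $\chi(G_K,\ad^0\rhobar)=0$ since $l\ne p$) is correct, and parametrising $\Qpbar$-points via Grothendieck's monodromy theorem, with the observation that $r|_{I_K}$ lies in a discrete set, is the right idea for part~(a). But you explicitly defer the ``case analysis in the parameter count,'' and that is not a detail: it is where the upper bound $\dim R^\square_{\rhobar,\chi}[1/p]=3$, the regularity of components, and the finiteness of components are actually established (by writing down explicit equations for the lifting ring, as in~\cite{shotton}). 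A proof that punts on this is a proof of the lower bound only.

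Two further points are not merely unfinished but as stated incorrect. First, the claim that each inertial-type locus ``is an open subscheme of an affine space of dimension $3$'' cannot be right in general: a single inertial type can support more than one irreducible component (the trivial inertial type carries both the unramified and the twist-of-Steinberg components---precisely the $\sqrt{\cP_\nr}$ and $\sqrt{\cP_\mult}$ of Theorem~\ref{thm:Ihara avoidance deformations}), so the locus with a given inertial type need not be irreducible, let alone an open subscheme of affine space. What one can hope for is that each \emph{component} is regular, which is what the theorem asserts and what the explicit computation proves. Second, your argument for equidimensionality of $R^\square_{\rhobar,\chi}$ conflates two different statements: the existence of a Galois-stable $\cO$-lattice inside any bounded Weil--Deligne representation shows that $\Qpbar$-points of the generic fibre extend to $\cO_{\Qpbar}$-points, but it does not show that $R^\square_{\rhobar,\chi}$ is $\cO$-flat, nor does it rule out irreducible components of $\Spec R^\square_{\rhobar,\chi}$ supported entirely in characteristic~$p$. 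Ruling those out (equivalently, proving $\cO$-flatness) is one of the genuinely delicate points of the theorem, and again comes out of the explicit description of the ring, not from lattice considerations alone.
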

In the light of Theorem~\ref{thm: types are constant on components
    etc}, we make the following definition. Let $\tau$ be an inertial
  $\WD$-type. Then there is a unique reduced, $p$-torsion free
  quotient $R^\square_{\rhobar,\chi,\tau}$ of
  $R^\square_{\rhobar,\chi}$ with the property that a continuous
  homomorphism $\psi:R^\square_{\rhobar,\chi}\to\Qpbar$ factors
  through $R^\square_{\rhobar,\chi,\tau}$ if and only if
  $\psi\circ \rho^\square$ has inertial Weil--Deligne type $\tau$. (Of
  course, for all but finitely many $\tau$, we will just have
  $R^\square_{\rhobar,\chi,\tau}=0$.) By Theorem~\ref{thm: types are constant on components
    etc} we see that if $R^\square_{\rhobar,\chi,\tau}$ is nonzero
  then it has Krull dimension $4$. 
\subsection{Taylor--Wiles deformations}\label{subsec: TW
  deformations}As the name suggests, the deformations that we
consider in this subsection will be of crucial importance for the
Taylor--Wiles--Kisin method. 
Assume that $\rhobar$ is unramified, that $\rhobar(\Frob_K)$ has
distinct eigenvalues, and that $\# k\equiv 1\pmod{p}$. Suppose also that
$\chi$ is unramified.

\begin{lem}\label{universal def ring for TW deformations}
  Suppose that $(\# k-1)$ is exactly divisible by $p^m$. Then 
  $R^\square_{\rhobar,\chi}\cong\cO\llbracket x,y,B,u\rrbracket /((1+u)^{p^m}-1)$. Furthermore,
  if $\varphi\in G_K$ is a lift of $\Frob_K$, then $\rho^\square(\varphi)$
  is conjugate to a diagonal matrix.
\end{lem}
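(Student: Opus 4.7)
The plan is to directly enumerate all lifts of $\rhobar$ to an arbitrary $A\in\ob\cC_\cO$. First, any such lift has pro-$p$ image on $\ker(\GL_2(A)\to\GL_2(\F))$, and since $P_K$ is pro-$l$ while the prime-to-$p$ part of $I_K/P_K$ is pro-prime-to-$p$, both subgroups must map trivially. Hence every lift $\rho$ factors through the quotient of $G_K$ topologically generated by a Frobenius lift $\varphi$ and a generator $\sigma$ of the pro-$p$ part of tame inertia, subject to the single relation $\varphi\sigma\varphi^{-1}=\sigma^q$ where $q=\#k$. A lift is therefore determined by matrices $\Phi=\rho(\varphi),\ \Sigma=\rho(\sigma)$ in $\GL_2(A)$ satisfying $\Phi\equiv\rhobar(\varphi)$, $\Sigma\equiv I\pmod{\m_A}$, $\det\Phi=\chi(\varphi)$, $\det\Sigma=1$, and the commutation $\Phi\Sigma\Phi^{-1}=\Sigma^q$.

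Next I parameterize $\Phi$. Fix a basis so that $\rhobar(\varphi)=\diag(\alpha,\beta)$. Because $\alpha\neq\beta$, Hensel's lemma applied to the characteristic polynomial of $\Phi$ yields unique eigenvalues $A\equiv\alpha$ and $D\equiv\beta$ in $R^\square_{\rhobar,\chi}$, with $AD=\chi(\varphi)$; set $B:=A-\alpha\in\m$. The distinct eigenvalues split the universal rank-$2$ module canonically into free rank-one $\Phi$-eigenlines $L_A\oplus L_D$, and each is generated by a vector obtained from the corresponding standard basis vector by adding an element of $\m$ times the other, giving two framing parameters $x,y\in\m$. The matrix $P=\smallmat{1}{y}{x}{1}$ then diagonalizes $\Phi$ as $P\diag(A,D)P^{-1}$, which establishes the Frobenius diagonalizability claim at the universal level.

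Third, set $\Sigma':=P^{-1}\Sigma P$; the commutation becomes $\diag(A,D)\Sigma'\diag(A,D)^{-1}=(\Sigma')^q$. Cayley--Hamilton gives $(\Sigma')^2=(\tr\Sigma')\Sigma'-I$ (using $\det\Sigma'=1$), so inductively $(\Sigma')^q=\alpha_q\Sigma'+\beta_q I$ where $\alpha_q$ is a polynomial in $\tr\Sigma'$ whose value at $\tr\Sigma'=2$ (the case of eigenvalues $1,1$) is $q$. Since $\tr\Sigma'\equiv 2\pmod\m$ and $q\equiv 1\pmod p$, this yields $\alpha_q\equiv 1\pmod\m$. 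The off-diagonal entries of the commutation relation then read $(A/D)\Sigma'_{12}=\alpha_q\Sigma'_{12}$ and $(D/A)\Sigma'_{21}=\alpha_q\Sigma'_{21}$. Since $A/D-\alpha_q\equiv\alpha/\beta-1$ and $D/A-\alpha_q\equiv\beta/\alpha-1$ are nonzero in $\F$ (using $\alpha\neq\beta$), they are units in $R^\square_{\rhobar,\chi}$, forcing $\Sigma'_{12}=\Sigma'_{21}=0$. Thus $\Sigma'=\diag(1+u,(1+u)^{-1})$ for some $u\in\m$, and the remaining diagonal equation $(1+u)^q=1+u$ reads $(1+u)^{q-1}=1$; writing $q-1=p^m r$ with $\gcd(r,p)=1$ and using that $z\mapsto z^r$ is an automorphism of the pro-$p$ group $1+\m$, this is equivalent to $(1+u)^{p^m}=1$.

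Assembling the three steps, the universal lift is described by the four free generators $x,y,B,u$ over $\cO$ subject to the single relation $(1+u)^{p^m}-1=0$, yielding the presentation $R^\square_{\rhobar,\chi}\cong\cO\llbracket x,y,B,u\rrbracket/((1+u)^{p^m}-1)$. The main technical step is the Cayley--Hamilton trick in the third paragraph, which reduces the \emph{a priori} nonlinear equation $\Phi\Sigma\Phi^{-1}=\Sigma^q$ (a polynomial of degree $q$ in the entries of $\Sigma$) to a linear equation in $\Sigma'_{12}$ and $\Sigma'_{21}$ with unit coefficients, forcing these entries to vanish.
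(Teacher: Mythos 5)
Your proof is correct and follows essentially the same route as the paper, which presents the argument as the guided Exercise immediately following the lemma: reduce to the tame pro-$p$ quotient $\langle \varphi,\sigma : \varphi^{-1}\sigma\varphi=\sigma^{q}\rangle$, apply Hensel's lemma to diagonalise $\rho(\varphi)$ via the framing parameters $x,y,B$, show that $\rho(\sigma)$ is simultaneously diagonal, and then read off $(1+u)^{q-1}=1 \Leftrightarrow (1+u)^{p^m}=1$. The one step the paper leaves open ("use the commutation relation to show $v=w=0$") you fill in cleanly with the Cayley--Hamilton linearisation $(\Sigma')^{q}=\alpha_q\Sigma'+\beta_q I$ with $\alpha_q\equiv q\equiv 1\pmod{\m}$, which turns the degree-$q$ relation into a linear one with unit coefficient on the off-diagonal entries; this is a nice way to carry out that step and avoids any inductive approximation modulo powers of $\m$.
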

\begin{exercise}\label{ex: TW deformations}Prove this lemma as
  follows. Note firstly that $\rho^\square(P_K)=\{1\}$, because
  $\rhobar(P_K)=\{1\}$, so $\rho^\square(P_K)$ is a pro-$l$-subgroup
  of the pro-$p$-group $\ker(\GL_2(R^\square_{\rhobar,\chi})\to\GL_2(\F))$. 

Let $\varphi$ be a fixed lift of $\Frob_K$ to $G_K/P_K$, and $\sigma$ a
topological generator of $I_K/P_K$, which as in Section~\ref{subsec:
  monodromy p not l} we can choose so that
$\varphi^{-1}\sigma\varphi=\sigma^{\# k}$. Write $\rhobar(\varphi)=
\begin{pmatrix}
  \alphabar&0\\0&\betabar
\end{pmatrix}$, and fix lifts $\alpha,\beta\in\cO$ of $\alphabar,\betabar$.

Then we will show that we can take \[\rho^\square(\varphi)=
  \begin{pmatrix}
    1&y\\x&1
  \end{pmatrix}^{-1}
  \begin{pmatrix}
    \alpha+B&0\\0&\chi(\varphi)/(\alpha+B)
  \end{pmatrix}
  \begin{pmatrix}
    1&y\\x&1
  \end{pmatrix},\] \[\rho^\square(\sigma)=
  \begin{pmatrix}
    1&y\\x&1
  \end{pmatrix}^{-1}
  \begin{pmatrix}
    1+u&0\\0&(1+u)^{-1}
  \end{pmatrix}
  \begin{pmatrix}
    1& y\\x&1
  \end{pmatrix}.\]
  \begin{enumerate}
  \item 
    Let $\rho:G_K\to\GL_2(A)$ be a lift of
    $\rhobar$. By Hensel's lemma, there are $a$, $b\in\m_A$ such that
    $\rho(\varphi)$ has characteristic polynomial
    $(X-(\alpha+a))(X-(\beta+b))$. Show that there are $x$, $y\in\m_A$
    such that \[\rho(\varphi)
    \begin{pmatrix}
      1\\x
    \end{pmatrix}=(\alpha+a)
    \begin{pmatrix}
      1\\x
    \end{pmatrix}\]and  \[\rho(\varphi)
    \begin{pmatrix}
      y\\1
    \end{pmatrix}=(\beta+b)
    \begin{pmatrix}
      y\\1
    \end{pmatrix}\]
  \item Since $\rhobar$ is unramified, $\rhobar(\sigma)=1$, so we
    may write \[
    \begin{pmatrix}
      1&y\\x&1
    \end{pmatrix}^{-1}\rho(\sigma)
    \begin{pmatrix}
      1&y\\x&1
    \end{pmatrix}=
    \begin{pmatrix}
      1+u&v\\w&1+z
    \end{pmatrix}\]with $u$, $v$, $w$, $z\in\m_A$. Use the commutation
    relation between $\rho(\varphi)$ and $\rho(\sigma)$ to show that $v=w=0$.
  \item Use the fact that $\chi$ is unramified to show that $1+z=(1+u)^{-1}$.
  \item Show that $(1+u)^{\#k}=1+u$, and deduce that $(1+u)^{\#k-1}=1$.
  \item Deduce that $(1+u)^{p^m}=1$.
  \item Complete the proof of the lemma.
  \end{enumerate}
\end{exercise}

\subsection{Taylor's ``Ihara avoidance'' deformations}
The following deformation rings are crucial to Taylor's arguments
in~\cite{tay} which avoid the use of Ihara's lemma in proving
automorphy lifting theorems. When $n=2$ these arguments are not
logically necessary, but they are crucial to all applications of
automorphy lifting theorems when $n>2$. They are used in order to
compare Galois representations with differing ramification at places
not dividing~$p$.

Continue to let $K/\Ql$ be a finite extension, and assume that
$\rhobar$ is the trivial $2$-dimensional representation, that
$\#k\equiv 1\pmod{p}$, that $\chi$ is unramified, and that $\chibar$
is trivial. Again, we see that $\rho^\square(P_K)$ is trivial, so that
$\rho^\square$ is determined by the two matrices
$\rho^\square(\sigma)$ and $\rho^\square(\varphi)$, as in
Exercise~\ref{ex: TW deformations}. A similar analysis then yields the
following facts. (For the proof of the analogous results in the
$n$-dimensional case, see Section 3 of~\cite{tay}.)

\begin{defn}
  \leavevmode\begin{enumerate}
  \item Let $\cP_\nr$ be the minimal ideal of
    $R^\square_{\rhobar,\chi}$ modulo which
    $\rho^\square(\sigma)=1_2$. 
  \item For any root of unity $\zeta$ which is trivial modulo~$\lambda$, we let $\cP_\zeta$ be the minimal
    ideal of $R^\square_{\rhobar,\chi}$ modulo which $\rho^\square(\sigma)$ has
    characteristic polynomial $(X-\zeta)(X-\zeta^{-1})$.
  \item Let $\cP_\mult$ be the minimal ideal of $R^\square_{\rhobar,\chi}$ modulo
    which $\rho^\square(\sigma)$ has characteristic polynomial $(X-1)^2$,
    and $\#k(\tr \rho^\square(\varphi))^2=(1+\#k)^2\det \rho^\square(\varphi)$.
  \end{enumerate}
\end{defn}
[The motivation for the definition of $\cP_\mult$ is that we are
attempting to describe the unipotent liftings, and if you assume that $\rho^\square(\sigma)=
\begin{pmatrix}
  1& 1\\0&1
\end{pmatrix}$, this is the relation forced on $\rho^\square(\varphi)$.] 
\begin{prop}
  The minimal primes of $R^\square_{\rhobar,\chi}$ are precisely $\sqrt{\cP_\nr}$,
  $\sqrt{\cP_\mult}$, and the $\sqrt{\cP_\zeta}$ for $\zeta\ne 1$. We
  have $\sqrt{\cP_1}=\sqrt{\cP_\nr}\cap\sqrt{\cP_\mult}$.
\end{prop}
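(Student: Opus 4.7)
The plan is to exploit the fact, proved exactly as in Exercise~\ref{ex: TW deformations}, that $\rho^\square(P_K) = 1$: since $\rhobar$ is trivial, $\rho^\square(P_K)$ is simultaneously pro-$l$ and pro-$p$, hence trivial. So $\rho^\square$ factors through $G_K/P_K$ and is determined by the two matrices $\Sigma := \rho^\square(\sigma)$ and $\Phi := \rho^\square(\varphi)$, subject to the single Weil-group relation $\varphi^{-1}\sigma\varphi = \sigma^{\#k}$ together with the fixed determinants $\det\Sigma = 1$ (as $\chi$ is unramified) and $\det\Phi = \chi(\varphi)$. Everything that follows amounts to analysing this matrix-theoretic problem.

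The key step is to classify the $\Qpbar$-points of the generic fibre $\Spec R^\square_{\rhobar,\chi}[1/p]$ by the characteristic polynomial of $\Sigma$. Let $\alpha,\alpha^{-1} \in \Zpbar^\times$ be its eigenvalues; both lie in $1 + \m_{\Zpbar}$. The commutation relation forces the multiset $\{\alpha,\alpha^{-1}\}$ to be stable under $x\mapsto x^{\#k}$. Either $\alpha^{\#k}=\alpha$, so that $\alpha^{\#k-1}=1$ and (being close to $1$) $\alpha$ is a $p$-power root of unity trivial modulo $\lambda$; or $\alpha^{\#k}=\alpha^{-1}$, so $\alpha^{\#k+1}=1$. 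But since $p>2$ and $\#k\equiv 1\pmod p$ we have $\gcd(\#k+1,p)=1$, and the only root of unity in $1+\m_{\Zpbar}$ of order prime to $p$ is $1$, forcing $\alpha=1$ in this case. Thus the characteristic polynomial of $\Sigma$ takes the form $(X-\zeta)(X-\zeta^{-1})$ for some $p$-power root of unity $\zeta$ trivial modulo $\lambda$, and exactly one of $\cP_\nr, \cP_\mult, \cP_\zeta$ is satisfied at each closed point.

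I then verify that each of the loci $V(\cP_\nr)$, $V(\cP_\zeta)$ (for $\zeta\ne 1$), and $V(\cP_\mult)$ is irreducible of dimension $3$, matching the equidimensionality in Theorem~\ref{thm: types are constant on components etc}. For $\cP_\nr$, setting $\Sigma = 1$ leaves only $\Phi$ with fixed determinant, a smooth irreducible $3$-dimensional variety. For $\cP_\zeta$ with $\zeta \ne 1$, the identity $\zeta^{\#k}=\zeta$ gives $\Sigma^{\#k}=\Sigma$, so $\Phi$ commutes with the regular semisimple $\Sigma$; parameterising $\Sigma$ by its $\GL_2$-orbit ($2$-dimensional) and $\Phi$ in the centralising torus with fixed determinant ($1$-dimensional) exhibits $V(\cP_\zeta)$ as a smooth connected $3$-fold, and Theorem~\ref{thm: types are constant on components etc}(a) separates different $\zeta$ since they carry distinct inertial $\WD$-types. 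For $\cP_\mult$, a non-trivially unipotent $\Sigma = 1 + N$ (with $N \ne 0$, $N^2=0$) combined with the relation $\Phi N\Phi^{-1} = \#k\, N$ pins $\Phi$ down to having eigenvalues $\alpha, \alpha/\#k$ with $\alpha^2 = \#k\,\chi(\varphi)$, and the trace identity $\#k(\tr\Phi)^2 = (1+\#k)^2\det\Phi$ is precisely the relation produced by this eigenvalue pattern; so $V(\cP_\mult)$ is the closure of the locus of non-trivially unipotent $\Sigma$, again of dimension $3$.

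Finally, $\sqrt{\cP_1} = \sqrt{\cP_\nr}\cap\sqrt{\cP_\mult}$ is immediate from the enumeration: $V(\cP_1)$ is the locus where $\Sigma$ is unipotent, which splits set-theoretically as $\{\Sigma=1\}\cup\overline{\{\Sigma\text{ non-trivially unipotent}\}} = V(\cP_\nr) \cup V(\cP_\mult)$. The main technical obstacle is the irreducibility analysis for $\cP_\mult$ and the verification that the list is exhaustive; both reduce to the classification of lifts by their inertial $\WD$-type together with the character-theoretic observation that a root of unity in $1+\m_{\Zpbar}$ of order coprime to $p$ is necessarily $1$, which is exactly what eliminates the swapped case $\alpha^{\#k}=\alpha^{-1}$ with $\alpha \ne 1$. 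The $n$-dimensional generalisation of this argument is carried out in Section~$3$ of~\cite{tay}.
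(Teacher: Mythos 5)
The Proposition is stated without proof in the notes; the proof lives in Proposition~3.1 of~\cite{tay} (for general $n$) and in~\cite{shotton} (the explicit two-dimensional computation), which are the references cited in the proof of Theorem~\ref{thm:Ihara avoidance deformations} immediately below. Your sketch reconstructs the intended argument correctly: triviality of wild inertia reduces everything to the pair $(\Sigma,\Phi)$ subject to the single tame relation and the determinant constraints, and the eigenvalue analysis --- in particular the use of $\gcd(\#k+1,p)=1$ to rule out the swapped case $\alpha^{\#k}=\alpha^{-1}$ with $\alpha\ne 1$ --- is exactly the right input for exhaustiveness.

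Three details a complete write-up would need to pin down. First, your argument locates the minimal primes of $R^\square_{\rhobar,\chi}[1/p]$ via the dense set of closed points of the generic fibre; to transfer this to the minimal primes of $R^\square_{\rhobar,\chi}$ itself one needs to know the lifting ring is $\cO$-flat (has no irreducible component supported on the special fibre), and this is an output of the explicit computation in~\cite{shotton} rather than a given. Second, in the $\cP_\mult$ analysis the equation $\alpha^2=\#k\,\chi(\varphi)$ has two roots, so as a naive variety one would see two sheets; it is the residual condition $\Phi\equiv 1\pmod{\m}$ in the formal local setting (forcing $\alpha\equiv 1$, since $\#k\equiv 1$) that picks out one of them, and this deserves a mention before you assert irreducibility. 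Third, a small sign slip: with $\varphi^{-1}\sigma\varphi=\sigma^{\#k}$ the derived relation on the nilpotent is $\Phi^{-1}N\Phi=\#k\,N$, not $\Phi N\Phi^{-1}=\#k\,N$. None of these affects the shape of the argument, which matches the one in the cited references.
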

Write $R^\square_{\rhobar,\chi,1}$, $R^\square_{\rhobar,\chi,\zeta}$,
$R^\square_{\rhobar,\chi,\nr}$, $R^\square_{\rhobar,\chi,\mult}$ for the
corresponding quotients of $R^\square_{\rhobar,\chi}$. 
\begin{thm}\label{thm:Ihara avoidance deformations}We have
  $R^\square_{\rhobar,\chi,1}/\lambda=R^\square_{\rhobar,\chi,\zeta}/\lambda$. Furthermore,
  \begin{enumerate}
  \item If  $\zeta\ne 1$ then $R^\square_{\rhobar,\chi,\zeta}[1/p]$ is 
    geometrically irreducible of dimension $3$.
  \item  $R^\square_{\rhobar,\chi,\nr}$ is formally smooth over $\cO$ (and thus
    geometrically irreducible) of relative dimension $3$.
  \item  $R^\square_{\rhobar,\chi,\mult}[1/p]$ is 
    geometrically irreducible of dimension $3$.
  \item $\Spec R^\square_{\rhobar,\chi,1}=\Spec
    R^\square_{\rhobar,\chi,\nr}\cup\Spec
    R^\square_{\rhobar,\chi,\mult}$ and 
    $\Spec R^\square_{\rhobar,\chi,1}/\lambda=\Spec R^\square_{\rhobar,\chi,\nr}/\lambda\cup\Spec
    R^\square_{\rhobar,\chi,\mult}/\lambda$ are both a union of two
    irreducible components, and have relative dimension $3$.
  \end{enumerate}
\end{thm}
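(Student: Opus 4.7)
My plan exploits the fact that $P_K$ is pro-$\ell$ while $\ker(\GL_2(R^\square_{\rhobar,\chi}) \to \GL_2(\F))$ is pro-$p$, forcing $\rho^\square(P_K) = 1$. Every lifting is therefore determined by the pair $\Phi := \rho^\square(\varphi)$, $\Sigma := \rho^\square(\sigma)$ subject to $\Phi \Sigma \Phi^{-1} = \Sigma^{\# k}$, $\det \Phi = \chi(\varphi)$, $\det \Sigma = 1$, and $\Phi, \Sigma \equiv 1_2 \pmod \lambda$. Each part of the theorem then reduces to an explicit analysis of this moduli of pairs.

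Part (2) is the simplest: $\Sigma = 1_2$ leaves only $\Phi$, a lift of $1_2$ of determinant $\chi(\varphi)$; writing $\Phi = 1_2 + (x_{ij})$ and solving $\det \Phi = \chi(\varphi)$ for one entry presents $R^\square_{\rhobar,\chi,\nr}$ as a power series ring in three variables over $\cO$, as claimed. For part (1), fix $\zeta \neq 1$ a $p$-power root of unity congruent to $1$ modulo $\lambda$. Since $p > 2$, the alternative $\zeta^{\# k} = \zeta^{-1}$ is ruled out (it would force $\zeta^2 = 1$, hence $\zeta = 1$, since $\zeta$ has $p$-power order and $\# k \equiv 1 \pmod p$), so necessarily $\zeta^{\# k} = \zeta$ and $\Phi$ commutes with $\Sigma$. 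After inverting $p$ one diagonalizes $\Sigma$ (regular semisimple as $\zeta \neq \zeta^{-1}$), so $\Phi$ is diagonal in the eigenbasis; parametrizing by the conjugating matrix together with the remaining free scalar in $\Phi$ (after imposing $\det \Phi = \chi(\varphi)$) yields an explicit smooth three-parameter family, giving geometric irreducibility and dimension $3$.

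For part (3), $\Sigma$ is unipotent. On the open locus where $\Sigma \neq 1_2$ one conjugates so that $\Sigma = \smallmat{1}{1}{0}{1}$; a direct computation then forces $\Phi = \smallmat{\# k \cdot d}{b}{0}{d}$, so that $\Phi$ together with the conjugating element gives a smooth parametrization of dimension $3$. This open locus is dense in $\Spec R^\square_{\rhobar,\chi,\mult}[1/p]$ (the closed subscheme $\Sigma = 1_2$ being of strictly smaller dimension), yielding geometric irreducibility. For part (4), a lift with $\Sigma$ of characteristic polynomial $(X-1)^2$ has either $\Sigma = 1_2$ (factoring through $R^\square_{\rhobar,\chi,\nr}$) or $\Sigma \neq 1_2$ (in which case the analysis above yields the $\cP_\mult$ relation automatically, so it factors through $R^\square_{\rhobar,\chi,\mult}$), giving the claimed decomposition; the same argument works modulo $\lambda$. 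Finally, $R^\square_{\rhobar,\chi,1}/\lambda = R^\square_{\rhobar,\chi,\zeta}/\lambda$ is immediate since $\zeta \equiv 1 \pmod \lambda$ makes $(X-\zeta)(X-\zeta^{-1})$ reduce to $(X-1)^2$.

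The main obstacle is converting the informal ``up to conjugation'' pictures above into actual presentations of the framed deformation rings, in which one does \emph{not} quotient by conjugation. The remedy is the one illustrated in Exercise \ref{ex: TW deformations}: introduce the conjugating matrix itself as extra free parameters. A subtlety absent from the Taylor--Wiles situation is that here $\rhobar(\varphi) = 1_2$ has a repeated eigenvalue, so the diagonalization is not integral; this is handled by working on the generic fibre, where on each component $\Sigma$ becomes regular semisimple or nontrivially unipotent, so that once a basis in which $\Sigma$ has the desired canonical form has been chosen (locally on the relevant open), Hensel-type arguments complete the explicit parametrization.
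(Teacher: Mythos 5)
The paper gives no proof of this theorem, only references (Proposition~3.1 of~\cite{tay} and~\cite{shotton}). Your strategy --- reducing everything to the explicit pair $(\Phi,\Sigma)=(\rho^\square(\varphi),\rho^\square(\sigma))$ subject to $\Phi\Sigma\Phi^{-1}=\Sigma^{\#k}$ and the determinant constraints, with the conjugating matrix added back as framing coordinates --- is exactly what those references do, and your computations for the generic fibre in (1)--(3), including the eigenvalue bookkeeping in (1) and the forced shape $\Phi=\smallmat{\#k\,d}{b}{0}{d}$ in (3), are the correct ones.

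The genuine gap is hidden in the phrase ``the same argument works modulo $\lambda$''. The mod-$\lambda$ assertion in (4) --- that $\Spec R^\square_{\rhobar,\chi,\mult}/\lambda$ is itself irreducible, so that $\Spec R^\square_{\rhobar,\chi,1}/\lambda$ has exactly two components matching those in characteristic zero --- is the whole point of the Ihara-avoidance technique, and it does not follow formally from the generic-fibre statement. Irreducibility of $\Spec R[1/p]$ does not give irreducibility of $\Spec R/\lambda$: for instance $R=\cO\llbracket x,y\rrbracket/(xy-p)$ is a regular local domain, $\cO$-flat, with irreducible generic fibre, yet $R/\lambda=\F\llbracket x,y\rrbracket/(xy)$ has two components. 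In fact the same soft spot is already in your generic-fibre argument for (3): the open locus $\Sigma\ne 1_2$ being irreducible of dimension $3$ and the closed locus $\Sigma=1_2$ having dimension $\le 2$ does not by itself make the open locus dense --- you also need to know there are no lower-dimensional irreducible (or embedded) components, i.e.\ unmixedness. On the generic fibre this can be rescued using Theorem~\ref{thm: types are constant on components etc}(b), which gives regularity of the components of $R^\square_{\rhobar,\chi}[1/p]$; there is no analogous crutch mod~$\lambda$. What is actually needed --- and is what the explicit equations in~\cite{shotton} deliver --- is a presentation of $R^\square_{\rhobar,\chi,\mult}$ as a quotient of a power series ring over $\cO$ by a regular sequence, so that the ring and its reduction mod $\lambda$ are complete intersections, hence Cohen--Macaulay and unmixed; only then does your density argument yield irreducibility in both fibres. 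So you correctly flagged ``converting the informal pictures into presentations'' as the obstacle, but underestimated it: the presentations are not bookkeeping, they are precisely what makes the mod-$\lambda$ irreducibility (and hence the bijection between components in characteristic $0$ and $p$ that the patching argument in Section~5 consumes) true.
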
 
\begin{proof}
  See Proposition 3.1 of~\cite{tay} for an $n$-dimensional version of
  this result. In the 2-dimensional case it can be proved by
  explicitly computing equations for the lifting rings; see~\cite{shotton}.
\end{proof}

\section{Modular and automorphic forms, and the Langlands
  correspondence}We now turn to the automorphic side of the Langlands
correspondence, and define the spaces of modular forms to which our
modularity lifting theorems pertain.
\subsection{The local Langlands correspondence (and the
  Jacquet--Langlands correspondence)}\label{subsec:loclanglands}

Weil--Deligne representations are the objects on the ``Galois'' side of
the local Langlands correspondence. We now describe the objects on the
``automorphic'' side. These will be representations $(\pi,V)$ of
$\GL_n(K)$ on (usually infinite-dimensional) $\C$-vector spaces, where
as above $K/\Ql$ is a finite extension for some prime~$l$.
\begin{defn}We say that $(\pi,V)$ is \emph{smooth} if for any vector
  $v\in V$, the stabiliser of $v$ in $\GL_n(K)$ is open. We say that
  $(\pi,V)$ is \emph{admissible} if it is smooth, and for any compact open
  subgroup $U\subset \GL_n(K)$, $V^U$ is finite-dimensional.
  
\end{defn}For example, a smooth one-dimensional representation of $K^\times$ is the
same thing as a continuous character (for the discrete topology on~$\C$).
\begin{fact}
  \leavevmode\begin{enumerate}
  \item If $\pi$ is smooth and irreducible then it is admissible.
  \item Schur's lemma holds for admissible smooth representations, and in particular if $\pi$ is smooth,
    admissible and irreducible then it has a central character $\chi_\pi:K^\times\to\C^\times$.
  \end{enumerate}
\end{fact}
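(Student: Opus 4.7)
The plan is to handle the two parts separately, with (1) being the genuinely difficult input and (2) a formal consequence of a countable-dimension argument of Dixmier.

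First I would tackle (1). Fix a compact open subgroup $U\subseteq G:=\GL_n(K)$ and aim to show $V^U$ is finite-dimensional. The standard reformulation is through the Hecke algebra $\calH(G,U)$ of compactly supported bi-$U$-invariant $\C$-valued functions on $G$: convolution makes $V^U$ into an $\calH(G,U)$-module, and the functor $V\mapsto V^U$ gives an equivalence between smooth $G$-representations generated by their $U$-fixed vectors and nondegenerate $\calH(G,U)$-modules, sending irreducibles to irreducibles. So it suffices to show every irreducible nondegenerate $\calH(G,U)$-module is finite-dimensional. This is the main obstacle and is not formal; it rests on nontrivial input specific to the reductive $p$-adic setting. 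For $\GL_n$, one classical route is Jacquet's theorem: an irreducible smooth $\pi$ is either supercuspidal (in which case matrix coefficients are compactly supported modulo the centre, forcing $V^U$ to be finite-dimensional) or occurs as a subquotient of parabolic induction from a proper Levi, reducing to smaller rank by induction on $n$. A cleaner modern route is Bernstein's decomposition of the category of smooth $G$-representations into blocks, each equivalent to modules over a finitely generated $\C$-algebra, under which $\calH(G,U)$ becomes a finite module over a Noetherian centre and irreducible modules are automatically finite-dimensional.

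Part (2) is much softer. The key point is that any smooth irreducible $(\pi,V)$ has \emph{countable} $\C$-dimension: $G$ is $\sigma$-compact, so for any compact open $U$ the coset space $G/U$ is countable; picking $0\neq v\in V^U$ for $U$ small enough (possible by smoothness), irreducibility forces $V=\C[G]\cdot v$, which is then spanned by the countable set $\{gv:g\in G/U\}$. Now let $T\in\End_G(V)$. For fixed nonzero $v\in V$, the map $T\mapsto Tv$ is $\C$-linear and injective (if $Tv=0$ then $T\C[G]v=\C[G]Tv=0$), so $\End_G(V)$ also has countable $\C$-dimension. Since $V$ is irreducible, $\End_G(V)$ is a division algebra over $\C$ (kernel and image of any nonzero intertwiner are submodules). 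If $T\in\End_G(V)$ is algebraic over $\C$, writing its minimal polynomial as $\prod(X-\lambda_i)$ and using that $\C$ is algebraically closed, some $T-\lambda_i=0$, so $T=\lambda_i\in\C$; otherwise $\C(T)\hookrightarrow\End_G(V)$, but $\C(T)\cong\C(X)$ has uncountable dimension over $\C$ (via partial fractions), contradicting countable dimension. Hence $\End_G(V)=\C$. The centre $Z(G)=K^\times$ (scalar matrices) acts by intertwiners, so each $z\in K^\times$ acts as a scalar $\chi_\pi(z)\in\C^\times$; the computation $(zz')v=\chi_\pi(z)\chi_\pi(z')v$ shows $\chi_\pi$ is a homomorphism, and smoothness of $\pi$ forces $\chi_\pi^{-1}(1)$ to be open in $K^\times$, so $\chi_\pi$ is continuous for the discrete topology on $\C$.
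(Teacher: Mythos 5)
The paper states this as a \emph{Fact} without proof, simply citing standard references (e.g.\ \cite{MR2234120}), so there is no in-text argument to compare against. Your proposal is a correct sketch of the standard proofs. For (1), the reduction via Jacquet's supercuspidal/parabolic-induction dichotomy (supercuspidals have matrix coefficients compactly supported modulo the centre, which forces finiteness of $V^U$; non-supercuspidals embed in parabolic inductions from proper Levis, reducing rank) is indeed the classical route, and Bernstein's block decomposition is the modern alternative. One small imprecision: the functor $V\mapsto V^U$ is not an equivalence on the full subcategory of smooth representations generated by their $U$-fixed vectors (exactness and full faithfulness can fail there); the precise statement you want is that it induces a bijection between irreducible smooth $V$ with $V^U\neq 0$ and irreducible $\calH(G,U)$-modules, which is all your argument requires. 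For (2), the Dixmier countability argument is complete and correct as written: $G$ is second countable, so $G/U$ is countable and $V$ has countable $\C$-dimension; the injectivity of $T\mapsto Tv$ on $\End_G(V)$ then forces this division algebra to have countable dimension, ruling out any transcendental $T$ via the uncountable family $\{(X-\lambda)^{-1}\}_{\lambda\in\C}$ in $\C(X)$. Note that your argument uses only smoothness, irreducibility, and $\sigma$-compactness of $\GL_n(K)$ --- not admissibility --- so it proves Schur's lemma a bit more generally than stated, which is consistent with (1) making the admissibility hypothesis in (2) redundant anyway.
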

In general these representations are classified in terms of the
(super)cuspidal representations. We won't need the details of this
classification, and accordingly we won't define the cuspidal
representations (for which see for example Chapter IV of~\cite{MR2234120}).

Let $B$ be the subgroup of $\GL_2(K)$ consisting of upper-triangular
matrices. Define $\delta:B\to K^\times$ by \[\delta\left(
\begin{pmatrix}
  a&*\\0& d
\end{pmatrix}\right)=ad^{-1}.\]Given two continuous characters $\chi_1$,
$\chi_2:K^\times\to\C^\times$, we may view $\chi_1\otimes\chi_2$ as a
representation of $B$ by \[\chi_1\otimes\chi_2:
\begin{pmatrix}
  a&*\\0&d
\end{pmatrix}\mapsto\chi_1(a)\chi_2(d).\]Then we define a
representation $\chi_1\times\chi_2$ of $\GL_2(K)$ by \emph{normalized induction}:
\begin{align*}
  \chi_1\times\chi_2&=\nInd_B^{\GL_2(K)}(\chi_1\otimes\chi_2)\\
  &:=\{\varphi:GL_2(K)\to
  \C|\varphi(hg)=(\chi_1\otimes\chi_2)(h)|\delta(h)|_K^{1/2}\varphi(g)\text{
    for all }h\in B,\ g\in\GL_2(K)\}
\end{align*}where $\GL_2(K)$ acts by $(g\varphi)(g')=\varphi(g'g)$, and we
only allow smooth $\varphi$, i.e.\ functions for which there is an open
subgroup $U$ of $\GL_2(K)$ such that $\varphi(gu)=\varphi(g)$ for all
$g\in\GL_2(K)$, $u\in U$.

The representation $\chi_1\times \chi_2$ has length at most 2, but is
not always irreducible. It is always the case that $\chi_1\times
\chi_2$ and $\chi_2\times\chi_1$ have the same Jordan-H\"older
factors. If $\chi_1\times\chi_2$ is irreducible then we say that it is
a \emph{principal series} representation.

\begin{fact}
  \leavevmode\begin{enumerate}
  \item $\chi_1\times\chi_2$ is irreducible unless
    $\chi_1/\chi_2=|\cdot|_K^{\pm 1}$.
  \item $\chi\times \chi|\cdot|_K$ has a one-dimensional irreducible
    subrepresentation, and the corresponding quotient is
    irreducible. We denote this quotient by $\Sp_2(\chi)$.
  \end{enumerate}
\end{fact}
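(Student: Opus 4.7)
The plan is standard representation theory of $\GL_2$ over a $p$-adic field: combine the Bruhat decomposition (to compute Jacquet modules) with the meromorphic theory of standard intertwining operators.

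First, I would compute the normalized Jacquet module of $\chi_1\times\chi_2$ using $\GL_2(K)=B\sqcup BwB$, where $w=\smallmat{0}{1}{1}{0}$. The Mackey-style filtration of the $B$-coinvariants by the two Bruhat cells produces a short exact sequence of $T$-representations
\[
0 \to \chi_2\otimes\chi_1 \to J_B(\chi_1\times\chi_2) \to \chi_1\otimes\chi_2 \to 0,
\]
where the swap of characters comes from conjugation by $w$ and the $\delta^{1/2}$-twists of normalized induction/restriction cancel against the modulus of the big cell. So $J_B(\chi_1\times\chi_2)$ has length two with these two graded pieces.

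For part (1), I would construct the standard intertwining operator $M:\chi_1\times\chi_2\to\chi_2\times\chi_1$ by
\[
M\varphi(g) \;=\; \int_K \varphi\!\left(w\,\smallmat{1}{0}{x}{1}\,g\right)dx,
\]
which converges for $\chi_1/\chi_2$ in an appropriate half-plane of characters and extends meromorphically. A direct Tate-style computation locates the poles and zeros of $M$ precisely at $\chi_1/\chi_2=|\cdot|_K^{\pm 1}$, so in the generic case $M$ is a $G$-isomorphism. For irreducibility: if $\pi\subsetneq\chi_1\times\chi_2$ is a proper nonzero subrepresentation, then by exactness of $J_B$ the module $J_B(\pi)$ is a proper nonzero subobject of $J_B(\chi_1\times\chi_2)$, hence isomorphic to $\chi_2\otimes\chi_1$, and $J_B((\chi_1\times\chi_2)/\pi)\cong\chi_1\otimes\chi_2$. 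Combining both Frobenius reciprocities with the isomorphism $M$, and using the fact that principal series admit no cuspidal subquotients, leads to a contradiction.

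For part (2), take $\chi_1=\chi$ and $\chi_2=\chi|\cdot|_K$, so $\chi_1/\chi_2=|\cdot|_K^{-1}$. A direct check shows that $\varphi_0(g):=\chi(\det g)|\det g|_K^{1/2}$ satisfies the defining transformation law for $\chi\times\chi|\cdot|_K$ (using $\delta(\smallmat{a}{*}{0}{d})^{1/2}=|a/d|^{1/2}$), so it spans a one-dimensional $\GL_2(K)$-stable subspace isomorphic to $(\chi|\cdot|^{1/2})\circ\det$. Writing $\Sp_2(\chi)$ for the quotient, exactness of $J_B$ combined with Step~1 shows $J_B(\Sp_2(\chi))$ has length one, so any proper nontrivial subquotient of $\Sp_2(\chi)$ would be cuspidal, again contradicting the non-appearance of cuspidals in principal series. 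The main obstacle throughout is the analytic input: convergence, meromorphic continuation, and the explicit location of poles and zeros of $M$ (a Tate-type local $\Gamma$-function computation); everything else is Mackey theory and Frobenius reciprocity.
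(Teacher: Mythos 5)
The paper does not prove this statement: it is presented as a \emph{Fact}, with the reader pointed to Bushnell--Henniart \cite{MR2234120} for the underlying classification of admissible representations of $\GL_2(K)$. So there is nothing in the text to compare your sketch against, and I can only evaluate it on its own terms.

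Your strategy is the standard one, and the Bruhat short exact sequence for $J_B(\chi_1\times\chi_2)$ (sub $\chi_2\otimes\chi_1$, quotient $\chi_1\otimes\chi_2$) is correct. But the key deduction in part (1) contains an error: you assert that a proper nonzero subobject $J_B(\pi)\subset J_B(\chi_1\times\chi_2)$ must equal the sub-piece $\chi_2\otimes\chi_1$. This would require the sequence to be non-split; in fact when $\chi_1\ne\chi_2$ the two characters of the abelian group $T$ are distinct, any smooth extension between distinct characters of an abelian locally profinite group splits, so $J_B(\chi_1\times\chi_2)$ is semisimple and both constituents occur as subobjects. Worse, Frobenius reciprocity $\Hom_G(\pi,\chi_1\times\chi_2)\cong\Hom_T(J_B(\pi),\chi_1\otimes\chi_2)\ne 0$ forces $J_B(\pi)\cong\chi_1\otimes\chi_2$ --- the \emph{opposite} of your claim (you can check this against your own part (2): the one-dimensional sub $(\chi|\cdot|^{1/2})\circ\det$ of $\chi\times\chi|\cdot|$ has normalized Jacquet module $\chi\otimes\chi|\cdot|=\chi_1\otimes\chi_2$, not $\chi_2\otimes\chi_1$). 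The argument is salvaged by instead using the isomorphism $M$ to embed $\pi$ into $\chi_2\times\chi_1$ and applying Frobenius reciprocity there, giving $\Hom_T(\chi_1\otimes\chi_2,\chi_2\otimes\chi_1)\ne 0$ and hence $\chi_1=\chi_2$, a contradiction. Two further points: (i) the case $\chi_1=\chi_2$ is genuinely not handled by this route, since the two Jacquet constituents coincide and $M$ maps $\chi\times\chi$ to itself; a separate argument is needed there (e.g.\ unitarizability of $\chi\times\chi$ after an unramified twist, together with $\dim\End_G(\chi\times\chi)=1$ from the Jacquet module); and (ii) the non-appearance of cuspidal subquotients in principal series, which you invoke in both parts, is true but is itself a nontrivial input (projectivity/injectivity of cuspidal representations, or uniqueness of cuspidal support) that should be cited or proved rather than assumed.
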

We will let $\chi_1\boxplus\chi_2$ denote $\chi_1\times\chi_2$ unless
$\chi_1/\chi_2=|\cdot|_K^{\pm 1}$, and we let
\[\chi\boxplus\chi|\cdot|_K=\chi|\cdot|_K\boxplus\chi=(\chi |\cdot |_K^{1/2}) \circ
\det.\] (While this notation may seem excessive, we remark that a similar
construction is possible for $n$-dimensional representations, which is where the
notation comes from.) These representations, and the $\Sp_2(\chi)$, are all the
non-cuspidal irreducible admissible representations of $\GL_2(K)$. We say that
an irreducible smooth representation $\pi$ of $\GL_2(K)$ is \emph{discrete
  series} if it is of the form $\Sp_2(\chi)$ or is cuspidal.

The local Langlands correspondence provides a unique family of
bijections $\rec_K$ from the set of isomorphism classes of irreducible smooth representations
of $\GL_n(K)$ to the set of isomorphism classes of $n$-dimensional Frobenius semisimple
Weil--Deligne representations of $W_K$ over $\C$, satisfying a list of
properties. In order to be uniquely determined, one needs to formulate
the correspondence for all $n$ at once, and the properties are
expressed in terms of $L$- and $\varepsilon$-factors, neither of which we
have defined. Accordingly, we will not make a complete statement of
the local Langlands correspondence, but will rather state the
properties of the correspondence that we will need to use. (Again, the
reader could look at the book~\cite{MR2234120} for these properties,
and many others.) It is also
possible to define the correspondence in global terms, as we will see
later, and indeed at present the only proof of the correspondence is
global. 
\begin{fact}
  We now list some properties of $\rec_K$ for $n=1$, $2$.
  \begin{enumerate}
  \item If $n=1$ then $\rec_K(\pi)=\pi\circ\Art_K^{-1}$.
  \item If $\chi$ is a smooth character, $\rec_K(\pi\otimes(\chi\circ\det))=\rec_K(\pi)\otimes\rec_K(\chi)$.
  \item $\rec_K(\Sp_2(\chi))=\Sp_2(\rec_K(\chi))$. (See Exercise~\ref{ex: WD reps} for
    this notation.)
  \item $\rec_K(\chi_1\boxplus\chi_2)=\rec_K(\chi_1)\oplus\rec_K(\chi_2)$.
  \item if~$n=2$, then $\rec_K(\pi)$ is unramified (i.e.\ $N=0$ and the restriction to
    $I_K$ is trivial) if and only if $\pi=\chi_1\boxplus\chi_2$ with
    $\chi_1$, $\chi_2$ both unramified characters
    (i.e.\ trivial on $\cO_K^\times$). These
    conditions are equivalent to $\pi^{\GL_2(\cO_K)}\ne 0$, in which
    case it is one-dimensional.
  \item $\pi$ is discrete series if and only if $\rec_K(\pi)$ is
    indecomposable, and cuspidal if and only if $\rec_K(\pi)$ is
    irreducible.
  \end{enumerate}

\end{fact}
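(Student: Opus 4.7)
The plan is to treat these six assertions not by constructing $\rec_K$ from scratch (which is deep, global, and well beyond what we need here), but by deducing each from a small set of characterizing properties of $\rec_K$: compatibility with twisting, with parabolic induction (in the form of the Langlands quotient theorem), with central characters, with class field theory in dimension one, and the requirement that $\rec_K$ be a bijection between irreducible admissible representations and Frobenius-semisimple Weil--Deligne representations. Throughout I will take it as given that $\rec_K$ is uniquely pinned down by its compatibility with $L$- and $\varepsilon$-factors of pairs, and that the resulting map satisfies (i) and (ii) essentially by definition of the normalizations used (the geometric Frobenius convention for $\Art_K$ and the twist by $|\det|^{(1-n)/2}$ hidden in the normalization). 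So (1) and (2) I would not really prove: (1) is the statement that $\rec_K$ in dimension one is local class field theory, and (2) is a tautological compatibility with twisting by one-dimensional representations, coming from the multiplicativity of $L$- and $\varepsilon$-factors.

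For (3) and (4) I would invoke the compatibility of $\rec_K$ with normalized parabolic induction, together with the classification of composition factors of $\chi_1 \times \chi_2$. When $\chi_1/\chi_2 \neq |\cdot|_K^{\pm 1}$ the induced representation is irreducible, so $\rec_K(\chi_1 \boxplus \chi_2)$ must have the same semisimple $W_K$-part as $\rec_K(\chi_1) \oplus \rec_K(\chi_2)$; since the induced is generic, $N=0$, giving (4). When $\chi_1/\chi_2 = |\cdot|_K$, the reducible induced representation $\chi \times \chi|\cdot|_K$ has two Jordan--H\"older constituents: a one-dimensional quotient $(\chi |\cdot|_K^{1/2})\circ \det$ and the subrepresentation, whose Langlands quotient is $\Sp_2(\chi)$. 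On the Galois side, the Weil--Deligne representation $\rec_K(\chi) \oplus \rec_K(\chi)|\cdot|_K$ has a unique non-split indecomposable lift with the same semisimple Frobenius, namely $\Sp_2(\rec_K(\chi))$ in the notation of Exercise \ref{ex: WD reps}. Matching these two filtrations under Langlands gives (3).

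For (5), the key input is the Satake isomorphism: a smooth irreducible $\pi$ has a nonzero $\GL_2(\cO_K)$-fixed vector if and only if it is an irreducible principal series $\chi_1 \boxplus \chi_2$ with both $\chi_i$ unramified, and in that case $\pi^{\GL_2(\cO_K)}$ is one-dimensional. Combined with (4) this identifies exactly which $\pi$ yield unramified $\rec_K(\pi)$: these are those with $\rec_K(\pi) = \rec_K(\chi_1) \oplus \rec_K(\chi_2)$, and by (1) each summand is unramified iff $\chi_i|_{\cO_K^\times}$ is trivial. The equivalence ``$N=0$ and $I_K$ acts trivially'' on the Galois side then matches the condition that $\pi$ is spherical and both characters are unramified. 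Statement (6) is the defining property that $\rec_K$ preserves temperedness and the ``discrete series'' subclass: on the Galois side the indecomposable Weil--Deligne representations are precisely $\Sp_m(r)$ with $r$ irreducible, and $r$ irreducible corresponds on the automorphic side to supercuspidal $\pi$; so $\pi$ is cuspidal iff $\rec_K(\pi)$ is irreducible, and discrete series (i.e.\ cuspidal or $\Sp_2(\chi)$) iff $\rec_K(\pi)$ is indecomposable, using the description in (3).

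The hard part, in a sense that is invisible in this writeup, is (6): identifying the supercuspidals of $\GL_2(K)$ with the irreducible two-dimensional Weil representations of $W_K$ (even just the existence of an LLC that does this) is genuinely deep, and historically this is what the local Langlands correspondence achieves. All other items in the list are structural consequences once (1), (2), the induction/Langlands-quotient compatibility, and the cuspidal/irreducible matching have been accepted; so in a proof I would state (1) and (6) as the substantive inputs and derive (2)--(5) from them by the arguments sketched above.
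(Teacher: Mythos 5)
The paper does not prove this Fact at all: it is stated as background, with the text immediately above explaining that the full statement of local Langlands ``for all $n$ at once'' in terms of $L$- and $\varepsilon$-factors will not be given, and with a pointer to the Bushnell--Henniart book \cite{MR2234120} for proofs. So there is no internal argument in the paper to compare against; you are supplying something the paper deliberately omits.

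Taken on its own terms, your sketch is a sensible way to organize the logical dependencies among (1)--(6): take (1) (class field theory) and (6) (cuspidal $\leftrightarrow$ irreducible, discrete series $\leftrightarrow$ indecomposable) as the substantive inputs, and derive (2)--(5) using twist-compatibility, compatibility with (normalized) parabolic induction and the Langlands quotient, and the Satake isomorphism for (5). The derivations of (4) from induction-compatibility, of (3) by matching the unique indecomposable Weil--Deligne representation lying over $\rec_K(\chi)\oplus\rec_K(\chi)|\cdot|_K$ with the unique non-generic constituent of $\chi\times\chi|\cdot|_K$, and of (5) from (1), (4), (6) and sphericity are all correct in outline. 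Two caveats are worth flagging. First, ``compatibility with parabolic induction'' and the twist property (2) are themselves theorems about $\rec_K$, not part of the $L$/$\varepsilon$-factor axioms, so treating them as free inputs quietly shifts weight onto results you have not established; for $\GL_2$ this is standard (it is precisely where one would cite Bushnell--Henniart), but it should be acknowledged rather than called tautological. Second, in (5) the converse direction (unramified $\rec_K(\pi)$ forces $\pi$ to be an unramified principal series) should be spelled out by ruling out the Steinberg and cuspidal cases via (3) and (6); your writeup leaves this implicit.

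In short: your proposal is a reasonable reduction of (2)--(5) to (1), (6), and standard structural compatibilities, but it is not ``the paper's proof'' because the paper supplies none; the appropriate comparison is with \cite{MR2234120}, where these compatibilities are proved, not assumed.
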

\subsection{Hecke operators}Consider the set of compactly supported $\C$-valued
functions on $\GL_2(\cO_K)\backslash\GL_2(K)/\GL_2(\cO_K)$. Concretely,
these are functions which vanish outside of a finite number of double
cosets $\GL_2(\cO_K)g\GL_2(\cO_K)$. The set of
such functions is in fact a ring, with the multiplication being given
by convolution. To be precise, we fix $\mu$ the (left and right) Haar
measure on $\GL_2(K)$ such that $\mu(\GL_2(\cO_K))=1$, and we
define \[(\varphi_1*\varphi_2)(x)=\int_{\GL_2(K)}\varphi_1(g)\varphi_2(g^{-1}x)d\mu_g.\]
Of course, this integral is really just a finite sum. One can check
without too much difficulty that the ring $\cH$ of these Hecke operators is
just $\C[T,S^{\pm 1}]$, where $T$ is the characteristic function
of \[\GL_2(\cO_K)
\begin{pmatrix}
  \varpi_K&0\\0&1
\end{pmatrix}\GL_2(\cO_K)\]and $S$ is the characteristic function
of \[\GL_2(\cO_K)
\begin{pmatrix}
  \varpi_K&0\\0&\varpi_K
\end{pmatrix}\GL_2(\cO_K).\]

The algebra $\cH$ acts on an irreducible admissible
$\GL_2(K)$-representation $\pi$. Given $\varphi\in\cH$, we obtain a
linear map $\pi(\varphi):\pi\to\pi^{\GL_2(\cO_K)}$,
by \[\pi(\varphi)(v)=\int_{\GL_2(K)}\varphi(g)\pi(g)vd\mu_g.\] In
particular, if $\pi$ is unramified then $\pi(\varphi)$ acts via a scalar
on the one-dimensional $\C$-vector space $\pi^{\GL_2(\cO_K)}$. We will
now compute this scalar explicitly.

\begin{exercise}\label{ex:computing Hecke evals on unram PS}
  \leavevmode\begin{enumerate}
  \item Show that we have decompositions \[\GL_2(\cO_K)
\begin{pmatrix}
  \varpi_K&0\\0&\varpi_K
\end{pmatrix}\GL_2(\cO_K)=\begin{pmatrix}
  \varpi_K&0\\0&\varpi_K
\end{pmatrix}\GL_2(\cO_K),\]and  \[\GL_2(\cO_K)
\begin{pmatrix}
  \varpi_K&0\\0&1
\end{pmatrix}\GL_2(\cO_K)=\left(\coprod_{\alpha\in\cO_K\pmod{\varpi_K}}
\begin{pmatrix}
  \varpi_K&\alpha\\0&1
\end{pmatrix}\GL_2(\cO_K)\right)\coprod
\begin{pmatrix}
  1&0\\0&\varpi_K
\end{pmatrix}\GL_2(\cO_K).\]
\item Suppose that $\pi=(\chi|\cdot|^{1/2})\circ\det$ with $\chi$
  unramified. Show that $\pi^{\GL_2(\cO_K)}=\pi$, and that $S$ acts
  via $\chi(\varpi_K)^2(\# k)^{-1}$, and that $T$ acts via $(\#k^{1/2}+\#k^{-1/2})\chi(\varpi_K)$.
\item Suppose that $\chi_1$, $\chi_2$ are unramified characters and
  that $\chi_1\ne\chi_2|\cdot |_K^{\pm 1}$. Let
  $\pi=\chi_1\boxplus\chi_2$. Using the Iwasawa decomposition
  $\GL_2(K)=B(K)\GL_2(\cO_K)$, check that $\pi^{\GL_2(\cO_K)}$ is
  one-dimensional, and is spanned by a function $\varphi_0$ with $\varphi_0\left(
  \begin{pmatrix}
    a& b\\0&d
  \end{pmatrix}\right)=\chi_1(a)\chi_2(d)|a/d|^{1/2}$. Show that $S$ acts on
  $\pi^{\GL_2(\cO_K)}$ via $(\chi_1\chi_2)(\varpi_K)$, and that
  $T$ acts via $\#k^{1/2}(\chi_1(\varpi_K)+\chi_2(\varpi_K))$.

\end{enumerate}
\end{exercise}

\subsection{Modular forms and automorphic forms on quaternion
  algebras}\label{subsec: modular forms}
Let $F$ be a totally real field, and let $D/F$ be a quaternion algebra
with centre $F$, i.e.\ a central simple
$F$-algebra of dimension 4. Letting $S(D)$ be the set of places $v$ of $F$
at which $D$ is ramified, i.e.\ for which $D\otimes_FF_v$ is a division
algebra (equivalently, is not isomorphic to $M_2(F_v)$), it is known
that $S(D)$ classifies $D$ up to isomorphism, and that $S(D)$ can be
any finite set of places of $F$ of even cardinality (so for example
$S(D)$ is empty if and only if $D=M_2(F)$). We will now define some spaces of
automorphic forms on $D^\times$.

For each $v|\infty$ fix $k_v\ge 2$ and $\eta_v\in\Z$ such that
$k_v+2\eta_v-1=w$ is independent of $v$. These will be the weights of
our modular forms. Let $G_D$ be the algebraic group over $\Q$ such
that for any $\Q$-algebra $R$, $G_D(R)=(D\otimes_\Q R)^\times$. For
each place $v|\infty$ of $F$, we define a subgroup $U_v$ of
$(D\otimes_F F_v)^\times$ as follows: if $v\in S(D)$ we let
$U_v=(D\otimes_F F_v)^\times\cong\mathbb{H}^\times$ (where
$\mathbb{H}$ denotes the Hamilton quaternions), and if $v\notin
S(D)$, so that $(D\otimes_F F_v)^\times\cong\GL_2(\R)$, we take
$U_v=\R^\times\SO(2)$. If $\gamma=
\begin{pmatrix}
  a&b\\c&d
\end{pmatrix}\in\GL_2(\R)$ and $z\in\C-\R$, we let
$j(\gamma,z)=cz+d$. One checks easily that
$j(\gamma\delta,z)=j(\gamma,\delta z)j(\delta,z)$. 

We now define a representation $(\tau_v,W_v)$ of $U_v$ over $\C$ for
each $v|\infty$. If $v\in S(D)$, we have
$U_v\into\GL_2(\overline{F}_v)\cong\GL_2(\C)$ which acts on $\C^2$, and
we let $(\tau_v,W_v)$ be the
representation \[(\Sym^{k_v-2}\C^2)\otimes(\Wedge^2\C^2)^{\eta_v}.\]
If $v\notin S(D)$, then we have $U_v\cong \R^\times \SO(2)$, and we
take $W_v=\C$,
with \[\tau_v(\gamma)=j(\gamma,i)^{k_v}(\det\gamma)^{\eta_v-1}.\] We
write $U_\infty=\prod_{v|\infty}U_v$,
$W_\infty=\otimes_{v|\infty}W_v$,
$\tau_\infty=\otimes_{v|\infty}\tau_v$. Let $\A=\A_\Q$ be the adeles
of $\Q$, and let $\A^\infty$ be the finite adeles. We then define
$S_{D,k,\eta}$ (where $k,\eta$ reflect the dependence on the integers $k_v,\eta_v$)
to be the space of functions $\varphi:G_D(\Q)\backslash G_D(\A)\to
W_\infty$ which satisfy
\begin{enumerate}
\item $\varphi(gu_\infty)=\tau_\infty(u_\infty)^{-1}\varphi(g)$ for all
  $u_\infty\in U_\infty$ and $g\in G_D(\A)$.
\item There is a nonempty open subset $U^\infty\subset G_D(\A^\infty)$
  such that $\varphi(gu)=\varphi(g)$ for all $u\in U^\infty$, $g\in G_D(\A)$.
\item Let $S_\infty$ denote the infinite places of $F$. If $g\in
  G_D(\A^\infty)$ then the function \[(\C-\R)^{S_\infty-S(D)}\to
  W_\infty\] defined by
 \[h_\infty(i,\dots,i)\mapsto \tau_\infty(h_\infty)\phi(gh_\infty)\]
  is holomorphic. [Note that this
  function is well-defined by the first condition, as  $U_\infty$ is
  the stabiliser of
  $(i,\dots,i)$.]
\item If $S(D)=\emptyset$ then for all $g\in G_D(\A)=\GL_2(\A_F)$,
  we have  \[\int_{F\backslash\A_F}\varphi(
  \begin{pmatrix}
    1& x\\0&1
  \end{pmatrix}g)dx=0.\] If in addition we have $F=\Q$, then we
  furthermore demand that for all $g\in G_D(\A^\infty)$,
  $h_\infty\in\GL_2(\R)^+$ the function $\varphi(gh_\infty)|\Im (h_\infty i)|^{k/2}$
   is bounded on $\C-\R$.

\end{enumerate}
There is a natural action of $G_D(\A^\infty)$ on $S_{D,k,\eta}$ by
right-translation, i.e.\ $(g\varphi)(x):=\varphi(xg)$.
\begin{exercise}
  While this definition may at first sight appear rather mysterious,
  it is just a generalisation of the familiar spaces of cuspidal
  modular forms. For example, take $F=\Q$, $S(D)=\emptyset$,
  $k_\infty=k$, and
  $\eta_\infty=0$. Define \[U_1(N)=\{g\in \GL_2(\Zhat)|g\equiv
  \begin{pmatrix}
    *&*\\0&1
  \end{pmatrix}\pmod{N}\}.\]
  \begin{enumerate}
  \item Let $\GL_2(\Q)^{+}$ be the subgroup of $\GL_2(\Q)$ consisting
    of matrices with positive determinant. Show that the intersection
    of $\GL_2(\Q)^+$ and $U_1(N)$ inside $\GL_2(\A^\infty)$ is
    $\Gamma_1(N)$,  the matrices in $\SL_2(\Z)$ congruent to $
    \begin{pmatrix}
      1&*\\0&1
    \end{pmatrix}\pmod{N}$. [Hint: what is $\Zhat^\times\cap\Q^\times$?]
  \item Use the facts that $\GL_2(\A)=\GL_2(\Q)U_1(N)\GL_2(\R)^+$
    [which follows from strong approximation for $\SL_2$ and the fact
    that $\det U_1(N)=\Zhat^\times$] and that
    $\A^\times=\Q^\times\Zhat^\times\R^\times_{>0}$ to show that
    $S_{D,k,0}^{U_1(N)}$ can naturally be identified with a space of
    functions \[\varphi:\Gamma_1(N)\backslash \GL_2(\R)^+\to\C\]
    satisfying \[\varphi(gu_\infty)=j(u_\infty,i)^{-k}\varphi(g)\]for all
    $g\in\GL_2(\R)^+$, $u_\infty\in\R^\times_{>0} \SO(2)$.
  \item Show that the stabiliser of $i$ in $\GL_2(\R)^+$ is
    $\R^\times_{>0} \SO(2)$. Hence deduce a natural isomorphism between
    $S_{D,k,0}^{U_1(N)}$ and $S_k(\Gamma_1(N))$, which takes a
    function $\varphi$ as above to the function $(gi\mapsto
    j(g,i)^k\varphi(g))$, $g\in\GL_2(\R)^+$.

  \end{enumerate}

\end{exercise}
The case that $S_\infty\subset S(D)$ is particularly simple; then if
$U\subset G_D(\A^\infty)$ is an open subgroup, then $S_{D,2,0}^U$ is just the set of
$\C$-valued functions on \[G_D(\Q)\backslash G_D(\A)/G_D(\R)U,\] which
is a finite set. When proving modularity lifting theorems, we will be
able to reduce to the case that $S_\infty\subset S(D)$; when this
condition holds, we say that $D$ is a \emph{definite} quaternion algebra.

We will now examine the action of Hecke operators on these
spaces. Choose an $\cO_F$-order $\cO_D\subset D$ (that is, an $\cO_F$-subalgebra
of $D$ which is finitely generated as a $\Z$-module and for which
$\cO_D\otimes_{\cO_F}F\isoto D$). For example, if $D=M_2(F)$, one may
take $\cO_D=M_2(\cO_F)$.

 For all but finite many finite places
$v$ of $F$ we can choose an isomorphism $D_v\cong M_2(F_v)$ such that
this isomorphism induces an isomorphism
$\cO_D\otimes_{\cO_F}\cO_{F_v}\isoto M_2(\cO_{F_v})$. Then
$G_D(\A^\infty)$ is the subset of elements
$g=(g_v)\in\prod_{v\nmid\infty}G_D(F_v)$ such that
$g_v\in\GL_2(\cO_{F_v})$ for almost all $v$.

We now wish to describe certain irreducible representations of
$G_D(\A^\infty)$ in terms of irreducible representations of the
$GL_2(F_v)$. More generally, we have the following construction. Let
$I$ be an indexing set and for each $i\in I$, let $V_i$ be a $\C$-vector space. Suppose
that we are given $0\ne e_i\in V_i$ for almost all $i$ (that is, all
but finitely many $i$). Then we
define the \emph{restricted tensor product} \[\otimes'_{\{e_i\}} V_i:=\varinjlim_{J\subseteq I}\otimes_{i\in
  J}V_i,\]where the colimit is over the finite subsets $J\subseteq I$ containing all the places for which $e_i$ is
not defined, and where the transition maps for the colimit are given
by ``tensoring with the $e_i$''. It can be checked that
$\otimes'_{\{e_i\}}V_i\cong\otimes'_{\{f_i\}}V_i$ if for almost
all $i$, $e_i$ and $f_i$ span the same line.
\begin{defn}
  We call a representation $(\pi,V)$ of $G_D(\A^\infty)$
  \emph{admissible} if
  \begin{enumerate}
  \item for any $x\in V$, the stabiliser of $x$ is open, and
  \item for any $U\subset G_D(\A^\infty)$ an open subgroup, $\dim_\C V^U<\infty$.
  \end{enumerate}

\end{defn}
\begin{fact}(See~\cite{MR546596}.)
  If $\pi_v$ is an irreducible smooth (so admissible) representation
  of $(D\otimes_F F_v)^\times$ with $\pi_v^{\GL_2(\cO_{F_v})}\ne 0$ for almost all
  $v$, then
  $\otimes'\pi_v:=\otimes'_{\left\{\pi_v^{\GL_2(\cO_{F_v})}\right\}}\pi_v$ is an
  irreducible admissible smooth representation of $G_D(\A^\infty)$, and any
  irreducible admissible smooth representation of $G_D(\A^\infty)$ arises in this way
  for unique $\pi_v$.
\end{fact}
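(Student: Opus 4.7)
The plan is to recognize this as Flath's factorization theorem and to reduce it to a Hecke-algebra argument. For each finite place $v$ of $F$ let $\mathcal{H}_v = C_c^\infty(G_D(F_v))$ be the convolution algebra, and at almost all $v$ (those where $D_v$ is split and the fixed order is maximal) let $e_v^0$ be the normalized characteristic function of $\GL_2(\cO_{F_v})$, an idempotent in $\mathcal{H}_v$. Form the restricted tensor product $\mathcal{H} := \bigotimes'_v(\mathcal{H}_v, e_v^0)$; non-degenerate $\mathcal{H}$-modules then correspond to smooth $G_D(\mathbb{A}^\infty)$-representations, and admissibility corresponds to finite-dimensionality of $eV$ for each compact-open idempotent $e$. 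The statement becomes: irreducible admissible $\mathcal{H}$-modules are classified by tuples $(\pi_v)$ of irreducible admissible $\mathcal{H}_v$-modules with $e_v^0\pi_v\ne 0$ almost everywhere, via $(\pi_v)\mapsto\bigotimes'_v\pi_v$.

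For the forward direction, given such $(\pi_v)$ and a choice of $\xi_v\in\pi_v^{\GL_2(\cO_{F_v})}\setminus\{0\}$ for almost all $v$, admissibility of $\pi := \bigotimes'_v\pi_v$ follows from the identity $\pi^U = \bigotimes_{v\in S}\pi_v^{K_v}\otimes\bigotimes_{v\notin S}\mathbb{C}\xi_v$ (for $U = \prod K_v$ with $K_v = \GL_2(\cO_{F_v})$ off a finite set $S$) together with admissibility of each $\pi_v$. Irreducibility is a pure-tensor argument: any nonzero $\mathcal{H}$-submodule $W\subseteq\pi$ meets some $\bigotimes_{v\in S}\pi_v\otimes\xi^S$ in a nonzero $\bigotimes_{v\in S}\mathcal{H}_v$-stable subspace, which, by irreducibility of each $\pi_v$ and induction on $|S|$, must be the whole thing; letting $S$ grow yields $W=\pi$.

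For the converse, observe that $\mathcal{H} = \mathcal{H}_v\otimes\mathcal{H}^v$ where $\mathcal{H}^v := \bigotimes'_{w\ne v}(\mathcal{H}_w, e_w^0)$. The general tensor-product theorem for simple modules asserts that an irreducible module over $A\otimes B$ decomposes as $M\otimes N$ for irreducible $A$- and $B$-modules provided at least one factor satisfies Schur's lemma; this applies in our setting because irreducible admissible $\mathcal{H}_v$-modules have countable $\mathbb{C}$-dimension, so Dixmier--Schur gives $\mathrm{End}_{\mathcal{H}_v}(\pi_v) = \mathbb{C}$. Applied to an irreducible admissible $\mathcal{H}$-module $\pi$, this yields $\pi\cong\pi_v\otimes\pi^v$ with $\pi_v$ irreducible admissible over $\mathcal{H}_v$ and $\pi^v$ irreducible admissible over $\mathcal{H}^v$. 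Iterating over all $v$ and passing to the limit over finite sets of places yields a well-defined restricted tensor product: commutativity of the spherical Hecke algebra at almost all $v$ (the Satake isomorphism; the $\GL_2$ case is the polynomial algebra $\mathbb{C}[T,S^{\pm 1}]$ of Exercise~\ref{ex:computing Hecke evals on unram PS}) forces each irreducible unramified $\pi_v$ to have one-dimensional spherical subspace, pinning down the $\xi_v$ up to scalar and making the iteration compatible. Uniqueness of the $\pi_v$ follows from the intrinsic characterization of $\pi_v$ as the $\mathcal{H}_v$-factor in the above decomposition, applied again to any alternate factorization.

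The main obstacle, and the key technical ingredient, is the Schur-type lemma guaranteeing that endomorphism algebras of irreducible admissible $\mathcal{H}$-modules equal $\mathbb{C}$: this is not automatic for arbitrary idempotented algebras over $\mathbb{C}$, but holds here via Dixmier's countable-dimensional Schur, since $G_D(\mathbb{A}^\infty)$ admits a countable basis of compact open neighborhoods of the identity and an irreducible admissible representation is generated by any single nonzero vector. Combined with commutativity of the spherical Hecke algebras at unramified places, these two ingredients are precisely what both produce and rigidify the factorization; the remainder of the argument is a careful but routine bookkeeping of the restricted tensor product structure.
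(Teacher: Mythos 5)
The paper does not prove this fact but simply cites Flath~\cite{MR546596}, and your proposal is a correct sketch of exactly Flath's argument: recast everything as idempotented Hecke algebras, use commutativity of the spherical Hecke algebra at almost all places to get one-dimensional spherical subspaces, apply the tensor-product theorem together with Schur's lemma to split off one place at a time, and take the limit. One small simplification worth noting: you do not need Dixmier's countable-dimensional Schur here, because admissibility already gives Schur's lemma more elementarily --- an $\mathcal{H}_v$-endomorphism of $\pi_v$ is determined by its restriction to the finite-dimensional space $\pi_v^{K_v}$ for any small enough compact open $K_v$, so $\End_{\mathcal{H}_v}(\pi_v)$ is a finite-dimensional division algebra over $\C$, hence $\C$.
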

We have a global Hecke algebra, which decomposes as a restricted tensor
product of the local Hecke algebras in the following way. For each
finite place $v$ of $F$ we choose $U_v\subset (D\otimes_FF_v)^\times$ a compact open
subgroup, such that $U_v=\GL_2(\cO_{F_v})$ for almost all $v$. Let
$\mu_v$ be a Haar measure on $(D\otimes_FF_v)^\times$, chosen such that for almost
all $v$ we have $\mu_v(\GL_2(\cO_{F_v}))=1$. Then there is a unique
Haar measure $\mu$ on $G_D(\A^\infty)$ such that for any $U_v$ as
above, if we set $U=\prod_vU_v\subset G_D(\A^\infty)$, then
$\mu(U)=\prod_v\mu_v(U_v)$. Then there is a
decomposition \[\cC_c(U\backslash
G_D(\A^\infty)/U)\mu\cong\otimes'_{\left\{1_{U_v}\mu_v\right\}}\cC_c(U_v\backslash
(D\otimes_FF_v)^\times/U_v)\mu_v,\]and the actions of these Hecke algebras are
compatible with the decomposition $\pi=\otimes'\pi_v$. For the
following fact, see Lemma~1.3 of~\cite{tay-fm2}.
\begin{fact}
  $S_{D,k,\eta}$ is a semisimple admissible representation of $G_D(\A^\infty)$.
\end{fact}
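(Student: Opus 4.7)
The plan is to treat admissibility and semisimplicity separately. The definite case $S_\infty\subseteq S(D)$ --- which, as the excerpt notes, is the only one ultimately needed for modularity lifting --- is essentially immediate; the content lies in handling the indefinite case.

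\textbf{Admissibility.} Openness of stabilisers is built into condition (2) of the definition. To verify finite-dimensionality of $S_{D,k,\eta}^U$ for an open compact $U\subseteq G_D(\A^\infty)$, I would use the finiteness of the double coset space $G_D(\Q)\backslash G_D(\A^\infty)/U$ (a consequence of the finiteness of class numbers for $G_D$, itself coming from reduction theory and strong approximation applied to the simply connected cover) to pick representatives $g_1,\dots,g_h$. An element $\varphi\in S_{D,k,\eta}^U$ is then determined by the functions $\varphi_i(u_\infty):=\varphi(g_i u_\infty)$, each of which is $\tau_\infty$-equivariant under right multiplication by $U_\infty$ and invariant under a certain arithmetic subgroup $\Gamma_i\subseteq G_D(\Q)$, together with holomorphy (and, if $S(D)=\emptyset$, cuspidality) at the infinite places outside $S(D)$. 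In the definite case, $\Gamma_i$ is finite modulo the centre and $W_\infty$ is finite-dimensional, which closes the argument at once. In the indefinite case the claim reduces to the classical finite-dimensionality of spaces of holomorphic (quaternionic/Hilbert modular) cusp forms of fixed weight on a congruence quotient of $\mathfrak{H}^{S_\infty\setminus S(D)}$, a consequence of Koecher's principle and the Baily--Borel compactification.

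\textbf{Semisimplicity.} I would exhibit a positive-definite $G_D(\A^\infty)$-invariant Hermitian inner product --- the Petersson product --- and then take orthogonal complements. Let $Z_D$ denote the centre of $G_D$. Since $U_\infty/Z_D(\R)$ is compact (as $\mathbb{H}^\times/\R^\times\cong \SO(3)$ and $\SO(2)$ are compact), there is a $U_\infty$-invariant Hermitian form $\langle\cdot,\cdot\rangle_{W_\infty}$ on $W_\infty$. After twisting by a suitable unitary central character to absorb the weight $w$, the function $g\mapsto\langle\varphi(g),\psi(g)\rangle_{W_\infty}$ descends to $G_D(\Q)Z_D(\A)\backslash G_D(\A)$, and the Petersson integral
\[
\langle\varphi,\psi\rangle:=\int_{G_D(\Q)Z_D(\A)\backslash G_D(\A)}\langle\varphi(g),\psi(g)\rangle_{W_\infty}\,dg
\]
converges thanks to cuspidality (condition (4) in the split case) and compactness of the quotient in the ramified case. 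Right translation by $G_D(\A^\infty)$ preserves this form. Any $G_D(\A^\infty)$-stable subspace $V$ therefore has a stable complement $V^\perp$, and combined with admissibility an induction on $\dim V^U$ (for varying $U$) yields a decomposition into irreducibles.

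\textbf{Main obstacle.} The real work is in the indefinite case: both the finite-dimensionality of $S_{D,k,\eta}^U$ and the convergence of the Petersson integral rely critically on condition (4) in the definition (cuspidality, plus the boundedness clause when $F=\Q$). In the totally definite case the entire argument collapses: $G_D(\Q)\backslash G_D(\A^\infty)/U$ is a finite set, $S_{D,k,\eta}^U$ is a finite direct sum of subspaces of $W_\infty$, and the Petersson pairing reduces to a finite sum, so both admissibility and semisimplicity are essentially tautological.
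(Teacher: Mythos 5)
The paper itself gives no proof of this fact; it simply cites Lemma 1.3 of \cite{tay-fm2}, so there is no in-paper argument to compare against. Your sketch is the standard one and is essentially correct: finiteness of $G_D(\Q)\backslash G_D(\A^\infty)/U$ reduces admissibility to the finite-dimensionality of classical spaces of vector-valued holomorphic automorphic forms for the arithmetic subgroups $\Gamma_i = G_D(\Q)\cap g_iUg_i^{-1}$, and the $G_D(\A^\infty)$-invariant Petersson form gives semisimplicity. Two points deserve sharpening. First, your dichotomy ``definite / indefinite'' should really be ``$S(D)=\emptyset$ / $S(D)\ne\emptyset$'': condition (4) in the definition is only imposed when $S(D)=\emptyset$, and for a division quaternion algebra $D$ that is not totally definite, convergence of the Petersson integral and the classical finite-dimensionality come from compactness of $G_D(\Q)Z_D(\A)\backslash G_D(\A)$, not from cuspidality, so it is misleading to say the proof in the ``indefinite'' case rests on condition (4). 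Second, in passing from the orthogonal complement $V^\perp$ to a genuine $G_D(\A^\infty)$-stable complement you should say why $V+V^\perp=S_{D,k,\eta}$, since the pre-Hilbert space is not complete: the right argument is level by level, using that the averaging idempotent $e_U$ is self-adjoint (by $G$-invariance of the form and normality of a small enough $U'\lhd U$) to identify $(V^\perp)^U$ with the orthogonal complement of $V^U$ inside the finite-dimensional $S_{D,k,\eta}^U$, whence $V^U\oplus(V^\perp)^U=S_{D,k,\eta}^U$ for all $U$. With these two small fixes your argument is sound and is the kind of argument one finds in the cited reference.
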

\begin{defn}
  The irreducible constituents of $S_{D,k,\eta}$ are called the
  \emph{cuspidal automorphic representations} of $G_D(\A^\infty)$ of
  weight $(k,\eta)$.
\end{defn}
\begin{remark}
  Note that these automorphic representations do not include Maass
  forms or weight one modular forms; they are the class of
  \emph{regular algebraic} or \emph{cohomological} cuspidal automorphic representations.
\end{remark}For the following facts, the reader could consult~\cite{MR0379375}.
\begin{fact}
  (Strong multiplicity one (and multiplicity one) for $\GL_2$) Suppose that
  $S(D)=\emptyset$. Then every irreducible constituent of
  $S_{D,k,\eta}$ has multiplicity one. In fact if $\pi$ (respectively
  $\pi'$) is a cuspidal automorphic representation of weight
  $(k,\eta)$ (respectively $(k',\eta')$) such that $\pi_v\cong\pi'_v$
  for almost all $v$ then $k=k'$, $\eta=\eta'$, and $\pi=\pi'$.
\end{fact}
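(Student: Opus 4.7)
The plan is to deduce both assertions from the Fourier expansion of cusp forms and the uniqueness of Whittaker models on $\GL_2(\A_F)$. Since $S(D) = \emptyset$, we have $G_D = \GL_2/F$, and I consider irreducible constituents $\pi \subseteq S_{D,k,\eta}$. Fix a nontrivial continuous additive character $\psi : F \backslash \A_F \to \C^\times$ and write $n(x) := \smallmat{1}{x}{0}{1}$. For $\varphi \in \pi$, define the Whittaker function
\[
W_\varphi(g) := \int_{F \backslash \A_F} \varphi(n(x) g)\, \psi(-x)\, dx.
\]
The standard Fourier expansion of $x \mapsto \varphi(n(x) g)$ along the compact abelian group $F \backslash \A_F$, combined with the cuspidality condition (which kills the zeroth Fourier coefficient), yields
\[
\varphi(g) = \sum_{\alpha \in F^\times} W_\varphi\left( \begin{pmatrix} \alpha & 0 \\ 0 & 1 \end{pmatrix} g \right),
\]
so $\varphi$ is completely determined by $W_\varphi$.

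For multiplicity one, the assignment $\varphi \mapsto W_\varphi$ is a $\GL_2(\A_F)$-equivariant embedding of the $\pi$-isotypic subspace of $S_{D,k,\eta}$ into the space of smooth functions $W : \GL_2(\A_F) \to \C$ satisfying $W(n(x) g) = \psi(x) W(g)$. By local uniqueness of Whittaker models (Gelfand--Kazhdan at finite places, Shalika at archimedean ones) together with the factorisation $\pi = {\otimes'}_v \pi_v$, this global Whittaker space is irreducible, so any two embeddings $\pi \hookrightarrow S_{D,k,\eta}$ compose with $\varphi \mapsto W_\varphi$ to give proportional maps; by injectivity the embeddings themselves are proportional, forcing the $\pi$-isotypic component to have multiplicity one.

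For strong multiplicity one I invoke the Rankin--Selberg theory of Jacquet, Piatetski-Shapiro, and Shalika. Suppose for contradiction that $\pi \not\cong \pi'$. The global Rankin--Selberg $L$-function $L(s, \pi \times \widetilde{\pi}')$ then extends to an entire function of $s$, whereas $L(s, \pi \times \widetilde{\pi})$ has a simple pole at $s = 1$. At every place $v$ where $\pi_v \cong \pi'_v$ the local factors $L(s, \pi_v \times \widetilde{\pi}_v)$ and $L(s, \pi_v \times \widetilde{\pi}'_v)$ coincide; since this holds at almost all $v$, the two global $L$-functions differ only by finitely many nonzero meromorphic local factors, contradicting the discrepancy in their behaviour at $s = 1$. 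Hence $\pi \cong \pi'$ globally, which in particular forces $\pi_v \cong \pi'_v$ at every archimedean $v$. Since $(k_v, \eta_v)$ is recovered from the $U_v$-type of $\pi_v$ (through $(\Sym^{k_v - 2} \C^2) \otimes (\Wedge^2 \C^2)^{\eta_v}$ when $v \in S(D)$, and through the character $j(\cdot, i)^{k_v}(\det)^{\eta_v - 1}$ of $\R^\times \SO(2)$ when $v \notin S(D)$), we conclude $k = k'$ and $\eta = \eta'$.

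The main obstacle is the strong multiplicity one step, which rests on the nontrivial analytic theory of Rankin--Selberg $L$-functions: meromorphic continuation, the precise location of poles, and the fact that $L(s, \pi \times \widetilde{\pi}')$ detects isomorphism of cuspidal automorphic representations. By contrast, multiplicity one and the matching of archimedean weights are essentially formal consequences of Whittaker uniqueness and of the classification of admissible $U_v$-modules at archimedean places.
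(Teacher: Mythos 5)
The paper does not prove this statement; it records it as a \textit{Fact} and points the reader to a reference (Gelbart's \emph{Automorphic forms on adele groups}). So there is no internal proof to compare against, and the question is whether your sketch is a sound version of the standard argument. For the most part it is: the Fourier expansion along $N(F)\backslash N(\A_F)$ with cuspidality killing the trivial Fourier coefficient, the resulting injectivity of $\varphi\mapsto W_\varphi$, and local uniqueness of Whittaker models feeding into global uniqueness is exactly the classical multiplicity one proof. Your deduction of $k=k'$, $\eta=\eta'$ from $\pi_\infty\cong\pi'_\infty$ is also fine once you identify constituents of $S_{D,k,\eta}$ with full automorphic representations of $\GL_2(\A_F)$ whose archimedean component is the (limit of) discrete series prescribed by $(k,\eta)$.

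The one place where you gloss over a genuine point is the Rankin--Selberg step. Writing $S$ for the finite set of places where $\pi_v\not\cong\pi'_v$, the argument you want is that the partial $L$-functions $L^S(s,\pi\times\widetilde\pi)$ and $L^S(s,\pi\times\widetilde\pi')$ agree, that the first has a pole at $s=1$, and that the second does not. Each of these last two facts requires knowing that the local factors $L_v(s,\pi_v\times\widetilde\pi_v)$ and $L_v(s,\pi_v\times\widetilde\pi'_v)$ for $v\in S$ are \emph{holomorphic and nonzero} at $s=1$: nonzero is automatic (finite local factors are inverse polynomials in $q_v^{-s}$, archimedean ones are products of $\Gamma$-factors), but holomorphy at $s=1$ is not free — it follows from bounds on the local parameters of components of cuspidal representations (for $\GL_2$, the strict unitarity/genericity bound $q_v^{-1/2}<|\alpha_v|<q_v^{1/2}$, or in general the Jacquet--Shalika local bound). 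Saying the $L$-functions ``differ only by finitely many nonzero meromorphic local factors'' does not by itself rule out a local pole at $s=1$ cancelling the global one. It is also worth noting that for $\GL_2$ one can sidestep Rankin--Selberg entirely: the original strong multiplicity one proofs (Casselman, Miyake, Jacquet--Langlands) are more elementary, arguing directly with Hecke $L$-functions or Whittaker expansions, and only for $\GL_n$ with $n>2$ is the Rankin--Selberg machinery really forced on you.
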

\begin{fact}
  (The theory of newforms) Suppose that
  $S(D)=\emptyset$. If $\n$ is an ideal of $\cO_F$, write \[U_1(\n)=\{g\in\GL_2(\hat{\cO}_F)|g\equiv
  \begin{pmatrix}
    *&*\\0&1
  \end{pmatrix}\pmod{\n}\}.\] If $\pi$ is a cuspidal automorphic
  representation of $G_D(\A^\infty)$ then there is a unique ideal
  $\n$ such that $\pi^{U_1(\n)}$ is one-dimensional, and
  $\pi^{U_1(\m)}\ne 0$ if and only if $\n|\m$. We call $\n$ the
  \emph{conductor} (or sometimes the \emph{level}) of $\pi$.

\end{fact}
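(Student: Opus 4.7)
The plan is to reduce to a purely local statement at each finite place of $F$ via the restricted tensor product decomposition of $\pi$, and then appeal to Casselman's local newform theorem for~$\GL_2$.

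Since $S(D)=\emptyset$ we have $G_D(\A^\infty)=\GL_2(\A_F^\infty)$, and the open compact subgroup $U_1(\n)$ factors as a product $\prod_v U_1(\n)_v$ over the finite places $v$ of~$F$, where $U_1(\n)_v = \GL_2(\cO_{F_v})$ for $v\nmid \n$ and $U_1(\n)_v = \{g\in \GL_2(\cO_{F_v}) : g\equiv \smallmat{*}{*}{0}{1} \pmod{\varpi_v^{v(\n)}}\}$ otherwise. By Flath's theorem cited in the fact immediately above, $\pi\cong\otimes'_v \pi_v$; taking invariants under a product of local open compacts commutes with the restricted tensor product, so $\pi^{U_1(\n)} = \bigotimes_v \pi_v^{U_1(\n)_v}$. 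Note that $\pi_v$ is unramified (that is, has a nonzero $\GL_2(\cO_{F_v})$-fixed vector) for all but finitely many~$v$, and that cuspidality forces each finite-place $\pi_v$ to be infinite-dimensional (generic).

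The theorem therefore reduces to the following local assertion: for any irreducible admissible infinite-dimensional representation $\sigma$ of $\GL_2(K)$, with $K$ a non-archimedean local field and uniformiser $\varpi$, there is a unique minimal integer $c(\sigma)\ge 0$ such that $\sigma^{U_1(\varpi^{c(\sigma)})}\ne 0$; this fixed space is then one-dimensional; and moreover $\sigma^{U_1(\varpi^m)}\ne 0$ if and only if $m\ge c(\sigma)$. Granting this, one sets $\n := \prod_v \varpi_v^{c(\pi_v)}$, which is a well-defined ideal of $\cO_F$ as only finitely many exponents are nonzero; one-dimensionality of $\pi^{U_1(\n)}$ follows factor-by-factor (the remaining factors being the one-dimensional unramified spaces), and $\pi^{U_1(\m)}\ne 0 \iff \n\mid \m$ follows from the local monotonicity statement applied at each place.

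The main obstacle is the local assertion, and in particular the one-dimensionality of $\sigma^{U_1(\varpi^{c(\sigma)})}$, which is due to Casselman. The argument goes via the Whittaker model: genericity of $\sigma$ yields a Whittaker functional unique up to scalar, realising $\sigma$ as a space $\mathcal{W}(\sigma,\psi)$ of functions $W:\GL_2(K)\to\C$ with $W(\smallmat{1}{x}{0}{1}g)=\psi(x)W(g)$. Via the Iwasawa decomposition, a $U_1(\varpi^n)$-fixed Whittaker function is determined by its values on the diagonal torus, which in turn are governed by a recursion coming from the local Hecke operators; a careful analysis of this recursion, together with the Atkin--Lehner involution, yields existence, monotonicity, and (most delicately) one-dimensionality, identifying $c(\sigma)$ with the conductor exponent of $\rec_K(\sigma)$.
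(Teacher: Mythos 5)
The paper does not prove this statement: it is presented as a Fact and the reader is directed to the reference \cite{MR0379375} for it. So there is no in-text proof to compare against. Your argument is the standard one and is correct in substance: you use Flath's restricted tensor product decomposition $\pi\cong\otimes'_v\pi_v$ (cited immediately above in the paper), the compatibility of taking $U$-invariants with the restricted tensor product when $U=\prod_v U_v$ with $U_v=\GL_2(\cO_{F_v})$ for almost all $v$, and then you invoke Casselman's local newform theorem. This is precisely what the cited reference establishes.

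A few small points worth tightening. First, the ideal should be written $\n:=\prod_v\p_v^{c(\pi_v)}$ (with $\p_v$ the prime ideal of $\cO_F$ below $v$); $\varpi_v^{c(\pi_v)}$ is an element of $F_v$, not an ideal. Second, the claim that each local component $\pi_v$ is infinite-dimensional (equivalently, for $\GL_2$, generic) deserves a word of justification: it follows from the global Whittaker/Fourier expansion of cusp forms, which gives a global Whittaker model and hence local Whittaker models at every place. Third, your concluding paragraph sketching Casselman's proof is a gesture rather than an argument — the recursion on Whittaker values, the role of the Atkin--Lehner involution, and especially the one-dimensionality at the minimal level all require real work — but since you are explicitly attributing this to Casselman (and the paper itself treats the Fact as a citation), that is an acceptable level of detail here. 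The identification of $c(\sigma)$ with the conductor exponent of $\rec_K(\sigma)$ that you mention at the end is a pleasant additional fact but is not needed for the statement.
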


Analogous to the theory of admissible representations of $\GL_2(K)$,
$K/\Qp$ finite that we sketched above, there is a theory of admissible
representations of $M^\times$, $M$ a nonsplit quaternion algebra over
$K$. Since $M^\times/K^\times$ is compact, any irreducible smooth
representation of $M^\times$ is finite-dimensional. There is a
bijection $\JL$, the \emph{local Jacquet--Langlands correspondence},
from the irreducible smooth representations of $M^\times$ to the
discrete series representations of $\GL_2(K)$, determined by a
character identity.

\begin{fact}[The global Jacquet--Langlands correspondence]We have the
  following facts about $G_D(\A^\infty)$.

  \begin{enumerate}
  \item The only finite-dimensional cuspidal automorphic
    representations of $G_D(\A^\infty)$
    are 1-dimensional representations which factor through the
    reduced norm; these only exist if $D\ne M_2(F)$.
  \item     There is a bijection $\JL$ from the infinite-dimensional cuspidal
    automorphic representations of $G_D(\A^\infty)$ of weight
    $(k,\eta)$ to the cuspidal automorphic representations of
    $\GL_2(\A_F^\infty)$ of weight $(k,\eta)$ which are
    discrete series for all finite places $v\in S(D)$. Furthermore if
    $v\notin S(D)$ then $\JL(\pi)_v=\pi_v$, and if $v\in S(D)$ then
    $\JL(\pi)_v=\JL(\pi_v)$.
  \end{enumerate}

\end{fact}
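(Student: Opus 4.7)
The plan is to prove the global Jacquet--Langlands correspondence by comparing Selberg trace formulas on $G_D(\A)$ and $\GL_2(\A_F)$. The first step would be to establish a transfer of test functions: for a suitable factorizable Hecke function $\varphi^D = \otimes_v \varphi^D_v$ on $G_D(\A)$, construct $\varphi = \otimes_v \varphi_v$ on $\GL_2(\A_F)$ with matching orbital integrals at all regular semisimple elements. At places $v \notin S(D)$ one simply takes $\varphi_v = \varphi^D_v$ under the isomorphism $D_v^\times \cong \GL_2(F_v)$. At finite $v \in S(D)$ one uses the local Jacquet--Langlands character identity (already invoked in the preceding fact) to define $\varphi_v$; the matching is concentrated on the elliptic regular set, since $D_v^\times$ has no split regular semisimple elements. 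At archimedean $v \in S(D)$ (so $D_v^\times \cong \mathbb{H}^\times$), one matches characters of the compact group $\mathbb{H}^\times/\R^\times$ against pseudocoefficients of discrete series of $\GL_2(\R)$.

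The second step is to run the trace formula on both sides. Because $G_D$ is anisotropic modulo center as soon as $S(D)$ is nonempty at a finite place, its trace formula is ``simple'': only central and elliptic orbital integrals contribute. On the $\GL_2$ side one must contend with hyperbolic terms and the continuous/residual spectrum. By choosing test functions that are supercuspidal at one auxiliary place, one kills the parabolic contributions and the Eisenstein spectrum, so that the geometric sides match term-by-term (elliptic conjugacy classes in $D^\times(F)$ inject into semisimple conjugacy classes in $\GL_2(F)$). Equating spectral sides then gives an identity
\[ \sum_{\pi} m_D(\pi)\, \tr \pi(\varphi^D) \;=\; \sum_{\pi'} m(\pi')\, \tr \pi'(\varphi), \]
where $\pi$ ranges over cuspidal automorphic representations of $G_D(\A)$ and $\pi'$ over discrete automorphic representations of $\GL_2(\A_F)$.

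The third step is to extract the bijection from this identity. Varying $\varphi^D_v$ at one place at a time and using linear independence of characters, combined with strong multiplicity one for $\GL_2$, pins down a unique $\JL(\pi)$ for each infinite-dimensional cuspidal $\pi$, with $\JL(\pi)_v = \pi_v$ for $v \notin S(D)$ and $\JL(\pi)_v = \JL(\pi_v)$ for $v \in S(D)$. The one-dimensional cuspidal representations of $G_D(\A^\infty)$ correspond to characters of $F^\times \backslash (\A_F^\infty)^\times$ factoring through the reduced norm and must be separated out (they do \emph{not} correspond to cuspidal automorphic representations of $\GL_2$, but rather to residues of Eisenstein series). Conversely, the image is exactly the representations that are discrete series at every $v \in S(D)$: a cuspidal $\pi'$ on $\GL_2(\A_F)$ whose local component at some $v \in S(D)$ is principal series cannot appear on the right-hand side, because no test function on $D_v^\times$ has orbital integrals matching such a principal series.

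The main obstacle is the local transfer of orbital integrals, particularly at the archimedean places in $S(D)$, which rests on Harish-Chandra's deep theory of the discrete series and the explicit matching of characters between $\GL_2(\R)$ and $\mathbb{H}^\times$. A secondary technical difficulty is isolating the cuspidal contribution on the $\GL_2$ side from the residual and continuous spectra in a way that is compatible with the chosen matching of test functions, and ensuring that the one-dimensional representations of $G_D$ are correctly excluded from the bijection.
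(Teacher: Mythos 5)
The paper does not actually prove this statement: it is stated as a \emph{Fact}, i.e.\ a known deep theorem quoted from the literature (the original sources are Jacquet--Langlands and Gelbart--Jacquet; the notes implicitly defer to references such as those cited nearby). So there is no ``paper's own proof'' to compare against, and a trace-formula sketch is exactly the sort of thing the notes are declining to reproduce.

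That said, your outline is the standard route and is correct in its broad strokes. One place you should be more careful: you write that ``$G_D$ is anisotropic modulo center as soon as $S(D)$ is nonempty at a finite place.'' The correct statement is that $G_D$ is anisotropic modulo center as soon as $D$ is a division algebra, i.e.\ as soon as $S(D)\neq\emptyset$, regardless of whether $S(D)$ contains any finite place. This matters directly for the paper's application: the quaternion algebras used in the patching argument are \emph{definite}, with $S(D)=S_\infty$ consisting entirely of archimedean places, so your stated criterion would exclude exactly the case the notes actually use. The trace formula on $G_D$ is still simple in that case (no parabolic $F$-subgroups, no Eisenstein spectrum), and the matching at archimedean $v\in S(D)$ proceeds via characters of $\mathbb{H}^\times/\R^\times$ versus discrete series/pseudocoefficients on $\GL_2(\R)$, as you say. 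Beyond that slip, the main omissions are the ones you flag yourself: the full archimedean transfer, the precise isolation of the cuspidal spectrum on the $\GL_2$ side (including handling of residual representations, which is where the one-dimensional representations of $G_D$ get matched to characters $\chi\circ\det$ rather than to cusp forms), and the fact that one genuinely needs the local Jacquet--Langlands character identity at every $v\in S(D)$, not just its existence. Each of these is a substantial theorem in its own right, which is precisely why the notes take the global correspondence as a black box.
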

\begin{rem}
  We will use the global Jacquet--Langlands correspondence together
  with base change (see below) to reduce ourselves to considering the
  case that $S(D)=S_\infty$ when proving automorphy lifting theorems.
\end{rem}
\subsection{Galois representations associated to automorphic representations}

\begin{fact}[The existence of Galois representations associated to
  regular algebraic cuspidal
  automorphic representations] Let $\pi$ be a regular algebraic cuspidal automorphic
  representation of $\GL_2(\A_F^\infty)$ of weight $(k,\eta)$. Then
  there is a CM field $L_\pi$ which contains the eigenvalues of
  $T_v$ and $S_v$ on $\pi_v^{\GL_2(\cO_{F_v})}$ for each finite
  place~$v$ at which~$\pi_v$ is unramified. 
  Furthermore, for each finite place $\lambda$ of $L_\pi$ there is a continuous irreducible Galois
  representation \[r_\lambda(\pi):G_F\to\GL_2(\Lbar_{\pi,\lambda})\]
  such that
  \begin{enumerate}
  \item if $\pi_v$ is unramified and $v$ does not divide the residue
    characteristic of $\lambda$, then $r_\lambda(\pi)|_{G_{F_v}}$
  is unramified, and the characteristic polynomial of $\Frob_v$ is
  $X^2-t_vX+(\#k(v))s_v$, where $t_v$ and $s_v$ are the eigenvalues of
  $T_v$ and $S_v$ respectively on $\pi_v^{\GL_2(\cO_{F_v})}$, and
  $k(v)$ is the residue field of~$F_v$. [Note
  that by the Chebotarev density theorem, this already characterises
  $r_\lambda(\pi)$ up to isomorphism.]
\item More generally, for all finite places~
  $v$ not dividing the residue characteristic of~ $\lambda$, $\WD(r_\lambda(\pi)|_{G_{F_v}})^{F-\operatorname{ss}}\cong\rec_{F_v}(\pi_v\otimes|\det|^{-1/2})$.
\item If $v$ divides the residue characteristic of $\lambda$ then
  $r_\lambda(\pi)|_{G_{F_v}}$ is de Rham with $\tau$-Hodge-Tate weights
  $\eta_\tau,\eta_\tau+k_\tau-1$, where $\tau:F\into
  \barL_{\pi}\subset \C$ is an embedding lying over $v$. If $\pi_v$ is
  unramified then $r_\lambda(\pi)|_{G_{F_v}}$ is crystalline.
\item If $c_v$ is a complex conjugation, then $\det r_\lambda(\pi)(c_v)=-1$.
  \end{enumerate}
  \begin{rem}
    The representations~$r_\lambda(\pi)$ in fact form a strictly
    compatible system; see Section 5 of \cite{blggt} for a discussion
    of this in a more general context.
  \end{rem}
  \begin{rem}Using the Jacquet--Langlands correspondence, we get
    Galois representations for the infinite-dimensional cuspidal
    automorphic representations of $G_D(\A^\infty)$ for any $D$. In
    fact, the proof actually uses the Jacquet--Langlands
    correspondence; in most cases, you can transfer to a $D$ for which
    $S(D)$ contains all but one infinite place, and the Galois
    representations are then realised in the \'etale cohomology of the
    associated Shimura curve. The remaining Galois representations are
    constructed from these ones via congruences.
  \end{rem}
\end{fact}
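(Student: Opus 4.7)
The plan is to construct $r_\lambda(\pi)$ inside the $p$-adic étale cohomology of a suitable Shimura curve, where $p$ denotes the residue characteristic of $\lambda$, and then to verify the four listed properties place by place.

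First, I would use soluble base change together with Brauer's theorem on induction from solvable subgroups to reduce to the case that $[F:\Q]$ is odd. With this in hand, there exists a quaternion algebra $D/F$ ramified at every infinite place except a distinguished one $\tau_0 : F \hookrightarrow \R$, and unramified at all finite places. Since $\pi$ is regular algebraic, $\pi_v$ is in the discrete series at every infinite place, so the global Jacquet--Langlands correspondence produces a cuspidal automorphic representation $\pi^D$ of $G_D(\A)$ with $(\pi^D)^\infty \cong \pi^\infty$. After choosing a neat compact open subgroup $U \subset G_D(\A^\infty)$ with $((\pi^D)^\infty)^U \ne 0$, the associated Shimura curve $X_U/F$ (defined using $\tau_0$) is smooth and projective, and carries a $\Qpbar$-local system $\cL_{k,\eta}$ built from the algebraic representation of $G_D$ determined by the weight $(k,\eta)$. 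I would then define $r_\lambda(\pi)$ to be the $(\pi^D)^\infty$-isotypic component of $H^1_{\et}(X_U \times_F \overline{F}, \cL_{k,\eta})$; by multiplicity one for $\GL_2$ this component is two-dimensional over $\Lbar_{\pi,\lambda}$, and the Hecke and $G_F$ actions commute, giving the desired Galois representation. The field generated by the Hecke eigenvalues being CM follows from the Ramanujan--Petersson bound together with the action of complex conjugation on the Hodge decomposition of $H^1$.

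Next I would verify the four listed properties. Property~(1) follows from the Eichler--Shimura congruence relation: at a good place $v$, $\Frob_v$ acts on $H^1_\et$ by a Hecke correspondence whose characteristic polynomial on the $\pi^D$-component is $X^2 - t_v X + (\#k(v)) s_v$, matching the unramified computation of Exercise~\ref{ex:computing Hecke evals on unram PS}. Property~(2) is the deepest input: it is Carayol's local-global compatibility theorem (extended by Saito for bad reduction), obtained by analysing the nearby cycles of an integral model of $X_U$ at $v$ and comparing with $\rec_{F_v}(\pi_v \otimes |\det|^{-1/2})$. Property~(3) comes from Faltings' $p$-adic Hodge comparison theorem applied to the smooth proper curve $X_U$: the Hodge filtration on the associated automorphic vector bundles reads off the labelled Hodge--Tate weights $\eta_\tau,\, \eta_\tau + k_\tau - 1$, and when $\pi_v$ is unramified one may take $U_v$ hyperspecial so that $X_U$ has good reduction at $v$, whence the crystalline comparison theorem gives the crystalline statement. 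Property~(4) is immediate from the fact that complex conjugation acts on $H^1$ of a smooth projective curve with determinant $-1$.

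Finally, the direct construction above is unavailable for certain $\pi$ --- for instance, after the base change step one may be forced to also ramify $D$ at a finite place $v$ where $\pi_v$ is principal series, so that the Jacquet--Langlands transfer to $D$ does not exist. For these remaining cases, I would proceed by congruence: $p$-adically interpolate $\pi$ through a Hida family (in the ordinary case) or through a suitable eigenvariety whose classical points \emph{do} admit a Shimura-curve construction, and obtain $r_\lambda(\pi)$ as a limit of the corresponding Galois representations, with the Chebotarev density theorem and the Brauer--Nesbitt theorem pinning down the limit uniquely. The hardest step in this programme is the local-global compatibility at ramified finite places in~(2), which rests on Carayol's deep analysis of vanishing cycles on integral models of Shimura curves; the second most demanding input is the use of Faltings' comparison theorem for~(3).
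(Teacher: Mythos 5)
The paper states this only as a ``Fact'' and sketches the proof in a remark; your proposal fills in the same route the paper indicates (Jacquet--Langlands transfer to a quaternion algebra split at exactly one infinite place, realisation in $H^1_\et$ of the corresponding Shimura curve with coefficients in the local system $\cL_{k,\eta}$, Carayol and Saito for local-global compatibility at $v\nmid p$, Faltings' comparison theorem for $v\mid p$, and congruences for the cases where the direct transfer fails). The list of inputs --- Eichler--Shimura for (1), Carayol/Saito for (2), $p$-adic comparison theorems for (3), complex conjugation on $H^1$ of a curve for (4) --- is exactly right, and the recognition that the Jacquet--Langlands step can fail and must be handled by congruences matches the paper's remark.

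Your opening step, however, is wrong as stated: you claim to use soluble base change together with Brauer induction to ``reduce to the case that $[F:\Q]$ is odd.'' Base change from $F$ to a soluble extension $F'$ replaces the degree $[F:\Q]$ by $[F':F]\cdot[F:\Q]$, which is never odd when $[F:\Q]$ is even, so the parity can never be fixed this way. Nor does base change help with the Jacquet--Langlands obstruction at finite places: if $\pi_v$ is an (unramified or ramified) principal series, then its base change to any $F'_w$ is again a principal series, never discrete series, so base change cannot create a finite place at which one may ramify $D$. The actual dichotomy is: if $[F:\Q]$ is odd, or if $\pi_v$ is a discrete series at some finite $v$, Carayol's Shimura curve construction applies directly; in the remaining case ($[F:\Q]$ even and $\pi$ principal series at every finite place) one must argue by congruences, as Taylor did originally --- choose auxiliary primes $w$ and level-raise to produce, for each $n$, a $\pi^{(n)}$ congruent to $\pi$ mod $p^n$ and Steinberg at $w$, build $r_\lambda(\pi^{(n)})\bmod p^n$ from the Shimura curve, and take a limit, pinning it down by Chebotarev and Brauer--Nesbitt. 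Your appeal to Hida families or eigenvarieties is a heavier but workable variant of this congruence step, and your final paragraph essentially describes it; the issue is purely with the first sentence, which should be deleted or replaced by this dichotomy. (The Brauer-induction idea you invoke does play a genuine role in constructing Galois representations, but in the CM/unitary-group setting for non-self-dual higher-dimensional representations, not for $\GL_2$ over a totally real field.)

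One smaller imprecision: the CM-ness of the Hecke field $L_\pi$ is usually deduced from the existence of a positive involution on $L_\pi$ coming from the Petersson pairing (a classical argument going back to Shimura), rather than from the Ramanujan bound plus the Hodge decomposition; and the determinant-on-conjugation claim in (4) is cleanest to deduce from the relation $\det r_\lambda(\pi)=\varepsilon_p^{-1}\cdot(\text{Galois character of the central character})$ combined with the weight constraint $k_v+2\eta_v-1=w$, rather than directly from $H^1$ of the curve, though the latter can be made to work.
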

\begin{fact}[Cyclic base change]\label{fact: cyclic base change}
  Let $E/F$ be a cyclic extension of totally real fields of prime
  degree. Let $\Gal(E/F)=\langle\sigma\rangle$ and let 
  $\Gal(E/F)^\vee=\langle\delta_{E/F}\rangle$ (here $\Gal(E/F)^\vee$
  is the dual abelian group of $\Gal(E/F)$). Let $\pi$ be a cuspidal
  automorphic representation of $\GL_2(\A^\infty_F)$ of weight
  $(k,\eta)$. Then there is a cuspidal automorphic representation
  $\BC_{E/F}(\pi)$ of $\GL_2(\A_E^\infty)$ of weight
  $(\BC_{E/F}(k),\BC_{E/F}(\eta))$ such that
  \begin{enumerate}
  \item for all finite places $v$ of $E$,
    $\rec_{E_v}(\BC_{E/F}(\pi)_v)=(\rec_{F_{v|F}}(\pi_{v|_F}))|_{W_{E_v}}$. In
    particular, $r_\lambda(BC_{E/F}(\pi))\cong r_\lambda(\pi)|_{G_E}$.
  \item $\BC_{E/F}(k)_v=k_{v|_F}$, $\BC_{E/F}(\eta)_v=\eta_{v|_F}$.
  \item $\BC_{E/F}(\pi)\cong\BC_{E/F}(\pi')$ if and only if
    $\pi\cong\pi'\otimes(\delta_{E/F}^i\circ\Art_F\circ\det)$ for some $i$.
  \item A cuspidal automorphic representation $\pi$ of
    $\GL_2(\A^\infty_E)$ is in the image of $\BC_{E/F}$ if and only if
    $\pi\circ\sigma\cong\pi$. 
  \end{enumerate}

\end{fact}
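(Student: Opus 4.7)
This is the cyclic base change theorem for $\GL_2$, due in essentially this form to Langlands (building on work of Saito and Shintani), and the natural proof proceeds via the twisted trace formula. The first step is to define local base change at each place. At a finite place $v$ of $F$ that splits completely in $E$, so $E\otimes_F F_v\cong \prod_{w|v} F_v$, define $\BC_v(\pi_v)=(\pi_v,\dots,\pi_v)$. At an inert or ramified $v$, where $E\otimes_F F_v$ is a field $E_w$ of degree $\ell$ over $F_v$, define $\BC_v(\pi_v)$ as the unique irreducible admissible representation of $\GL_2(E_w)$ whose $\sigma$-twisted character matches the character of $\pi_v$, in the sense of an orbital integral identity. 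For unramified $\pi_v$ with Satake parameters $(\alpha,\beta)$, this amounts to the parameters $(\alpha^\ell,\beta^\ell)$, i.e.\ dualising the base change homomorphism of unramified Hecke algebras. At archimedean places one makes the analogous construction, which yields the recipe for $\BC_{E/F}(k)$ and $\BC_{E/F}(\eta)$ and hence property~(2). That $\BC_v(\pi_v)$ matches $(\rec_{F_v}(\pi_v))|_{W_{E_w}}$ via $\rec_{E_w}$ is part of the definition at split and unramified places and must be verified directly at ramified places using the character identity (this is essentially the statement that local base change commutes with local Langlands for $\GL_2$).

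The second step is the fundamental lemma: a matching of normalised orbital integrals on $\GL_2(F_v)$ with $\sigma$-twisted orbital integrals on $\GL_2(E_w)$. For $\GL_2$ this was established by Saito, Shintani and Langlands, and it is what links the geometric side of the ordinary trace formula for $\GL_2/F$ to that of the $\sigma$-twisted trace formula for $\GL_2/E$. The third step is the global comparison: equating geometric sides yields an equality of spectral distributions, and inverting it — using linear independence of twisted characters together with strong multiplicity one for $\GL_2(\A_F)$ — produces, for each cuspidal $\pi$ on $\GL_2(\A_F^\infty)$, a $\sigma$-invariant cuspidal $\BC_{E/F}(\pi)$ on $\GL_2(\A_E^\infty)$ with the prescribed local components. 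This gives (1); the Galois-theoretic consequence $r_\lambda(\BC_{E/F}(\pi))\cong r_\lambda(\pi)|_{G_E}$ then follows from (1), local-global compatibility, and Chebotarev at the split places. Property~(3) falls out by strong multiplicity one applied at all $v$ split in $E$: $\BC_{E/F}(\pi)\cong \BC_{E/F}(\pi')$ forces the Satake parameters of $\pi$ and $\pi'$ to agree at such $v$ up to a character trivial on norms from $E$, and by global class field theory any such character is of the form $\delta_{E/F}^i\circ\Art_F$.

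For (4), the characterization of the image, I would argue as follows. If $\Pi$ is a cuspidal automorphic representation of $\GL_2(\A_E^\infty)$ with $\Pi\circ\sigma\cong\Pi$, fix a Whittaker-normalised intertwiner $A_\Pi:\Pi\isoto\Pi\circ\sigma$, which is unique up to an $\ell$-th root of unity. The twisted trace of $\Pi$ with respect to $A_\Pi$ appears on the spectral side of the $\sigma$-twisted trace formula, and the comparison with the trace formula for $\GL_2/F$ forces this distribution to be a linear combination of twisted characters $\BC_{E/F}(\pi)$ for cuspidal $\pi$ on $\GL_2(\A_F^\infty)$; linear independence of characters then isolates a single $\pi$ with $\BC_{E/F}(\pi)\cong \Pi$. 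The converse direction of~(4) is immediate from~(1), since the local base change parameters are visibly $\sigma$-stable.

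The hard part is the global trace formula comparison itself. Passing from matching geometric sides to matching cuspidal spectral sides requires controlling the contributions from the continuous spectrum (Eisenstein series), and one needs a coherent normalization of the twisted characters (which is the reason for insisting on the Whittaker-normalised intertwiner). For $\GL_2$ these technicalities are manageable — this is the content of Langlands' Princeton monograph on base change for $\GL_2$ — but a self-contained treatment is lengthy, which is why the statement is cited here as a black box rather than proved from scratch.
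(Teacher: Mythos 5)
The paper does not prove this statement: it is labelled a \emph{Fact} and is cited as a black box, as you yourself observe at the end of your proposal. So there is no argument in the paper to compare against; the question is simply whether your sketch of the standard proof (Langlands' base change monograph for $\GL_2$, following Saito and Shintani) is sound. It is, and it matches the canonical route: define local base change directly at split places, via the Shintani character identity at inert or ramified places (with the unramified case recoverable from the Satake isomorphism), prove the fundamental lemma matching orbital integrals on $\GL_2(F_v)$ with twisted orbital integrals on $\GL_2(E_w)$, compare the ordinary trace formula over $F$ with the $\sigma$-twisted trace formula over $E$, and invert the resulting spectral identity using linear independence of (twisted) characters and strong multiplicity one. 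Item (3) from class field theory, item (4) from $\sigma$-invariance plus the reverse trace formula comparison, and the Galois consequence of (1) from local-global compatibility and Chebotarev, are all as you say.

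One point your sketch glosses over is why $\BC_{E/F}(\pi)$ is actually \emph{cuspidal}, as the statement asserts. In general cyclic base change of a cuspidal $\pi$ can fail to be cuspidal precisely when $\pi$ is automorphically induced from a Hecke character of $E$, equivalently when $\pi\cong\pi\otimes\delta_{E/F}^i$ for some $i\ne 0$ (forcing $\ell=2$ for $\GL_2$). Here that degeneracy cannot occur: $E$ is totally real, so every archimedean place of $F$ splits in $E$, and hence $\AI_{E/F}(\chi)$ is a principal series (not discrete series) at every infinite place; but $\pi$ has weight $(k,\eta)$ with $k_v\ge 2$, so $\pi_\infty$ is in the discrete series. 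This is the hypothesis that makes the paper's clean statement correct, and it is worth making explicit when inverting the trace formula identity to extract a single cuspidal $\Pi$. A second minor slip: in your first paragraph you describe the Shintani lift as matching the $\sigma$-twisted character ``in the sense of an orbital integral identity''---the defining relation is a twisted character identity on the spectral side, while the orbital integral matching is the fundamental lemma on the geometric side, which you then correctly separate out as your second step.
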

\begin{defn}
  We say that $r:G_F\to\GL_2(\Qpbar)$ is \emph{modular} (of weight
  $(k,\eta)$) if it is isomorphic to $r_\lambda(\pi)$ for some
  cuspidal automorphic representation $\pi$ (of weight $(k,\eta)$) and
  some place $\lambda$ of $L_{\pi}$ lying over~$p$.
\end{defn}
\begin{prop}
  Suppose that $r:G_F\to\GL_2(\Qpbar)$ is a continuous representation,
  and that $E/F$ is a finite solvable Galois extension of totally real
  fields. Then $r|_{G_E}$ is modular if and only if $r$ is modular.
\end{prop}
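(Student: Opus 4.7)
The plan is to deduce the proposition from Fact~\ref{fact: cyclic base change} (cyclic base change) together with a standard tower argument. Since $\Gal(E/F)$ is solvable, I choose a chain of totally real fields $F=F_0\subset F_1\subset\dots\subset F_n=E$ in which every $F_{i+1}/F_i$ is cyclic of prime degree; as the biconditional of the proposition is transitive along such a tower, I reduce to $n=1$ and assume from now on that $E/F$ is cyclic of prime degree with $\Gal(E/F)=\langle\sigma\rangle$. The forward implication is then essentially immediate: if $r\cong r_\lambda(\pi)$ for a cuspidal automorphic representation $\pi$ of $\GL_2(\A_F^\infty)$, then $\pi_E:=\BC_{E/F}(\pi)$ is cuspidal by Fact~\ref{fact: cyclic base change} and satisfies $r_\lambda(\pi_E)\cong r_\lambda(\pi)|_{G_E}\cong r|_{G_E}$.

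For the converse, assume $r|_{G_E}\cong r_\lambda(\pi_E)$ for some cuspidal $\pi_E$ on $\GL_2(\A_E^\infty)$. Since $r$ is defined on all of $G_F$, for any lift $\widetilde{\sigma}\in G_F$ of $\sigma$ conjugation by $r(\widetilde{\sigma})$ produces an isomorphism $r|_{G_E}\cong (r|_{G_E})^{\widetilde{\sigma}}$. The unramified local--global compatibility in Fact~\ref{fact: cyclic base change}, combined with the fact that $\sigma$ permutes the finite places of $E$ at which $\pi_E$ is unramified, identifies $(r_\lambda(\pi_E))^{\widetilde{\sigma}}$ with $r_\lambda(\pi_E^\sigma)$; the Chebotarev density theorem then gives $r_\lambda(\pi_E^\sigma)\cong r_\lambda(\pi_E)$, and strong multiplicity one for $\GL_2$ forces $\pi_E^\sigma\cong\pi_E$. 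By the last clause of Fact~\ref{fact: cyclic base change}, there exists a cuspidal $\pi_F$ on $\GL_2(\A_F^\infty)$ with $\BC_{E/F}(\pi_F)\cong\pi_E$, and therefore $r_\lambda(\pi_F)|_{G_E}\cong r_\lambda(\pi_E)\cong r|_{G_E}$.

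It remains to upgrade an isomorphism over $G_E$ to one over $G_F$. Because $r|_{G_E}$ arises from a cuspidal automorphic representation it is irreducible, whence $r$ is irreducible as well, and both $r$ and $r_\lambda(\pi_F)$ are two-dimensional irreducible extensions of the same $G_E$-representation. Applying Frobenius reciprocity to the nonzero map $r|_{G_E}\to r_\lambda(\pi_F)|_{G_E}$ and decomposing $\Ind_{G_E}^{G_F}\triv$ as the sum of the characters of the finite group $\Gal(E/F)$ yields a character $\chi:\Gal(E/F)\to\Qpbartimes$ such that $r\cong r_\lambda(\pi_F)\otimes\chi$. As $\chi$ has finite order, via global class field theory and the fixed embeddings $\Qbar\hookrightarrow\C$, $\Qbar\hookrightarrow\Qpbar$ it corresponds to a finite-order Hecke character $\widetilde{\chi}$ of $F$, and then $\pi_F':=\pi_F\otimes(\widetilde{\chi}\circ\det)$ is cuspidal with $r_\lambda(\pi_F')\cong r_\lambda(\pi_F)\otimes\chi\cong r$, completing the proof that $r$ is modular.

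The main obstacle is the compatibility $(r_\lambda(\pi_E))^{\widetilde{\sigma}}\cong r_\lambda(\pi_E^\sigma)$ in the second paragraph: it rests on checking, at almost all finite places $w$ of $E$, that the Frobenius trace of $(r_\lambda(\pi_E))^{\widetilde{\sigma}}$ at $w$ equals the Hecke eigenvalue of $\pi_E^\sigma$ at $w$, which in turn follows from unravelling the definition $\pi_E^\sigma(g)=\pi_E(\sigma(g))$ and using that $\widetilde{\sigma}$ sends $\Frob_w$ to $\Frob_{\sigma^{-1}(w)}$. Once this translation is in hand, the remainder of the argument is the formal combination of strong multiplicity one, the descent clause of cyclic base change, and the character-twist bookkeeping above.
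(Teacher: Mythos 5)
Your proof is correct and follows the same outline the paper sketches as an exercise: reduce by solvability to the cyclic prime-degree case, handle the easy direction by base change, and for the converse deduce $\pi_E^\sigma\cong\pi_E$ from Chebotarev and strong multiplicity one, descend via the last clause of cyclic base change, and then pin down the discrepancy as a twist by a character of $\Gal(E/F)$. Your appeal to Frobenius reciprocity together with the decomposition of $\Ind_{G_E}^{G_F}\triv$ is a clean way of realising the ``Schur's lemma'' step the paper points to, and the final automorphic twist by the corresponding finite-order Hecke character closes the argument as intended.
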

\begin{exercise}
  Prove the above proposition as follows.
  \begin{enumerate}
  \item Use induction to reduce to the case that $E/F$ is cyclic of
    prime degree.
  \item Suppose that $r|_{G_E}$ is modular, say $r|_{G_E}\cong
    r_\lambda(\pi)$. Use strong multiplicity one to show that
    $\pi\circ\sigma\cong\pi$. Deduce that there is an automorphic
    representation $\pi'$ such that $\BC_{E/F}(\pi')=\pi$.
  \item Use Schur's lemma to deduce that there is a character $\chi$
    of $G_F$ such that $r\cong r_\lambda(\pi')\otimes
    \chi$. Conclude that $r$ is modular.
  \end{enumerate}

\end{exercise}
We can make use of this result to make considerable simplifications in
our proofs of modularity lifting theorems. It is frequently employed
in conjunction with the following fact from class field theory.
\begin{fact}[Lemma 2.2 of~\cite{MR1981033}]\label{fact: existence of number fields with given local properties}Let $K$ be a number field, and let $S$ be a finite set of
  places of $K$. For each $v\in S$, let $L_v$ be a finite Galois
  extension of $K_v$. Then there is a finite solvable Galois extension
  $M/K$ such that for each place $w$ of $M$ above a place $v\in S$
  there is an isomorphism $L_v\cong M_w$ of $K_v$-algebras.
  
\end{fact}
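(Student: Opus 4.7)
The plan is to induct on $n := \max_{v \in S}[L_v:K_v]$, with the base case $n = 1$ immediate (take $M = K$). The first ingredient for the inductive step is that every finite Galois extension of a local field has solvable Galois group: this is clear at archimedean places, and at a non-archimedean $v$, the standard filtration of $\Gal(L_v/K_v)$ by wild inertia and inertia has pro-$p$ and cyclic successive quotients. In particular, whenever $[L_v:K_v] > 1$, $\Gal(L_v/K_v)$ admits a nontrivial cyclic quotient of some prime order $\ell_v$, corresponding to a Galois intermediate field $L_v^\circ \subseteq L_v$ with $L_v^\circ/K_v$ cyclic of order $\ell_v$.

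The second ingredient is a Grunwald--Wang-type theorem, which I would invoke to construct a finite cyclic Galois extension $M_1/K$ such that $(M_1)_{w_1} \cong L_v^\circ$ as $K_v$-algebras for every $v \in S$ and every place $w_1$ of $M_1$ above $v$; at archimedean places this merely prescribes the splitting behaviour of $v$ in $M_1$. The notorious Grunwald--Wang special case at the prime $2$ is dealt with in the standard manner, by enlarging $S$ with auxiliary finite places at which $M_1$ is required to split, which does not affect the conclusion. I would then apply the inductive hypothesis to the base field $M_1$, the set of places $S_1 := \{w_1 : w_1 \mid v,\ v \in S\}$, and the local extensions $L_v/(M_1)_{w_1}$, which have strictly smaller maximal degree $[L_v:L_v^\circ] < [L_v:K_v]$. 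This yields a finite solvable Galois extension $M/M_1$ with $M_w \cong L_v$ as $K_v$-algebras for every place $w$ of $M$ above $v \in S$.

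The last step is to remedy the fact that $M/K$ need not itself be Galois by passing to the Galois closure $\widetilde{M}$ of $M$ over $K$. Since $M_1/K$ is Galois, $\widetilde{M}$ is the $M_1$-compositum of the $\Gal(M_1/K)$-conjugates of $M$, each of which is solvable Galois over $M_1$, so $\widetilde{M}/M_1$ is again solvable Galois and hence $\widetilde{M}/K$ is solvable Galois. Because $L_v/K_v$ is itself Galois, every embedding of $L_v$ into $\overline{K_v}$ has the same image; hence each of the $\Gal(M_1/K)$-conjugates of $M$ completes (inside a fixed $\overline{K_v}$) to the very same subfield $L_v \subseteq \overline{K_v}$, so their compositum $\widetilde{M}_{\widetilde{w}}$ is again $L_v$. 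Thus $\widetilde{M}_{\widetilde{w}} \cong L_v$ for every $\widetilde{w} \mid v$, as required. The main obstacle is the Grunwald--Wang invocation, including the standard technical contortions needed to sidestep its special case at $\ell_v = 2$; once that is in hand, the inductive step and the Galois-closure argument are essentially formal.
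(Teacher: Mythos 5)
Your argument is correct, and it is essentially the standard proof of this statement. The paper itself gives no proof and simply cites Lemma 2.2 of Taylor's \emph{Remarks on a conjecture of Fontaine and Mazur}, and the route you take --- induction on $\max_v[L_v:K_v]$, peeling off a cyclic layer of prime order using solvability of local Galois groups, globalizing that layer via Grunwald--Wang, and passing at the end to the Galois closure, with the normality of $L_v/K_v$ guaranteeing that the closure has the same completions --- is the argument that appears (up to cosmetic choices such as inducting on the order rather than, say, the derived length) in the source and elsewhere in the literature. The one genuinely delicate point, that taking the Galois closure does not enlarge the completions, is handled exactly right: each conjugate $\sigma(M)$ has closure in $\overline{K_v}$ isomorphic over $K_v$ to $L_v$, and normality of $L_v/K_v$ forces all these closures to be the \emph{same} subfield of $\overline{K_v}$, so their compositum is still $L_v$.

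One small correction on the Grunwald--Wang aside, which is harmless but worth getting straight. Since each $L_v^\circ/K_v$ is taken of prime degree $\ell_v$, you may take $M_1/K$ cyclic of degree $\operatorname{lcm}_v\,\ell_v$, which is squarefree; the Grunwald--Wang exceptional case only arises when $8$ divides the requested cyclic degree, so it simply never occurs in your setup and no workaround is needed. Moreover, the workaround you describe (enlarging $S$ by auxiliary places where $M_1$ is required to split) would not in general repair the special case: the failure there is a failure of surjectivity of a localization map, and imposing additional local constraints only shrinks the set of global solutions. When the special case does genuinely occur, the standard remedies are instead to allow the global cyclic degree to be $2n$ rather than $n$, or to settle for an abelian rather than cyclic extension. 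Again, none of this affects the correctness of your proof, because your cyclic layers are of prime order.
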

Note that we are allowed to have infinite places in $S$, so that if
$K$ is totally real we may choose to make $L$ totally real by an
appropriate choice of the $L_v$.

\section{The Taylor--Wiles--Kisin method}
In this section we prove our modularity lifting theorem, using the
Taylor--Wiles--Kisin patching method. Very roughly, the idea of this
method is to patch together spaces of modular forms of varying levels,
allowing more and more ramification at places away from~$p$, in such a
way as to ``smooth out'' the singularities of global deformation
rings, reducing the problem to one about local deformation rings. This
patching procedure is (at least on first acquaintance) somewhat
strange, as it involves making many non-canonical choices to identify
spaces of modular forms with level structures at different primes.
\subsection{}
Our aim now is to prove the
following theorem. Let $p>3$ be a prime, and let $L/\Qp$ be a finite
extension with ring of integers $\cO$, maximal ideal $\lambda$, and
residue field $\F=\cO/\lambda$. Let $F$ be a totally real number
field, and assume that $L$ is sufficiently large that $L$ contains the
images of all embeddings $F\into\Lbar$. 
\begin{thm}
  \label{thm: main modularity lifting theorem}Let
  $\rho,\rho_0:G_F\to\GL_2(\cO)$ be two continuous representations,
  such that $\rhobar=\rho\pmod{\lambda}=\rho_0\pmod{\lambda}$. Assume
  that $\rho_0$ is modular, that $\rho$ is geometric, and that $p>3$.
  Assume further that the following properties hold.
  \begin{enumerate}
  \item For all $\sigma:F\into L$,
    $\HT_\sigma(\rho)=\HT_\sigma(\rho_0)$, and contains two distinct elements.
  \item
    \begin{itemize}
    \item For all $v|p$, $\rho|_{G_{F_v}}$ and $\rho_0|_{G_{F_v}}$ are crystalline.
    \item $p$ is unramified in $F$.
    \item For all $\sigma:F\into L$, the elements of 
    $\HT_\sigma(\rho)$ differ by at most $p-2$.
    \end{itemize}
  \item $\Im\rhobar\supseteq\SL_2(\F_p)$.

  \end{enumerate}
Then $\rho$ is modular.
\end{thm}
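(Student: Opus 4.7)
The plan is to apply the Taylor--Wiles--Kisin patching method, using the Fontaine--Laffaille deformation rings of Theorem~\ref{thm:FL deformation rings} at places above $p$ and Taylor's Ihara-avoidance deformations of Theorem~\ref{thm:Ihara avoidance deformations} away from $p$. First, I reduce to a convenient solvable base change. Using Fact~\ref{fact: existence of number fields with given local properties} together with Fact~\ref{fact: cyclic base change}, I pass to a finite solvable totally real extension $F'/F$ of even degree over~$\Q$, arranged so that $\rhobar|_{G_{F'}}$ still contains $\SL_2(\F_p)$, $p$ remains unramified in $F'$, the Fontaine--Laffaille hypothesis is preserved, and at every finite place $v\nmid p$ where either $\rho$ or $\rho_0$ is ramified one has $\rhobar|_{G_{F'_v}}$ trivial, $\#k(v)\equiv 1\pmod p$, and both restrictions have unipotent inertia. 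By the solvable descent proposition, proving modularity of $\rho|_{G_{F'}}$ suffices, so I replace $F$ by $F'$. The even degree then allows me to fix a totally definite quaternion algebra $D/F'$ with $S(D)=S_\infty$; the Jacquet--Langlands correspondence transports modularity of $\rho_0$ to a system of eigenvalues on $G_D(\A^\infty)$.

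Next I set up the deformation problem and Hecke module. Let $S$ be the union of the places of $F'$ above $p$ and the places at which $\rho$ or $\rho_0$ ramifies, and fix a lift $\chi:G_{F'}\to\cO^\times$ of $\det\rhobar$ matching $\det\rho=\det\rho_0$. Define a deformation problem $\cS_1$ of type $(S,\{\cD_v\},\chi)$ by taking $\cD_v$ to be the Fontaine--Laffaille crystalline quotient $R^\square_{\rhobar|_{G_{F'_v}},\chi,\cris,\{H_\sigma\}}$ at $v\mid p$ and the ``unipotent'' quotient $R^\square_{\rhobar|_{G_{F'_v}},\chi,1}$ of Theorem~\ref{thm:Ihara avoidance deformations} at $v\in S\setminus\{v\mid p\}$. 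Define $\cS_\nr$, $\cS_\mult$, and $\cS_\zeta$ similarly by substituting the $\nr$, $\mult$, $\zeta$ quotients at the auxiliary places. By construction, $\rho_0$ and $\rho$ both define $\Qpbar$-points of $R^\univ_{\cS_1}$ (with $\rho_0$ moreover defining a point of $R^\univ_{\cS_\nr}$). For a compact open $U\subset G_D(\A^\infty)$ of level matching $\cS_1$, set $M_{\cS_1}:=(S_{D,k,\eta}^U\otimes\cO)_\m$ where $\m\subset\T$ is the maximal ideal cut out by $\rhobar$; then $R^\univ_{\cS_1}$ acts on $M_{\cS_1}$ through a surjection onto $\T_\m$, and analogous modules are defined for the other problems.

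The patching step is the heart of the proof. The hypothesis $\Im\rhobar\supseteq\SL_2(\F_p)$ together with Chebotarev provides, for each $n\ge 1$, a set $Q_n$ of places $v\notin S$ of fixed cardinality $q$ with $\#k(v)\equiv 1\pmod{p^n}$, $\rhobar(\Frob_v)$ of distinct eigenvalues, and vanishing dual Selmer group; Proposition~\ref{prop:H^1 and H^2 of global over local} then expresses $R^{\square_T}_{\cS_1\cup Q_n}$ as a power-series quotient of $R^{\loc}_{\cS_1,T}$ in a fixed number $g$ of variables. At $v\in Q_n$, Lemma~\ref{universal def ring for TW deformations} yields a free action of the $p$-Sylow quotient $\Delta_v$ of $k(v)^\times$; writing $\Delta_n=\prod_{v\in Q_n}\Delta_v$, an auxiliary level structure at $Q_n$ produces a Hecke module $M_{Q_n}$ that is finite free over $\cO[\Delta_n]$ (Diamond--Fujiwara freeness). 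Passing to a Kisin-style patched limit along a cofinal system of the $Q_n$ produces a complete local ring $R_\infty=R^{\loc}_{\cS_1,T}\llbracket x_1,\ldots,x_g\rrbracket$, a power-series $\cO$-algebra $S_\infty=\cO\llbracket\Delta_\infty\rrbracket\llbracket y_1,\ldots,y_t\rrbracket$ with $\Delta_\infty\cong\Z_p^q$, and an $R_\infty$-module $M_\infty$ finite free over $S_\infty$, such that quotienting by the augmentation ideal of $S_\infty$ recovers $M_{\cS_1}$ as an $R^\univ_{\cS_1}$-module; by the dimension count, $\dim R_\infty\le \dim S_\infty$. Running the same patching with the same $Q_n$ for $\cS_\nr$, $\cS_\mult$, and each $\cS_\zeta$ produces parallel data $R^\nr_\infty, R^\mult_\infty, R^\zeta_\infty$ and modules $M^\nr_\infty, M^\mult_\infty, M^\zeta_\infty$.

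I then conclude via Taylor's Ihara-avoidance argument. By Theorem~\ref{thm:Ihara avoidance deformations} each $R^\zeta_\infty[1/p]$ is geometrically irreducible of dimension equal to $\dim S_\infty$, so $M^\zeta_\infty$, being free of positive rank over $S_\infty$, is supported on all of $\Spec R^\zeta_\infty$. The identification $R^\zeta_\infty/\lambda = R^\nr_\infty/\lambda$ together with a support-swap across the generic fibre (Taylor's trick) transfers this full support to $M^\nr_\infty$ and $M^\mult_\infty$, and hence to every minimal prime of $R_\infty$ in the $\cS_1$-problem. Quotienting by the augmentation ideal of $S_\infty$ then shows $R^\univ_{\cS_1}[1/p]\to\T_\m[1/p]$ has nilpotent kernel and that every $\Qpbar$-point of $R^\univ_{\cS_1}$ is automorphic; applying this to the point corresponding to $\rho$ proves $\rho$ is modular. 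The hardest part is the patching machinery itself: verifying the Selmer-group vanishing at the TW primes via Poitou--Tate and the big-image hypothesis, establishing freeness of $M_{Q_n}$ over $\cO[\Delta_n]$ using the distinct-eigenvalue splitting of $\rhobar(\Frob_v)$, and aligning the local/global dimension count so that $\dim R_\infty=\dim S_\infty$. Reconciling the possibly different ramification behaviours of $\rho$ and $\rho_0$ at auxiliary primes is what necessitates the Ihara-avoidance refinement beyond the classical Taylor--Wiles setup.
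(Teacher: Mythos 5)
Your plan captures the correct architecture — solvable base change to an even-degree totally real field, transfer to a definite quaternion algebra via Jacquet--Langlands, Fontaine--Laffaille at $p$, Taylor--Wiles--Kisin patching with Chebotarev-produced auxiliary sets $Q_N$, and Taylor's Ihara-avoidance at the bad places away from $p$ — and this is precisely the structure of the paper's proof. However, there are two genuine gaps in the way you have set up the Ihara-avoidance step.

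First, the identification you invoke, ``$R^\zeta_\infty/\lambda = R^\nr_\infty/\lambda$,'' is not correct; the isomorphism supplied by Theorem~\ref{thm:Ihara avoidance deformations} is
$R^\square_{\rhobar,\chi,1}/\lambda = R^\square_{\rhobar,\chi,\zeta}/\lambda$, i.e.\ it is the $\cP_1$ quotient, not the $\cP_\nr$ quotient, that agrees with the $\cP_\zeta$ quotient modulo $\lambda$. Correspondingly, only two global deformation problems enter the patching: the one using $R^\square_{\rhobar|_{G_{F_v}},\chi,1}$ at $v\in T_r$ (call it the unprimed problem, which admits both $\rho$ and $\rho_0$ as points) and the one using $R^\square_{\rhobar|_{G_{F_v}},\chi,\zeta}$ (the primed problem). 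The ``nr'' and ``mult'' quotients are simply the two irreducible components of $\Spec R^\square_{\rhobar,\chi,1}$; they do not carry their own patched modules. There is no natural Hecke module whose support would be calibrated against $R^\nr_\infty$ or $R^\mult_\infty$ separately, so the objects $M^\nr_\infty$, $M^\mult_\infty$ in your proposal have no construction. What the argument actually does is: (i) $R'_\infty$ (the patched primed ring) is irreducible, so the finite-freeness of $S'_\infty$ over $\cJ_\infty$ and a depth-over-dimension comparison force $\Supp_{R'_\infty}(S'_\infty)=\Spec R'_\infty$; (ii) reducing mod $\lambda$, using $R_\infty/\lambda\cong R'_\infty/\lambda$ and $S_\infty/\lambda\cong S'_\infty/\lambda$, one gets $\Supp_{R_\infty/\lambda}(S_\infty/\lambda)=\Spec R_\infty/\lambda$; (iii) since $\Supp_{R_\infty}(S_\infty)$ is a union of irreducible components (again by depth) and mod-$\lambda$ reduction induces a bijection on components by Theorem~\ref{thm:Ihara avoidance deformations}(4), this forces $\Supp_{R_\infty}(S_\infty)=\Spec R_\infty$.

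Second, you have not said what the automorphic module for the $\zeta$-problem is. The Galois-side ring $R^\square_{\rhobar,\chi,\zeta}$ classifies lifts whose inertia at $v\in T_r$ has eigenvalues $\zeta,\zeta^{-1}$, and the matching modular forms are cut out not by a change of level but by a nontrivial nebentypus character $\psi'$ on $\prod_{v\in T_r} U_0(v)$ sending the image of $\sigma_v$ to $\zeta$. The crucial point — and the engine of the whole Ihara-avoidance mechanism — is that $\psi'\equiv\psi\pmod{\lambda}$, which is what forces $S(U_\emptyset,\cO)/\lambda = S(U'_\emptyset,\cO)/\lambda$ and hence the isomorphism of the entire patched picture modulo $\lambda$. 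Without this twist, the two Hecke modules have no a priori relation, and the support-transfer you describe cannot get started. You should also make the patching compatible: the surjections $R_\infty\onto R^\square_{Q_N}$, the maps $\cJ_\infty\to R_\infty$, and the pattern of finite sub-objects in the pigeonhole limit must all be chosen so that the primed and unprimed data literally coincide after reduction mod $\lambda$, which requires choosing the universal lifts and framings consistently across the two problems from the outset.

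One smaller point: after the base change, you should explicitly arrange $\det\rho=\det\rho_0$, since the determinant is fixed in the deformation problem. This is a short argument — after imposing unipotent inertia away from $p$ and crystalline-with-fixed-weights at $p$, the ratio $\det\rho/\det\rho_0$ is an everywhere unramified character that is residually trivial, hence of $p$-power order, hence trivial on complex conjugations, so its splitting field is a solvable totally real extension unramified above $p$ and one can base change to kill it.
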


\subsection{The integral theory of automorphic forms}\label{subsec:
  integral auto forms}In order to prove
Theorem~\ref{thm: main modularity lifting theorem}, we will need to
study congruences between automorphic forms. This is easier to do if
we  work with automorphic forms on $G_D(\A^\infty)$,
where $S(D)=S_\infty$. In order to do this, assume that $[F:\Q]$ is
even. (We will reduce to this case by base change.) Then such a $D$
exists, and we have $G_D(\A^\infty)\cong\GL_2(\A_F^\infty)$, and
$(D\otimes_\Q\R)^\times/(F\otimes_\Q\R)^\times$ is compact.

Fix an isomorphism $\imath:\Lbar\isoto\C$, and some $k\in\Z_{\ge
  2}^{\Hom(F,\C)}$, $\eta\in\Z^{\Hom(F,\C)}$ with
$w:=k_\tau+2\eta_\tau-1$ independent of $\tau$. Let
$U=\prod_vU_v\subset\GL_2(\A_F^\infty)$ be a compact open subgroup,
and let $S$ be a finite set of finite places of $F$, not containing any of
the places lying over $p$, with the property that if $v\notin S$, then
$U_v=\GL_2(\cO_{F_v})$.

Let $U_S:=\prod_{v\in S}U_v$, write $U=U_SU^S$, let $\psi:U_S\to\cO^\times$ be a continuous homomorphism
(which implies that it has open kernel), and let
$\chi_0:\A_F^\times/F^\times\to\C^\times$ be an algebraic
Gr\"ossencharacter with the properties that
\begin{itemize}
\item $\chi_0$ is unramified outside $S$,
\item for each place $v|\infty$,
  $\chi_0|_{(F_v^\times)^\circ}(x)= x^{1-w}$, and
\item $\chi_0|_{(\prod_{v\in S}F_v^\times)\cap U_S}=\imath\circ\psi^{-1}$.
\end{itemize}
As in Theorem~\ref{thm: Gr\"ossencharacters and algebraic
    representations}, this gives us a
  character \[\chi_{0,\imath}:\A_F^\times/\overline{F^\times
    (F_\infty^\times)^\circ}\to\Lbar^\times,\] \[x\mapsto\bigl(\prod_{\tau:F\into
    L}\tau(x_p)^{1-w}\bigr)\imath^{-1}(\prod_{\tau:F\into\C}\tau(x_\infty))^{w-1}\chi_0(x).\]

Our spaces of ($p$-adic) algebraic automorphic forms will be defined in a similar
way to the more classical spaces defined in Section~\ref{subsec:
  modular forms}, but with the role of the infinite places being
played by the places lying over $p$. Accordingly, we define
coefficient systems in the following way. Assume that~$L$ is
sufficiently large that it contains the image of~$\chi_{0,\imath}$.

Let
$\Lambda=\Lambda_{k,\eta,\imath}=\otimes_{\tau:F\into\C}\Sym^{k_\tau-2}(\cO^2)\otimes(\wedge^2\cO^2)^{\otimes\eta_\tau}$,
and let $\GL_2(\cO_{F,p}):=\prod_{v|p}\GL_2(\cO_{F_v})$ act on $\Lambda$ via $\imath^{-1}\tau$ on
the $\tau$-factor. In particular,
$\Lambda\otimes_{\cO,\imath}\C\cong\otimes_{\tau:F\into\C}\Sym^{k_\tau-2}(\C^2)\otimes(\wedge^2\C^2)^{\otimes\eta_\tau}$,
which has an obvious action of $\GL_2(F_\infty)$, and the two actions
of $\GL_2(\cO_{F,(p)})$ (via its embeddings into $\GL_2(\cO_{F,p})$
and $\GL_2(F_\infty)$) are compatible.

Let $A$ be a finite $\cO$-module. Since~$D$ is fixed, we drop it from
the notation from now on. We define
$S(U,A)=S_{k,\eta,\imath,\psi,\chi_0}(U,A)$ to be the space of
functions \[\phi:D^\times\backslash
\GL_2(\A_F^\infty)\to\Lambda\otimes_\cO A\] such that for all
$g\in\GL_2(\A_F^\infty),u\in U,z\in(\A_F^\infty)^\times$, we
have \[\phi(guz)=\chi_{0,\imath}(z)\psi(u_S)^{-1}u_p^{-1}\phi(g).\]

Since $D^\times\backslash \GL_2(\A_F^\infty)/U(\A_F^\infty)^\times$ is
finite, we see in particular that $S(U,\cO)$ is a finite free
$\cO$-module. It has a Hecke action in the obvious way: let
$\widetilde{\T}:=\cO[T_v,S_v: v\nmid p,v\notin S]$, let $\varpi_v$ be
a uniformizer of~$F_v$, and let $T_v,S_v$ act
via the usual double coset operators corresponding to $
\begin{pmatrix}
  \varpi_v&0\\0&1
\end{pmatrix}$, $
\begin{pmatrix}
  \varpi_v&0\\0&\varpi_v
\end{pmatrix}$. Let $\T_U$ be the image of $\widetilde{\T}$ in
$\End_{\cO}(S(U,\cO))$, so that $\T_U$ is a commutative $\cO$-algebra
which acts faithfully on $S(U,\cO)$, and is finite free as an
$\cO$-module.

As in~\cite[Lem.\ 1.3]{tay-fm2}, to which we refer for more details,
there is an isomorphism\[S(U,\cO)\otimes_{\cO,\imath}\C\isoto\Hom_{U_S}(\C(\psi^{-1}),S_{k,\eta}^{U^S,\chi_0}),\]with
the map being \[\phi\mapsto (g\mapsto
g_\infty^{-1}\imath(g_p\phi(g^\infty))),\]where $g_p$ acts on
$\Lambda\otimes_{\cO,\imath}\C$ via the obvious extension of the action
of $\GL_2(\cO_{F,(p)})$ defined above, and the target of the
isomorphism is the elements $\phi'\in S_{k,\eta}$ with
$z\phi'=\chi_0(z)\phi'$ for all $z\in(\A_F^\infty)^\times$,
$u\phi'=\psi(u_S)^{-1}\phi'$ for all $u\in U$. This isomorphism is
compatible with the actions of $\widetilde{\T}$ on each side. The target
is isomorphic
to \[\oplus_{\pi}\Hom_{U_S}(\C(\psi^{-1}),\pi_S)\otimes\otimes'_{v\notin
  S}\pi_v^{\GL_2(\cO_{F_v})},\]where the sum is over the cuspidal
automorphic representations $\pi$ of $G_D(\A^\infty)$ of weight
$(k,\eta)$, which have central character $\chi_0$ and are unramified
outside of $S$ (so that in particular, for $v\notin S$,
$\pi_v^{\GL_2(\cO_{F_v})}$ is a one-dimensional $\C$-vector space).

By strong multiplicity one, this means that we have an
isomorphism \[\T_U\otimes_{\cO,\imath}\C\cong\prod_{\pi\text{ as
    above, with }\Hom_{U_S}(\C(\psi^{-1}),\pi_S)\ne0}\C\]sending
$T_v,S_v$ to their eigenvalues on
$\pi_v^{\GL_2(\cO_{F_v})}$. (Note in particular that this shows that
$\T_U$ is reduced.) This shows that there is a bijection
between $\imath$-linear ring homomorphisms $\theta:\T_U\to\C$ and the
set of $\pi$ as above, where $\pi$ corresponds to the character taking
$T_v,S_v$ to their corresponding eigenvalues. 

Each $\pi$ has a corresponding Galois representation. Taking the
product of these representations, we obtain a
representation \[\rhomod:G_F\to\prod_\pi\GL_2(\Lbar)=\GL_2(\T_U\otimes_\cO\Lbar),\]
which is characterised by the properties that it is unramified outside
of $S\cup\{v|p\}$, and for any $v\notin S$, $v\nmid p$, we have
$\tr\rhomod(\Frob_v)=T_v$, $\det\rhomod(\Frob_v)=\#k(v)S_v$.

Let $\m$ be a maximal ideal of $\T_U$. Then if $\gp\subsetneq\m$ is a
minimal prime, then there is an injection $\theta:\T_U/\gp\into\Lbar$,
which corresponds to some $\pi$ as above. (This follows from the
going-up and going-down theorems, and the fact that $\T_U$ is finitely
generated and free over $\cO$.) The semisimple mod $p$
Galois representation corresponding to $\pi$ can be conjugated to give
a representation $\rhobar_\m:G_F\to\GL_2(\T_U/\m)$ (because the trace
and determinant are valued in $\T_U/\m$, which is a finite field, and
thus has trivial Brauer group, so the Schur index is trivial). This is well defined (up to
isomorphism) independently of the choice of $\gp$ and $\theta$ (by the
Chebotarev density theorem).

Since $\T_U$ is finite over the complete local ring $\cO$, it is
semilocal, and we can write $\T_U=\prod_\m\T_{U,\m}$. Suppose now that
$\rhobar_\m$ is absolutely irreducible. Then we have the
representation \[\rhomod_\m:G_F\to\GL_2(\T_{U,\m}\otimes_\cO\Lbar)=\prod_\pi\GL_2(\Lbar),\]where
the product is over the $\pi$ as above with
$\rhobar_{\pi,\imath}\cong\rhobar_\m$. Each representation to
$\GL_2(\Lbar)$ can be conjugated to lie in $\GL_2(\cO_{\Lbar})$, and
after further conjugation (so that the residual representations are
equal to $\rhobar_\m$, rather than just conjugate to it), the image of
$\rhomod_\m$ lies in the subring of $\prod_\pi\GL_2(\cO_{\Lbar})$
consisting of elements whose image modulo the maximal ideal of $\cO_{\Lbar}$
lie in $\T_U/\m$. We can then apply Lemma~\ref{lem: carayol traces
  lemma} to see that $\rhomod_\m$ can be conjugated to lie in
$\GL_2(\T_{U,\m})$. We will write $\rhomod_\m:G_F\to\GL_2(\T_{U,\m})$ for
the resulting representation from now on.

We will sometimes want to consider Hecke operators at places in
$S$. To this end, let $T\subseteq S$ satisfy $\psi|_{U_T}=1$, and
choose $g_v\in\GL_2(F_v)$ for each $v\in T$. Set $W_v=[U_v g_v U_v]$,
and define $\T_U\subseteq\T_U'\subseteq\End_\cO(S(U,\cO))$ by
adjoining the $W_v$ for $v\in T$. This is again commutative, and
finite and flat over $\cO$. However, it need not be reduced; indeed,
we have \[\T_U'\otimes_{\cO,\imath}\C\cong\oplus_\pi\otimes_{v\in
  T}\{\text{ subalgebra of }\End_\C(\pi_v^{U_v})\text{ generated by
}W_v\},\]so that there is a bijection between $\imath$-linear
homomorphisms $\T'_U\to\C$ and tuples $(\pi,\{\alpha_v\}_{v\in T})$,
where $\alpha_v$ is an eigenvalue of $W_v$ on $\pi_v^{U_v}$. (Note that we will
not explicitly use the notation $\T_U'$ again for a Hecke algebra, but that for
example the Hecke algebras $\T_{U_Q}$ used in the patching argument below, which
incorporate Hecke operators at the places in $Q$, are an example of this construction.)

 We can
write \[\GL_2(\A_F^\infty)=\coprod_{i\in I} D^\times g_i
U(\A_F^\infty)^\times\] for some finite indexing set $I$, and so we
have an injection $S(U,A)\into\oplus_{i\in I}(\Lambda\otimes_{\cO} A)$, by
sending $\phi\mapsto (\phi(g_i))$. To determine the image, we need
to consider when we can have $g_i=\delta g_iuz$ for $\delta\in
D^\times$, $z\in(\A_F^\infty)^\times$, $u\in U$ (because then
$\phi(g_i)=\phi(\delta g_i
uz)=\chi_{0,\imath}(z)\psi(u_S)^{-1}u_p^{-1}\phi(g_i)$). We see in
this way that we obtain an isomorphism\[S(U,A)\isoto \oplus_{i\in
  I}(\Lambda\otimes A)^{(U(\A_F^\infty)^\times\cap g_i^{-1}D^\times
  g_i)/F^\times}.\]

We need to have some control on these finite groups $G_i:=(U(\A_F^\infty)^\times\cap g_i^{-1}D^\times
  g_i)/F^\times$. (Note that they are finite, because $D^\times$ is
  discrete in $G_D(\A^\infty)$.)
   Since we have assumed that $p>3$ and $p$ is
  unramified in $F$, we see that $[F(\zeta_p):F]>2$. Then we claim that $G_i$ has order prime to $p$. To see this, note that if
  $g_i^{-1}\delta g_i$ is in this group, with $\delta\in D^\times$,
  then $\delta^2/\det\delta \in D^\times \cap g_iUg_i^{-1}(\det U)$,
  the intersection of a discrete set and a compact set, so
  $\delta^2/\det\delta$ has finite order, i.e. is a root of
  unity. However any element of $D$ generates an extension of $F$ of
  degree at most $2$, so by the assumption that $[F(\zeta_p):F]>2$, it
  must be a root of unity of degree prime to $p$, and there is some
  $p\nmid N$ with $\delta^{2N}\in F^\times$, so that $g_i^{-1}\delta
  g_i$ has order prime to $p$, as required.
  \begin{prop}
    \label{prop: freeness of modular forms over Hecke algebra}\leavevmode
    \begin{enumerate}
    \item We have $S(U,\cO)\otimes_\cO A\isoto S(U,A)$.
    \item If $V$ is an open normal subgroup of $U$ with $\#(U/V)$ a
      power of $p$, then $S(V,\cO)$ is a free
      $\cO[U/V(U\cap(\A_F^\infty)^\times)]$-module.
    \end{enumerate}

  \end{prop}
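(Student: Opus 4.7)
The plan is to deduce both parts from the isomorphism
\[S(U,A)\isoto\bigoplus_{i\in I}(\Lambda\otimes_\cO A)^{G_i}\]
derived in the excerpt, together with the fact established there that each $G_i$ has order prime to~$p$.

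For (1), since $\#G_i\in\cO^\times$, the averaging idempotent $e_i=(\#G_i)^{-1}\sum_{\gamma\in G_i}\gamma\in\cO[G_i]$ computes $G_i$-invariants as a direct summand: $M^{G_i}=e_iM$ for any $\cO[G_i]$-module $M$. Since the image of an idempotent commutes with $\cO$-linear base change, one obtains $(\Lambda\otimes_\cO A)^{G_i}=\Lambda^{G_i}\otimes_\cO A$. Summing over $i$ and comparing with the case $A=\cO$ yields $S(U,\cO)\otimes_\cO A\isoto S(U,A)$.

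For (2), first apply the same analysis with $U$ replaced by $V$. Writing $\bar U:=U/V(U\cap(\A_F^\infty)^\times)$ and choosing lifts $\{u'_k\}\subset U$ of its elements, the family $\{g_iu'_k\}_{i,k}$ may be taken as a set of representatives for $D^\times\backslash\GL_2(\A_F^\infty)/V(\A_F^\infty)^\times$, yielding
\[S(V,\cO)\cong\bigoplus_{i\in I}\bigoplus_{u'_k\in\bar U}\Lambda^{(u'_k)^{-1}H_{i,k}u'_k}\]
for certain subgroups $H_{i,k}\subseteq G_i$. The key observation is that the image of $G_i$ in $\bar U$ is simultaneously of prime-to-$p$ order (as a quotient of $G_i$) and of $p$-power order (as a subgroup of the $p$-group $\bar U$), and hence is trivial. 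This forces two things: the $V$-double cosets within $D^\times g_iU(\A_F^\infty)^\times$ are in bijection with $\bar U$ with no collapsing, and the stabiliser $H_{i,k}$ equals the full group $G_i$ (because every element of $G_i$ already lies in $V(U\cap(\A_F^\infty)^\times)$ modulo $F^\times$).

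The natural action of $\bar U$ on $S(V,\cO)$, extending right translation by $U$ after the appropriate $p$-adic and $\psi$-normalisations needed to descend through $V(U\cap(\A_F^\infty)^\times)$, permutes the $\bar U$-indexed summands within each $i$-block simply transitively. Since each summand $\Lambda^{(u'_k)^{-1}G_iu'_k}$ is a direct $\cO$-module summand of $\Lambda$ (again by averaging over the prime-to-$p$ group), the $i$-block is isomorphic to the induced module $\cO[\bar U]\otimes_\cO\Lambda^{G_i}$, which is free over $\cO[\bar U]$. Summing over $i$ gives (2). The only subtlety is that when one rewrites $u'_kv=u'_\ell\cdot w$ with $w\in V(U\cap(\A_F^\infty)^\times)$, scalar factors from $\chi_{0,\imath}$ and $\psi$ appear in the identification of summands; since these scalars lie in $\cO^\times$, the resulting twisted induction is still free, and the conclusion is unaffected.
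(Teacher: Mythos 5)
Your proof is correct and follows essentially the same approach as the paper. Part (1) is exactly the paper's argument: since each $G_i$ has order prime to $p$, the averaging idempotent realises $\Lambda^{G_i}$ as a direct $\cO$-summand of $\Lambda$, so taking $G_i$-invariants commutes with $-\otimes_\cO A$. For part (2) you arrive at the same double-coset decomposition $\GL_2(\A_F^\infty)=\coprod_{i,k}D^\times g_iu'_kV(\A_F^\infty)^\times$ that the paper uses (with $U=\coprod_k u'_k\,V(U\cap(\A_F^\infty)^\times)$), and from it the freeness over $\cO[\bar{U}]$ follows in the same way. The one place where you phrase things differently is in proving injectivity of the indexing: the paper works by hand, observing that for the relevant $\delta\in D^\times$ there is $N$ prime to $p$ with $\delta^N\in F^\times$, deducing $(u_{j'}u_j^{-1})^N\in V(U\cap(\A_F^\infty)^\times)$ and then using that $U/V$ is a $p$-group; you package these same two facts conceptually by noting that the image of $G_i$ in $\bar{U}$ is simultaneously a quotient of a prime-to-$p$ group and a subgroup of a $p$-group, hence trivial. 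This is a clean reformulation, and it has the small bonus of making explicit that the stabilisers at level $V$ are the full (conjugated) $G_i$, a point the paper leaves implicit in ``from which the result is immediate.'' Your remark at the end about the unit twists coming from $\chi_{0,\imath}$ and $\psi$ not affecting freeness is also correct and worth keeping, since the $\bar{U}$-action on $S(V,\cO)$ is only right translation up to such scalars.
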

  \begin{proof}
    (1) This is immediate from the isomorphism $S(U,A)\isoto \oplus_{i\in
  I}(\Lambda\otimes A)^{G_i}$, because the fact that the $G_i$ have
order prime to $p$ means that $(\Lambda\otimes
A)^{G_i}=(\Lambda)^{G_i}\otimes A$. 

(2) Write $U=\coprod_{j\in J} u_j V(U\cap (\A_F^\infty)^\times)$. We
claim that we have $\GL_2(\A_F^\infty)=\coprod_{i\in I,j\in J}
D^\times g_i u_j V(\A_F^\infty)^\times$, from which the result is
immediate. To see this, we need to show that if $g_iu_j=\delta
g_{i'}u_{j'}vz$ then $i=i'$ and $j=j'$. 

That $i=i'$ is immediate from the definition of $I$, so we have
$u_{j'}vu_j^{-1} z=g_i^{-1}\delta^{-1}g_i$. As above, there is some
positive integer $N$ coprime to $p$ such that $\delta^N\in F^\times$,
so $(u_{j'}vu_j^{-1})^N\in(\A_F^\infty)^\times$. Since $V$ is normal
in $U$, we can write $(u_{j'}vu_j^{-1})^N=(u_{j'}u_j^{-1})^Nv'$ for
some $v'\in V$, so that $(u_{j'}u_j^{-1})^N\in
V(U\cap(\A_F^\infty)^\times)$. Since $\#(U/V)$ is a power of $p$, we
see that in fact $u_{j'}u_j^{-1}\in
V(U\cap(\A_F^\infty)^\times)$, so that $j=j'$ by the definition of $J$.
  \end{proof}

\subsection{Base change}We begin the proof of Theorem~\ref{thm: main modularity lifting theorem} by using base change to reduce to a
special case. By Facts~\ref{fact: cyclic base change} and~\ref{fact:
  existence of number fields with given local properties}, we can
replace $F$ by a solvable totally real extension which is unramified
at all primes above $p$, and assume that
\begin{itemize}
\item $[F:\Q]$ is even.
\item $\rhobar$ is unramified outside $p$.
\item For all places $v\nmid p$, both $\rho(I_{F_v})$ and
  $\rho_0(I_{F_v})$ are unipotent (possibly trivial).
\item If $\rho$ or $\rho_0$ are ramified at some place $v\nmid p$,
  then $\rhobar|_{G_{F_v}}$ is trivial, and $\#k(v)\equiv 1\pmod{p}$.
\item $\det\rho=\det\rho_0$. [To see that we can assume this, note that the assumption
  that $\rho,\rho_0$ are crystalline with the same Hodge--Tate weights
  for all places dividing $p$ implies that $\det\rho/\det\rho_0$ is
  unramified at all places dividing $p$. Since we have already assumed
  that  $\rho(I_{F_v})$ and
  $\rho_0(I_{F_v})$ are unipotent for all places $v\nmid p$, we see
  that the character $\det\rho/\det\rho_0$ is unramified at all places, and thus
  has finite order. Since it is residually trivial, it has $p$-power
  order, and is thus trivial on all complex conjugations; so the
  extension cut out by its kernel is a finite, abelian, totally real
  extension which is unramified at all places dividing $p$.]
\end{itemize}
We will assume from now on that all of these conditions hold. Write
$\chi$ for $\det\rho=\det\rho_0$; then we have
$\chi\varepsilon_p=\chi_{0,\imath}$ for some algebraic
Gr\"ossencharacter $\chi_0$.


From now on, we will assume without further comment that the
coefficient field $L$ is sufficiently large, in the
sense that $L$ contains a primitive $p$-th root of unity, and for all
$g\in G_F$, $\F$ contains the eigenvalues of $\rhobar(g)$.

\subsection{Patching} Having used base change to impose the additional
conditions of the previous section, we are now in a position to begin
the main patching argument. 

We let $D/F$ be a quaternion algebra ramified at exactly the infinite
places (which exists by our assumption that $[F:\Q]$ is even). By the
Jacquet--Langlands correspondence, we can and will work with
automorphic representations of $G_D(\A^\infty)$ from now on.

Let $T_p$ be the set of places of $F$ lying over $p$, let $T_r$ be the
set of places not lying over $p$ at which $\rho$ or $\rho_0$ is
ramified, and let $T=T_p\coprod T_r$. If $v\in T_r$, write $\sigma_v$
for a choice of topological generator of $I_{F_v}/P_{F_v}$. By our
assumptions above, if $v\in T_r$ then $\rhobar|_{G_{F_v}}$ is trivial,
$\rho|_{I_{F_v}},\rho_0|_{I_{F_v}}$ are unipotent, and $\#k(v)\equiv
1\pmod{p}$.

The patching argument will involve the consideration of various finite
sets $Q$ of auxiliary finite places. We will always assume that if
$v\in Q$, then
\begin{itemize}
\item $v\notin T$,
\item $\#k(v)\equiv 1\pmod{p}$, and
\item $\rhobar(\Frob_v)$ has distinct eigenvalues, which we denote
  $\alphabar_v$ and $\betabar_v$.
\end{itemize}

For each set $Q$ of places satisfying these conditions, we define
deformation problems $\cS_Q=(T\cup Q,\{\cD_v\},\chi)$ and
$\cS'_Q=(T\cup Q,\{\cD'_v\},\chi)$ as follows. (The reason for
considering both problems is that the objects without a prime are the
ones that we ultimately wish to study, but the objects with a prime
have the advantage that the ring $(R^{\operatorname{loc},'})^\red$ defined
below is irreducible. We will exploit this irreducibility, and the
fact that the two deformation problems agree modulo~$p$.) Let $\zeta$ be a fixed
primitive $p$-th root of unity in $L$.
\begin{itemize}
\item If $v\in T_p$, then $\cD_v=\cD'_v$ is chosen so that
  $R_{\rhobar|_{G_{F_v}},\chi}^{\square}/I(\cD_v)=R^\square_{\rhobar|_{G_{F_v}},\chi,\cris,\{\HT_\sigma(\rho)\}}$.
\item If $v\in Q$, then $\cD_v=\cD_v'$ consists of all lifts of
  $\rhobar|_{G_{F_v}}$ with determinant $\chi$.
\item If $v\in T_r$, then $\cD_v$ consists of all lifts of
  $\rhobar|_{G_{F_v}}$ with $\chara_{\rho(\sigma_v)}(X)=(X-1)^2$,
  while $\cD'_v$ consists of all lifts with
  $\chara_{\rho(\sigma_v)}(X)=(X-\zeta)(X-\zeta^{-1})$.
\end{itemize}(In particular, the difference between $\cS_Q$ and $\cS_\emptyset$
is that we have allowed our deformations to ramify at places in $Q$.)
We write \[\Rloc=\cotimes_{v\in
  T,\cO}R^\square_{\rhobar|_{G_{F_v}},\chi}/I(\cD_v),\
R^{\operatorname{loc},'}=\cotimes_{v\in
  T,\cO}R^\square_{\rhobar|_{G_{F_v}},\chi}/I(\cD'_v).\] Then
$\Rloc/\lambda=R^{\operatorname{loc},'}/\lambda$, because $\zeta\equiv
1\pmod{\lambda}$. In addition,  we see from Theorems~\ref{thm:FL deformation rings} and~\ref{thm:Ihara avoidance deformations} that
\begin{itemize}
\item $(R^{\operatorname{loc},'})^\red$ is irreducible, $\cO$-flat, and has Krull
  dimension $1+3\#T+[F:\Q]$,
\item $(\Rloc)^\red$ is $\cO$-flat, equidimensional of Krull
  dimension $1+3\#T+[F:\Q]$, and reduction modulo $\lambda$ gives a
  bijection between the irreducible components of $\Spec \Rloc$ and
  those of $\Spec\Rloc/\lambda$.
\end{itemize}
We have the global analogues $R_Q^\univ:=R_{\rhobar,\cS_Q}^\univ$,
$R_Q^{\univ,'}:=R_{\rhobar,\cS'_Q}^\univ$,
$R_Q^\square:=R_{\rhobar,\cS_Q}^{\square_T}$,
$R_Q^{\square,'}:=R_{\rhobar,\cS'_Q}^{\square_T}$, and we have
$R_Q^\univ/\lambda=R_Q^{\univ,'}/\lambda$,
$R_Q^\square/\lambda=R_Q^{\square,'}/\lambda$. There are obvious
natural maps $\Rloc\to R_Q^\square$, $R^{\operatorname{loc},'}\to R_Q^{\square,'}$, and
these maps agree after reduction mod $\lambda$.

We can and do fix representatives $\rho_Q^\univ,\rho_Q^{\univ,'}$ for
the universal deformations of $\rhobar$ over $R_Q^\univ,R_Q^{\univ,'}$
respectively, which are compatible with the choices of
$\rho_\emptyset^\univ,\rho_\emptyset^{\univ,'}$, and so that the induced
surjections \[R_Q^\univ\onto R_\emptyset^\univ,\ R_Q^{\univ,'}\onto
R_\emptyset^{\univ,'}\] are identified modulo $\lambda$.

Fix a place $v_0\in T$, and set $\cJ:=\cO\llbracket X_{v,i,j}\rrbracket _{v\in T,
  i,j=1,2}/(X_{v_0,1,1})$. Let $\ca$ be the ideal of $\cJ$ generated
by the $X_{v,i,j}$. Then our choice of $\rho_Q^\univ$ gives an
identification $R_Q^\square\isoto R_Q^\univ\cotimes_\cO\cJ$,
corresponding to the universal $T$-framed deformation
$(\rho_Q^\univ,\{1+(X_{v,i,j})\}_{v\in T})$.

Now, by Exercise~\ref{ex: TW deformations}, for each place $v\in Q$ we have an
isomorphism $\rho_Q^\univ|_{G_{F_v}}\cong\chi_\alpha\oplus\chi_\beta$,
where $\chi_\alpha,\chi_\beta:G_{F_v}\to (R_Q^\univ)^\times$, where
$(\chi_\alpha\text{ mod }\m_{R_Q^\univ})(\Frob_v)=\alphabar_v$,
$(\chi_\beta\text{ mod }\m_{R_Q^\univ})(\Frob_v)=\betabar_v$. 

Let $\Delta_v$ be the maximal $p$-power quotient of
$k(v)^\times$ (which we sometimes regard as a subgroup of~$k(v)^\times$). Then $\chi_\alpha|_{I_{F_v}}$ factors through the
composite \[I_{F_v}\onto I_{F_v}/P_{F_v}\onto k(v)^\times\onto
\Delta_v,\]and if we write
$\Delta_Q=\prod_{v\in Q}\Delta_v$,
$(\prod\chi_\alpha):\Delta_Q\to(R_Q^\univ)^\times$, then we see that
$(R_Q^\univ)_{\Delta_Q}=R_\emptyset^\univ$.

The isomorphism $R_Q^\square\isoto R_Q^\univ\cotimes_\cO\cJ$ and the
homomorphism $\Delta_Q\to(R_Q^\univ)^\times$ together give a
homomorphism $\cJ[\Delta_Q]\to R_Q^\square$. In the same way, we have
a homomorphism $\cJ[\Delta_Q]\to R_Q^{\square,'}$, and again these
agree modulo $\lambda$. If we write
$\ca_Q:=\langle\ca,\delta-1\rangle_{\delta\in\Delta_Q}\lhd\cJ[\Delta_Q]$,
then we see that $R_Q^\square/\ca_Q=R_\emptyset^\univ$, and that
$R_Q^{\square,'}/\ca_Q=R_\emptyset^{\univ,'}$, and again these agree
modulo $\lambda$.

We now examine the spaces of modular forms that we will patch. We have
our fixed isomorphism $\imath:\overline{L}\isoto\C$, and an algebraic
Gr\"ossencharacter $\chi_0$ such that
$\chi\varepsilon_p=\chi_{0,\imath}$. Define $k,\eta$ by
$\HT_\tau(\rho_0)=\{\eta_{\imath\tau},\eta_{\imath\tau}+k_{\imath\tau}-1\}$. We
define compact open subgroups $U_Q=\prod U_{Q,v}$, where:
\begin{itemize}
\item $U_{Q,v}=\GL_2(\cO_{F_v})$ if $v\notin Q\cup T_r$,
\item $U_{Q,v}=U_0(v)=\{
  \begin{pmatrix}
    *&*\\0&*
  \end{pmatrix}\pmod{v}\}$ if $v\in T_r$, and
\item $U_{Q,v}=\{ \begin{pmatrix}
    a&b\\c&d
  \end{pmatrix}\in U_0(v)|a/d\pmod{v}\in k(v)^\times\mapsto
  1\in\Delta_v\}$ if $v\in Q$. 

\end{itemize}
We let $\psi:\prod_{v\in
  Q\cup T_r}U_{Q,v}\to\cO^\times$ be the trivial character. Similarly, we
set $U'_Q=U_Q$, and we define  $\psi':\prod_{v\in
  Q\cup T_r}U_{Q,v}\to\cO^\times$ in the following way.  For each $v\in T_r$,
we have a homomorphism $U_{Q,v}\to k(v)^\times$ given by sending $
\begin{pmatrix}
  a&b\\c&d
\end{pmatrix}$ to $a/d\pmod{v}$, and we compose these characters with
the characters $k(v)^\times\to\cO^\times$ sending the image of
$\sigma_v$ to $\zeta$,
where $\sigma_v$ is a generator of $I_{F_v}/P_{F_v}$. We let~$\psi'$
be trivial at the places in~$Q$.

We obtain spaces of modular forms $S(U_Q,\cO)$, $S(U'_Q,\cO)$ and
corresponding Hecke algebras $\T_{U_Q}$, $\T_{U'_Q}$, generated by the
Hecke operators $T_v,S_v$ with $v\notin T\cup Q$, together with Hecke
operators~$U_{\varpi_v}$ for~$v\in Q$ (depending on a chosen uniformiser~$\varpi_v$) defined by \[U_{\varpi_v}=\left[U_{Q,v}
  \begin{pmatrix}
    \varpi_v&0\\0&1
  \end{pmatrix}U_{Q,v}\right].\] Note that
$\psi=\psi'\pmod{\lambda}$, so we have
$S(U_\emptyset,\cO)/\lambda=S(U'_\emptyset,\cO)/\lambda$. We let $\m_\emptyset\lhd
\T_{U_\emptyset}$ be the ideal generated by $\lambda$ and the
$\tr\rhobar(\Frob_v)-T_v$, $\det\rhobar(\Frob_v)-\#k(v)S_v$, $v\notin
T$. This is a maximal ideal of $\T_{U_\emptyset}$, because 
it is the kernel of the homomorphism $\T_{U_\emptyset}\to\cO\onto\F$,
where the map $\T_{U_\emptyset}\to\cO$ is the one coming from the
automorphicity of $\rho_0$, sending $T_v\mapsto\tr\rho_0(\Frob_v)$,
$S_v\mapsto\#k(v)^{-1}\det\rho_0(\Frob_v)$.

Write $\T_\emptyset:=\T_{U_\emptyset,\m_\emptyset}$. We have a lifting $\rhomod:G_F\to\GL_2(\T_\emptyset)$ of
type $\cS_\emptyset$, so by the universal property of $R_\emptyset^\univ$, we have a surjection
$R_\emptyset^\univ\onto\T_\emptyset$ (it is surjective because
local-global compatibility shows that the Hecke operators generating
$\T_{\emptyset}$ are in the image).
Similarly, we have a surjection
$R_\emptyset^{\univ,'}\onto\T'_\emptyset:=\T_{U'_\emptyset,\m_\emptyset}$. Set
$S_\emptyset:=S(U_\emptyset,\cO)_{\m_\emptyset}$,
$S'_\emptyset:=S(U'_\emptyset,\cO)_{\m_\emptyset}$. Then
 the identification $R^\univ_\emptyset/\lambda\cong
R^{\univ,'}_\emptyset/\lambda$ is compatible with
$S_\emptyset/\lambda=S'_\emptyset/\lambda$.

\begin{lem} If
$\Supp_{R_\emptyset^\univ}(S_\emptyset)=\Spec
R_\emptyset^\univ$, then  $\rho$ is modular.  
\end{lem}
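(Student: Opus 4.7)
The plan is to show that the hypothesis forces the surjection $R_\emptyset^\univ \onto \T_\emptyset$ to have nilpotent kernel, and then to transport the lift $\rho$ through this surjection into a Hecke eigensystem, which by construction is modular. First I would verify that $\rho$ is itself a lift of type $\cS_\emptyset$: by the reductions already performed (unramified outside $T$, unipotent inertia at places in $T_r$, prescribed determinant), and by the Fontaine--Laffaille hypothesis at places above $p$ together with the matching Hodge--Tate weights, the representation $\rho$ satisfies all the local conditions cut out by $\cD_v$ for $v \in T$ and is unramified outside $T$. Therefore, by the universal property, $\rho$ induces a continuous $\cO$-algebra homomorphism
\[ \pi_\rho \colon R_\emptyset^\univ \To \cO. \]

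Next I would identify $\Supp_{R_\emptyset^\univ}(S_\emptyset)$ with $V(\ker(R_\emptyset^\univ \to \T_\emptyset))$. Since $S_\emptyset$ is finitely generated over $\cO$ and hence over $R_\emptyset^\univ$, its support equals $V(\Ann_{R_\emptyset^\univ} S_\emptyset)$. By construction $\T_\emptyset$ is the image of $R_\emptyset^\univ$ in $\End_\cO(S_\emptyset)$ (the generators $T_v$, $S_v$ of $\T_\emptyset$ are the images of $\tr \rho_\emptyset^\univ(\Frob_v)$ and $\#k(v)^{-1}\det\rho_\emptyset^\univ(\Frob_v)$ under the universal property, by local-global compatibility), so the annihilator equals $\ker(R_\emptyset^\univ \onto \T_\emptyset)$. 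The hypothesis $\Supp_{R_\emptyset^\univ}(S_\emptyset) = \Spec R_\emptyset^\univ$ then forces this kernel to lie in the nilradical of $R_\emptyset^\univ$.

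Since $\cO$ is a domain, $\pi_\rho$ kills the nilradical, so it factors through a homomorphism $\overline{\pi}_\rho \colon \T_\emptyset \to \cO$. Composing with $\imath \colon \overline{L} \isoto \C$ yields an $\imath$-linear homomorphism $\T_\emptyset \otimes_{\cO,\imath} \C \to \C$, which by the description of $\T_U \otimes_{\cO,\imath} \C$ in Section~\ref{subsec: integral auto forms} corresponds to a cuspidal automorphic representation $\pi$ of $G_D(\A^\infty)$ of weight $(k,\eta)$ with central character $\chi_0$, unramified outside~$T$, whose Hecke eigenvalues at each $v \notin T$ satisfy $T_v \mapsto \tr\rho(\Frob_v)$ and $\#k(v) S_v \mapsto \det\rho(\Frob_v)$. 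Transferring $\pi$ back to $\GL_2(\A_F^\infty)$ via Jacquet--Langlands and invoking the existence of the associated Galois representation $r_\lambda(\JL(\pi))$, Chebotarev density together with the Brauer--Nesbitt theorem yields $\rho \cong r_\lambda(\JL(\pi))$, so $\rho$ is modular.

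The only part requiring real care is step two, i.e.\ the precise identification of the image of $R_\emptyset^\univ$ in $\End_\cO(S_\emptyset)$ with $\T_\emptyset$; this is what lets nilpotence of the kernel feed directly into the factorisation of $\pi_\rho$. The rest is bookkeeping through universal properties and the dictionary between $\imath$-linear characters of $\T_\emptyset$ and cuspidal automorphic representations.
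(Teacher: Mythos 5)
Your proof is correct and follows essentially the same route as the paper: identify $\Ann_{R_\emptyset^\univ}(S_\emptyset)$ with $\ker(R_\emptyset^\univ \onto \T_\emptyset)$ via faithfulness of $S_\emptyset$ over $\T_\emptyset$, deduce nilpotence of the kernel from the support hypothesis, and factor the classifying map $R_\emptyset^\univ \to \cO$ of $\rho$ through $\T_\emptyset$ to land on a Hecke eigensystem. You merely unpack steps the paper compresses (notably the faithful-module/annihilator reasoning and the Chebotarev/Brauer--Nesbitt comparison at the end); the substance is identical.
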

\begin{proof}
  Suppose that $\Supp_{R_\emptyset^\univ}(S_\emptyset)=\Spec
R_\emptyset^\univ$. Since $S_\emptyset$ is a faithful
  $\T_\emptyset$-module by definition, we see that
  $\ker(R_\emptyset^\univ\to\T_\emptyset)$ is nilpotent, so that
  $(R_\emptyset^\univ)^\red\isoto\T_\emptyset$. Then $\rho$
  corresponds to some homomorphism $R^\univ_\emptyset\to\cO$, and thus
  to a homomorphism $\T_\emptyset\to\cO$, and the composite of this
  homomorphism with $\imath:\cO\into\C$ corresponds to a cuspidal
  automorphic representation $\pi$ of $G_D(\A^\infty)$ of weight
  $(k,\eta)$, which by construction has the property that
  $\rho\cong\rho_{\pi,\imath}$, as required.
\end{proof}

To show that $\Supp_{R_\emptyset^\univ}(S_\emptyset)=\Spec
R_\emptyset^\univ$, we will study the above constructions as $Q$
varies. Let $\m_Q\lhd\T_{U_Q}$ be the maximal ideal generated by
$\lambda$, the $\tr\rhobar(\Frob_v)-T_v$ and
$\det\rhobar(\Frob_v)-\#k(v)S_v$ for $v\notin T\cup Q$, and the
$U_{\varpi_v}-\alphabar_v$ for $v\in Q$.

We write $S_Q=S_{U_Q}:=S(U_Q,\cO)_{\m_Q}$, and
$\T_Q:=(\T_{U_Q})_{\m_Q}$. We have a homomorphism
$\Delta_Q\to\End(S_Q)$, given by sending $\delta\in\Delta_v$ to $
\begin{pmatrix}
  \delta&0\\0&1
\end{pmatrix}\in U_0(v)$. We also have another homomorphism
$\Delta_Q\to\End(S_Q)$, given by the composite \[\Delta_Q\to
R_Q^\univ\onto \T_Q\to\End(S_Q).\] Let $U_{Q,0}:=\prod_{v\notin Q}U_{Q,v}\prod_{v\in Q}U_0(v)$. Then
$U_Q$ is a normal subgroup of $U_{Q,0}$, and $U_{Q,0}/U_Q=\Delta_Q$.

We now examine the consequences of local-global compatibility at the
places in $Q$. 

\begin{prop}
  \label{fact: two actions of Delta agree}
  \begin{enumerate}
  \item The two homomorphisms $\Delta_Q\to\End(S_Q)$ (the other one
    coming via $R_Q^\univ$) are equal. 
  \item  $S_Q$ is finite free over
    $\cO[\Delta_Q]$.
  \end{enumerate}
\end{prop}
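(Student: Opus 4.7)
The plan is to deduce (1) from local-global compatibility at places $v\in Q$, and (2) from Proposition~\ref{prop: freeness of modular forms over Hecke algebra}(2) applied to the inclusion $U_Q \lhd U_{Q,0}$.

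For (1), I would first observe that both $\Delta_Q$-actions on $S_Q$ factor through $\T_Q \otimes_\cO S_Q$ in the obvious sense, and that $\T_Q$ embeds into $\prod_\pi \cO_{\Lbar}$ via its action on eigenforms; hence it suffices to show the two actions agree on each eigencomponent associated to a cuspidal automorphic representation $\pi$ contributing to $S_Q$. Fix such a $\pi$ and a place $v\in Q$. Because $\rhobar|_{G_{F_v}}$ is unramified with distinct Frobenius eigenvalues, the local lift $\rho_\pi|_{G_{F_v}} \cong \chi_\alpha \oplus \chi_\beta$ (as produced in Exercise~\ref{ex: TW deformations}) is a direct sum of two characters with $\chi_\alpha(\Frob_v) \equiv \alphabar_v \pmod{\m}$; the action of $\Delta_v$ through $R_Q^\univ$ is by $\chi_\alpha|_{I_{F_v}}$ pushed to $\Delta_v$. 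On the automorphic side, $\pi_v$ is a principal series $\chi_{1,v} \boxplus \chi_{2,v}$ and the condition that $U_{\varpi_v}$ acts by a unit lifting $\alphabar_v$ on the $U_{Q,v}$-fixed vectors pins down which character is $\chi_{1,v}$; the right-translation action of $\Delta_v \subset U_{Q,0,v}/U_{Q,v}$ on $\pi_v^{U_{Q,v}}$ is by $\chi_{1,v}|_{\cO_{F_v}^\times}$ factored through $\Delta_v$. The local Langlands correspondence $\rec_{F_v}(\pi_v \otimes|\det|^{-1/2}) \cong \WD(\rho_\pi|_{G_{F_v}})^{\Fsemis}$, combined with the matching of $U_{\varpi_v}$-eigenvalues with $\chi_\alpha(\Frob_v)$, then identifies the two $\Delta_v$-actions (the unramified twist $|\det|^{-1/2}$ is irrelevant on inertia). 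Taking the product over $v\in Q$ gives the result.

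For (2), I would apply Proposition~\ref{prop: freeness of modular forms over Hecke algebra}(2) with $U = U_{Q,0}$ and $V=U_Q$: these satisfy $U_Q \lhd U_{Q,0}$ with quotient $\Delta_Q$ a $p$-group, and the scalar matrices $\mathrm{diag}(z,z)$ with $z\in\cO_{F_v}^\times$ lie in $U_Q$ for each $v\in Q$ (since $a/d=1$ under the map to $\Delta_v$), so $U_{Q,0}\cap(\A_F^\infty)^\times \subseteq U_Q$ and therefore $U_{Q,0}/U_Q(U_{Q,0}\cap (\A_F^\infty)^\times) = \Delta_Q$. The proposition gives that $S(U_Q,\cO)$ is free over $\cO[\Delta_Q]$. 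Since the diamond action commutes with the Hecke operators generating $\m_Q$, localisation at $\m_Q$ produces a direct summand $S_Q$ of this free $\cO[\Delta_Q]$-module, hence a projective $\cO[\Delta_Q]$-module; and because $\Delta_Q$ is a finite $p$-group and $\cO$ is a local ring with residue field of characteristic $p$, the group algebra $\cO[\Delta_Q]$ is local, so projective modules are free.

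The only real subtlety is the normalisation-tracking in (1), making sure that the choice of $U_{\varpi_v}$-eigenvalue $\alphabar_v$ (as built into the definition of $\m_Q$) is consistent with the choice of $\chi_\alpha$ (as the character singled out in Exercise~\ref{ex: TW deformations}); granted that, local-global compatibility does the work. Part (2) is essentially formal given the freeness result already established.
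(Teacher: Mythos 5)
Your proposal is correct and follows essentially the same strategy as the paper. For part (1), you reduce to a single eigencomponent $\pi$, use the decomposition $\rho_\pi|_{G_{F_v}}\cong\chi_\alpha\oplus\chi_\beta$ coming from Exercise~\ref{ex: TW deformations}, analyse $\pi_v$ as a tamely ramified principal series, and invoke local-global compatibility to match the $U_{\varpi_v}$-eigenvalue normalization with the choice of $\chi_\alpha$; this is exactly the paper's argument, except that the paper carries out the explicit computation of $(\chi_1\times\chi_2)^{U_{Q,v}}=\C\phi_1\oplus\C\phi_w$ and the action of $U_{\varpi_v}$ and $\mathrm{diag}(\delta,1)$ on this basis (and, along the way, deduces $\chi_1/\chi_2\ne|\cdot|^{\pm1}$ from $\alphabar_v\ne\betabar_v$), which you wave at as ``explicit computation.'' One phrasing to tighten: the action of $\Delta_v$ on all of $\pi_v^{U_{Q,v}}$ is not by a single character, but by $\chi_1$ on $\phi_1$ and $\chi_2$ on $\phi_w$; only after localising at $\m_Q$ (which projects onto the span of $\phi_w$, where $U_{\varpi_v}$ acts via a lift of $\alphabar_v$) does it become a single character, and that is the content of the final step. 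For part (2) your argument is in fact more careful than the paper's one-line reduction: you check that $U_{Q,0}\cap(\A_F^\infty)^\times\subseteq U_Q$ so that the quotient in Proposition~\ref{prop: freeness of modular forms over Hecke algebra}(2) is genuinely $\Delta_Q$, you note that the diamond operators commute with the Hecke operators defining $\m_Q$ so that $S_Q$ is an $\cO[\Delta_Q]$-direct summand of $S(U_Q,\cO)$, and you invoke locality of $\cO[\Delta_Q]$ to pass from projective to free. These are exactly the steps the paper elides.
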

\begin{proof}
  A homomorphism $\theta:\T_Q\to\overline{L}\isoto\C$ corresponds to a cuspidal 
  automorphic representation $\pi$, and for each $v\in Q$ the image
  $\alpha_v$ of $U_{\varpi_v}$ is such that $\alpha_v$ is an
  eigenvalue of $U_{\varpi_v}$ on $\pi_v^{U_{Q,v}}$.

  It can be checked that since $\pi_v^{U_{Q,v}}\ne 0$, $\pi_v$ is
  necessarily a subquotient of $\chi_1\times \chi_2$ for some tamely
  ramified characters $\chi_1,\chi_2:F_v^\times\to\C^\times$. Then one
  checks explicitly
  that \[(\chi_1\times\chi_2)^{U_{Q,v}}\cong\C\phi_1\oplus\C\phi_w,\]
  where $w=
  \begin{pmatrix}
    0&1\\1&0
  \end{pmatrix}$, $\phi_1(1)=\phi_w(w)=1$, and
  $\Supp\phi_1=B(F_v)U_{Q,v}$, $\Supp\phi_w=B(F_v)wU_{Q,v}$.

  Further explicit calculation shows
  that
  \[U_{\varpi_v}\phi_1=\#k(v)^{1/2}\chi_1(\varpi_v)\phi_1+ X\phi_w\]
  for some $X$, which is $0$ if $\chi_1/\chi_2$ is ramified,
  and \[U_{\varpi_v}\phi_w=\#k(v)^{1/2}\chi_2(\varpi_v)\phi_w.\] Note
  that by local-global compatibility,
  $\imath^{-1}(\#k(v)^{1/2}\chi_1(\varpi_v))$ and
  $\imath^{-1}(\#k(v)^{1/2}\chi_2(\varpi_v))$ are the eigenvalues of
  $\rho_{\pi,\imath}(\Frob_v)$, so one of them is a lift of
  $\alphabar_v$, and one is a lift of $\betabar_v$.   As
  a consequence, we see that $\chi_1/\chi_2\ne |\cdot|^{\pm 1}$ (as if
  this equality held, we would have
  $\alphabar_v/\betabar_v\equiv \#k(v)^{\pm 1}\equiv 1\pmod{\lambda}$,
  contradicting our assumption that
  $\alphabar_v\ne\betabar_v$). Consequently we have
  $\pi_v=\chi_1\times\chi_2\cong\chi_2\times\chi_1$, so that  without
  loss of generality we have
  $\chibar_1(\varpi_v)=\betabar_v, \chibar_2(\varpi_v)=\alphabar_v$.

  It is also easily checked that \[
    \begin{pmatrix}
      \delta&0\\0&1
    \end{pmatrix}\phi_1=\chi_1(\delta)\phi_1,\ \begin{pmatrix}
      \delta&0\\0&1
    \end{pmatrix}\phi_w=\chi_2(\delta)\phi_w.\]


  We see that
  $S_Q\otimes_{\cO,\imath}\C=\oplus_\pi\otimes_{v\in Q}X_v$, where
  $X_v$ is the 1-dimensional space where $U_{\varpi_v}$ acts via a
  lift of $\alphabar_v$. Since this space is spanned by $\phi_w$, we
  see that $\Delta_v$ acts on $S_Q$ via
  $\chi_2=\chi_\alpha\circ\Art$. This completes the proof of the first part.

  Finally, the second part is  immediate from Proposition~\ref{prop: freeness of modular forms over Hecke algebra}(2).
\end{proof}

 Fix a place $v\in Q$. Since $\alphabar_v\ne\betabar_v$, by Hensel's
 lemma we may write $\chara\rhomod_\emptyset(\Frob_v)=(X-A_v)(X-B_v)$
 for some $A_v,B_v\in\T_\emptyset$ with $A_v\equiv\alphabar_v,
 B_v\equiv\betabar_v\pmod{\m_\emptyset}$. 
\begin{prop}
  \label{prop: what fixing U_v eigenvalues does}We have an
  isomorphism
  $\prod_{v\in Q}(U_{\varpi_v}-B_v):S_\emptyset\isoto
  S(U_{Q,0},\cO)_{\m_Q}$ (with the morphism being defined by viewing
  the  source and target as submodules of $S(U_{Q,0},\cO)_{\m_\emptyset}$).
\end{prop}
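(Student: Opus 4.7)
The claim is multiplicative in $Q$: since the operators $U_{\varpi_v}$ for $v \in Q$ commute, I will reduce to the case $Q = \{v\}$ and iterate. Fix such a $v$, and view $S_\emptyset \subset S(U_{Q,0},\cO)_{\m_\emptyset}$ via the natural inclusion coming from $U_{Q,0} \subset U_\emptyset$. The target $S(U_{Q,0},\cO)_{\m_Q}$ sits inside $S(U_{Q,0},\cO)_{\m_\emptyset}$ as the direct summand where $U_{\varpi_v}$ has eigenvalue lifting $\alphabar_v$. My plan is to produce an eigenspace decomposition of $S(U_{Q,0},\cO)_{\m_\emptyset}$ via an Atkin--Lehner--type quadratic relation for $U_{\varpi_v}$, and then check that $(U_{\varpi_v} - B_v)$ realizes the projection to the $A_v$-eigenspace and is an isomorphism on the image of $S_\emptyset$.

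\textbf{Local representation theory.} First, as in the proof of Proposition~\ref{fact: two actions of Delta agree}, I will argue that any cuspidal $\pi$ contributing to $S(U_{Q,0},\cO)_{\m_\emptyset}$ must have $\pi_v$ an unramified principal series $\chi_1 \boxplus \chi_2$: Steinberg and ramified principal series are ruled out because $\#k(v) \equiv 1 \pmod{p}$ together with $\alphabar_v \ne \betabar_v$ would force the two Frobenius eigenvalues to be congruent mod $\lambda$. Then $\pi_v^{U_0(v)}$ is two-dimensional with basis $\phi_1,\phi_w$, and the calculation recalled in the proof of Proposition~\ref{fact: two actions of Delta agree} gives $U_{\varpi_v}\phi_1 = \#k(v)^{1/2}\chi_1(\varpi_v)\phi_1$ and $U_{\varpi_v}\phi_w = \#k(v)^{1/2}\chi_2(\varpi_v)\phi_w$ (the off-diagonal term vanishes since $\chi_1/\chi_2$ is unramified), with eigenvalues being (lifts of) $\alphabar_v$ and $\betabar_v$; by local--global compatibility these eigenvalues are precisely $A_v$ and $B_v$ after appropriate labelling.

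\textbf{Eigenspace decomposition and conclusion.} Since $T_v = A_v + B_v$ and $\#k(v) S_v = A_v B_v$ in $\T_\emptyset$, and every $\pi_v$ contributing to $S(U_{Q,0},\cO)_{\m_\emptyset}$ is unramified, the relation $(U_{\varpi_v} - A_v)(U_{\varpi_v} - B_v) = 0$ holds on $S(U_{Q,0},\cO)_{\m_\emptyset}$. Because $A_v - B_v$ reduces to $\alphabar_v - \betabar_v \ne 0$ modulo $\lambda$, it is a unit in $(\T_{U_{Q,0}})_{\m_\emptyset}$, so CRT yields
\[
S(U_{Q,0},\cO)_{\m_\emptyset} \;=\; \ker(U_{\varpi_v}-A_v) \,\oplus\, \ker(U_{\varpi_v}-B_v),
\]
and by definition of $\m_Q$, the first summand is exactly $S(U_{Q,0},\cO)_{\m_Q}$. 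Under the inclusion $\iota_1 : S_\emptyset \hookrightarrow S(U_{Q,0},\cO)_{\m_\emptyset}$, each automorphic form's local component at $v$ is $\varphi_0 = \phi_1 + \phi_w$ (by Iwasawa decomposition, as in Exercise~\ref{ex:computing Hecke evals on unram PS}); applying $(U_{\varpi_v} - B_v)$ yields $(A_v - B_v)\phi_1$, i.e.\ a unit multiple of the $A_v$-eigenvector. So the composite $S_\emptyset \xrightarrow{(U_{\varpi_v} - B_v)\circ \iota_1} S(U_{Q,0},\cO)_{\m_Q}$ agrees, up to the unit $A_v - B_v$, with the composition of $\iota_1$ with projection to the $A_v$-eigenspace.

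\textbf{Integral bookkeeping and main obstacle.} Both source and target are finite free $\cO$-modules, and the analysis above shows the map is an isomorphism after inverting $p$ (since the multiplicity formulas match up $\pi$ by $\pi$), with the ``scaling factors'' being units in $\cO$. To promote this to an $\cO$-isomorphism, I will show injectivity integrally (flatness over $\cO$ plus the $L$-iso gives this) and match $\cO$-ranks, or alternatively check surjectivity modulo $\lambda$ via the explicit $\phi_1,\phi_w$ basis and invoke Nakayama. The main obstacle is precisely this integral step: one must know that $(U_{\varpi_v} - A_v)(U_{\varpi_v} - B_v) = 0$ holds in the Hecke algebra (not just after $\otimes L$), which requires identifying $A_v, B_v \in \T_\emptyset$ with Hecke eigenvalues via local--global compatibility, and verifying that no ramified $\pi_v$ contribute (so that the relation is not violated by forms outside the old subspace).
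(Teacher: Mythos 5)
Your treatment of the generic fibre is essentially correct (modulo a small transcription error: the paper says the off-diagonal entry $X$ of $U_{\varpi_v}$ in the basis $\phi_1,\phi_w$ vanishes when $\chi_1/\chi_2$ is \emph{ramified}, not unramified; for the unramified $\pi_v$ occurring here the matrix is merely triangular, and one needs to check that $\phi_0 = \phi_1+\phi_w$ is not killed by $U_{\varpi_v}-B_v$, which it isn't). The real gap is the integral step, which you correctly identify as the ``main obstacle'' but do not close. Of the two strategies you propose, the first --- ``show injectivity integrally\ldots and match $\cO$-ranks'' --- is simply insufficient: an injection of finite free $\cO$-modules of equal rank that becomes an isomorphism over $L$ need not be an isomorphism (consider $p\colon\cO\to\cO$). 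The second --- surjectivity mod $\lambda$ plus Nakayama --- is the right kind of thing, but you do not carry it out, and it is exactly where the content lies. There is also a wrinkle in the CRT/eigenspace step: the elements $A_v,B_v$ live in $\T_\emptyset$, which does \emph{not} act on $S(U_{Q,0},\cO)_{\m_\emptyset}$ (for $v\in Q$ the operators $T_v,S_v$ are absent from the Hecke algebra at level $U_{Q,0}$); making the quadratic relation precise over $\cO$ presupposes that everything at level $U_{Q,0}$ is integrally old at $v$, which is in essence what is to be proved.

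The paper's proof avoids both problems. It first reduces to showing the map is an isomorphism after $\otimes L$ and an injection after $\otimes\F$: from the $L$-isomorphism one gets a short exact sequence $0\to S_\emptyset\to S(U_{Q,0},\cO)_{\m_Q}\to \operatorname{coker}\to 0$ with torsion cokernel, and injectivity mod $\lambda$ then forces the cokernel to have no $\lambda$-torsion, hence to vanish. The injectivity mod $\lambda$ is established by a direct double-coset calculation, not via any integral eigenspace decomposition: if $x\in(S_\emptyset\otimes\F)[\m_\emptyset]$ satisfies $(U_{\varpi_v}-\betabar_v)x=0$, then writing out the cosets for $T_v$ and $U_{\varpi_v}$ as functions on $D^\times\backslash\GL_2(\A_F^\infty)$ and using $T_v x=(\alphabar_v+\betabar_v)x$ one computes $\bigl(\begin{smallmatrix}1&0\\0&\varpi_v\end{smallmatrix}\bigr)x=\alphabar_v x$ and then $U_{\varpi_v}x=\alphabar_v x$, contradicting $\alphabar_v\ne\betabar_v$. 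This is the missing ingredient; without it your argument doesn't go through.
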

\begin{proof}
  We claim that it is enough to prove that the map is an isomorphism
  after tensoring with $L$, and an injection after tensoring with
  $\F$. To see this, write $X:=S_\emptyset, Y:=S(U_{Q,0},\cO)_{\m_Q}$,
  and write $Q$ for the cokernel of the map $X\to Y$. Then $X,Y$ are
  finite free $\cO$-modules, and if the map $X\otimes L\to Y\otimes L$
  is injective, then so is the map $X\to Y$, so that we have a short
  exact sequence $0\to X\to Y\to Q\to 0$. Tensoring with $L$, we have
  $Q\otimes L=0$. Tensoring with $\F$, we
  obtain an exact sequence $0\to Q[\lambda]\to X\otimes\F\to
  Y\otimes\F\to Q\otimes\F\to 0$, so we have $Q[\lambda]=0$. Thus
  $Q=0$, as required.

  In order to check that we have an isomorphism after tensoring with
  $L$, it is enough to check that the induced map $\prod_{v\in
    Q}(U_{\varpi_v}-B_v):S_\emptyset\otimes_{\cO,\imath}\C\to
  S(U_{Q,0},\cO)_{\m_Q}\otimes_{\cO,\imath}\C$ is an isomorphism. This
  is easily checked: $S_\emptyset\otimes\C\cong\oplus_\pi\otimes_{v\in
    Q}(\chi_{1,v}\times\chi_{2,v})^{\GL_2(\cO_{F_v})}$, and
  $(\chi_{1,v}\times\chi_{2,v})^{\GL_2(\cO_{F_v})}=\C\phi_0$,
  where $\phi_0$ is as in Exercise~\ref{ex:computing Hecke evals on unram PS}(3). Similarly,
  $S(U_{Q,0},\cO)_{\m_Q}\otimes_{\cO,\imath}\C=\oplus_\pi\otimes_{v\in
    Q}M_v$, where $M_v$ is the subspace of $(\chi_{1,v}\times
  \chi_{2,v})^{U_0(v)}$ on which $U_{\varpi_v}$ acts via a lift of
  $\alphabar_v$, which is spanned by $\phi_w$. Since the natural map
  $(\chi_{1,v}\times\chi_{2,v})^{\GL_2(\cO_{F_v})}\to (\chi_{1,v}\times
  \chi_{2,v})^{U_0(v)}$ sends $\phi_0\mapsto \phi_1+\phi_w$ (as
  $\phi_0(1)=\phi_0(w)=1$), the result follows.

It remains to check injectivity after tensoring with $\F$. The kernel
of the map, if nonzero, would be a nonzero finite module for the Artinian local
ring $\T_\emptyset/\lambda$, and would thus have nonzero
$\m_\emptyset$-torsion, so it suffices to prove that the induced
map   \[\prod_{v\in
  Q}(U_{\varpi_v}-B_v):(S_\emptyset\otimes\F)[\m_\emptyset]\to
S(U_{Q,0},\cO)_{\m_Q}\otimes\F\]is an injection. By induction on
$\#Q$, it suffices to prove this in the case that $Q=\{v\}$. Suppose
for the sake of contradiction that there is a nonzero
$x\in(S_\emptyset\otimes\F)[\m_\emptyset]$ with
$(U_{\varpi_v}-\betabar_v)x=0$. Since $x\in S_\emptyset\otimes\F$, we also have
$T_vx=(\alphabar_v+\betabar_v)x$, and we will show that these two
equations together lead to a contradiction. 

Now, $x$ is just a function
$D^\times\backslash\GL_2(\A_F^\infty)\to\Lambda\otimes\F$, on which
$\GL_2(\A_F^\infty)$ acts by right translation. If we make the action
of the Hecke operators explicit, we find that there are $g_i$ such
that $U_v=\coprod_i g_iU_{Q,v}$, and $T_v=\left(\coprod_i g_i\GL_2(\cO_{F_v})\right)\coprod
\begin{pmatrix}
  1&0\\0&\varpi_v
\end{pmatrix}\GL_2(\cO_{F_v})$, so that we have $
\begin{pmatrix}
  1&0\\0&\varpi_v
\end{pmatrix}x=T_vx-U_{\varpi_v}x=\alphabar_vx$. Then $
\begin{pmatrix}
  \varpi_v&0\\0&1
\end{pmatrix}x=w
\begin{pmatrix}
  1&0\\0&\varpi_v
\end{pmatrix}wx=\alphabar_v x$, and $U_{\varpi_v}x=\sum_{a\in k(v)}
\begin{pmatrix}
  \varpi_v&a\\0&1
\end{pmatrix}x=\sum_{a\in k(v)}
\begin{pmatrix}
  1&a\\0&1
\end{pmatrix}
\begin{pmatrix}
  \varpi_v&0\\0&1
\end{pmatrix}
x=\#k(v)\alphabar_vx=\alphabar_vx$. But $U_{\varpi_v}x=\betabar_vx$, so
$\alphabar_v=\betabar_v$, a contradiction.
\end{proof}
Set $S_Q^\square:=S_Q\otimes_{R_Q^\univ}R_Q^\square$. Then we have
$S_Q^\square/\ca_Q=S(U_{Q,0},\cO)_{\m_Q}\isoto S_\emptyset$,
compatibly with the isomorphism $R_Q^\square/\ca_Q\isoto
R_\emptyset^\univ$. Also, $S_Q^\square$ is finite free over
$\cJ[\Delta_Q]$. 

We now return to the Galois side. By Proposition~\ref{prop:H^1 and H^2
  of global over local}, we can and do choose a
presentation \[\Rloc\llbracket x_1,\dots,x_{h_Q}\rrbracket \onto R_Q^{\square},\]where
$h_Q=\#T+\#Q-1-[F:\Q]+\dim_\F H^1_Q(G_{F,T},(\ad^0\rhobar)(1))$, and
$H^1_Q(G_{F,T},(\ad^0\rhobar)(1))=\ker(
H^1(G_{F,T},(\ad^0\rhobar)(1))\to\oplus_{v\in
  Q}H^1(G_{k(v)},(\ad^0\rhobar)(1))$.

The following result will provide us with the sets $Q$ that we will
use. 
\begin{prop}
  \label{prop: Chebotarev argument for TW primes} Let $r=\max(\dim
  H^1(G_{F,T},(\ad^0\rhobar)(1)),1+[F:\Q]-\#T)$. For each $N\ge 1$,
  there exists a set $Q_N$ of places of $F$ such that
  \begin{itemize}
  \item $Q_N\cap T=\emptyset$.
  \item If $v\in Q_N$, then $\rhobar(\Frob_v)$ has distinct
    eigenvalues $\alphabar_v\ne\betabar_v$.
  \item If $v\in Q_N$, then $\#k(v)\equiv 1\pmod{p^N}$.
  \item $\# Q_N=r$.
  \item $R_{Q_N}^{\square}$ (respectively $R_{Q_N}^{\square,'}$) is
    topologically generated over $\Rloc$ (respectively
    $R^{\operatorname{loc},'}$) by $\#T-1-[F:\Q]+r$ elements.
  \end{itemize}
\end{prop}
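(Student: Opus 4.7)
The plan is to produce the set $Q_N$ via the Chebotarev density theorem, chosen so that the dual Selmer group $H^1_{Q_N}(G_{F,T},(\ad^0\rhobar)(1))$ vanishes. Once this holds, the presentation of $R_Q^{\square}$ recorded just before the statement (coming from Proposition~\ref{prop:H^1 and H^2 of global over local}) yields
\[
h_{Q_N} \;=\; \#T + \#Q_N - 1 - [F:\Q] + 0 \;=\; \#T + r - 1 - [F:\Q],
\]
which is exactly the desired bound. The assertion for $R_{Q_N}^{\square,'}$ follows by the same argument applied to $\cS'_{Q_N}$: at each $v\in T_r$ the characteristic polynomials $(X-1)^2$ and $(X-\zeta)(X-\zeta^{-1})$ agree modulo $\lambda$ (since $\zeta\equiv 1\pmod\lambda$), so the tangent spaces $L(\cD_v)$ and $L(\cD'_v)$ coincide and $H^1_{Q_N}$ is literally the same group in both deformation problems.

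For the Chebotarev input, set $K_N := F(\zeta_{p^N})\cdot F^{\ker\rhobar}$. Because $p>3$, $\SL_2(\F_p)$ is perfect and the abelian extension $F(\zeta_{p^N})/F$ is linearly disjoint (over $F(\zeta_p)$) from the $\SL_2(\F_p)$-part of $F^{\ker\rhobar}/F$, so $\rhobar(G_{F(\zeta_{p^N})})\supseteq\SL_2(\F_p)$. Choose $\sigma_0\in\Gal(K_N/F)$ fixing $F(\zeta_{p^N})$ pointwise and whose image under $\rhobar$ is a regular semisimple element of $\SL_2(\F_p)$ with distinct eigenvalues $\alphabar\ne\pm\betabar$ (for instance, a nontrivial element of a split torus). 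Any Frobenius $\Frob_v$ landing in the $G_F$-conjugacy class of $\sigma_0$ then automatically satisfies $v\notin T$, $\#k(v)\equiv 1\pmod{p^N}$, and $\rhobar(\Frob_v)$ has the required distinct eigenvalues.

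The crux is to show that these primes can furthermore be arranged to jointly detect every nonzero class in $H^1(G_{F,T},(\ad^0\rhobar)(1))$ under restriction to $\bigoplus_v H^1(G_{k(v)},(\ad^0\rhobar)(1))$. Given a nonzero class $\psi$, the inflation--restriction sequence, together with the standard vanishing $H^1(\Gal(K_N/F),(\ad^0\rhobar)(1))=0$ (which uses the big image hypothesis and $p>3$ to rule out $\SL_2(\F_p)$-cohomology on the adjoint), shows that $\psi|_{G_{K_N}}$ is a nonzero $\Gal(K_N/F)$-equivariant homomorphism $G_{K_N}\to(\ad^0\rhobar)(1)$; let $M/K_N$ be the finite extension it cuts out. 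Replacing $\sigma_0$ by $\sigma_0\tau$ with $\tau\in\Gal(M/K_N)$ shifts $\psi(\sigma_0)$ through the nonzero, $\Gal(K_N/F)$-stable $\F$-submodule $\psi(\Gal(M/K_N))\subseteq(\ad^0\rhobar)(1)$, and since $(\ad^0\rhobar)(1)$ is irreducible as an $\F[\SL_2(\F_p)]$-module this submodule is not contained in the proper subspace $(\sigma_0-1)(\ad^0\rhobar)(1)$. Chebotarev then produces a positive density of primes $v$ satisfying the three local conditions and with $\psi(\Frob_v)\notin(\Frob_v-1)(\ad^0\rhobar)(1)$, hence with $\psi|_{G_{F_v}}$ nontrivial in the unramified local cohomology. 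An inductive extraction (strip off one class at a time) yields primes $v_1,\ldots,v_s$ with $s\le\dim H^1(G_{F,T},(\ad^0\rhobar)(1))\le r$ whose combined restriction maps inject, and if $s<r$ we pad $Q_N$ with arbitrary additional primes satisfying the Chebotarev conditions on $\sigma_0$, which cannot enlarge $H^1_{Q_N}$.

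The main obstacle is verifying the two cohomological inputs used above: the vanishing $H^1(\Gal(K_N/F),(\ad^0\rhobar)(1))=0$ and the irreducibility of $(\ad^0\rhobar)(1)$ as an $\F[\SL_2(\F_p)]$-module (together with $(\sigma_0-1)$ acting non-surjectively on it). These are exactly the places where the full hypothesis $\Im\rhobar\supseteq\SL_2(\F_p)$ together with $p>3$ is essential, as mere absolute irreducibility of $\rhobar$ would not suffice to kill the relevant $\SL_2(\F_p)$-cohomology or to guarantee irreducibility of the adjoint.
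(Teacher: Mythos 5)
Your overall strategy coincides with the paper's: reduce the last bullet to the vanishing of the dual Selmer group $H^1_{Q_N}(G_{F,T},(\ad^0\rhobar)(1))$ and produce the primes by Chebotarev in a field containing $\zeta_{p^N}$ and trivialising $\ad\rhobar$. The observation that the same $Q_N$ works simultaneously for $\cS$ and $\cS'$ is correct (though the cleaner reason is that the dual Selmer condition only involves restriction at the places of $Q_N$, where $\cD_v=\cD'_v$ by definition, so the groups $H^1_{Q_N}$ are literally equal; the agreement of $L(\cD_v)$ with $L(\cD'_v)$ at $v\in T_r$ is not what is being used, since those places are framed and do not enter the dual Selmer condition). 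The Chebotarev step, the irreducibility of $\ad^0\rhobar$, the coinvariant description of $H^1(G_{k(v)},(\ad^0\rhobar)(1))$, and the padding/trimming to reach $\#Q_N=r$ all track the paper's argument closely. The extra condition $\alphabar\ne\pm\betabar$ is harmless but unnecessary: $\alphabar\ne\betabar$ already forces $\sigma_0-1$ to have nontrivial kernel on $\ad^0\rhobar$.

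The one genuine gap is the vanishing $H^1(\Gal(K_N/F),(\ad^0\rhobar)(1))=0$, which you assert as ``standard'' and attribute to ``ruling out $\SL_2(\F_p)$-cohomology on the adjoint''. That is not how the paper establishes it, and the sketch as written does not suffice. The difficulty is the Tate twist: even granting $H^1(\SL_2(\F_p),\ad^0\rhobar)=0$ for $p>3$, one still has to contend with the cyclotomic factor, and there is a genuine exceptional case (recorded in Section~\ref{subsec:relaxing hypotheses}: $p=5$, $\sqrt5\in F$, projective image $\PGL_2(\F_5)$) where the group cohomology of $\SL_2(\F_p)$ on $\ad^0$ vanishes but the relevant $H^1$ does not. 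The paper avoids this entirely by exploiting the twist rather than fighting it. It first uses $p\ge5$ and the hypothesis that $p$ is unramified in $F$ — which you never invoke — to get $[F(\zeta_p):F]\ge4$ and hence, by the classification of subgroups of $\PGL_2$, $\zeta_p\notin\overline{F}^{\ker\ad\rhobar}$. It then splits the cyclic extension $E/\overline{F}^{\ker\ad\rhobar}$ generated by $\zeta_{p^N}$ into its $p$-part $E_0$ and prime-to-$p$ part, and runs inflation–restriction with $\Gamma_1=\Gal(E_0/F)$, $\Gamma_2=\Gal(E/E_0)$: the prime-to-$p$ order of $\Gamma_2$ kills $H^1(\Gamma_2,-)$ for free, and since $\Gamma_2$ acts trivially on $\ad^0\rhobar$ but nontrivially on the twist (because $\zeta_p\notin E_0$) the invariants $((\ad^0\rhobar)(1))^{\Gamma_2}$ vanish as well, so both inflation–restriction terms die without any computation of $\SL_2(\F_p)$-cohomology. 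To make your argument complete you either need to reproduce some version of this computation, or explicitly cite the vanishing for $\Gal(K_N/F)$ acting on $(\ad^0\rhobar)(1)$ with the cyclotomic twist taken into account, making visible where the hypothesis that $p$ is unramified in $F$ enters.
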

\begin{proof}
  The last condition may be replaced by
  \begin{itemize}
  \item $H^1_{Q_N}(G_{F,T},(\ad^0\rhobar)(1))=0$.
  \end{itemize}
Therefore, it is enough to show that for each $0\ne [\phi]\in
H^1(G_{F,T},(\ad^0\rhobar)(1))$, there are infinitely many $v\notin T$
such that
\begin{itemize}
\item $\# k(v)\equiv 1\pmod{p^N}$.
\item $\rhobar(\Frob_v)$ has distinct eigenvalues $\alphabar_v, \betabar_v$.
\item $\Res [\phi]\in H^1(G_{k(v)},(\ad^0\rhobar)(1))$ is nonzero.
\end{itemize}
(This then gives us some set of places $Q$ with the given properties,
except that $\#Q$ may be too large; but then we can pass to a subset
of cardinality $r$, while maintaining the injectivity of the map $H^1(G_{F,T},(\ad^0\rhobar)(1))\to\oplus_{v\in
  Q}H^1(G_{k(v)},(\ad^0\rhobar)(1))$.

We will use the Chebotarev density theorem to do this; note that the condition
that $\#k(v)\equiv 1\pmod{p^N}$ is equivalent to $v$ splitting
completely in $F(\zeta_{p^N})$, and the condition that $\rhobar(\Frob_v)$
has distinct eigenvalues is equivalent to asking that
$\ad\rhobar(\Frob_v)$ has an eigenvalue not equal to $1$.

Set $E=\overline{F}^{\ker\ad\rhobar}(\zeta_{p^N})$. We claim that we have
$H^1(\Gal(E/F),(\ad^0\rhobar)(1))=0$. In order to see this, we
claim firstly that $\zeta_p\notin\overline{F}^{\ker\ad\rhobar}$. This
follows from the classification of finite subgroups of
$\PGL_2(\Fpbar)$: we have assumed that $\im\rhobar\supseteq\SL_2(\Fp)$,
and this implies that $\im\ad\rhobar=\PGL_2(\F_{p^s})$ or
$\PSL_2(\F_{p^s})$ for some $s$, and in particular
$(\im\ad\rhobar)^{\ab}$ is trivial or cyclic of order $2$. Since $p\ge
5$ and $p$ is unramified in $F$, we have $[F(\zeta_p):F]\ge 4$, so
$\zeta_p\notin\overline{F}^{\ker\ad\rhobar}$, as claimed.

The extension $E/\overline{F}^{\ker\ad\rhobar}$ is abelian, and we let
$E_0$ be the intermediate field such that $\Gal(E/E_0)$ has order
prime to $p$, while $\Gal(E_0/\overline{F}^{\ker\ad\rhobar})$ has
$p$-power order. Write $\Gamma_1=\Gal(E_0/F)$,
$\Gamma_2=\Gal(E/E_0)$. Then the inflation-restriction exact sequence is in
part \[0\to H^1(\Gamma_1,(\ad^0\rhobar)(1)^{\Gamma_2}) \to
H^1(\Gal(E/F),(\ad^0\rhobar)(1))\to
H^1(\Gamma_2,(\ad^0\rhobar)(1))^{\Gamma_1},\] so in order to show that
$H^1(\Gal(E/F),(\ad^0\rhobar)(1))=0$, it suffices to prove that
$H^1(\Gamma_1,(\ad^0\rhobar)(1)^{\Gamma_2})=
H^1(\Gamma_2,(\ad^0\rhobar)(1))^{\Gamma_1}=0$. 

In fact, we claim that $(\ad^0\rhobar)(1)^{\Gamma_2}$ and
$H^1(\Gamma_2,(\ad^0\rhobar)(1))$ both vanish. For the first of these,
note that $\Gamma_2$ acts trivially on $\ad^0\rhobar$ (since $E_0$
contains $\overline{F}^{\ker\ad\rhobar}$), but that $\zeta_p\notin
E_0$ (as $[E_0:\overline{F}^{\ker\ad\rhobar}]$ is a power of $p$). For
the second term, note that $\Gamma_2$ has prime-to-$p$
order.

Suppose that $\#k(v)\equiv 1\pmod{p}$, and that $\rhobar(\Frob_v)=
\begin{pmatrix}
  \alphabar_v&0\\0&\betabar_v
\end{pmatrix}$. Then $\ad^0\rhobar$ has the basis $
\begin{pmatrix}
  1&0\\0&-1
\end{pmatrix}$, $
\begin{pmatrix}
  0&1\\0&0
\end{pmatrix}$, $
\begin{pmatrix}
  0&0\\1&0
\end{pmatrix}$ of eigenvectors for $\Frob_v$, with eigenvalues $1$,
$\alphabar_v/\betabar_v$, $\betabar_v/\alphabar_v$
respectively. Consequently, we see that there is an isomorphism
$H^1(G_{k(v)},(\ad^0\rhobar)(1))\cong\F$ (since in general for a
(pro)cyclic group, the first cohomology is given by passage to
coinvariants), which we can write explicitly as
$[\phi]\mapsto\pi_v\circ\phi(\Frob_v)\circ i_v$, where $i_v$ is the
injection of $\F$ into the $\alphabar_v$-eigenspace of $\Frob_v$, and
$\pi_v$ is the $\Frob_v$-equivariant projection onto that subspace. 

Let $\sigma_0$ be an element of $\Gal(E/F)$ such that
\begin{itemize}
\item $\sigma_0(\zeta_{p^N})=\zeta_{p^N}$.
\item $\rhobar(\sigma_0)$ has distinct eigenvalues $\alphabar$, $\betabar$.
\end{itemize} (To see that such a $\sigma_0$ exists, note that
$\Gal(\overline{F}^{\ker\rhobar}/F(\zeta_{p^N})\cap\overline{F}^{\ker\rhobar})$
contains $\PSL_2(\Fp)$, and so we can choose $\sigma_0$ so that its
image in this group is an element whose adjoint has an eigenvalue
other than $1$.)
Let $\tE/E$ be the extension cut out by all the $[\phi]\in
H^1(G_{F,T},(\ad^0\rhobar)(1))$. In order to complete the proof, it
suffices to show that we can choose some $\sigma\in\Gal(\tE/F)$ with
$\sigma|_E=\sigma_0$, and such that in the notation above, we have
$\pi_{\sigma_0}\circ\phi(\sigma)\circ i_{\sigma_0}\ne 0$, because we
can then choose $v$ to have $\Frob_v=\sigma$ by the Chebotarev density
theorem. 

To this end, choose any $\sigmat_0\in\Gal(\tE/F)$ with
$\sigmat_0|_E=\sigma_0$. If $\sigmat_0$ does not work, then we have
$\pi_{\sigma_0}\circ\phi(\sigmat_0)\circ i_{\sigma_0}=0$. In this
case, take $\sigma=\sigma_1\sigmat_0$ for some
$\sigma_1\in\Gal(\tE/E)$. Then
$\phi(\sigma)=\phi(\sigma_1\sigmat_0)=\phi(\sigma_1)+\sigma_1\phi(\sigmat_0)=\phi(\sigma_1)+\phi(\sigmat_0)$,
so $\pi_{\sigma_0}\circ\phi(\sigma)\circ
i_{\sigma_0}=\pi_{\sigma_0}\circ\phi(\sigma_1)\circ i_{\sigma_0}$.

Note that $\phi(\Gal(\tE/E))$ is a $\Gal(E/F)$-invariant subset of
$\ad^0\rhobar$, which is an irreducible $\Gal(E/F)$-module, since the
image of $\rhobar$ contains $\SL_2(\F_p)$. Thus the $\F$-span of
$\phi(\Gal(\tE/E))$ is all of $\ad^0\rhobar(1)$, from which it is
immediate that we can choose $\sigma_1$ so that
$\pi_{\sigma_0}\circ\phi(\sigma_1)\circ i_{\sigma_0}\ne 0$.
\end{proof}

We are now surprisingly close to proving the main theorem! Write
$h:=\#T-1-[F:\Q]+r$, and $R_\infty:=\Rloc\llbracket x_1,\dots,x_h\rrbracket $. For each
set $Q_N$ as above, choose a surjection $R_\infty\onto
R_{Q_N}^\square$. Let $\cJ_\infty:=\cJ\llbracket y_1,\dots,y_r\rrbracket $. Choose a 
surjection $\cJ_\infty\onto\cJ[\Delta_{Q_N}]$, given by writing
$Q_N=\{v_1,\dots, v_r\}$ and mapping $y_i$ to $(\gamma_i-1)$, where
$\gamma_i$ is a generator of $\Delta_{v_i}$. Choose a homomorphism
$\cJ_\infty\to R_\infty$ so that the composites $\cJ_\infty\to
R_\infty\onto R_{Q_N}^{\square}$ and
$\cJ_\infty\to\cJ[\Delta_{Q_N}]\to R_{Q_N}^\square$ agree, and write
$\ca_\infty:=(\ca,y_1,\dots,y_r)$. Then
$S_{Q_N}^\square/\ca_\infty=S_\emptyset$,
$R_{Q_N}^\square/\ca_\infty=R_\emptyset^\univ$. 

Write $\cb_N:=\ker(\cJ_\infty\to\cJ[\Delta_{Q_N}])$, so that
$S_{Q_N}^\square$ is finite free over $\cJ_\infty/\cb_N$. Since all
the elements of $Q_N$ are congruent to $1$ modulo $p^N$, we see that
$\cb_N\subseteq ((1+y_1)^{p^N}-1,\dots,(1+y_r)^{p^N}-1)$.

We can and do choose the same data for $R^{\operatorname{loc},'}$, in
such a way that the two sets of data are compatible modulo $\lambda$.

Now choose open ideals $\cc_N\lhd\cJ_\infty$ such that
\begin{itemize}
\item $\cc_N\cap\cO=(\lambda^N)$.
\item $\cc_N\supseteq\cb_N$.
\item $\cc_N\supseteq\cc_{N+1}$.
\item $\cap_N\cc_N=0$.
\end{itemize}
(For example, we could take
$\cc_N=((1+X_{v,i,j})^{p^N}-1,(1+y_i)^{p^N}-1,\lambda^N)$.) Note that
since $\cc_N\supseteq\cb_N$, $S_{Q_N}^\square/\cc_N$ is finite free
over $\cJ_\infty/\cc_N$. Also choose open ideals $\cd_N\lhd
R_\emptyset^\univ$ such that
\begin{itemize}
\item $\cd_N\subseteq\ker(R_\emptyset^\univ\to\End(S_\emptyset/\lambda^N))$.
\item $\cd_N\supseteq\cd_{N+1}$.
\item $\cap_N\cd_N=0$.
\end{itemize}
If $M\ge N$, write $S_{M,N}=S_{Q_M}^\square/\cc_N$, so that $S_{M,N}$ is finite free over
$\cJ_\infty/\cc_N$ of rank equal to the $\cO$-rank of $S_\emptyset$;
indeed $S_{M,N}/\ca_\infty\isoto S_\emptyset/\lambda^N$. Then we have a
commutative diagram \tikzset{every loop/.style={min distance=10mm,in=70,out=110,looseness=10}}
\[\begin{tikzcd}\cJ_\infty\arrow{r}&R_\infty\ar[two heads]{r}&
  R_\emptyset^\univ/\cd_N\\ & S_{M,N}\arrow[loop above]{} \ar[two heads]{r}&S_\emptyset/\cd_N\arrow[loop above]{}
\end{tikzcd}\] where
$S_{M,N}$, $S_\emptyset/\cd_N$ and $R_\emptyset^\univ/\cd_N$ all have
finite cardinality. Because of this finiteness, we see that there is
an infinite subsequence of pairs $(M_i,N_i)$ such that $M_{i+1}>M_i$,
$N_{i+1}>N_i$, and
the induced diagram \[\begin{tikzcd}\cJ_\infty\arrow{r}&R_\infty\ar[two heads]{r}&
  R_\emptyset^\univ/\cd_{N_i}\\ & S_{M_{i+1},N_{i+1}}/\cc_{N_i}\arrow[loop above]{} \ar[two heads]{r}&S_\emptyset/\cd_{N_i}\arrow[loop above]{}
\end{tikzcd}\]  
is isomorphic to the diagram for $(M_i,N_i)$.

Then we can take the projective limit over this subsequence, to
obtain a commutative diagram \[\begin{tikzcd}\cJ_\infty\arrow{r}&R_\infty\ar[two heads]{r}&
  R_\emptyset^\univ\\ & S_\infty\arrow[loop above]{} \ar[two heads]{r}&S_\emptyset\arrow[loop above]{}
\end{tikzcd}\] where
$S_\infty$ is finite free over $\cJ_\infty$. Furthermore, we can
simultaneously carry out the same construction in the $'$ world,
compatibly with this picture modulo $\lambda$.

This is the key picture, and the theorem will now follow from it by
purely commutative algebra arguments. We have (ultimately by the
calculations of the dimensions of the local deformation rings in
Theorems~\ref{thm:FL deformation rings} and~\ref{thm: types are constant on components
    etc}) $\dim R_\infty=\dim
R'_\infty=\dim \cJ_\infty=4\#T+r$, and since $S_\infty, S'_\infty$ are finite
free over the power series ring $\cJ_\infty$ (from Proposition~\ref{fact: two actions of Delta agree}), we have
$\depth_{\cJ_\infty}(S_\infty)=\depth_{\cJ_\infty}(S'_\infty)=4\#T+r$.
(This is the ``numerical coincidence'' on which the Taylor--Wiles
method depends; see~\cite{CalGer} for a further discussion of this
point, and of a more general ``numerical coincidence''.) Since
the action of $\cJ_\infty$ on $S_\infty$ factors through $R_\infty$,
we see that $\depth_{R_\infty}(S_\infty)\ge 4\#T+r$, and similarly
$\depth_{R'_\infty}(S'_\infty)\ge 4\#T+r$. Now, if $\cp\lhd R'_\infty$
is a minimal prime in the support of $S'_\infty$, then we see
that \[4\#T+r=\dim R'_\infty\ge\dim
R'_\infty/\cp\ge\depth_{R'_\infty}S'_\infty\ge 4\#T+r,\] so
equality holds throughout, and $\cp$ is a minimal prime of
$R'_\infty$. But $R'_\infty$ has a unique minimal prime, so in fact
$\Supp_{R'_\infty}(S'_\infty)=\Spec R'_\infty$.

By the same argument, we see that $\Supp_{R_\infty}(S_\infty)$ is a
union of irreducible components of $\Spec R_\infty$. We will show that
it is all of $\Spec R_\infty$ by reducing modulo $\lambda$ and
comparing with the situation for $S'_\infty$.

To this end, note that since  $\Supp_{R'_\infty}(S'_\infty)=\Spec
R'_\infty$, we certainly have
\[\Supp_{R'_\infty/\lambda}(S'_\infty/\lambda)=\Spec
R'_\infty/\lambda.\] By the compatibility between the two pictures,
this means that  \[\Supp_{R_\infty/\lambda}(S_\infty/\lambda)=\Spec
R_\infty/\lambda.\] Thus $\Supp_{R_\infty}(S_\infty)$ is a union of
irreducible components of $\Spec R_\infty$, which contains the
entirety of $\Spec R_\infty/\lambda$. Since (by Theorem~\ref{thm:Ihara avoidance deformations}) the irreducible components
of $\Spec R_\infty/\lambda$ are in bijection with the irreducible
components of $\Spec R_\infty$, this implies that
$\Supp_{R_\infty}(S_\infty)=\Spec R_\infty$. Then
$\Supp_{R_\infty/\ca_\infty}(S_\infty/\ca_\infty)=R_\infty/\ca_\infty$,
i.e. $\Supp_{R_\emptyset^\univ}S_\emptyset=R_\emptyset^\univ$, which
is what we wanted to prove.

\section{Relaxing the hypotheses}\label{subsec:relaxing hypotheses}The hypotheses in our main theorem
are not optimal. We will now briefly indicate the ``easy''
relaxations of the assumptions that could be made, and discuss the
generalisations that are possible with (a lot) more work. 

Firstly, it is possible to relax the assumption that $p\ge 5$, and
that $\Im\rhobar\supseteq\SL_2(\Fp)$. These assumptions cannot be
completely removed, but they can be considerably relaxed. The case
$p=2$ is harder in several ways, 
but important theorems
have been proved in this case, for example the results
of Kisin~\cite{MR2551765} which completed the proof of Serre's conjecture.

On the other hand, the case $p=3$ presents no real difficulties. The main place that we
assumed that $p>3$ was in the proof that the finite groups~$G_i$ in
Section~\ref{subsec:
  integral auto forms} have order prime to~$p$; this argument could also break
down for cases when $p>3$ if we allowed $p$ to ramify in $F$, which in
general we would like to do. Fortunately, there is a simple solution
to this problem, which is to introduce an auxiliary prime~ $v$ to the
level. 
This prime is chosen in such a way that all
deformations of $\rhobar|_{G_{F_v}}$ are automatically unramified, so
none of the global Galois deformation rings that we work with are
changed when we relax the conditions at $v$. The existence of an
appropriate $v$ follows from the Chebotarev density theorem and some
elementary group theory; see Lemma 4.11 of~\cite{MR1605752} and the
discussion immediately preceding it.

We now consider the possibility of relaxing the assumption that
$\Im\rhobar\supseteq\SL_2(\Fp)$. We should certainly assume that
$\rhobar$ is absolutely irreducible, because otherwise many of our
constructions don't even make sense; we always had to assume this in
constructing universal deformation rings, in constructing the
universal modular deformation, and so on. (Similar theorems have been
proved in the case that $\rhobar$ is reducible, in particular by
Skinner--Wiles
\cite{MR1793414},
but the arguments are considerably more involved, and at present
involve a number of serious additional hypotheses, in particular
ordinarity --- although see Pan's \cite{pan2021fontainemazur} for a theorem
without an ordinarity hypothesis.) Examining the arguments made above, we see that the main
use of the assumption that $\Im\rhobar\supseteq\SL_2(\Fp)$ is
in the proof of Proposition~\ref{prop: Chebotarev argument for TW
  primes}. Looking more closely at the proof, the key assumption is
really that $\rhobar|_{G_{F(\zeta_p)}}$ is absolutely irreducible; this is known
as the ``Taylor--Wiles assumption''. (Note that by elementary group
theory, this is equivalent to the absolute irreducibility of
$\rhobar|_{G_K}$, where $K/F$ is the unique quadratic subextension of
$F(\zeta_p)/F$; in particular, over $\Q$ the condition is equivalent
to the absolute irreducibility of
$\rhobar|_{G_{\Q(\sqrt{(-1)^{(p-1)/2}p})}}$, which is how the
condition is stated in the original papers.)

Unfortunately this condition isn't quite enough in complete
generality, but it comes very close; the only exception is certain
cases when $p=5$, $F$ contains $\Q(\sqrt{5})$, and the projective
image of $\rhobar$ is $\PGL_2(\F_5)$. See~\cite[(3.2.3)]{kis04} for
the definitive statement (and see the work of Khare--Thorne
\cite{MR3648503} for some improvements in this exceptional case). If
$\rhobar$ is absolutely irreducible, but $\rhobar|_{G_{F(\zeta_p)}}$
is (absolutely) reducible, it is sometimes possible to prove
modularity lifting theorems, but considerably more work is needed (and
there is no general approach in higher dimension); see~\cite{MR1928993} in the ordinary
case, which uses similar arguments to those of~\cite{MR1793414}, and
also \cite{MR3451399, pan2021fontainemazur}.

The other conditions that we could hope to relax are the assumptions
on $\rho|_{G_{F_v}}, \rho_0|_{G_{F_v}}$ at places $v|p$. We've hardly
discussed where some of these assumptions come from, as we swept most
issues with $p$-adic Hodge theory under the carpet. There are
essentially two problems here. One is that we have assumed that $p$ is
unramified in $F$, that the Galois representations are crystalline,
and that the gaps between the Hodge--Tate weights are ``small''; this
is the Fontaine--Laffaille condition. There is also the assumption
that $\rho, \rho_0$ have the same Hodge--Tate weights. Both conditions
can be considerably (although by no means completely) relaxed (of
course subject to the necessary condition that $\rho$ is geometric). As
already alluded to above, very general results are available in the
ordinary case (even in arbitrary dimension), in particular those of
Geraghty~\cite{ger}. In the case that $F_v=\Qp$ there are again very
general results, using the $p$-adic local Langlands correspondence
for~$\GL_2(\Qp)$ (see in particular~\cite{unpublished,MR2505297,
  pan2021fontainemazur}). However, beyond this case, the situation is
considerably murkier, and at present there are no generally applicable results.

\subsection{Further generalisations}
Other than the results discussed in the
previous subsection, there are a number of obvious generalizations
that one could hope to prove. One obvious step, already alluded to
above, is to replace $2$-dimensional representations with
$n$-dimensional representations; we could also hope to allow~$F$ to be a more general number
field. At present it seems to be necessary to assume that~$F$ is a CM
field, as otherwise we do not know how to attach Galois
representations to automorphic representations; but if~$F$ is CM, then
automorphy lifting theorems analogous to our main theorem are now
known (for arbitrary~$n$), and we refer
to~\cite{calegari2021reciprocity} for both the history of such results
and the state of the art.



Another natural condition to relax would be the condition that the
Hodge--Tate weights are distinct; for example, one could ask that they
all be equal, and hope to prove Artin's conjecture, or that some are
equal, to prove modularity results for abelian varieties. The
general situation where some Hodge--Tate weight occurs with multiplicity
greater than 2 seems to be completely out of reach (because there is
no known way to relate the automorphic representations expected to correspond to
such Galois representations to the automorphic representations which
contribute to the cohomology of Shimura varieties, which is the only
technique we have for constructing the maps $R\to\T$), but there has been
considerable progress for small dimensional cases, for which we again
refer the reader to~\cite{calegari2021reciprocity}.  

Finally, we would of course like to be able to dispose of
the hypothesis that $\rhobar$ is modular (that is, to dispose of
$\rho_0$). This is the problem of Serre's conjecture and its
generalisations, and has only been settled in the case that $F=\Q$ and
$n=2$. The proof in that case (by Khare--Wintenberger and Kisin, see
\cite{MR2551763, MR2551764, MR2551765}) makes essential use of
modularity lifting theorems. The proof inductively reduces to the case
that $p\le 5$ and $\rhobar$ has very little ramification, when direct
arguments using discriminant bounds can be made. The more general
modularity lifting theorems mentioned above make it plausible that the
inductive steps could be generalised, but the base case of the
induction seems specific to the case of $\GL_2/\Q$, and proving the
modularity of~$\rhobar$ in greater generality is one of the biggest
open problems in the field.

\bibliographystyle{amsalpha}
\bibliography{tobylecturesbib.bib}
\end{document}